\documentclass[11pt,twoside]{amsart}

\usepackage{setspace, a4wide}
\usepackage[T1]{fontenc}
\usepackage[english]{babel}
\usepackage{amssymb,amsfonts,amsthm,amsmath,mathrsfs}
\usepackage{layout,version,enumerate}
\usepackage[all,matrix]{xy}
\usepackage[varg]{pxfonts}
\usepackage{indentfirst}
\usepackage{url}
\usepackage{color}
\usepackage[colorlinks=true,linkcolor=blue,citecolor=magenta, pagebackref]{hyperref}
\usepackage[numbers]{natbib}
\usepackage{tikz-cd} 

\theoremstyle{plain}
\newtheorem{thm}{Theorem}[section]
\newtheorem{prop}[thm]{Proposition}
\newtheorem{crl}[thm]{Corollary}
\newtheorem{lem}[thm]{Lemma}
\newtheorem*{thm*}{Theorem}
\newtheorem*{fact*}{Fact}

\theoremstyle{definition}
\newtheorem{dfn}[thm]{Definition}
\newtheorem*{dfn*}{Definition}
\newtheorem*{app}{Approximation problem for $\cp$}
\newtheorem*{lapp}{Lattice approximation problem for $\cp$}

\theoremstyle{remark}
\newtheorem{rmk}[thm]{Remark}

\newcommand{\bn}{\mathbb{N}}
\newcommand{\bz}{\mathbb{Z}}
\newcommand{\br}{\mathbb{R}}

\newcommand{\rb}{\mathscr B}
\newcommand{\ca}{\mathcal A}

\newcommand{\cg}{\mathcal G}

\newcommand{\ul}{\mathfrak u}
\newcommand{\rel}{\mathcal R}

\newcommand{\co}{\mathcal O}
\newcommand{\cp}{\mathcal P}

\newcommand{\ph}{\varphi}
\newcommand{\eps}{\varepsilon}
\newcommand{\id}{\mathfrak{id}}

\newcommand{\rank}{\mathit{d}}

\renewcommand{\id}{\mathrm{1}}
\newcommand{\haar}{\lambda}
\def\defin#1{\textbf{\emph{#1}}}  
\newcommand{\pmp}{p.m.p.\ }

\usepackage{tikz}
\usetikzlibrary{matrix,arrows}

\renewcommand{\part}{\operatorname{Part}}

\newcommand{\aut}{\operatorname{Aut}}
\newcommand{\lcsc}{l.c.s.c.\ }

\newcommand{\binte}{\overline{\operatorname{Int}}}
\newcommand{\covol}{\operatorname{covol}}
\newcommand{\vol}{\operatorname{vol}}

\newcommand{\cost}{\mathrm{Cost}}
\setcounter{tocdepth}{1}

\newcommand{\SmallHack}[1]{\lowercase{#1}}
\newcommand{\Bibkeyhack}[1]{}

\begin{document}

\onehalfspace

\thispagestyle{empty}
\title{Asymptotic invariants of lattices in locally compact groups}

\author{Alessandro Carderi}
\address{A.C., Institut für Algebra und Geometrie, KIT, 76128 Karlsruhe, Germany}
\email{alessandro.carderi@kit.edu}


\begin{abstract}
{ The aim of this work is to understand some of the asymptotic properties of sequences of lattices in a fixed locally compact group. In particular we will study the asymptotic growth of the Betti numbers of the lattices renormalized by the covolume and the rank gradient, the minimal number of generators also renormalized by the covolume. For doing so we will consider the ultraproduct of the sequence of actions of the locally compact group on the coset spaces and we will show how the properties of one of its cross sections are related to the asymptotic properties of the lattices.
}
\end{abstract}

\maketitle

\tableofcontents
\addtocontents{toc}{\protect\setcounter{tocdepth}{-3}} 
\section*{Introduction}

Let us consider a real-valued invariant $\cp$ of countable groups, that is, $\cp$ assigns to every countable group $\Gamma$ a (possibly infinite) real number $\cp(\Gamma)$. The main examples for us will be when $\cp=\rank$ is the rank, the minimal number of generators, and when $\cp=b_i$ is the $i$-th Betti number, that is, the dimension of the $i$-th homology group with rational coefficients. However the questions that we address are still interesting and often open for many other invariants such as the rank of the abelianization or the dimension of the homology with coefficients in a finite field. We want to understand the following problem, see \cite{luc2016}.

\begin{app} 
For a countable, residually finite group $\Gamma$ and a sequence of finite index subgroups $\{\Gamma_n\}_n$ of $\Gamma$ such that the index $[\Gamma\colon\Gamma_n]$ tends to infinity does the sequence $\cp(\Gamma_n)/[\Gamma\colon\Gamma_n]$ converge? If so, does the limit depend on the sequence $\{\Gamma_n\}_n$? Can we compute this limit?
\end{app}

A first striking result in this direction is the L\"uck's approximation theorem \cite{luc1994} which states that whenever $\Gamma$ is the fundamental group of a compact manifold and $\{\Gamma_n\}_n$ is a (nested) chain of normal subgroups of $\Gamma$ satisfying $\cap_n \Gamma_n=\{\id_\Gamma\}$, then the sequence $b_i(\Gamma_n)/[\Gamma\colon\Gamma_n]$ converges to the $i$-th $\ell^2$-Betti number of the group $\Gamma$, which will be denoted by $\beta_i(\Gamma)$. Therefore, under some assumptions on the group and on the chain, the approximation problem is completely understood and the limit does not depend on the chain. The needed assumptions on the sequence of finite index subgroups were later weakened by Farber.

\begin{dfn*}
  A sequence of finite index subgroups $\{\Gamma_n\}_n$ is called \textit{Farber} if for every $\gamma\in\Gamma\setminus\{\id_\Gamma\}$ we have \[\lim_n\frac{\left|\left\{g\Gamma_n\in \Gamma/\Gamma_n\colon \gamma\in g\Gamma_n g^{-1}\right\}\right|}{[\Gamma\colon\Gamma_n]}=0.\]
\end{dfn*}
 
Farber proved in \cite{far1998} that the sequence $b_i(\Gamma_n)/[\Gamma\colon\Gamma_n]$ also converges to $\beta_i(\Gamma)$ whenever $\{\Gamma_n\}_n$ is a Farber and nested chain of finite index subgroups of $\Gamma$. The approximation problem for $b_i$ was later studied in several different occasion, for example in the case of non Farber chains in \cite{ber2004}, for sofic approximations in \cite{ele2005} and in a more general setting in \cite{CGS}.

Another invariant for which the approximation problem has been widely studied is the rank. Ab\'ert and Nikolov proved in \cite{abe2012} that whenever $\Gamma$ is a finitely generated group and $\{\Gamma_n\}_n$ is a Farber chain of finite index subgroups, then the sequence $\rank(\Gamma_n)/[\Gamma\colon\Gamma_n]$ converges towards the cost of the associated profinite action (minus $1$). In this case however it is still unknown whether the limit depends on the chosen Farber sequence, indeed it is unknown whether all free \pmp actions of a fixed countable group have the same cost, \cite{gab2000}. Ab\'ert and Nikolov's theorem was later generalized to non-nested sequences in \cite{tot2017} and \cite{CGS}, where in general only one inequality is proved.

In our work we are interested in a variant of the approximation problem for lattices in locally compact groups.

\begin{lapp} 
For a locally compact, second countable group $G$ and a sequence of lattices $\{\Gamma_n\}_n$ of $G$ such that the covolume $\covol(\Gamma_n)$ tends to infinity does the sequence $\cp(\Gamma_n)/\covol(\Gamma_n)$ converge? If so, does the limit depend on the sequence $\{\Gamma_n\}_n$? Can we compute this limit?
\end{lapp}

The lattice approximation problem was first studied for Betti numbers in \cite{abe2017} where, among many other things, it is proven that if $G$ is a connected center-free semi-simple Lie group and $\{\Gamma_n\}_n$ is a sequence of lattices which is \textit{nowhere thin} and \textit{almost everywhere thick}, then the sequence $b_i(\Gamma_n)/\covol(\Gamma_n)$ converges to the $i$-th $\ell^2$-Betti number of the associated homogeneous space. 

\begin{dfn*}
  A sequence of lattices $\{\Gamma_n\}_n$ is
  \begin{itemize}
  \item \textit{nowhere thin} (or \textit{uniformly discrete}) if there exists a neighborhood of the identity $U\subseteq G$ such that for every $g\in G$ and $n\in\bn$ we have that $g\Gamma_ng^{-1}\cap U=\{\id_G\}$;
  \item \textit{almost everywhere thick} (or \textit{Farber}) if for every neighborhood of the identity $U\subseteq G$ we have \[\lim_n\frac{\vol\left(\left\{ g\Gamma_n\in G/\Gamma_n\colon g\Gamma_ng^{-1}\cap U=\{\id_G\}\right\}\right)}{\covol(\Gamma_n)}=1.\]
  \end{itemize}
\end{dfn*}

A similar result was obtained in \cite{pet2016} for totally disconnected groups acting cocompactly on simplicial complexes. In this context they were also able to get an inequality for the limit without the ``nowhere thinness'' assumption. One of the purposes of this work is to give a unified proof of these two results and to generalize them in the context of a general locally compact group.

\begin{thm*}[Corollary \ref{crl:excomp}]\label{thmI:l2}
  Let $G$ be a unimodular \lcsc compactly generated group and let us fix a Haar measure $\haar$ on $G$. Let $\{\Gamma_n\}_n$ be a Farber sequence of cocompact torsion free lattices of $G$. Assume that one of the following conditions holds. 
  \begin{itemize}
  \item The group $G$ is almost connected.
  \item The group $G$ is totally disconnected and it acts cocompactly and properly on a contractible simplicial complex.
  \item The quotient $G/G_0$ satisfies the previous condition, where $G_0\leq G$ denotes the connected component of the identity
  \end{itemize}
  Then \[\beta_i(G)\leq \liminf_n \frac{b_i(\Gamma_n)}{\covol(\Gamma_n)}\] and if the sequence $\{\Gamma_n\}_n$ is nowhere thin, then \[\beta_i(G)= \lim_n \frac{b_i(\Gamma_n)}{\covol(\Gamma_n)}.\] 
\end{thm*}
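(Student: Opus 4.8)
The plan is to reduce everything to a geometric model for $G$, obtain the lower bound from the von Neumann Morse inequalities together with the scaling of $\ell^2$-Betti numbers along lattices, and obtain the reverse inequality (in the nowhere thin case) from the ultraproduct / cross section machinery of the previous sections together with a determinant estimate playing the role that $\mathbb{Z}[\Gamma]$-integrality plays in L\"uck's approximation theorem. Under each of the three conditions one first fixes a contractible finite dimensional proper $G$-CW-complex (or $G$-manifold) $Y$ with $G\backslash Y$ compact and only finitely many $G$-orbits of cells: $Y=G/K$ for a maximal compact $K\le G$ in the almost connected case, the given complex in the totally disconnected case, and the product of these two models in the mixed case. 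Since the $\Gamma_n$ are torsion free they act freely on $Y$, so (using a Borel--Serre type bordification when $\Gamma_n$ is non-uniform) each $\Gamma_n$ is of type $F$ and $b_i(\Gamma_n)=b_i(M_n)$ for a compact model $M_n$ built from $\Gamma_n\backslash Y$; moreover the cochain complexes of the $M_n$ are uniformly locally finite, with the number of $i$-cells $\asymp\covol(\Gamma_n)$ for a constant depending only on $Y$, and $\beta_i(G)$ is by definition the von Neumann dimension over $\mathcal L(G)$ of the reduced $L^2$-cohomology of $Y$. So the task is to compare $b_i(M_n)/\covol(\Gamma_n)$ with $\beta_i(G)$.

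For the lower bound, I would argue as follows. The von Neumann Morse inequalities applied to $M_n$ (or to its bordification) give $b_i(M_n)\ge\beta_i^{(2)}(\Gamma_n)$, and the scaling of $\ell^2$-Betti numbers under a lattice embedding gives $\beta_i^{(2)}(\Gamma_n)=\covol(\Gamma_n)\,\beta_i(G)$; hence $\beta_i(G)\le b_i(\Gamma_n)/\covol(\Gamma_n)$ for every $n$, and a fortiori $\beta_i(G)\le\liminf_n b_i(\Gamma_n)/\covol(\Gamma_n)$, which already gives the first assertion.

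For the reverse inequality under the nowhere thin hypothesis, I would rescale the cochain complexes of the $M_n$ by $\covol(\Gamma_n)^{-1}$ and pass to a free ultrafilter $\omega$, obtaining a finitely generated Hilbert cochain complex over the tracial von Neumann algebra of the ultraproduct action $G\curvearrowright(\mathbf X,\mu)$ of the coset actions $G\curvearrowright(G/\Gamma_n,\covol(\Gamma_n)^{-1}\haar)$; since the boundary operators of $Y$ are bounded with finitely supported (indeed integer) matrix coefficients, this is the $L^2$-cochain complex of $Y$ with coefficients in that action. The ``almost everywhere thick'' hypothesis makes this action essentially free, so by the results of the earlier sections it has a cross section equivalence relation $\rel$ whose von Neumann dimensions are related to those over $\mathcal L(G)$ by the covolume of the cross section, and the $\ell^2$-invariants of $\rel$ recover those of $G$. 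What remains is to show that $\lim_\omega b_i(M_n)/\covol(\Gamma_n)$ coincides with the relevant von Neumann dimension on the $\rel$-side and not merely bounds it, i.e.\ that no spectral mass of the Laplacians $\Delta_i^{M_n}$, renormalized by $\covol(\Gamma_n)$, escapes to $0$ along $\omega$ — equivalently that the Fuglede--Kadison determinant term $\int_{(0,\infty)}\log\lambda\,d\nu_n(\lambda)$ of the spectral measure $\nu_n$ of $\Delta_i^{M_n}$ stays bounded below. In L\"uck's theorem this is supplied by $\mathbb{Z}[\Gamma]$-integrality, unavailable for a sequence of lattices; its substitute will be the uniform lower bound on the injectivity radius provided by ``nowhere thin'', which bounds below the non-zero spectrum of $\Delta_i^{M_n}$ inside any cell lying within the injectivity radius and hence yields the determinant estimate once it is transported to $\mathcal N(\rel)$. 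This forces $\lim_\omega b_i(M_n)/\covol(\Gamma_n)=\beta_i(G)$ for every $\omega$, hence $\lim_n b_i(\Gamma_n)/\covol(\Gamma_n)=\beta_i(G)$.

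The step I expect to be the main obstacle is precisely this last one: transplanting the integrality mechanism behind L\"uck approximation to the lattice setting, where the geometric uniformity of ``nowhere thin'' is the only available replacement, and making the resulting determinant bound uniform along the ultraproduct and legible on the cross section relation $\rel$. A further, less delicate point is the bookkeeping of the various normalizations — the combinatorial trace on the cochains of $M_n$, the covolume $\covol(\Gamma_n)$, and the trace on $\mathcal N(\rel)$ — which needs care exactly when the $\Gamma_n$ are non-uniform and the $M_n$ are genuinely non-compact.
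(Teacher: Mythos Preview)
Your lower-bound step is wrong. There is no ``von Neumann Morse inequality'' of the form $b_i(M_n)\geq\beta_i^{(2)}(\Gamma_n)$; the $\ell^2$-Morse inequalities bound $\beta_i^{(2)}$ by the number of $i$-cells, not by the ordinary Betti number, and the inequality you use is false in general. A concrete counterexample sits exactly in the setting of the corollary: for a closed orientable hyperbolic $4$-manifold $M$ with $b_1(M)=0$ (e.g.\ the Davis manifold) one has $\chi(M)=2+b_2(M)$, while $\beta_2^{(2)}(\pi_1 M)=\chi(M)$, so $b_2(M)=\beta_2^{(2)}(\pi_1 M)-2<\beta_2^{(2)}(\pi_1 M)$. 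Since even the first inequality of the corollary fails for a single lattice, your argument cannot give $\beta_i(G)\le b_i(\Gamma_n)/\covol(\Gamma_n)$ termwise; the $\liminf$ is genuinely needed, and the Farber hypothesis is doing real work.

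For the reverse direction you have also misdiagnosed where the difficulty lies. The paper does not need any analytic substitute for $\mathbb Z[\Gamma]$-integrality. The point of passing to a cross section $Y_n\subset G/\Gamma_n$ is that $Y_n$ is a \emph{finite} set, so the cross equivalence relation $\rel_n^K$ is a finite equivalence relation and the nerve $\Sigma_n^K M$ is an honest finite simplicial complex with integer Laplacians; the L\"uck-type convergence $\beta_i(\Sigma_\ul^K M)=\lim_\ul\beta_i(\Sigma_n^K M)$ is then literally the classical integrality argument, already packaged in \cite{CGS}. The role of ``nowhere thin'' is not a spectral/determinant bound at all: it guarantees that $\rel_n^K$ is \emph{transitive}, so that $\beta_i(\Sigma_n^K M)=b_i(M/\Gamma_n)/|Y_n|$ on the nose. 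Without it, $\rel_n^K$ sees only the thick part, and one gets only $\beta_i(\Sigma_n^{K}M)\le b_i(M/\Gamma_n)/|Y_n|$ via a Mayer--Vietoris comparison between the thick part and the whole quotient; this is how the paper obtains the $\liminf$ inequality, again through the ultraproduct and not by any termwise Morse-type bound. Your Borel--Serre bordification and the worry about non-uniform bookkeeping are therefore unnecessary detours.
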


Here by $\beta_i(G)$ we mean the $i$-th $\ell^2$-Betti number of the locally compact group $G$ as defined in \cite{pet2013} and \cite{kye2015}. In particular we derive as a corollary the above stated theorem in \cite{abe2017} and in \cite{pet2016} for torsion free lattices\footnote{The general form of Theorem $1.3$ of \cite{pet2016} can also be derived from our techniques but in this text we will only work with torsion free lattices.}. But our result can be also applied to a large new class of locally compact groups with interesting lattices, the S-adic groups, the groups which appear as (local) products of real and p-adic Lie groups.

We will prove the above theorem in a much more general context, see Theorem \ref{thm:mainl2}, of actions on what we call well-covered $G$-spaces, see Definition \ref{dfn:wellcovered}. In particular a similar convergence will be derived for every cocompact, proper and isometric action on a contractible Riemannian manifold or for every cocompact, proper and simplicial action on a contractible simplicial space. The theorem above will then be a corollary of Theorem \ref{thm:mainl2} by considering the action of $G$ on a contractible well-covered $G$-space.

Let us observe that for some groups $G$ which either have property $(T)$ or $(\tau)$ and which satisfy the conclusion of the Stuck-Zimmer theorem, any sequence of lattices whose covolume tends to infinity is automatically almost everywhere thick, or Farber see \cite{abe2017}, \cite{gel2017} and \cite{lev2017a}. We will dedicate a small note about this phenomenon in a further work \cite{carta}.

Using similar techniques we are also able to obtain an analogous theorem for the lattice approximation problem for the rank. For this we need the notion of cost for a \pmp action of a locally compact group. Such a cost can be defined, see Definition \ref{dfn:costlc}, to be the cost of one its cross sections renormalized by the covolume, that is, if $G$ acts on $(X,\mu)$ and $Y\subseteq X$ is a cross section, then \[\cost\left(G\curvearrowright X\right)\coloneqq 1+\frac{\cost(\rel_Y)-1}{\covol(Y)}.\] 

\begin{thm*}[Theorem \ref{thm:costgen}]
  Let $G$ be a compactly generated unimodular \lcsc group and fix a Haar measure $\haar$. Let $\{\Gamma_n\}_n$ be a nowhere thin and almost everywhere thick sequence of lattices of $G$. Then \[\cost\left(G\curvearrowright [G/\Gamma_n]_\ul^R\right)\geq 1+\lim_\ul\frac{\rank(\Gamma_n)-1}{\covol(\Gamma_n)}\geq \cost(G).\]
\end{thm*}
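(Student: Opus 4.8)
The plan is to derive the statement from an exact computation of the cost of each transitive action $G\curvearrowright G/\Gamma_n$, together with a lower semicontinuity property of cost along the ultraproduct. First I would fix the cross sections. Since $\{\Gamma_n\}_n$ is nowhere thin, there is one neighbourhood $U$ of $\id$ with $g\Gamma_ng^{-1}\cap U=\{\id\}$ for all $g\in G$ and all $n$, so (after shrinking $U$) one can choose cross sections $Y_n\subset G/\Gamma_n$ of the transitive actions $G\curvearrowright G/\Gamma_n$ all of the same covolume $c:=\covol(Y_n)$, independent of $n$. In the ultraproduct the sequence $(Y_n)_n$ represents a cross section $R$ of $G\curvearrowright[G/\Gamma_n]_\ul$ with $\covol(R)=c$, and the cross section relation is the ultraproduct $\rel_R=\prod_\ul\rel_{Y_n}$; this is what the ultraproduct formalism for cross sections developed earlier provides, and by Definition~\ref{dfn:costlc} one has $\cost(G\curvearrowright[G/\Gamma_n]_\ul^R)=1+\tfrac{\cost(\rel_R)-1}{c}$.

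Next I would compute $\cost(G\curvearrowright G/\Gamma_n)$. Using that the cost of a $G$-action does not depend on the chosen cross section, apply the formula with the cross section reduced to a single point; its associated cross section relation is then the group $\Gamma_n$ itself, whose cost is by definition the minimal cardinality of a generating set, i.e.\ $\rank(\Gamma_n)$. Hence $\cost(G\curvearrowright G/\Gamma_n)=1+\tfrac{\rank(\Gamma_n)-1}{\covol(\Gamma_n)}$, and re-expressing the same number through the covolume-$c$ section $Y_n$ yields
\[\cost(\rel_{Y_n})\;=\;1+\frac{c\,(\rank(\Gamma_n)-1)}{\covol(\Gamma_n)}.\]

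The core of the argument is the inequality $\cost\bigl(\textstyle\prod_\ul\rel_{Y_n}\bigr)\ge\lim_\ul\cost(\rel_{Y_n})$, which I would prove by descent. Fix $\eps>0$ and a graphing $\Phi$ of $\rel_R=\prod_\ul\rel_{Y_n}$ with $\cost(\Phi)\le\cost(\rel_R)+\eps$; then finitely many edges of $\Phi$, together with words of bounded length, already connect all but an $\eps$-fraction of $\rel_R$. These finitely many edges are represented by sequences of partial isomorphisms of the $\rel_{Y_n}$, and by the theorem of \L o\'s the corresponding finite graphings connect all but an $\eps$-fraction of $\rel_{Y_n}$ for $\ul$-almost every $n$. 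Here the transitivity of $G\curvearrowright G/\Gamma_n$ enters: since each $\rel_{Y_n}$ has a single class, the uncovered part can be completed to a generating graphing of $\rel_{Y_n}$ by adjoining partial isomorphisms supported on it and routing it into the generated part, at an extra cost of $O(\eps)$ that is uniform in $n$. Thus $\cost(\rel_{Y_n})\le\cost(\Phi)+O(\eps)$ for $\ul$-almost every $n$; taking the ultralimit, then the infimum over $\Phi$, then $\eps\to0$ gives the inequality. This is the step I expect to be the main obstacle: it is a genuine lower bound on the cost of a limiting object, whereas cost does not behave continuously under ultraproducts in general, and one must really exploit the uniform single-class structure coming from transitivity (and a careful \L o\'s-type transfer of the ``generating'' property, reached via words of bounded length) to make the descent uniform in $n$.

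Finally I would assemble the two inequalities. The first follows from
\[\cost\bigl(G\curvearrowright[G/\Gamma_n]_\ul^R\bigr)\;=\;1+\frac{\cost(\rel_R)-1}{c}\;\ge\;1+\frac{\lim_\ul\cost(\rel_{Y_n})-1}{c}\;=\;1+\lim_\ul\frac{\rank(\Gamma_n)-1}{\covol(\Gamma_n)}.\]
For the second, $1+\lim_\ul\tfrac{\rank(\Gamma_n)-1}{\covol(\Gamma_n)}=\lim_\ul\cost(G\curvearrowright G/\Gamma_n)$, and since $\cost(G)$ is defined as an infimum of costs of p.m.p.\ $G$-actions one has $\cost(G\curvearrowright G/\Gamma_n)\ge\cost(G)$ for every $n$; the almost everywhere thickness of $\{\Gamma_n\}_n$ moreover makes the ultraproduct action essentially free, so this is compatible with the infimum being taken over free actions only. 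Passing to the ultralimit completes the proof.
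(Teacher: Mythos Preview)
There is a genuine gap in the second step, and it unravels the whole argument. The cross section \emph{equivalence relation} $\rel_{Y_n}$ on $G/\Gamma_n$ does not see $\Gamma_n$: since the action is transitive, $\rel_{Y_n}$ is simply the full equivalence relation on the finite set $Y_n$, and its cost is $1-1/|Y_n|$. In particular, taking $Y_n$ to be a single point gives the trivial relation on one point, not ``the group $\Gamma_n$''; Definition~\ref{dfn:costlc} then yields
\[
\cost(G\curvearrowright G/\Gamma_n)=1+\frac{(1-1/|Y_n|)-1}{\covol(\Gamma_n)/|Y_n|}=1-\frac{1}{\covol(\Gamma_n)},
\]
independently of $Y_n$, and certainly not $1+(\rank(\Gamma_n)-1)/\covol(\Gamma_n)$. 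Consequently your ``lower semicontinuity'' inequality $\cost(\rel_\ul)\ge \lim_\ul\cost(\rel_{Y_n})$ is just $\cost(\rel_\ul)\ge 1$, which is true but carries no information about $\rank(\Gamma_n)$. You are tacitly replacing the cross equivalence relation by the cross \emph{groupoid} $\cg_{Y_n}$ (whose isotropy at a point is indeed a conjugate of $\Gamma_n$); but then your descent argument breaks too, since completing a pushed-down graphing to generate the \emph{relation} on $Y_n$ costs $O(\eps)$, while completing it to generate the \emph{groupoid} forces you to produce generators of $\Gamma_n$ and costs essentially $\rank(\Gamma_n)/|Y_n|$---precisely the quantity you are trying to bound.

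The paper's route is exactly to keep track of this groupoid structure: one fixes maximal $U$-cross sections $Y_n$, uses Theorems~\ref{thm:maincross} and~\ref{thm:mainrel} to identify the cross equivalence relation $\rel_\ul$ on $Y_\ul$ with the ultraproduct of the graphed \emph{groupoids} $(\cg_n^K,\Theta_n^K)$ (the Farber hypothesis makes the limit action free, so the limiting groupoid is principal), and then invokes Theorem~3.13 of \cite{CGS} to obtain $\cost(\rel_\ul)\ge 1+\lim_\ul(\rank(\Gamma_n^K)-1)/|Y_n|$. Nowhere thinness enters via Lemma~\ref{lem:notproven} to ensure $\Gamma_n^K=\Gamma_n$. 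For the second inequality, your argument also fails as stated: the individual actions $G\curvearrowright G/\Gamma_n$ are not free, so they are not competitors in the infimum defining $\cost(G)$. The paper instead proves $\cost(G)\le 1+(\rank(\Gamma_n)-1)/\covol(\Gamma_n)$ directly, by inducing a free $\Gamma_n$-action to a free $G$-action (Proposition~\ref{prop:upperbound}).
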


The probability space $[G/\Gamma_n]_\ul^R$ is a measure-theoretic ultraproduct of the measure spaces $\{G/\Gamma_n\}_n$ and will be the main object of study in this work. The cost of this action is in most of the cases unknown, it seems very hard to compute and we will not give concrete bounds of it. 

\subsection*{The case of Lie groups.} 
Loosely speaking the assumption of nowhere thinness is needed because the homology/cost of the limit will only depend on the thick part of the quotients by the lattices $\Gamma_n$. For Lie groups we can make this observation a little bit more precise.

Let $G$ be a connected Lie group, let $K_0<G$ be a maximal compact subgroup and consider the homogeneous space $M\coloneqq K_0\backslash G$. Let us fix a Riemannian metric on $M$ for which the action of $G$ is isometric and we denote by $d_M$ the associated (geodesic) distance. For a positive real number $\eps$ we consider the $\eps$-thick part \[(M/\Gamma)_\eps\coloneqq\left\{p\in M\colon d_M(p,p\gamma)\geq \eps\ \forall \gamma\in\Gamma\setminus \{\id\}\right\}/\Gamma.\]

Finally if $P\subseteq Q$ is an inclusion of topological spaces we will denote by $\nabla_i(P,Q)$ the dimension of the
image of the $i$-th homology group of $P$ inside the homology of $Q$. The following theorem follows from Theorem \ref{thm:epsneigh} and Lemma \ref{lem: riemann well-covered}.

\begin{thm*}
  Let $G$ be a unimodular \lcsc compactly generated group and let us fix a Haar measure $\haar$ on $G$. Let $M$ be a contractible Riemannian manifold equipped with the Riemannian metric $d_M$ and on which $G$ acts by isometries.  
  Let $\{\Gamma_n\}_n$ be a Farber sequence of torsion free lattices of $\Gamma$. Then for every $\eps>0$ we have that \[\beta_i(G)=\lim_n \frac{\nabla_i((M/\Gamma_n)_\eps,M/\Gamma_n)}{\covol(\Gamma_n)}.\]
\end{thm*}

The theorem becomes easier to understand whenever we have that $(M/\Gamma_n)_\eps$ already contains all the homological information of the space, that is, when $\nabla_i((M/\Gamma_n)_\eps,M/\Gamma_n)=b_i(M/\Gamma_n)$. In this work we will not study this phenomenon. We will just observe in Corollary \ref{crl:gela} that an easy consequence of the thin-thick decomposition, \cite[Theorem 4.5.6]{thu1997}, the work of Gromov, \cite{gro1985} and the more recent work of Gelander, \cite{gel2004}, imply that it is the case for $i=1$, for higher rank non cocompact lattices and in rank $1$. In this way we are able to get a result similar to Theorem 1.8 of \cite{abe2017} and to a recent work by Ab\'ert, Bergeron, Biringer and Gelander \cite{abbg2018}.

Similarly, for the rank-cost approximation problem we obtain the following. 

\begin{thm*}[Theorem \ref{thm:costgela}]
  Let $G$ be a semisimple Lie group without compact factors and fix a Haar measure on $G$. Let $\{\Gamma_n\}_n$ be a Farber sequence of torsion free lattices. Then \[\cost\left(G\curvearrowright [G/\Gamma_n]_\ul^{R}\right)\geq 1+\lim_\ul \frac{\rank(\Gamma_n)-1}{\covol(\Gamma_n)}\geq \cost(G).\]
\end{thm*}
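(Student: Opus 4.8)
The plan is to derive this statement as an essentially immediate corollary of the already-established Theorem \ref{thm:costgen}, by verifying that a Farber sequence of torsion free lattices in a semisimple Lie group $G$ without compact factors is automatically nowhere thin. The structure of the argument thus has two parts: first reduce to Theorem \ref{thm:costgen}, then prove the uniform-discreteness (nowhere thinness) input. For the first part, note that Theorem \ref{thm:costgen} gives, for any nowhere thin and almost everywhere thick sequence of lattices, the chain of inequalities
\[
\cost\left(G\curvearrowright [G/\Gamma_n]_\ul^R\right)\geq 1+\lim_\ul\frac{\rank(\Gamma_n)-1}{\covol(\Gamma_n)}\geq \cost(G),
\]
which is literally the conclusion we want. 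So the only thing to check is that the hypotheses of the present theorem (Farber, i.e. almost everywhere thick; torsion free) imply the hypotheses of Theorem \ref{thm:costgen} (almost everywhere thick \emph{and} nowhere thin).

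The key step is therefore the following uniform-discreteness claim: in a semisimple Lie group $G$ without compact factors, there is a neighborhood $U$ of the identity such that \emph{every} torsion free lattice $\Gamma$ satisfies $g\Gamma g^{-1}\cap U=\{\id_G\}$ for all $g\in G$. This is a version of the Kazhdan–Margulis theorem: it produces a Zassenhaus neighborhood $\Omega$ of $\id_G$ such that any discrete subgroup $\Gamma$ with $g\Gamma g^{-1}\cap\Omega\ne\{\id_G\}$ has the group generated by $g\Gamma g^{-1}\cap\Omega$ nilpotent, hence (after intersecting with a smaller relatively compact neighborhood and using that $\Gamma$, being a lattice, is finitely generated and has no nilpotent normal-in-a-neighborhood phenomenon) virtually nilpotent; but by Borel density / no compact factors a lattice cannot contain a nontrivial nilpotent subgroup generated by small elements, and combined with torsion freeness this forces $g\Gamma g^{-1}\cap U=\{\id_G\}$ for a suitable $U$. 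The upshot is that the constant $U$ is uniform over \emph{all} torsion free lattices, in particular over the whole sequence $\{\Gamma_n\}_n$, which is exactly the nowhere thin condition. I would cite Kazhdan–Margulis (or Raghunathan's book, or the treatment in \cite{gel2004}) for this rather than reprove it.

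The main obstacle, to the extent there is one, is making sure the Kazhdan–Margulis neighborhood genuinely works \emph{simultaneously} for every lattice in the sequence and that torsion freeness is the only extra hypothesis needed to upgrade "generates a nilpotent group" to "is trivial"; the "without compact factors" hypothesis is used precisely to rule out the degenerate situation where small elements could sit inside a compact factor and evade the Zassenhaus dichotomy. Once nowhere thinness is in hand, there is nothing left to do: one simply invokes Theorem \ref{thm:costgen} verbatim. Thus the proof is: (1) apply Kazhdan–Margulis in the no-compact-factor semisimple setting to get a uniform neighborhood of the identity meeting every torsion free lattice trivially under conjugation, hence the sequence is nowhere thin; (2) the sequence is almost everywhere thick by the Farber hypothesis; (3) conclude by Theorem \ref{thm:costgen}.
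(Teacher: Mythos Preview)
Your reduction has a genuine gap: a Farber sequence of torsion free lattices in a semisimple Lie group without compact factors is \emph{not} automatically nowhere thin. The paper itself notes (just after Definition \ref{dfn:Farberthinlatt}) that non-cocompact lattices are never $U$-thick for any $U$. Concretely, any non-uniform lattice $\Gamma$ contains unipotent elements, and a unipotent $\gamma\in\Gamma$ can be conjugated arbitrarily close to the identity in $G$; so for every neighborhood $U$ there is $g\in G$ with $g\Gamma g^{-1}\cap U\neq\{\id_G\}$. The Kazhdan--Margulis/Zassenhaus statement only says that $\Gamma\cap\Omega$ generates a nilpotent group, not that it is trivial, and torsion freeness does not upgrade ``nilpotent'' to ``trivial'' here---unipotent elements are torsion free and generate nontrivial nilpotent groups. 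Thus the hypotheses of Theorem \ref{thm:costgen} are not available, and your Step (1) fails precisely for the non-cocompact lattices that make the theorem interesting.

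The paper circumvents this obstruction by appealing not to Theorem \ref{thm:costgen} but to its refinement Theorem \ref{thm:costgenp}, which gives
\[
\cost\left(G\curvearrowright [G/\Gamma_n]_\ul^R\right)\geq 1+\lim_\ul\frac{\rank(\Gamma_n^K)-1}{\covol(\Gamma_n)}
\]
for a certain subgroup $\Gamma_n^K\leq\Gamma_n$ depending on a choice of cross section $Y_n$ and a compact $K$. The remaining work is to arrange $\Gamma_n^K=\Gamma_n$. This is achieved via Proposition \ref{prop:absurd}, which identifies $\Gamma^K$ with (a supergroup of) the image of $\pi_1(\rho^\Gamma(YU^2))$ in $\pi_1(M/\Gamma)=\Gamma$, together with Gelander's structural results in \cite{gel2004} (his Theorem 1.5, Lemma 4.1, Proposition 4.8, Theorem 7.4 and Section 9), which guarantee, for semisimple $G$ without compact factors, a choice of $U$ and $Y_n\subset (G/\Gamma_n)_{U^2}$ such that this image is all of $\Gamma_n$. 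That is the substantive input replacing your nowhere-thinness claim.
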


\subsection*{Ultraproducts of actions}
All the above stated theorems will be derived using the notion of ultraproducts of \pmp actions of locally compact groups along some fixed ultrafilter $\ul$. Note that if a locally compact group $G$ acts measurably on a sequence of probability spaces, then it will not act measurably on the associated ultraproduct. But it will however act measurably on a factor of it, which in the context of von Neumann algebras is sometimes called the equicontinuous part, \cite{tom2017}. We will use a similar ultraproduct which we will call the \textit{regular ultraproduct}. 

\begin{dfn*}[Definition \ref{dfn:regset} and \ref{dfn:regseq}]
  Assume that the \lcsc group $G$ acts measurably on the sequence of probability spaces $\{(X_n,\mu_n)\}_n$. We define
$[X_n]_\ul^R\coloneqq\prod_{n\in \bn}X_n/\sim^R_\ul$ for the equivalence relation defined by $(x_n)_n \sim^R_\ul (y_n)_n$ if there is a sequence $g_n\in G$ such that $\lim_\ul g_n=\id_G$ and $g_nx_{n}= y_{n}$ for $\ul$-almost every $n$.

We say that a sequence of subsets $\{A_n\subseteq X_n\}_n$ is ($\ul$-)\textit{regular} if for every $\eps>0$ there exists a neighborhood of the identity $U\subseteq G$ such that $UA_n$ is measurable and such that $\mu_n(UA_n\setminus A_n)\leq \eps$ for $\ul$-almost every $n$.
\end{dfn*}

The \textit{regular ultraproduct} is the probability space whose underlying set is $[X_n]_\ul^R$, the $\sigma$-algebra of measurable subsets is generated by the class of regular sequences $[A_n]_\ul^R$ and for such a sequence the measure is defined to be $\mu_\ul^R([A_n]_\ul^R)\coloneqq\lim_\ul\mu_n(A_n)$.

\begin{thm*}[Theorem \ref{thm:ultrabasic}]
  Let $G$ be a \lcsc group. Suppose that $G$ acts in a Borel manner on the sequence of standard probability spaces $(X_n,\mu_n)$ preserving the measure. Then $G$ acts continuously and measurably on the regular ultraproduct $([X_n]_\ul^R,\rb_\ul^R,\mu_\ul^R)$. 
\end{thm*}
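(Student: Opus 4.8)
The plan is to put on $[X_n]_\ul^R$ the action $g\cdot[x_n]_\ul^R:=[gx_n]_\ul^R$ and to verify, in turn: (i) this is well defined on $\sim^R_\ul$-classes and is a genuine $G$-action on the set $[X_n]_\ul^R$; (ii) each $g\in G$ acts as a bimeasurable, measure-preserving transformation of $([X_n]_\ul^R,\rb_\ul^R,\mu_\ul^R)$; and (iii) for every $A\in\rb_\ul^R$ the orbit map $g\mapsto g\cdot A$ is continuous for $d(A,B)=\mu_\ul^R(A\triangle B)$. Items (ii)--(iii) say exactly that we have a homomorphism $G\to\aut([X_n]_\ul^R,\mu_\ul^R)$ continuous for the weak topology; measurability of the associated point-action $G\times[X_n]_\ul^R\to[X_n]_\ul^R$ is then the standard consequence of continuity of an \lcsc action on a measure algebra. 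For (i): if $(x_n)_n\sim^R_\ul(y_n)_n$ via $g_n\in G$ with $\lim_\ul g_n=\id_G$ and $g_nx_n=y_n$ for $\ul$-a.e.\ $n$, then for fixed $g\in G$ continuity of $h\mapsto ghg^{-1}$ gives $\lim_\ul gg_ng^{-1}=\id_G$, while $(gg_ng^{-1})(gx_n)=g(g_nx_n)=gy_n$ for $\ul$-a.e.\ $n$, so $[gx_n]_\ul^R=[gy_n]_\ul^R$; the axioms $\id_G\cdot z=z$ and $(gh)\cdot z=g\cdot(h\cdot z)$ are immediate.

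For (ii), since $\rb_\ul^R$ is generated by the classes of regular sequences it suffices to show that the preimage of such a generator is again one of the same form, namely $g^{-1}[A_n]_\ul^R=[g^{-1}A_n]_\ul^R$ with $\{g^{-1}A_n\}_n$ regular. Given $\eps>0$, pick a neighborhood $U$ of $\id_G$ with $UA_n$ measurable and $\mu_n(UA_n\setminus A_n)\le\eps$ for $\ul$-a.e.\ $n$. Then $V:=g^{-1}Ug$ is a neighborhood of $\id_G$ because conjugation is a homeomorphism of $G$; moreover $Vg^{-1}A_n=g^{-1}(UA_n)$ is measurable as the image of a measurable set under the Borel automorphism $g^{-1}$, and by $G$-invariance of $\mu_n$
\[\mu_n\bigl(Vg^{-1}A_n\setminus g^{-1}A_n\bigr)=\mu_n\bigl(g^{-1}(UA_n\setminus A_n)\bigr)=\mu_n(UA_n\setminus A_n)\le\eps\]
for $\ul$-a.e.\ $n$. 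Hence $\{g^{-1}A_n\}_n$ is regular, so $g$ (and, by symmetry, $g^{-1}$) is bimeasurable. Since also $\mu_n(g^{-1}A_n)=\mu_n(A_n)$, we get $\mu_\ul^R(g^{-1}[A_n]_\ul^R)=\lim_\ul\mu_n(g^{-1}A_n)=\lim_\ul\mu_n(A_n)=\mu_\ul^R([A_n]_\ul^R)$ on the generating family; as $\mu_\ul^R$ is, by construction, determined by its values on regular sequences, $g$ preserves $\mu_\ul^R$.

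For (iii), by (ii) each $g$ is an automorphism of the measure algebra, so it is enough to prove continuity of $g\mapsto g\cdot[A_n]_\ul^R$ at $g=\id_G$ for $\{A_n\}_n$ regular. Fix $\eps>0$ and, using regularity, choose a \emph{symmetric} neighborhood $U$ of $\id_G$ with $UA_n$ measurable and $\mu_n(UA_n\setminus A_n)\le\eps$ for $\ul$-a.e.\ $n$. If $g\in U$ then $gA_n\subseteq UA_n$, and since $g^{-1}\in U$ as well, also $g^{-1}A_n\subseteq UA_n$; hence, using $\mu_n(A_n\setminus gA_n)=\mu_n(g^{-1}A_n\setminus A_n)$ together with the stability of $\ul$-large sets under finite intersection,
\[\mu_n(gA_n\triangle A_n)\le\mu_n(gA_n\setminus A_n)+\mu_n(g^{-1}A_n\setminus A_n)\le 2\eps\]
for $\ul$-a.e.\ $n$. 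Passing to the limit along $\ul$ gives $\mu_\ul^R\bigl(g\cdot[A_n]_\ul^R\,\triangle\,[A_n]_\ul^R\bigr)\le2\eps$ for all $g\in U$, which is the asserted continuity.

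The points I am leaving to the construction of the regular ultraproduct are the purely measure-theoretic bookkeeping — that a regular sequence defines a bona fide $\sim^R_\ul$-saturated element of $\rb_\ul^R$ with $g^{-1}[A_n]_\ul^R=[g^{-1}A_n]_\ul^R$, that $\mu_\ul^R$ is well defined and determined on $\rb_\ul^R$ by its values on regular sequences, and the standard step passing from continuity for the weak topology to Borel measurability of the point-action map. The one load-bearing idea is the conjugation $V=g^{-1}Ug$ in step (ii): this is precisely what lets the $G$-action descend to $[X_n]_\ul^R$ even though it does not descend to the naive ultraproduct $\prod_nX_n/\sim_\ul$, and lining up the neighborhoods from the definition of regularity under this conjugation is the only place any care is required — so I do not expect a genuine obstacle.
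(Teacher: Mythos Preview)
Your steps (i)--(iii) are fine and, in fact, your argument for continuity is essentially what the paper does (the paper dispatches continuity in one line, noting that the action is clearly continuous at every regular subset and citing Proposition~\ref{prop:automacont}). The gap is in the last move: you write that ``measurability of the associated point-action $G\times[X_n]_\ul^R\to[X_n]_\ul^R$ is then the standard consequence of continuity of an \lcsc action on a measure algebra.'' This is \emph{not} standard here, and it is in fact the main content of the proof.

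The step you have in mind (continuity on the measure algebra $\Rightarrow$ Borel point realization) is a theorem about \emph{standard} probability spaces, and even there it produces a new point action that agrees with the given one only up to null sets. The regular ultraproduct is not standard, and you already have a fixed point action whose joint measurability must be checked directly. Knowing that each $g$ is individually measurable and that $g\mapsto g^{-1}A$ is continuous in measure lets you approximate $\Phi^{-1}(A)$ in measure by step-functions $\bigcup_i D_i\times g_i^{-1}A$, but measure-theoretic closeness of fibers does not by itself force $\Phi^{-1}(A)$ into the completed product $\sigma$-algebra: the fiberwise null defects can vary non-measurably in $g$, and you cannot invoke Fubini on a set you do not yet know to be measurable.

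What the paper does instead is to exploit the specific structure of regular sequences to get \emph{pointwise} (not merely measure-theoretic) inner and outer approximations. For a regular $[A_n]_\ul^R$ whose complement is also represented by a regular sequence, and for suitable $U$, one has genuine set containments
\[
g_i[X_n\setminus U(X_n\setminus A_n)]_\ul^R\ \subset\ g^{-1}[A_n]_\ul^R\ \subset\ g_i[UA_n]_\ul^R
\]
for every $g\in D_i\subset Ug_i^{-1}$ (using Lemma~\ref{lem:innout}). Assembling these over a countable partition of $G$ gives measurable $S_k\subset\Phi^{-1}([A_n]_\ul^R)\subset T_k$; letting $k$ vary and applying Fubini to the measurable set $T\setminus S$ shows it is null, so completeness finishes. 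Proposition~\ref{prop:innerregular} (every element of $\rb_\ul^R$ is inner-approximated by regular sets from both sides) then upgrades this from generators to all of $\rb_\ul^R$. This pointwise sandwich is the idea your sketch is missing; once you see it, the rest of your argument (the conjugation in (ii), the symmetric-$U$ estimate in (iii)) slots in correctly.
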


One of the main tools for understanding \pmp actions of locally compact groups is the notion of cross section. For the regular ultraproduct we have an explicit one: the cross section of the regular ultraproduct is the ultraproduct of the cross sections. 

\begin{thm*}[Theorem \ref{thm:maincross}]
  Let $G$ be a \lcsc unimodular group, fix an open neighborhood of the identity $U'\subseteq G$ and a Haar measure $\haar$. Suppose that $G$ acts on the sequence of standard probability spaces $(X_n,\mu_n)$. Assume that for every $n$ there exists a $U'$-cross section $Y_n\subseteq X_n$ such that $\{\covol(Y_n)^{-1}\}_n$ is bounded and denote by $\nu_n$ the induced measure on $Y_n$. Take a compact neighborhood of the identity $U\subseteq U'$. Let $Y_\ul$ be the ultraproduct of $\{(Y_n,\nu_n)\}_n$. Consider the function
\begin{align*}
  \Psi\colon G\times Y_\ul\rightarrow X_\ul^R\quad\text{  }\quad\Psi(g,[y_n]_\ul)\coloneqq [\Phi_n(g,y_n)]_\ul^{R}.
\end{align*}
Then $(Y_\ul,\Psi)$ is an external $U$-cross section of the action of $G$ on the $G$-invariant measurable subset $G[\Phi_n(U\times Y_n)]_\ul^R\subseteq  X_\ul^{R}$ with covolume $\covol(Y_\ul)=\lim_\ul\covol(Y_n)$.
\end{thm*}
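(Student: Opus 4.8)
The plan is to verify, clause by clause, the definition of an external $U$-cross section: that the map $\Psi$ is well defined and $G$-equivariant, that it is measurable with image inside a measurable $G$-invariant subset of $[X_n]_\ul^R$, that its restriction to $U\times[Y_n]_\ul$ is injective onto $[\Phi_n(U\times Y_n)]_\ul^R$, and that this restriction intertwines $\haar|_U$ times the transverse measure $\nu_\ul=\lim_\ul\nu_n$ on $[Y_n]_\ul$ with the cross-section measure of $[X_n]_\ul^R$. Throughout one uses that each chart $\Phi_n$ is simply the restriction of the action, $\Phi_n(g,y_n)=g\cdot y_n$, so that it extends to a Borel map $G\times Y_n\to X_n$; that on $U'\times Y_n$ it is a Borel isomorphism onto the measurable neighbourhood $U'Y_n$ of $Y_n$ with $(\Phi_n)_*(\haar|_{U'}\times\nu_n)=\covol(Y_n)\,\mu_n|_{U'Y_n}$, where $\nu_n$ is the transverse measure normalised by $\nu_n(Y_n)=1$ (it is here that unimodularity of $G$ is used); and that the action on $[X_n]_\ul^R$ furnished by Theorem \ref{thm:ultrabasic} is $g\cdot[x_n]_\ul^R=[g x_n]_\ul^R$. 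Before starting I would shrink $U$, at no cost, so that in addition $U^{-1}U\subseteq U'$ and $\haar(\partial U)=0$.

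Well-definedness, equivariance and the location of the image should be formal. If $y_n=y_n'$ for $\ul$-a.e.\ $n$ then $\Phi_n(g,y_n)=\Phi_n(g,y_n')$ for $\ul$-a.e.\ $n$, so $\Psi$ is independent of the representative; and $\Psi(gh,[y_n]_\ul)=[(gh)y_n]_\ul^R=g\cdot[h y_n]_\ul^R=g\cdot\Psi(h,[y_n]_\ul)$, so that taking $h=\id$ gives $\Psi(g,[y_n]_\ul)=g\cdot[\Phi_n(\id,y_n)]_\ul^R\in g[\Phi_n(U\times Y_n)]_\ul^R\subseteq G[\Phi_n(U\times Y_n)]_\ul^R$. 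The set $G[\Phi_n(U\times Y_n)]_\ul^R$ is $G$-invariant by construction, and to see it is measurable I would check that $\{\Phi_n(U\times Y_n)\}_n=\{U Y_n\}_n$ is a regular sequence: given $\eps>0$, picking a neighbourhood $V$ of the identity with $VU\subseteq U'$ and using $V\cdot U Y_n\setminus U Y_n\subseteq(VU\setminus U)Y_n$, the measure formula gives $\mu_n(V\cdot U Y_n\setminus U Y_n)\le\covol(Y_n)^{-1}\haar(VU\setminus U)\le\bigl(\sup_n\covol(Y_n)^{-1}\bigr)\haar(VU\setminus U)$, which tends to $0$ as $V$ shrinks since $VU\downarrow\overline U$ and $\haar(\partial U)=0$; the boundedness hypothesis on $\{\covol(Y_n)^{-1}\}_n$ is precisely what makes this estimate uniform in $n$. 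Running the same estimate with an arbitrary regular sequence $B=[B_n]_\ul$ in place of $Y_n$ shows that $\Psi$ carries products $W\times B$ of measurable sets to measurable sets; decomposing $G$ into countably many translates of $U$ and invoking measurability of the $G$-action on $[X_n]_\ul^R$ then yields measurability of $\Psi$. That the image is \emph{all} of $G[\Phi_n(U\times Y_n)]_\ul^R$ I would get from compactness of $U$: a representative $z_n\in U Y_n$ can be written $z_n=u_n y_n$ with $u_n\in U$, $y_n\in Y_n$, and with $u_\infty:=\lim_\ul u_n\in U$ and $g_n:=u_\infty u_n^{-1}$ one has $\lim_\ul g_n=\id$ and $g_n z_n=u_\infty y_n$, so $[z_n]_\ul^R=\Psi(u_\infty,[y_n]_\ul)$.

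The genuinely delicate step — the one I expect to be the main obstacle — is the injectivity of $\Psi$ on $U\times[Y_n]_\ul$: this says that the local product structure of the cross sections $Y_n$ survives the passage to the \emph{regular} ultraproduct, and it is exactly here that quotienting by sequences with $\lim_\ul g_n=\id$ is used essentially. Suppose $u,u'\in U$ and $\Psi(u,[y_n]_\ul)=\Psi(u',[y_n']_\ul)$; unravelling $\sim^R_\ul$ produces $g_n\in G$ with $\lim_\ul g_n=\id$ and $g_n u y_n=u' y_n'$ for $\ul$-a.e.\ $n$, i.e.\ $\bigl((u')^{-1}g_n u\bigr)y_n=y_n'$ with $y_n,y_n'\in Y_n$. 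Because $u,u'$ are fixed while $g_n\to\id$ along $\ul$, one has $(u')^{-1}g_n u\to(u')^{-1}u\in U^{-1}U\subseteq U'$ along $\ul$, and $U'$ is open, so $(u')^{-1}g_n u\in U'$ for $\ul$-a.e.\ $n$; then $\Phi_n\bigl((u')^{-1}g_n u,\,y_n\bigr)=y_n'=\Phi_n(\id,y_n')$, and injectivity of $\Phi_n$ on $U'\times Y_n$ forces $(u')^{-1}g_n u=\id$ and $y_n=y_n'$ for $\ul$-a.e.\ $n$. Hence $[y_n]_\ul=[y_n']_\ul$ and $u'u^{-1}=\lim_\ul g_n=\id$, so $u=u'$. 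The outstanding measure compatibility would then follow by taking ultralimits in $(\Phi_n)_*(\haar|_{U'}\times\nu_n)=\covol(Y_n)\,\mu_n|_{U'Y_n}$, using that $\mu_\ul^R$ is the ultralimit of the $\mu_n$; putting everything together shows that $([Y_n]_\ul,\Psi)$ is an external $U$-cross section of $G\curvearrowright G[\Phi_n(U\times Y_n)]_\ul^R$.
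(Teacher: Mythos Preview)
Your injectivity argument is essentially the paper's, though you take an unnecessary detour: you do not need $U^{-1}U\subseteq U'$ (and you cannot shrink $U$, since the theorem must be proved for the given $U$). Working directly with $g_nu\,y_n=u'y_n'$, compactness of $U\subset U'$ together with $g_n\to\id_G$ gives $g_nu\in U'$ for $\ul$-a.e.\ $n$, and since also $u'\in U\subset U'$, injectivity of $\Phi_n$ on $U'\times Y_n$ yields $g_nu=u'$, $y_n=y_n'$.

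The genuine gap is the measurability of $\Psi_U^{-1}$, which you do not address. What you verify is that $\Psi$ carries products $W\times[B_n]_\ul$ to regular, hence measurable, subsets of $[X_n]_\ul^R$; that is the \emph{forward} direction. For $\Psi_U$ to be a measure-preserving \emph{isomorphism} in the sense of Definition~\ref{dfn:extcross} you must also show that for every measurable $D_\ul^R\subset\Psi(U\times Y_\ul)$ the preimage $\Psi_U^{-1}(D_\ul^R)\subset U\times Y_\ul$ is measurable. Your line ``the outstanding measure compatibility would then follow by taking ultralimits'' hides this difficulty: $\mu_\ul^R$ is the ultralimit of the $\mu_n$ only on \emph{regular} sequences, so one cannot simply pass the identity $(\Phi_n)_*(\haar\times\nu_n)=\covol(Y_n)\mu_n$ to the limit without first knowing that the preimage sits in the $\sigma$-algebra of $U\times Y_\ul$.

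In the paper this is the main technical step and occupies most of the proof. For a regular $D_\ul^R=[D_n]_\ul^R$ one writes $E_n:=\Phi_{n,U}^{-1}(D_n)$, reduces (via Lemmas~\ref{lem:apprfromabove}, \ref{lem:innout}, \ref{lem:regularseq}) to the case where each fibre $E_n^{y_n}:=\{g\in U:(g,y_n)\in E_n\}$ is compact, and then uses compactness of the Hausdorff space of closed subsets of $U$ to take the ultralimit fibrewise: $E_\ul^{[y_n]_\ul}:=\lim_\ul E_n^{y_n}$. One checks $\Psi_U^{-1}(D_\ul^R)=\bigcup_{[y_n]_\ul}E_\ul^{[y_n]_\ul}\times\{[y_n]_\ul\}$ and then exhibits this set as a countable intersection of measurable ``thickenings'' $F_\ul^j$ built from a finite Hausdorff $W_j$-net of compact subsets of $U$. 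Only after this is the general measurable $D_\ul^R$ handled by inner approximation (Proposition~\ref{prop:innerregular}). None of this is supplied by your sketch, and it is exactly where the non-standardness of $[X_n]_\ul^R$ bites: there is no automatic Luzin--Souslin argument making bijective Borel images bimeasurable here.
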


Moreover we will show that this property extends also naturally to the associated cross equivalence relations, see Theorem \ref{thm:mainrel}.

\subsection*{Sketch of the proof(s)}

Our understanding of the lattice approximation problem will be derived from the study of the regular ultraproduct and its cross section. Indeed using that the cross section of the regular ultraproduct is the ultraproduct of the cross sections we will reduce the problem to the case of ultraproducts of finite graphed equivalence relations studied in \cite{CGS}. The idea behind the proof is not hard but since we will have to work with non standard probability spaces we have to be very careful about the measurability problems. We will now sketch the proof of one of these theorems, say Theorem \ref{thm:epsneigh}, assuming that \textit{everything is measurable}.

Let us fix a connected Lie group $G$, denote by $K_0$ a maximal compact subgroup. Put $M\coloneqq K_0\backslash G$ and let $d_M$ be a geodesic $G$-invariant metric on $M$. Consider an almost everywhere thick sequence of lattices $\{\Gamma_n\}_n$ and a positive real number $\eps$. Take a sufficiently small $\eta$ and for every $n$ consider a subset $D_n\subseteq (M/\Gamma_n)_\eps$ which is maximal $\eta$-discrete so that the cover by the ball or radius $2\eta$ and center in $D_n$ covers $(M/\Gamma_n)_\eps$. We will now assume that the union of these balls is $(M/\Gamma_n)_\eps$, which is obviously not true in general but $(M/\Gamma_n)_\eps$ can be easily sandwiched between two such covers.

Let $\widetilde D_n$ be the associated $\eta$-discrete $\Gamma_n$-invariant subset of $M$. For every $g\in G$ we consider $D_ng$ to be the translate of $D_n$ inside $M/(g^{-1}\Gamma_n g)$ and observe that $D_ng$ is the quotient of $\widetilde D_n g$ by $g^{-1}\Gamma_n g$. In this way we get a random pointed $\eta$-discrete subset of $M$ indexed by $G/\Gamma_n$ (up to an inversion) which can also be thought as a random pointed $\eta$-discrete subset of the ``random space'' $M/(g^{-1}\Gamma_ng)$. Let us denote by $E_n$ a random element according to this measure, that is, a random translate of $\widetilde D_n$.

Let us fix an ultrafilter $\ul$. For every sequence of pointed $\eta$-discrete subsets $\{E_n\}_n$ we can form the pointed (at the identity) Hausdorff limit to obtain a $\eta$-discrete subset $E_\ul$ of $M$. Now the assumption that $\{\Gamma_n\}_n$ is almost everywhere thick translates to the fact that $E_\ul$ is $2\eta$-dense. Therefore if we consider the space of sequences of elements in $G/\Gamma_n$ modulo the fact the Hausdorff limit are the same, that is, the \textit{regular ultraproduct}, we obtain a random $G$-invariant $\eta$-discrete, $2\eta$-dense subset of $M$. By considering the balls of radius $2\eta$ around the points of $E_\ul$, we obtain a random cover of $M$. Let us discretize the problem.

For every $n$ by construction the set $D_n$ is finite. We consider now the finite set of translates of $D_n$ which are pointed at an element of the original $D_n$. When we take the nerve of the associated covering by balls of radius $2\eta$ we are just considering a finite simplicial complex and we are choosing its root at random. 

Now as above we let $n$ go to infinity and we take the ultraproduct of the sequence of pointed finite simplicial complexes as in \cite{CGS} (which is closely related to the Benjamini-Schramm convergence of finite simplicial complexes). This will give us a random (not anymore finite) simplicial complex. Again, the hypothesis that the sequence is Farber, will tell us that this random simplicial complex is contractible. Now the fact that the ultraproduct of the cross sections is the cross section of the regular ultraproduct will tell us that the random simplicial complex obtained in this way is a field of simplicial complexes of the \textit{cross equivalence relation} of an action of $G$. 

We now have to observe two facts. The first is that \cite{kye2015} implies that the $\ell^2$-Betti numbers of the discretization and the process itself are the same (modulo a covolume factor). The second is that it follows from \cite{CGS} that the $\ell^2$-Betti numbers of the finite random simplicial complexes converge as in L\"uck approximation theorem to the $\ell^2$-Betti numbers of the ultraproduct of the sequence. Finally a little bit of checking on covolumes and constants will give the desired result.

\section*{Acknowledgements}

This research was supported by the ERC Consolidator Grant No. 681207 and by the Deutsche Forschungsgemeinschaft  (German Research Foundation, 281869850 —RTG 2229). The author is grateful to Andreas Thom and Vadim Alekseev for countless interesting discussions over a Schweinshaxe or a cup of tea. This work could not have been completed without their help and support. The author thanks the referees for their useful comments and remarks.

\addtocontents{toc}{\protect\setcounter{tocdepth}{1}} 
\section{Ultraproducts of actions of locally compact groups}

One of the main concepts in this work will be the notion of \textit{ultraproduct}. Hence we fix now and for the entire work a non-principal ultrafilter $\ul$ over the natural numbers $\bn$. We will say that a property of a sequence $\{a_n\}_n$ holds for $\ul$-\textit{almost every} $n$ if the set of $n\in\bn$ for which $a_n$ satisfy the property is in the ultrafilter $\ul$.

In this text measure spaces will always be \textit{complete} and $\sigma$-\textit{finite}, in most of the cases \textit{probability spaces}. We will denote by $(X,\rb,\mu)$ a measure space, or briefly $(X,\mu)$ when we do not need to specify the $\sigma$-algebra $\rb$. We say that the $\sigma$-finite measure space $(X,\rb,\mu)$ is \textit{separable} if the $\sigma$-algebra $\rb$ is generated up to null-sets by a countable subset. A measurable map between measure spaces is a map $T\colon (X,\mu)\rightarrow (Z,\nu)$ such that for every measurable subset $A\subseteq Z$ we have that $T^{-1}(A)$ is measurable. We will say that $T$ is (inverse) \textit{measure preserving} if $\mu(T^{-1}(A))=\nu(A)$ for every measurable subset $A\subseteq Z$. 

We will work with \textit{locally compact second countable} (l.c.s.c.) groups, often denoted by $G$ and almost always unimodular and compactly generated. We will denote their left Haar measure by $\haar$. By a \textit{neighborhood of the identity} we mean a precompact Borel subset of $G$ which contains an open subset containing the identity, which is usually denoted by $\id_G$. In particular for us neighborhoods of the identity have finite Haar measure. A \textit{measure preserving action} of $G$ on a measure space $(X,\mu)$ is an action of $G$ on the set $X$ such that for every $g\in G$ the map $x\mapsto gx$ is a measure preserving transformation of the space. Note that $gx$ is defined for every $x\in X$. For an action of $G$ on the probability space $(X,\mu)$ we will denote by $\Phi\colon G\times X\rightarrow X$ the action map, that is, $\Phi(g,x)\coloneqq gx$. 

\begin{dfn}\label{dfn:measurability}
We say that an action is \defin{measurable} if the action map $\Phi\colon G\times X\rightarrow X$ is measurable.
\end{dfn}

As an example any Borel action of a \lcsc group on a standard Borel space is measurable. Indeed if $A\subseteq X$ is measurable, then there are Borel subsets $A_1$ and $A_2$ such that $A_1\subseteq A\subseteq A_2$ and $\mu(A_1)=\mu(A_2)$. Clearly $\Phi^{-1}(A_1)\subseteq \Phi^{-1}(A)\subseteq \Phi^{-1}(A_2)$ and by assumption $\Phi^{-1}(A_i)$ is Borel. Therefore Fubini\footnote{which holds for every probability space, see \cite[Theorem 252B]{fremlin}} implies that $\haar\times \mu(\Phi^{-1}(A_2\setminus A_1))=0$ and hence that $\Phi^{-1}(A)$ is measurable. 

We will say that a measurable action of $G$ on the measure space $(X,\mu)$ is \textit{probability measure preserving} (p.m.p.) if $\mu$ is a probability measure and the action preserves it.

\subsection{Continuity of actions}
Denote by $\aut(X,\mu)$ the group of all measure preserving automorphism of the probability space $(X,\mu)$. This group is equipped with a topology, called the \textit{weak topology}, which is induced by the functions $T\mapsto \mu(A\Delta TA)$ for all measurable subsets $A\subseteq X$. This topology is Polish whenever $(X,\mu)$ is a standard probability space. 

An action of a \lcsc group $G$ on a probability space $(X,\mu)$ is \defin{continuous} if the induced map $\rho\colon G\rightarrow \aut(X,\mu)$ is continuous. 

Since we will encounter non-continuous actions of locally compact groups, it is convenient to introduce some more terminology. Assume that the \lcsc group $G$ acts on the probability measure space $(X,\mu)$ preserving the measure. Let $U\subseteq G$ be a neighborhood of the identity and take $\eps>0$. A measurable subset $A\subseteq X$ is $(U,\eps)$-\defin{invariant} if $\sup_{g\in U}\mu(A\Delta gA)\leq \eps$.

Observe that since the action is measure preserving, $A$ is $(U,\eps)$-invariant if and only if it is $(U\cup U^{-1},\eps)$-invariant. We say that a measure preserving action of $G$ on the probability space $(X,\mu)$ is \textit{continuous at the measurable subset} $A\subseteq X$ if for every $\eps>0$ there exists $U$ such that $A$ is $(U,\eps)$-invariant. Clearly by definition of the weak topology on $\aut(X,\mu)$, an action is continuous if and only if it is continuous at every element of finite measure. The following lemma is a straightforward computation. 

\begin{lem}\label{lem:algop}
  Assume that the \lcsc group $G$ acts on the probability space $(X,\mu)$ and consider measurable subsets $A,B\subseteq X$ which are $(U,\eps)$-invariant. Then $gA$ is $(gUg^{-1},\eps)$-invariant, $X\setminus A$ is $(U,\eps)$-invariant and $A\cap B$ and $A\cup B$ are $(U,2\eps)$-invariant. In particular if $G$ is continuous at $A$ and $B$, then it is continuous at $gA$, $A\cap B$ and $X\setminus A$. 
\end{lem}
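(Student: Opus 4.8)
The plan is to verify each assertion in Lemma~\ref{lem:algop} by unwinding the definition of $(U,\eps)$-invariance, which is purely a statement about the measures of symmetric differences, together with the fact that the action is measure preserving. The only genuinely needed inputs are the set-theoretic identities relating symmetric differences under the action maps $x\mapsto gx$ (which are bijections of $X$) and under the Boolean operations $\cap$, $\cup$, $\setminus$.

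\begin{proof}
  For the first claim, fix $h\in gUg^{-1}$, so $h=gug^{-1}$ for some $u\in U$. Since $x\mapsto gx$ is a measure preserving bijection, it commutes with symmetric differences, whence
  \[
    \mu\bigl((gA)\,\Delta\,h(gA)\bigr)=\mu\bigl(g(A\,\Delta\,g^{-1}hg\,A)\bigr)=\mu\bigl(A\,\Delta\,uA\bigr)\leq\eps,
  \]
  using $g^{-1}hg=u\in U$ and $(U,\eps)$-invariance of $A$. Taking the supremum over $h\in gUg^{-1}$ gives that $gA$ is $(gUg^{-1},\eps)$-invariant.

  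For the complement, observe that $(X\setminus A)\,\Delta\,g(X\setminus A)=(X\setminus A)\,\Delta\,(X\setminus gA)=A\,\Delta\,gA$ for every $g\in U$, since $g(X\setminus A)=X\setminus gA$ as $x\mapsto gx$ is a bijection; hence $X\setminus A$ is $(U,\eps)$-invariant.

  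For the intersection and union, fix $g\in U$ and use the standard inclusions $(A\cap B)\,\Delta\,(gA\cap gB)\subset(A\,\Delta\,gA)\cup(B\,\Delta\,gB)$ and likewise for unions, together with $gA\cap gB=g(A\cap B)$, $gA\cup gB=g(A\cup B)$. Then
  \[
    \mu\bigl((A\cap B)\,\Delta\,g(A\cap B)\bigr)\leq\mu(A\,\Delta\,gA)+\mu(B\,\Delta\,gB)\leq 2\eps,
  \]
  and the same bound holds for $A\cup B$; taking suprema over $g\in U$ yields $(U,2\eps)$-invariance of $A\cap B$ and $A\cup B$.

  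The final assertion is now immediate: if $G$ is continuous at $A$, then for every $\eps>0$ there is $U$ making $A$ $(U,\eps)$-invariant, and by the first claim $gA$ is $(gUg^{-1},\eps)$-invariant; since $gUg^{-1}$ is again a neighborhood of the identity (conjugation by $g$ is a homeomorphism of $G$ fixing $\id_G$), continuity at $gA$ follows. Similarly, if $G$ is continuous at both $A$ and $B$, then for $\eps>0$ we may, by continuity at $A$ and at $B$, choose $U$ so that $A$ and $B$ are simultaneously $(U,\eps/2)$-invariant (intersecting the two neighborhoods), and the intersection bound gives that $A\cap B$ is $(U,\eps)$-invariant; continuity at $X\setminus A$ follows from the complement claim.
\end{proof}

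The computation is entirely routine; the only point requiring minor care is making sure, in the conjugation statement, that $gUg^{-1}$ still qualifies as a neighborhood of the identity in the sense fixed above (bounded Borel set containing an open neighborhood of $\id_G$), which holds because left and right translations and inversion are homeomorphisms of $G$ preserving boundedness. I do not anticipate any real obstacle.
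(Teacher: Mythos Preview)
Your proof is correct and is precisely the straightforward computation the paper has in mind; in fact the paper omits the proof entirely, merely stating that the lemma ``is a straightforward computation.''
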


If a \lcsc group $G$ acts in a Borel manner on the standard probability space $(X,\mu)$, then the induced map from $G$ to $\aut(X,\mu)$ is Borel and hence is automatically continuous, see Theorem 2.3.3 of \cite{gao}. Therefore any Borel action is automatically continuous. Let us sketch the proof of the analogous statement for general measure spaces.

\begin{prop}\label{prop:automacont}
  Assume that the \lcsc group $G$ acts measurably on the probability space $(X,\mu)$. Then the action is continuous.
\end{prop}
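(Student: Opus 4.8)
The plan is to prove directly that for every measurable $A\subseteq X$ and every $\eps>0$ there is a neighborhood $U$ of $\id_G$ with $\sup_{g\in U}\mu(A\Delta gA)\le\eps$; since $\mu$ is a probability measure every measurable set has finite measure, so this is continuity at every set of finite measure, that is, continuity of the action. So fix $A$ and $\eps$, and fix any neighborhood $W$ of $\id_G$, so that $\haar(W)<\infty$.

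The key is to feed the joint measurability of $\Phi$ into a rectangle approximation. The set $E:=\Phi^{-1}(A)\cap(W\times X)$ is measurable in the finite measure space $(W\times X,\ \haar|_W\times\mu)$ (completed with respect to $\haar\times\mu$, which is where $E$ a priori lives), and its slice at $g$ is $E_g=\{x\in X:gx\in A\}=g^{-1}A$, a genuine measurable subset of $X$ for every $g\in W$ because the action is measure preserving. Finite unions of measurable rectangles form an algebra generating the product $\sigma$-algebra, so one can find $R=\bigcup_{i=1}^{k}B_i\times C_i$ with the $B_i\subseteq W$ Borel and pairwise disjoint (after passing to a common refinement of the base sets) and the $C_i\subseteq X$ measurable, such that $(\haar\times\mu)(E\Delta R)<(\eps')^2$ for a small $\eps'$ to be fixed below; putting $B_0:=W\setminus\bigcup_{i\ge1}B_i$ and $C_0:=\emptyset$, one has $R_g=C_{i(g)}$ for a unique $i(g)\in\{0,\dots,k\}$, for every $g\in W$. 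By Fubini — in the version for complete product spaces already invoked in the preliminaries — the function $g\mapsto\mu(g^{-1}A\Delta R_g)$ is measurable with $\int_W\mu(g^{-1}A\Delta R_g)\,d\haar(g)<(\eps')^2$, so by Chebyshev the set of $g\in W$ with $\mu(g^{-1}A\Delta R_g)\ge\eps'$ has $\haar$-measure $<\eps'$. Choosing $\eps'<\min(\eps/2,\haar(W))$, the complementary ``good'' part of $W$ has positive Haar measure and is the disjoint union of the finitely many measurable sets $G_i:=\{g\in W:i(g)=i,\ \mu(g^{-1}A\Delta C_i)<\eps'\}$, so $\haar(G_{i_0})>0$ for some $i_0$.

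Finally I would apply the Steinhaus--Weil theorem to $G_{i_0}$: as it has positive Haar measure, $U:=G_{i_0}G_{i_0}^{-1}$ contains an open neighborhood of $\id_G$, and since $U\subseteq WW^{-1}$ it is bounded, so $U$ is a neighborhood of the identity in the sense of this paper. For $g,h\in G_{i_0}$ the sets $g^{-1}A$ and $h^{-1}A$ both lie within $\eps'$ of the single set $C_{i_0}$, hence $\mu(g^{-1}A\Delta h^{-1}A)<2\eps'$; applying the measure-preserving transformation $x\mapsto gx$ turns this into $\mu(A\Delta gh^{-1}A)<2\eps'\le\eps$. As every element of $U$ is of the form $gh^{-1}$ with $g,h\in G_{i_0}$, this shows $\sup_{u\in U}\mu(A\Delta uA)\le\eps$, completing the argument. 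Note that separability of $(X,\mu)$ plays no role, and should not: it is joint measurability of $\Phi$ — not mere measurability of the orbit maps on $L^2$ — that powers the rectangle approximation. I expect the only real work to be the measure-theoretic hygiene: that Fubini applies over the completed product, that $g\mapsto\mu(g^{-1}A\Delta R_g)$ is measurable, and that $R$ can be taken with pairwise disjoint base sets. That bookkeeping, rather than any conceptual point, is the main obstacle.
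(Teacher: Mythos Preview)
Your proof is correct and is essentially the same argument as the paper's: approximate $\Phi^{-1}(A)\cap(W\times X)$ by a finite union of rectangles with disjoint base sets, use Fubini/Chebyshev to find a base set $G_{i_0}$ of positive Haar measure on which $g^{-1}A$ is uniformly close to a single $C_{i_0}$, and then apply Steinhaus--Weil to $G_{i_0}G_{i_0}^{-1}$. The paper writes $gA$ where you write $g^{-1}A$ and uses $Z^{-1}Z$ rather than $ZZ^{-1}$, but these are cosmetic differences; the structure and the key idea are identical.
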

\begin{proof}
  Take a measurable subset $A\subseteq X$, let $U\subseteq G$ be a symmetric neighborhood of the identity and denote by $\lambda$ a Haar measure on $G$. Take $\eps\in (0,\lambda(U))$. Since $\Phi^{-1}(A)\subseteq G\times X$ is measurable there exists a measurable finite partition $\{W_i\}_i$ of $U$ and measurable subsets $B_i\subseteq X$ such that if we set $T\coloneqq \cup_i W_i\times B_i$, then $\lambda\times\mu(T\Delta(U\times X\cap \Phi^{-1}(A)))<\eps^2$. For $g\in G$, set $T_g\coloneqq \{x\in X\colon (g,x)\in T\}$ and observe that if $g,h\in W_i$, then $T_g=T_h=B_i$. Put $V\coloneqq \left\{g\in U\colon \mu(gA\Delta T_g)\leq \eps \right\}$. Observe that $\lambda\times \mu(U\setminus V)\leq \eps$. 

Fix $i$ such that $Z\coloneqq W_i\cap V$ has positive Haar measure. By \cite[Proposition 443D (c)]{fremlin} whenever $E\subseteq G$ has positive Haar measure, the set $E^{-1}E$ is a neighborhood of the identity. In particular, $Z^{-1}Z$ contains an open neighborhood of the identity $U'\subseteq U$. For every $u\in U'$ there are $g,h\in Z$ such that $u=g^{-1}h$. Therefore \[\mu(uA\Delta A)=\mu(gA\Delta hA)\leq \mu(gA\Delta T_g)+\mu(hA\Delta T_h)\leq 2\eps.\qedhere\]
\end{proof}

Since the groups we consider are separable every continuous action is \textit{locally} described by a separable $\sigma$-algebra.

\begin{lem}\label{lem:techcont}
  Assume that $G$ acts on the probability space $(X,\rb,\mu)$ preserving the measure. Suppose that $P$ is a subset of $\rb$ such that $G$ is continuous at every element of $P$. Then $G$ acts continuously on the $G$-invariant $\sigma$-subalgebra $\rb_P$ generated by $P$. 

 If $P$ can be generated by a countable subset, then $\rb_P$ is separable. 
\end{lem}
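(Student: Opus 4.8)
The plan is to build $\rb_P$ from $P$ by the usual transfinite generation of a $\sigma$-algebra (complements and countable unions), and to show at each stage that the set of measurable subsets at which $G$ acts continuously is closed under these operations, so that continuity propagates up the hierarchy. Let $\cg:=\{A\in\rb:\ G\text{ is continuous at }A\}$. First I would record that $\cg$ is closed under complementation and under the action of $G$: this is exactly Lemma \ref{lem:algop} (together with the observation that $G$ is trivially continuous at $\emptyset$ and $X$). The genuinely new point is closure under countable unions, so that step deserves care, and I expect it to be the main obstacle since the $2\eps$-bound in Lemma \ref{lem:algop} degrades when iterated naively.

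The key estimate is this: if $\{A_k\}_k\subset\cg$ and $A=\bigcup_k A_k$, then for any $\eps>0$ pick $N$ with $\mu(A\setminus\bigcup_{k\le N}A_k)<\eps/3$, and for each $k\le N$ pick a neighborhood $U_k$ of the identity with $\sup_{g\in U_k}\mu(A_k\Delta gA_k)\le \eps/(3N)$; set $U:=\bigcap_{k\le N}U_k$, still a neighborhood of the identity. Writing $A_{\le N}:=\bigcup_{k\le N}A_k$ we have $\mu(A_{\le N}\Delta gA_{\le N})\le\sum_{k\le N}\mu(A_k\Delta gA_k)\le\eps/3$ for $g\in U$, and since $\mu$ is measure preserving, $\mu(A\Delta A_{\le N})=\mu(A\setminus A_{\le N})<\eps/3$ and likewise $\mu(gA\Delta gA_{\le N})<\eps/3$. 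The triangle inequality for the symmetric difference then gives $\mu(A\Delta gA)\le 3\cdot(\eps/3)=\eps$ for all $g\in U$, so $A\in\cg$. Thus $\cg$ is a $\sigma$-algebra, and since it contains $P$ it contains $\rb_P$; moreover $\cg$ is $G$-invariant by Lemma \ref{lem:algop}, hence the $G$-invariant $\sigma$-algebra generated by $P$ is contained in $\cg$, which is precisely the assertion that $G$ acts continuously on $\rb_P$.

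For the final sentence, suppose $P$ is generated by a countable subset $P_0$, i.e. $\rb_P=\rb_{P_0}$ (the $G$-invariant $\sigma$-algebra generated by $P_0$ equals that generated by $P$). Here one must be a little careful because $G$-invariance is imposed: let $Q:=\{gA:\ g\in G,\ A\in P_0\}$. This may be uncountable, but I claim $\rb_P$ is still generated up to null sets by a countable family. Indeed, for each $A\in P_0$ the map $g\mapsto gA\in\malg(X,\mu)$ is continuous (by the hypothesis $A\in\cg$, reformulated: $G$ is continuous at $A$), and $\malg(X,\mu)$ with the metric $d(B,C)=\mu(B\Delta C)$ is separable on the $\sigma$-subalgebra $\rb_P$; choosing a countable dense subset $\{g_j\}_j$ of $G$, the countable family $\{g_jA:\ j\in\bn,\ A\in P_0\}$ is dense in $\{gA:\ g\in G,\ A\in P_0\}$, hence generates the same $\sigma$-algebra up to null sets. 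Therefore $\rb_P$ is generated up to null sets by a countable subset, i.e. it is separable. This completes the proof. \qed

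Remark: the one subtlety worth flagging is that separability of $\rb_P$ is being deduced \emph{before} we have a good handle on the ambient space, so I phrase it internally — density of $\{g_jA\}$ inside $\{gA\}$ in the pseudometric $d$ — rather than appealing to separability of $\malg(X,\mu)$, which need not hold for the whole (possibly non-separable) space $(X,\rb,\mu)$.
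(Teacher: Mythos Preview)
Your proof is correct. The core estimate --- approximate an element of $\rb_P$ by a finite Boolean combination of sets at which $G$ is already continuous, then use the triangle inequality for $\mu(\cdot\Delta\cdot)$ --- is exactly what the paper does as well. The organization differs, though: the paper invokes the fact that the \emph{algebra} $\ca_G$ generated by $\{gA:g\in G,\ A\in P\}$ is dense in $\rb_P$ (citing \cite[Proposition~136H]{fremlin}), so that any $B\in\rb_P$ is approximated by some $A_\eps\in\ca_G$, which is continuous by finitely many applications of Lemma~\ref{lem:algop}; then the same $3\eps$ triangle inequality finishes. You instead prove directly that the class $\cg$ of continuous sets is closed under countable unions (via truncation to a finite union), hence is a $G$-invariant $\sigma$-algebra containing $P$. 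Your route is marginally more self-contained in that it avoids the external citation; the paper's route makes the approximation step a single quotable fact and keeps Lemma~\ref{lem:algop} doing all the continuity work. For the separability clause both arguments are the same: pick a countable dense subset (or subgroup, in the paper) of $G$ and use continuity of $g\mapsto gA$ to see that the countable family $\{g_jA\}$ already generates $\rb_P$ up to null sets.
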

\begin{proof} 
  Let $H\leq G$ be a dense countable subgroup and let us denote by $\ca_H$ the algebra of subsets generated by $\{hP\}_{h\in H}$. By Lemma \ref{lem:algop}, the group $G$ is continuous at every element of $\ca_H$. Observe that if $P$ is generated by a countable subset, then $\ca_H$ is. Since $H\leq G$ is dense and the action is continuous, we have that $\ca_H$ is dense in the algebra $\ca_G$ generated by the $G$-translates of $P$. Remark that $\ca_H\subseteq \rb_P$ is dense, that is for every element $B\in\rb_P$ and $\eps>0$ there exists $A_\eps\in\ca_H$ such that $\mu(B\Delta A_\eps)<\eps$, see for example \cite[Proposition 136H]{fremlin}. Finally, let us check that the action of $G$ is continous at every element of $\rb_P$. Given $B\in \rb_P$ and $\eps>0$, there is an element $A_\eps\in\ca_H$ and a neighborhood of the identity $U\subseteq G$ such that $A_\eps$ is $(U,\eps)$-invariant and $\mu(A_\eps\Delta B)\leq \eps$. Then $B$ is $(U,3\eps)$-invariant, indeed \[\sup_{g\in U}\mu(B\Delta gB)\leq \sup_{g\in U}\left(\mu(A_\eps\Delta B)+\mu(A_\eps\Delta gA_\eps)+\mu(gB\Delta gA_\eps)\right)\leq 3\eps.\qedhere\]
\end{proof}

 Suppose that $G$ acts on the probability space $(X,\rb,\mu)$ preserving the measure. The set of sub-$\sigma$-algebras of $\rb$ for which the action of $G$ is continuous admits joins, that is for every two continuous sub-$\sigma$-algebras the $\sigma$-algebra generated by them is still continuous. Therefore there exists a maximal sub-$\sigma$-algebra of $\rb$ for which the action of $G$ is continuous. This $\sigma$-algebra is explicit: it is the $\sigma$-algebra generated by all the $A\in \rb$ such that $G$ is continuous at $A$.


\subsection{Ultraproducts}

We will now define the \textit{regular ultraproduct} of $G$-probability spaces. Let us fix a (non principal) ultrafilter $\ul$. 

\begin{dfn}
  Let $\{X_n\}_{n\in\bn}$ be a sequence of sets and let $X$ be their Cartesian product $X\coloneqq \prod_{n\in\bn} X_n$. The \defin{ultraproduct} of the sequence $\{X_n\}_n$ is the set $[X_n]_\ul\coloneqq \prod_{n\in \bn}X_n/\sim_\ul$ where the equivalence relation $\sim_\ul$ is defined as usual by saying that $(x_n)_n \sim_\ul (y_n)_n$ if $x_n=y_n$ for $\ul$-almost every $n$.
\end{dfn}

When the sequence $\{X_n\}_n$ is clear from the context we put $X_\ul\coloneqq [X_n]_\ul$. We will denote by $x_\ul$ and $A_\ul$ elements and subsets of $X_\ul$. For a sequence $\{x_n\in X_n\}_n$ we will denote by $[x_n]_\ul$ its class in $X_\ul$ and similarly for a sequence of subsets $\{A_n\subseteq X_n\}_n$ we will denote its class by $[A_n]_\ul$. If each $X_n$ is a probability space, then there is a canonical way to construct a probability measure on $X_\ul$, the Loeb probability space, see for example \cite{car2015}. 

\begin{thm} 
Let $\{(X_n,\rb_n,\mu_n)\}_{n\in\bn}$ be a sequence of probability spaces and let $X_\ul$ be the ultraproduct of the sequence $\{X_n\}_n$. Then there exists a complete probability space $(X_\ul,\rb_\ul,\mu_\ul)$ such that
  \begin{enumerate}
  \item for every sequence $\{A_n\in \rb_n\}_n$ the set $[A_n]_\ul$ is in $\rb_\ul$ and $\mu_\ul([A_n]_\ul)=\lim_{\ul}\mu_n(A_n)$,
  \item for every $A_\ul\in\rb_\ul$ there exists a sequence $\{A_n\in \rb_n\}_n$ such that $\mu_\ul(A_\ul\Delta [A_n]_\ul)=0$.
  \end{enumerate}
\end{thm}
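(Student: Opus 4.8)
The plan is to carry out the Loeb construction on the ultraproduct set $X_\ul$ and then verify the two approximation properties. First I would work with the algebra of \emph{internal} subsets of $X_\ul$, namely $\ca:=\{[A_n]_\ul:\ A_n\in\rb_n\}$. Because $\ul$ is an ultrafilter, $[\,\cdot\,]_\ul$ commutes with the finite Boolean operations: one has $X_\ul\setminus[A_n]_\ul=[X_n\setminus A_n]_\ul$ since $x_\ul\notin[A_n]_\ul$ is equivalent to $\{n:x_n\notin A_n\}\in\ul$, and $[A_n]_\ul\cup[B_n]_\ul=[A_n\cup B_n]_\ul$ since, if $x_n\in A_n\cup B_n$ for $\ul$-almost every $n$, then $\{n:x_n\in A_n\}\in\ul$ or $\{n:x_n\in B_n\}\in\ul$. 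Hence $\ca$ is an algebra of subsets of $X_\ul$ containing $X_\ul=[X_n]_\ul$. On $\ca$ I would set $\mu([A_n]_\ul):=\lim_\ul\mu_n(A_n)$. To see that $\mu$ is well defined, note that if $[A_n]_\ul=[B_n]_\ul$ but $\lim_\ul\mu_n(A_n\Delta B_n)>0$, then, after restricting to a set of indices in $\ul$, we may assume $\mu_n(A_n\setminus B_n)\geq\delta$ for some fixed $\delta>0$ and $\ul$-almost every $n$; picking $x_n\in A_n\setminus B_n$ for those $n$ produces a point of $[A_n]_\ul$ not lying in $[B_n]_\ul$, a contradiction, so $\lim_\ul\mu_n(A_n)=\lim_\ul\mu_n(B_n)$. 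The same point-picking trick shows that $[A_n]_\ul\cap[B_n]_\ul=\emptyset$ forces $A_n\cap B_n=\emptyset$ for $\ul$-almost every $n$, so $\mu$ is finitely additive by linearity of $\lim_\ul$, and $\mu(X_\ul)=\lim_\ul 1=1$.

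The core of the argument — and the step I expect to be the main obstacle — is to upgrade the finitely additive $\mu$ to a genuine, countably additive measure. For this I would establish the countable saturation of $\ca$: if $[A_n^{(1)}]_\ul\supseteq[A_n^{(2)}]_\ul\supseteq\cdots$ is a decreasing sequence in $\ca$ with $\bigcap_k[A_n^{(k)}]_\ul=\emptyset$, then $[A_n^{(k)}]_\ul=\emptyset$ for some $k$. Replacing $A_n^{(k)}$ by $\bigcap_{j\leq k}A_n^{(j)}$ (which changes none of the classes, as finite intersections commute with $[\,\cdot\,]_\ul$) we may assume $A_n^{(k)}$ is decreasing in $k$ for each $n$; if every $[A_n^{(k)}]_\ul$ were nonempty then $S_k:=\{n:A_n^{(k)}\neq\emptyset\}$ would be a decreasing sequence of sets in $\ul$, and, setting $\kappa(n):=\max\{k\leq n:n\in S_k\}$ for $n\in S_1$ and choosing $x_n\in A_n^{(\kappa(n))}$ (arbitrary otherwise), we would get $x_n\in A_n^{(k)}$ for every $n\in S_k\setminus\{1,\dots,k-1\}$, a set which lies in $\ul$ since $\ul$ is non-principal; hence $[x_n]_\ul\in\bigcap_k[A_n^{(k)}]_\ul$, a contradiction. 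A finitely additive probability measure on an algebra with this property is automatically continuous at $\emptyset$, hence a premeasure, so the Carathéodory extension theorem provides a complete probability measure $\mu_\ul$ on a $\sigma$-algebra $\rb_\ul\supseteq\sigma(\ca)$ restricting to $\mu$ on $\ca$; this is exactly assertion (1).

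Finally, for (2) I would use the outer-measure approximation built into the Carathéodory construction: since $\mu_\ul$ is finite and $\ca$ is an algebra, every $A_\ul\in\rb_\ul$ and every $\eps>0$ admit $C\in\ca$ with $\mu_\ul(A_\ul\Delta C)<\eps$. Choosing $C_k=[A_n^{(k)}]_\ul$ with $\mu_\ul(A_\ul\Delta C_k)<1/k$, we get $\lim_\ul\mu_n(A_n^{(k)}\Delta A_n^{(j)})<1/k+1/j$, hence $\{n:\mu_n(A_n^{(k)}\Delta A_n^{(j)})<1/k+1/j\}\in\ul$ for all $j,k$. A diagonalization in the same spirit as the previous one — for each $n$ use $A_n^{(\kappa(n))}$, where $\kappa(n)$ is the largest $k\leq n$ for which $n$ belongs to the finitely many relevant sets of $\ul$ — yields a single sequence $\{A_n\in\rb_n\}_n$ with $\mu_\ul(A_\ul\Delta[A_n]_\ul)\leq 3/k$ for every $k$, hence $\mu_\ul(A_\ul\Delta[A_n]_\ul)=0$, as required. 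Alternatively, the existence of such a space is the classical Loeb measure theorem, see \cite{car2015}, and one need only record properties (1) and (2).
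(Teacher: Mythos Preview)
Your proof is correct; it is the standard Loeb construction. The paper, however, does not prove this theorem at all: it is stated as a known fact with a reference to \cite{car2015}, exactly as you yourself note at the end of your proposal. So there is nothing to compare against beyond observing that your write-up supplies the details the paper deliberately omits. One small cosmetic remark: in the well-definedness step your measure-positivity hypothesis is stronger than needed---$[A_n]_\ul=[B_n]_\ul$ already forces $A_n=B_n$ for $\ul$-almost every $n$ by the same point-picking argument, so the limit of measures agrees trivially---but this does not affect correctness.
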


From now on we fix a \lcsc group $G$ and we assume that it acts on the sequence of probability spaces $\{(X_n,\rb_n,\mu_n)\}_{n\in\bn}$ preserving the measure. We can define an action of $G$ on the ultraproduct by the formula $g[x_n]_\ul\coloneqq [gx_n]_\ul$. This action preserves the measure $\mu_\ul$ on $X_\ul$. However even if the action of $G$ on $X_n$ is continuous for every $n$, the action of $G$ on the ultraproduct $X_\ul$ may be neither continuous nor measurable. The main point of this section is to define a continuous factor of the ultraproduct action.

\begin{dfn}\label{dfn:regset}
  Assume that the \lcsc group $G$ acts measurably on the sequence of probability spaces $\{(X_n,\mu_n)\}_n$. We set $[X_n]_\ul^R\coloneqq \prod_{n\in \bn}X_n/\sim^R_\ul$ for the equivalence relation defined by $(x_n)_n \sim^R_\ul (y_n)_n$ if there is a sequence $\{g_n\}_n$ of elements of $G$ such that $\lim_\ul g_n=\id_G$ and $g_nx_{n}= y_{n}$ for $\ul$-almost every $n$.
\end{dfn}

If the sequence $\{X_n\}_n$ of spaces is clear from the context we will put $X_\ul^R\coloneqq [X_n]_\ul^R$. We will denote by $[x_n]_\ul^R\in X_\ul^R$ the point associated with the sequence $(x_n)_n$. We will often denote by $A_\ul^R$ subsets of $X_\ul^R$. Put $G^0_\ul\coloneqq \left\{(g_n)_n\in \prod_{n\in\bn}G\colon \lim_\ul g_n=\id_G\right\}.$ Since $G$ is a topological group multiplication and inversion are continuous in $G$ and therefore $G^0_\ul$ is a group. Whenever $G$ acts on the sequence of spaces $\{(X_n,\mu_n)\}_n$  we can define an action of $G^0_\ul$ on $X_\ul$ by $(g_n)_n[x_n]_\ul\coloneqq [g_nx_n]_\ul.$ Observe that $[X_n]_\ul^R=[X_n]_\ul/G_\ul^0.$ We will denote by $\pi_\ul^R:[X_n]_\ul\rightarrow [X_n]_\ul^R$ the associated quotient map. For every $g\in G$ we have that $gG^0_\ul g^{-1}=G^0_\ul$ so the group $G$ acts on $[X_n]_\ul^R$ by the same formula $g[x_n]_\ul^R\coloneqq \pi_\ul^R(g[x_n]_\ul)=[gx_n]_\ul^R$ and $\pi_\ul^R$ is $G$-invariant. 

If $X_n=G/\Gamma_n$ for some sequence of lattices $(\Gamma_n)_n$ and $\mu_n$ is the normalized Haar measure, then $[X_n]_\ul^R$ corresponds to the metric ultraproduct of the sequence $(X_n)_n$ with respect to a suitable proper metric. Observe however that if $\covol(\Gamma_n)$ tends to infinity, then the limit metric on $[X_n]_\ul^R$ is only defined with values in $[0,+\infty]$. Two points in $[X_n]_\ul^R$ are at bounded distance for this limit metric if and only if they are in the same $G$-orbit. We will now define the measurable structure and the probability measure on $[X_n]_\ul^R$.

\begin{dfn}\label{dfn:regseq}
We say that a sequence of subsets $\{A_n\subseteq X_n\}_n$ is ($\ul$-)\defin{regular} if for every $\eps>0$ there exists a neighborhood of the identity $U\subseteq G$ such that $UA_n$ is measurable for $\ul$-almost every $n$ and for $\ul$-almost every $n$ we have $\mu_n(UA_n\setminus A_n)\leq \eps$.
\end{dfn}

Remark that for every sequence of subsets $\{A_n\subseteq X_n\}_n$ and neighborhood of the identity $U\subseteq G$ we have that $G_\ul^0[A_n]_\ul\subseteq [UA_n]_\ul$. Therefore if $\{A_n\}_n$ is regular we have that $G_\ul^0[A_n]_\ul\in \rb_\ul$ and $\mu_\ul(G_\ul^0[A_n]_\ul)=\mu_\ul([A_n]_\ul)$. For $\{A_n\}_n$ regular we put $[A_n]_\ul^R\coloneqq \pi_\ul^R([A_n]_\ul)$ and we will say that $[A_n]_\ul^R$ is a \defin{regular subset} of $[X_n]_\ul^R$.

\begin{dfn}
  Assume that the \lcsc group $G$ acts measurably on the sequence of probability spaces $\{(X_n,\mu_n)\}_n$ and let $[X_n]_\ul^R$ be as in Definition \ref{dfn:regset}. We let $\rb_\ul'$ be the smallest $\sigma$-algebra of subsets of $[X_n]_\ul^R$ which is generated by the sets $[A_n]_\ul^R$ for regular sequences $\{A_n\}_n$. Then we have that $(\pi_\ul^R)^{-1}(\rb_\ul^R)\subseteq \rb_\ul$ and we let $\mu_\ul^R\coloneqq (\pi_\ul^R)_*(\mu_\ul)$, that is $\mu_\ul^R(A^R_\ul)=\mu_\ul((\pi_\ul^R)^{-1}(A_\ul^R))$. We let now $\rb_\ul^R$ be the $\mu_\ul^R$-completion on the $\sigma$-algebra $\rb_\ul'$. We will call the measure space $([X_n]_\ul^R,\rb_\ul^R,\mu_\ul^R)$ the \defin{regular ultraproduct} of the sequence of probability spaces $\{(X_n,\rb_n,\mu_n)\}_n$.
\end{dfn}
In particular if $\{A_n\}_n$ is regular, we have that \[\mu_\ul^R([A_n]^R_\ul)=\mu_\ul(G_\ul^0[A_n]_\ul)=\mu_\ul([A_n]_\ul)=\lim_\ul\mu_n(A_n),\] and the action of $G$ on $([X_n]_\ul^R,\mu_\ul^R)$ is measure preserving.

Let $G$ be a \lcsc group and let $(H_n)_n$ be a sequence of subgroups of $G$. We say that $(H_n)_n$ converges to the subgroup $H$ of $G$ with respect to the \textit{Hausdorff pointed topology}, if for every compact subset $K\subseteq G$ and neighborhood of the identity $U\subseteq G$, for every $n$ big enough, we have $K\cap (UH_n)\supset K\cap H$ and $K\cap (UH)\supset K\cap H_n$. Remark that every sequence of subgroups $(H_n)_n$ admits a converging sub-sequence. Indeed given $(H_n)_n$ set $H_\ul\coloneqq\{\lim_\ul h_n\colon h_n\in H_n\}$. Then it is easy to remark that $H_\ul$ is a closed subgroup and $(H_n)_n$ converges to $H_\ul$ with respect to the Hausdorff pointed topology along the ultrafilter $\ul$. 

\begin{lem}\label{lem:stabil}
  Let $G$ be a \lcsc group. Suppose that $G$ acts in a Borel manner on the sequence of standard probability spaces $\{(X_n,\mu_n)\}_n$ preserving the measure. For every sequence $\{x_n\in X_n\}_n$ the stabilizer $G_{x_n}$ converges to $G_{[x]_\ul^{R}}$ in the Hausdorff pointed topology along the ultrafilter $\ul$.
\end{lem}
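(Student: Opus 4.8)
The plan is to reduce the lemma to a direct unwinding of the definition of $\sim_\ul^R$, after which pointed Hausdorff convergence becomes almost tautological. First I would describe the stabilizer $G_{[x]_\ul^R}$ explicitly. Since $g[x_n]_\ul^R = [gx_n]_\ul^R$, an element $g$ fixes $[x]_\ul^R$ exactly when $(gx_n)_n \sim_\ul^R (x_n)_n$, which by Definition~\ref{dfn:regset} means there is a sequence $\{g_n\}_n$ in $G$ with $\lim_\ul g_n = \id_G$ and $g_n(gx_n) = x_n$ for $\ul$-almost every $n$. Putting $h_n := g_ng$ this is the same as $h_n \in G_{x_n}$ for $\ul$-almost every $n$, and conversely any sequence $\{h_n\}_n$ with $h_n \in G_{x_n}$ for $\ul$-almost every $n$ gives back $g_n := h_ng^{-1}$, which satisfies $\lim_\ul g_n = \id_G$ precisely when $\lim_\ul h_n = g$ (using continuity of multiplication and inversion in $G$). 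Hence
\[ G_{[x]_\ul^R} = \bigl\{\, g \in G \ :\ \text{there is } \{h_n\}_n \text{ with } h_n \in G_{x_n}\ \ul\text{-a.e.\ and } \lim_\ul h_n = g \,\bigr\}. \]

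Next I would check that this identity is exactly the content of convergence $G_{x_n} \to G_{[x]_\ul^R}$ in the pointed Hausdorff (Chabauty) topology along $\ul$ (each $G_{x_n}$ being a closed subgroup, as the stabilizer of a Borel action of a \lcsc group on a standard Borel space). For the ``lower'' half: any $g \in G_{[x]_\ul^R}$ is, by the displayed formula, a $\ul$-limit of points $h_n \in G_{x_n}$, so every neighbourhood of $g$ meets $G_{x_n}$ for $\ul$-almost every $n$. For the ``upper'' half: let $K\subset G$ be compact with $K \cap G_{[x]_\ul^R} = \emptyset$; were $\{n : G_{x_n}\cap K \neq\emptyset\}$ in $\ul$, I would pick $h_n \in G_{x_n}\cap K$ for those $n$ and $h_n := \id_G$ otherwise, note that $g:=\lim_\ul h_n$ exists in $K$ because $G$ is metrizable and $K$ compact, and then the displayed formula would force $g \in G_{[x]_\ul^R}\cap K$, a contradiction; hence $G_{x_n}\cap K = \emptyset$ for $\ul$-almost every $n$. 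It remains to observe that $G_{[x]_\ul^R}$ is itself a closed subgroup, so that it is a bona fide point of the space on which this topology lives: the subgroup property is clear from the displayed formula, and closedness follows from a routine ultrafilter diagonalization — fixing a compatible metric $d$ on $G$, if $g^{(k)} \to g$ with $g^{(k)} = \lim_\ul h^{(k)}_n$, $h^{(k)}_n \in G_{x_n}$, one sets $h_n := h^{(k(n))}_n$ with $k(n)$ the largest $k \le n$ such that $d(h^{(k)}_n, g^{(k)}) < 2^{-k}$ (and $h_n:=\id_G$ if no such $k$ exists), and checks, using that $\{n : d(h^{(k)}_n, g^{(k)}) < 2^{-k}\}\in\ul$ for each $k$, that $\lim_\ul h_n = g$.

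The argument is soft — and, notably, involves no measure-theoretic subtlety, since stabilizers only see the action on points. The places that need genuine care, and which I expect to be the (mild) obstacle, are the bookkeeping in the equivalence $g_n(gx_n)=x_n \iff h_n:=g_ng\in G_{x_n}$ together with the passage between $\lim_\ul g_n = \id_G$ and $\lim_\ul h_n = g$, and the non-compactness of $G$: this is precisely why, in the ``upper'' half, one must test against compact sets, so that the relevant ultralimits exist in $G$ rather than escaping to infinity.
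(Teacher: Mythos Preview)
Your proof is correct and follows the same route as the paper: both arguments reduce the statement to the characterization $G_{[x]_\ul^R}=\{g\in G:\ \exists\, h_n\in G_{x_n}\text{ for }\ul\text{-a.e.\ }n\text{ with }\lim_\ul h_n=g\}$ via the substitution $h_n=g_ng$ (the paper uses the equivalent $g_n^{-1}g$), and this characterization \emph{is} pointed Hausdorff convergence along $\ul$. The paper's proof is considerably terser --- it records only the two implications behind the displayed formula and leaves the translation into Chabauty language and the closedness of the limit to the reader --- whereas you spell these out; but the content is the same.
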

\begin{proof}
   Assume that for $\ul$-almost every $n$ we have that $g_nx_n=x_n$ and that the sequence $g_n$ is bounded. Put $g\coloneqq \lim_\ul g_n$ and observe that $(h_n)_n\coloneqq (gg_n^{-1})_n\in G^0_\ul$. So $g[x_n]_\ul^{R}=[gx_n]_\ul^{R}=[h_nx_n]_\ul^{R}=[x_n]_\ul^{R}.$ On the other hand assume that $g[x_n]_\ul^{R}=[x_n]_\ul^{R}$. Then there exists a sequence $(g_n)_n\in G^0_\ul$ such that $gx_n=g_nx_n$ for $\ul$-almost every $n$ whence $g_n^{-1}g\in G_{x_n}$ and $\lim_\ul g_n^{-1}g=g$. 
\end{proof}

The great advantage of the regular ultraproduct is that the group $G$ acts continuously and measurably on it, see Theorem \ref{thm:ultrabasic}. It follows quite easily from the definition that the action is continuous, however the proof that the action is measurable is more complicated. For this, we will have to understand regular subsets better. These sets behave like compact subsets in a standard probability space.

\begin{lem}\label{lem:apprfromabove}
   Let $\{A_n\}_n$ be a regular sequence of subsets and let $U_k$ be neighborhoods of the identity such that $\cap_k  U_k=\{\id_G\}$. Then $\cap_k   U_k[A_n]_\ul^R=[A_n]_\ul^R$. 
\end{lem}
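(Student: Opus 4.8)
The plan is to prove the two inclusions separately. The inclusion $[A_n]_\ul^R\subset\bigcap_k U_k[A_n]_\ul^R$ is immediate: since $U_k$ is a neighborhood of the identity, $\id_G\in U_k$, so $A_n\subset U_kA_n$ for every $n$ and hence $[A_n]_\ul^R\subset U_k[A_n]_\ul^R$ for each $k$; intersecting over $k$ gives the claim. So the real content is the reverse inclusion $\bigcap_k U_k[A_n]_\ul^R\subset[A_n]_\ul^R$. First I would unwind what it means for a point $[x_n]_\ul^R$ to lie in $U_k[A_n]_\ul^R$: there is an element $g^{(k)}\in U_k$ and a sequence $\{a^{(k)}_n\in A_n\}_n$ so that $[x_n]_\ul^R = g^{(k)}[a^{(k)}_n]_\ul^R$, i.e. (after absorbing a $G^0_\ul$-perturbation, using that $g^{(k)}a^{(k)}_n$ represents the same $\sim^R_\ul$-class) there is a sequence $\{h_n^{(k)}\}_n$ with $\lim_\ul h^{(k)}_n=\id_G$ and $x_n = h^{(k)}_n g^{(k)} a^{(k)}_n$ for $\ul$-almost every $n$. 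Thus there exist $b_n^{(k)}:=g^{(k)}a_n^{(k)}\in g^{(k)}A_n\subset U_kA_n$ with $x_n\in G^0_\ul$-translate of $b_n^{(k)}$; in particular $x_n\in (V\,U_k\,A_n)$ for any fixed neighborhood $V$ of the identity, for $\ul$-almost every $n$.

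The idea is then a diagonal argument across $k$. Since $\bigcap_k U_k=\{\id_G\}$ and these are neighborhoods of the identity, I may pass to a nested cofinal subsequence and assume $U_1\supset U_2\supset\cdots$ with $\bigcap_k U_k=\{\id_G\}$, and also fix a countable basis $\{V_j\}_j$ of symmetric neighborhoods of the identity with $\bigcap_j V_j=\{\id_G\}$; shrinking further, arrange $V_k U_k\subset U_1$ say, and more importantly $V_kU_k$ shrinking to $\{\id_G\}$ as well. For each $k$ the set $S_k:=\{n: x_n\in V_kU_kA_n\}$ belongs to $\ul$. Now I want to produce a single sequence $\{a_n\in A_n\}_n$ and a single sequence $\{g_n\}_n$ with $\lim_\ul g_n=\id_G$ and $x_n=g_na_n$ for $\ul$-almost every $n$, which would exhibit $[x_n]_\ul^R=[a_n]_\ul^R\in[A_n]_\ul^R$. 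The standard trick: choose a decreasing sequence of $\ul$-large sets $T_1\supset T_2\supset\cdots$ with $T_k\subset S_k$, and define, for $n\in T_k\setminus T_{k+1}$, a witness: since $n\in S_k$ there is $a_n\in A_n$ and $w_n\in V_kU_k$ with $x_n=w_na_n$; set $g_n:=w_n$. For $n\notin T_1$ put $a_n$ arbitrary in $A_n$ (nonempty for $\ul$-almost all $n$, else $[A_n]_\ul^R$ and the whole statement are vacuous on a $\ul$-null complement) and $g_n=\id_G$. Then for every $k$, the set $\{n:g_n\in V_kU_k\}\supset T_k\in\ul$, and since $V_kU_k$ shrinks to $\{\id_G\}$ this forces $\lim_\ul g_n=\id_G$. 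Hence $[x_n]_\ul^R=[g_na_n]_\ul^R\sim^R_\ul[a_n]_\ul^R$, so $[x_n]_\ul^R\in[A_n]_\ul^R$.

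There is one measurability point to address so that the statement makes literal sense: $U_k[A_n]_\ul^R$ should be a measurable subset of the regular ultraproduct. This is where I would invoke that $\{A_n\}_n$ is regular — but note $\{U_kA_n\}_n$ need not be regular for a fixed $k$. However, for the \emph{intersection} statement one only needs each $U_k[A_n]_\ul^R$ to be $\mu_\ul^R$-measurable, or alternatively one can phrase the lemma as an equality of sets and deduce measurability of $[A_n]_\ul^R$ (which holds by regularity) together with the sandwich from below; in the application what is used is precisely that $[A_n]_\ul^R=\bigcap_k U_k[A_n]_\ul^R$ as sets, and that the left side is measurable. So I would state the set equality, prove it by the two inclusions above, and remark that measurability of $[A_n]_\ul^R$ is Definition \ref{dfn:regseq} together with the computation $\mu_\ul^R([A_n]_\ul^R)=\lim_\ul\mu_n(A_n)$ recorded just before the lemma.

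The main obstacle is the diagonal construction: one must be careful that the $G^0_\ul$-ambiguity in the definition of $\sim^R_\ul$ is handled correctly when unwinding $[x_n]_\ul^R\in U_k[A_n]_\ul^R$ — i.e. that membership in $U_k[A_n]_\ul^R$ really does give, for $\ul$-almost every $n$, a factorization $x_n=w_na_n$ with $w_n$ in a \emph{fixed} shrunk neighborhood $V_kU_k$ (not just $w_n\in G^0_\ul\cdot\{g^{(k)}\}$ with no uniform control) — and that the choice of the nested $T_k$ does not run afoul of $\ul$ not being countably complete (it does not, because we never intersect infinitely many $S_k$; we only need each $T_k\in\ul$ separately, and $T_k\subset S_k$ suffices). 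Everything else is bookkeeping with neighborhood bases.
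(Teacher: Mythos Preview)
Your approach is essentially the same as the paper's: unwind membership in $U_k[A_n]_\ul^R$, pass to nested shrinking neighborhoods, and run a diagonal argument to produce a single sequence $g_n\to\id_G$ with $g_nx_n\in A_n$ (equivalently $x_n=g_n^{-1}a_n$). The measurability digression is unnecessary --- the lemma is a pure set equality and the paper proves it as such, regularity only enters so that the notation $[A_n]_\ul^R$ is defined.

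One small bookkeeping gap: your construction defines $g_n$ only for $n\in T_k\setminus T_{k+1}$ and for $n\notin T_1$, so nothing is specified on $\bigcap_k T_k$, which may well be nonempty. The paper avoids this cleanly by building the condition $n\geq k$ into its sets $\alpha_k$, so that $k_n:=\max\{k:n\in\alpha_k\}$ is automatically finite; you can do the same by replacing $T_k$ with $T_k\cap\{n\geq k\}$.
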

\begin{proof}
 For the proof we will assume that $U_k$ is symmetric and that $U_k\supset U_{k+1}$ for every $k$. An element $[x_n]_\ul^R\in \cap_k U_k[A_n]_\ul^R$ if and only if for every $k$ there exists $u_k\in U_k$ and a sequence of elements $g^k_n$ such that $u_kg_n^kx_n\in A_n$ for $\ul$-almost every $n$ and $\lim_\ul g_n^k=\id_G$. Set $\alpha_k\coloneqq \{n\geq k\colon g_n^k\in U_k\text{ and }u_kg_n^kx_n\in A_n\}$. By assumption $\alpha_k\in \ul$ for every $k$. Set $k_n\coloneqq 1$ for all $n\notin \cup \alpha_k$ and define $k_n$ to be the maximum of all $k\in\bn$ such that $n\in\alpha_k$ otherwise. Put $g_n\coloneqq u_{k_n}g^{k_n}_n$. Observe that for every $n\in\alpha_1$ we have that $g_nx_n\in A_n$. Moreover for every $n\in\alpha_k$ we have that $g_n\in U_k^2$ and therefore $\lim_\ul g_n=\id_G$ which implies that $[x_n]_\ul^R\in [A_n]_\ul^R$ as claimed.
\end{proof}

\begin{lem}\label{lem:regularinter}
  \begin{itemize}
  \item Let $\{A_n\}_n$ and $\{B_n\}_n$ be regular sequences. Then $\{A_n\cap B_n\}_n$ and $\{A_n\cup B_n\}_n$ are regular sequences and $[A_n\cup B_n]_\ul^R=[A_n]_\ul^R\cup[B_n]_\ul^R$ and $[A_n\cap B_n]_\ul^R=[A_n]_\ul^R\cap[B_n]_\ul^R$.
  \item For every $i\in\bn$ we let $\{A_n^i\}_n$ be a regular sequence. Then there exists a regular sequence $\{A_n\}_n$ such that $[A_n]_\ul^R\subseteq \cap_i [A_n^i]_\ul^R$ and $\mu_\ul^R(\cap_i [A_n^i]_\ul^R\setminus [A_n]_\ul^R)=0$. 
  \end{itemize}
\end{lem}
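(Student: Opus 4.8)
The plan is to handle the two items in turn: the first is a ``collar estimate'' together with a push-forward, the second a diagonal extraction whose delicate point is keeping the extracted sequence regular.

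\emph{First item.} Regularity of $\{A_n\cup B_n\}_n$ and $\{A_n\cap B_n\}_n$ rests on the set relations $U(A_n\cup B_n)=UA_n\cup UB_n$ and $U(A_n\cap B_n)\subseteq UA_n\cap UB_n$, which give $U(A_n\cup B_n)\setminus(A_n\cup B_n)\subseteq(UA_n\setminus A_n)\cup(UB_n\setminus B_n)$ and the same inclusion with $\cap$ in place of $\cup$. So, given $\eps>0$, one takes a neighborhood $U$ contained in those witnessing regularity of $\{A_n\}_n$ and of $\{B_n\}_n$ at level $\eps/2$ and adds the two estimates; measurability of $UA_n$, $UB_n$, $U(A_n\cup B_n)$, $U(A_n\cap B_n)$ is dealt with, here and below, by completeness of the $\mu_n$. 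For the identities, $[A_n\cup B_n]_\ul^R=[A_n]_\ul^R\cup[B_n]_\ul^R$ is genuine, since $[A_n\cup B_n]_\ul=[A_n]_\ul\cup[B_n]_\ul$ already at the level of the plain ultraproduct and $\pi_\ul^R$ preserves unions. For $\cap$, the inclusion $[A_n\cap B_n]_\ul^R\subseteq[A_n]_\ul^R\cap[B_n]_\ul^R$ is immediate from Definition~\ref{dfn:regset}, while the reverse holds up to a $\mu_\ul^R$-null set (which suffices, $\rb_\ul^R$ being complete): regularity of $\{A_n\}_n$ and the equality $\mu_\ul(G_\ul^0[A_n]_\ul)=\mu_\ul([A_n]_\ul)$ recorded after Definition~\ref{dfn:regseq} give $G_\ul^0[A_n]_\ul=[A_n]_\ul$ modulo $\mu_\ul$-null, and likewise for $B$ and (using the regularity just proved) for $A\cap B$; hence, modulo $\mu_\ul$-null sets, $(\pi_\ul^R)^{-1}\big([A_n]_\ul^R\cap[B_n]_\ul^R\big)=G_\ul^0[A_n]_\ul\cap G_\ul^0[B_n]_\ul=[A_n\cap B_n]_\ul=(\pi_\ul^R)^{-1}\big([A_n\cap B_n]_\ul^R\big)$, and pushing forward along $\pi_\ul^R$ (recall $\mu_\ul^R=(\pi_\ul^R)_*\mu_\ul$) yields the claimed equality up to a $\mu_\ul^R$-null set.

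\emph{Second item.} Put $B_n^J:=\bigcap_{j=1}^{J}A_n^j$; by the first item each $\{B_n^J\}_n$ is regular, and $c_J:=\mu_\ul^R\big([B_n^J]_\ul^R\big)=\lim_\ul\mu_n(B_n^J)$ is nonincreasing, say with limit $c$. The idea is to take $A_n:=B_n^{m(n)}$ for a suitable $m(n)\to\infty$ along $\ul$: choose for each $J$ a set $\gamma_J\in\ul$ on which $|\mu_n(B_n^J)-c_J|<1/J$, replace $\gamma_J$ by $\gamma_1\cap\cdots\cap\gamma_J\cap\{n\geq J\}$, and set $m(n):=\max\{J:n\in\gamma_J\}$, which is finite and tends to infinity along $\ul$. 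Then, for each fixed $j$, $A_n\subseteq A_n^j$ for $\ul$-almost every $n$, so $[A_n]_\ul^R\subseteq[A_n^j]_\ul^R$ and hence $[A_n]_\ul^R\subseteq\bigcap_j[A_n^j]_\ul^R$; moreover, granting the regularity proved below, $\mu_\ul^R\big([A_n]_\ul^R\big)=\lim_\ul\mu_n(B_n^{m(n)})=c$ (the upper bound since $B_n^{m(n)}\subseteq B_n^J$ eventually along $\ul$ for each $J$, the lower bound since $n\in\gamma_{m(n)}$ forces $\mu_n(B_n^{m(n)})>c_{m(n)}-1/m(n)\geq c-1/m(n)$), whereas $\mu_\ul^R\big(\bigcap_j[A_n^j]_\ul^R\big)\leq\mu_\ul^R\big([B_n^J]_\ul^R\big)=c_J$ for every $J$ by the first item, hence $\leq c$. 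Combining, $\mu_\ul^R\big(\bigcap_j[A_n^j]_\ul^R\setminus[A_n]_\ul^R\big)=0$.

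Verifying that $\{A_n\}_n$ is regular is the step I expect to be the main obstacle: $A_n$ is an intersection of $m(n)$ sets with $m(n)$ unbounded, so no single neighborhood can be asked to control all the pieces at once, yet $m(n)$ must grow in order that $\mu_\ul^R([A_n]_\ul^R)$ reach the infimum $c$. The point is that these two demands cooperate. Given $\eps>0$, choose $J_0$ with $1/J_0<\eps/3$ and $c_{J_0}-c<\eps/3$, and let $U$ witness regularity of $\{B_n^{J_0}\}_n$ at level $\eps/3$. For $\ul$-almost every $n$ we have $m(n)\geq J_0$, hence $A_n=B_n^{m(n)}\subseteq B_n^{J_0}$, so $UA_n\setminus A_n\subseteq(UB_n^{J_0}\setminus B_n^{J_0})\cup(B_n^{J_0}\setminus A_n)$ and therefore $\mu_n(UA_n\setminus A_n)\leq\mu_n(UB_n^{J_0}\setminus B_n^{J_0})+\mu_n(B_n^{J_0})-\mu_n(B_n^{m(n)})$; for $\ul$-almost every $n$ the first term is $<\eps/3$, while $\mu_n(B_n^{J_0})<c_{J_0}+\eps/6$ and $\mu_n(B_n^{m(n)})>c-\eps/6$, so the whole bound is $<\eps$. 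Measurability of $UA_n$ is again absorbed into the completeness of the spaces, and this closes the argument.
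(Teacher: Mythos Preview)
Your argument is correct and matches the paper's approach: the same collar inclusion $U(A\cap B)\setminus(A\cap B)\subseteq(UA\setminus A)\cup(UB\setminus B)$ (and likewise for $\cup$) for regularity in the first item, and the same diagonal extraction together with the sandwich $UA_n\setminus A_n\subseteq(UB_n^{J_0}\setminus B_n^{J_0})\cup(B_n^{J_0}\setminus A_n)$ for the regularity verification in the second item.

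You are in fact more careful than the paper on one point: you prove $[A_n\cap B_n]_\ul^R=[A_n]_\ul^R\cap[B_n]_\ul^R$ only up to a $\mu_\ul^R$-null set, and this caution is warranted---take $A_n=[0,1]$ and $B_n=[1+\tfrac1n,2]$ inside $\br/n\bz$ to see that the set-theoretic equality can genuinely fail---whereas the paper writes only ``the argument for the intersection is similar''. Equality in the measure algebra is all that is used downstream, so your version is exactly what is needed.

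One harmless arithmetic slip at the very end: with $1/J_0<\eps/3$ the bound $n\in\gamma_{m(n)}$ gives $\mu_n(B_n^{m(n)})>c-1/m(n)\geq c-\eps/3$, not $c-\eps/6$, so your final estimate is $<7\eps/6$ rather than $<\eps$. Since $\eps$ is arbitrary this changes nothing; choosing $1/J_0<\eps/6$ at the outset fixes the bookkeeping if you want the stated bound.
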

\begin{proof}
  For every $U\subseteq G$ we have that both $U(A\cap B)\setminus (A\cap B)$ and $U(A\cup B)\setminus (A\cup B)$ are contained in $(UA\setminus A)\cup(UB\setminus B)$. Therefore if $\{A_n\}_n$ and $\{B_n\}_n$ are regular sequences, then $\{A_n\cap B_n\}_n$ and $\{A_n\cup B_n\}_n$ are regular sequences. If $[x_n]_\ul^R\in [A_n\cup B_n]_\ul^R$, then there exists an element $(g_n)_n\in G_\ul^0$ such that $g_nx_n\in A_n\cup B_n$ for $\ul$-almost every $n$. Hence for $\ul$-almost every $n$ the element $g_nx_n$ is either in $A_n$ or in $B_n$. The ultrafilter will choose one of the two and therefore $[x_n]_\ul^R\in [A_n]_\ul^R\cup [B_n]_\ul^R$. The argument for the intersection is similar.

Fix now for every $i\in\bn$ a regular sequence $\{A_n^i\}_n$. Observe that by the previous point for every $N\in\bn$ we have that $\cap_i^N[A_n^i]_\ul^R=[\cap_i^N A_n^i]_\ul^R$ and $\{\cap_i^N A_n^i\}_n$ is a regular sequence. Therefore we can assume that for every $i$ and $n\in\bn$ we have that $A^i_n\supset A^{i+1}_n$. Then a standard diagonal argument as in Lemma \ref{lem:apprfromabove} tells us that there exists a monotone function $f\colon\bn\rightarrow \bn$ such that $[A_n^{f(n)}]_\ul\subseteq \cap_i [A_n^i]_\ul$ and $\mu_\ul(\cap_i [A_n^i]_\ul\setminus [A_n^{f(n)}]_\ul)=0$. Set $A_n\coloneqq A_n^{f(n)}$ and let us prove that $\{A_n\}_n$ is regular. Fix $\eps>0$. There exists $N$ such that $\mu_n(A^N_n\setminus A_n)\leq \eps$ for $\ul$-almost every $n$. There also exists $U\subseteq G$ such that $\mu_n(UA_n^N\setminus A_n^N)\leq \eps$ for $\ul$-almost all $n$. Therefore $\mu_n(UA_n\setminus A_n)\leq \mu_n(UA_n^N\setminus A_n^N)+\mu_n(A^N_n\setminus A_n)\leq 2\eps.$
\end{proof}

\begin{lem}\label{lem:innout}
  Let $U\subseteq G$ be a neighborhood of the identity. If $\{A_n\}_n$ and $\{UA_n\}_n$ are regular sequences, then $[UA_n]_\ul^R=\overline U[A_n]_\ul^R$. Similarly if $\{X_n\setminus A_n\}_n$ and $\{X_n\setminus UA_n\}_n$ are regular, then $[X_n\setminus UA_n]_\ul^R=[X_n]_\ul^R\setminus \overset{\circ}{U}[A_n]_\ul^R$. 
\end{lem}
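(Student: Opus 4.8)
The plan is to prove the first identity $[UA_n]_\ul^R = \overline U[A_n]_\ul^R$ by a double inclusion, and then derive the second by complementation. For the easy inclusion $\overline U[A_n]_\ul^R \subseteq [UA_n]_\ul^R$, I would take $g \in \overline U$ and $[x_n]_\ul^R \in [A_n]_\ul^R$; then there is $(h_n)_n \in G^0_\ul$ with $h_n x_n \in A_n$ for $\ul$-almost every $n$, and I want to show $[g x_n]_\ul^R \in [UA_n]_\ul^R$, i.e. that I can move $g x_n$ into $U A_n$ by an element of $G^0_\ul$. Since $g \in \overline U$, I would pick a sequence $u_n \in U$ with $\lim_\ul u_n = g$ (possible because $g$ is in the closure of $U$), and observe that $(u_n^{-1} g)_n \in G^0_\ul$ while $u_n h_n x_n \in U A_n$; some care is needed to combine $(u_n^{-1}g)_n$ with $(h_n)_n$ to witness that $[g x_n]_\ul^R$ and $[u_n h_n x_n]_\ul^R$ agree, but this is a routine manipulation inside $G^0_\ul$ using that it is a group and that the product of a null sequence with a bounded convergent sequence behaves as expected.

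**The harder inclusion.** For $[UA_n]_\ul^R \subseteq \overline U[A_n]_\ul^R$, take $[x_n]_\ul^R$ represented so that (after moving by an element of $G^0_\ul$, which does not change the class) we have $x_n \in U A_n$ for $\ul$-almost every $n$. Then for each such $n$ write $x_n = u_n a_n$ with $u_n \in U$ and $a_n \in A_n$. Since $U$ is a neighborhood of the identity, hence bounded, the sequence $(u_n)_n$ is bounded, so $g := \lim_\ul u_n$ exists and lies in $\overline U$. Now $(g^{-1} u_n)_n \in G^0_\ul$, and $g^{-1} x_n = (g^{-1} u_n) a_n$, so $[g^{-1} x_n]_\ul^R = [a_n]_\ul^R \in [A_n]_\ul^R$, whence $[x_n]_\ul^R = g[a_n]_\ul^R \in \overline U [A_n]_\ul^R$. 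The main obstacle here, and the reason the regularity hypothesis on both $\{A_n\}_n$ and $\{U A_n\}_n$ is invoked, is measurability/well-definedness: one must make sure that passing to the representative with $x_n \in U A_n$ is legitimate up to $\mu_\ul^R$-null sets, i.e. that $[U A_n]_\ul^R$ as a subset of $[X_n]_\ul^R$ genuinely consists (up to null sets) of the $G^0_\ul$-classes meeting $\prod_n U A_n$ — this is exactly the content of the remark preceding Definition \ref{dfn:regseq} that $G^0_\ul[A_n]_\ul \subseteq [UA_n]_\ul$ together with regularity, and I would cite it rather than reprove it.

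**The complementary statement.** For the second identity, suppose $\{X_n \setminus A_n\}_n$ and $\{X_n \setminus U A_n\}_n$ are regular. I would apply the first part — or rather its proof — but I cannot directly, since $X_n \setminus U A_n$ is not of the form $U'(X_n\setminus A_n)$. Instead I argue by pointwise complementation in $[X_n]_\ul^R$. The cleanest route is: $[x_n]_\ul^R \notin \overset{\circ}{U}[A_n]_\ul^R$ iff for every $u \in \overset{\circ}{U}$ and every $(h_n)_n \in G^0_\ul$ we have $u h_n x_n \notin A_n$ for a $\ul$-large set of $n$; I want to show this is equivalent to $[x_n]_\ul^R \in [X_n \setminus U A_n]_\ul^R$, i.e. (up to $G^0_\ul$) that $x_n \notin U A_n$ for $\ul$-almost every $n$. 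One direction is immediate. For the other, if $x_n \in U A_n$ on a $\ul$-large set, the previous paragraph produces $g \in \overline U$ with $g^{-1} x_n \in A_n$ (on a $\ul$-large set); the point is that one can in fact arrange $g \in \overset{\circ}{U}$, by perturbing: since $x_n \in U A_n$ and $U A_n$ is open-ish (its regularity lets us thicken slightly), one writes $x_n = u_n a_n$ with $u_n$ in a fixed compact subset of $\overset\circ U$ for $\ul$-almost every $n$ — more precisely one uses Lemma \ref{lem:apprfromabove} with a nested neighborhood basis to see that membership in $\overset\circ U [A_n]_\ul^R$ is detected by any single interior point. The delicate bookkeeping is keeping the $\overline U$ versus $\overset\circ U$ distinction straight, which is the only genuine subtlety; I expect the whole argument to be short once the first identity and Lemma \ref{lem:apprfromabove} are in hand.
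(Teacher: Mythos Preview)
Your argument for the first identity is correct and is exactly the paper's approach, just unpacked: the paper compresses both inclusions into the single observation that $[y_n]_\ul^R\in[UA_n]_\ul^R$ if and only if one can write $y_n=g_nx_n$ for $\ul$-almost every $n$ with $x_n\in A_n$ and $\lim_\ul g_n\in\overline U$, and then reads off $[y_n]_\ul^R=g[x_n]_\ul^R$. Your two inclusions are precisely the two directions of that biconditional.

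Your paragraph about ``measurability/well-definedness'' is a red herring, though. The identity $[UA_n]_\ul^R=\overline U[A_n]_\ul^R$ is a \emph{set-theoretic} equality between subsets of $[X_n]_\ul^R$: by definition $[A_n]_\ul^R=\pi_\ul^R([A_n]_\ul)$ is just the image of a set under the quotient map, and your pointwise argument already establishes equality of these images with no appeal to measure. Regularity plays no role in the proof of the equality; it is only there so that the symbols $[A_n]_\ul^R$ and $[UA_n]_\ul^R$ denote \emph{regular subsets} (i.e.\ generators of $\rb_\ul'$). So you should drop the appeal to null sets and to the remark before Definition~\ref{dfn:regseq}.

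For the second identity your proposal has a genuine gap. You correctly isolate the issue: from $x_n\in UA_n$ one extracts $u_n\in U$ with limit $g\in\overline U$, and the problem is to upgrade $g\in\overline U$ to $g\in\overset{\circ}{U}$. But your suggested fix --- ``$UA_n$ is open-ish, regularity lets us thicken, so choose $u_n$ in a fixed compact subset of $\overset{\circ}{U}$, and use Lemma~\ref{lem:apprfromabove}'' --- does not work. Regularity of $\{X_n\setminus UA_n\}$ is a \emph{measure} statement (the $V$-thickening differs by small measure), not a set-theoretic containment, so it cannot force a particular $u_n$ to lie away from $\partial U$. And Lemma~\ref{lem:apprfromabove} concerns intersections $\cap_k U_k[A_n]_\ul^R$ for a basis $U_k\downarrow\{\id_G\}$; it says nothing about pushing a boundary point of a fixed $U$ into the interior. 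What is actually needed (and what the paper leaves implicit under ``analogous'') is the contrapositive form: if $[y_n]_\ul^R\in\overset{\circ}{U}[A_n]_\ul^R$, say $[y_n]_\ul^R=g[a_n]_\ul^R$ with $g\in\overset{\circ}{U}$, then for \emph{every} $(h_n)_n\in G_\ul^0$ one has $h_ny_n\in UA_n$ for $\ul$-a.e.\ $n$ (since $h_n g$-type products converge to $g\in\overset{\circ}{U}\subset U$), so $[y_n]_\ul^R\notin[X_n\setminus UA_n]_\ul^R$; and conversely if no representative of $[y_n]_\ul^R$ lies in $X_n\setminus UA_n$, a short diagonal argument over a neighbourhood basis of $\id_G$ shows one can choose the decomposition so that the limit lands in the interior. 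Your sketch does not supply this.
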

\begin{proof}
 Observe that $[y_n]_\ul^R\in [UA_n]_\ul^R$ if and only if there exists a sequence of elements $g_n\in G$ such that $g\coloneqq \lim_\ul g_n\in \overline U$ and element $x_n\in A_n$ such that $y_n=g_nx_n$ for $\ul$-almost every $n$. So $[y_n]_\ul^R=[gx_n]_\ul^R=g[x_n]_\ul^R$ and therefore $[UA_n]_\ul^R=\overline U [A_n]^R_\ul$. The other case is analogous.
\end{proof}

We fix a compatible metric $d$ on the \lcsc group $G$. We will set $B_r\coloneqq \{g\in G\colon d(g,\id_G)<r\}$ and $\overline B_r\coloneqq \{g\in G\colon d(g,\id_G)\leq r\}$.

\begin{lem}\label{lem:regularseq}
Assume that the \lcsc group $G$ acts measurably on the sequence of probability spaces $\{(X_n,\mu_n)\}_n$ preserving the measure. Consider a sequence of measurable subsets $\{A_n\}_n$ such that $EA_n\subseteq X_n$ is measurable for every Borel subset $E\subseteq G$. Then for almost all $r\in\br$ we have that $\{B_rA_n\}_n$, $\{\overline B_rA_n\}_n$, $\{X\setminus B_rA_n\}_n$ and $\{X\setminus \overline B_rA_n\}_n$ are regular sequences.
\end{lem}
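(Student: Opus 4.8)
The plan is to show that for all but countably many radii $r$, the "boundary annuli" $\mu_n\big((B_{r+\delta}A_n)\setminus B_r A_n\big)$ are small uniformly in $n$ along $\ul$, and that this is precisely what regularity of the sequence $\{B_rA_n\}_n$ asks for. The key observation is a monotonicity/Fubini argument: for fixed $n$, the function $r\mapsto \mu_n(B_rA_n)$ is nondecreasing, hence has at most countably many discontinuities; but we need a statement that is uniform along the ultrafilter, so I will instead integrate.

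First I would consider the auxiliary function on $\br_{>0}$ defined by $F(r):=\lim_\ul \mu_n(B_rA_n)$ (this makes sense once one checks $B_rA_n$ is measurable, which holds by the hypothesis that $EA_n$ is measurable for every Borel $E\subset G$, applied to $E=B_r$). The function $F$ is nondecreasing and bounded by $1$, so it has an at most countable set $S$ of discontinuity points. I claim that for every $r\notin S$ the sequence $\{B_rA_n\}_n$ is regular. Fix such an $r$ and $\eps>0$. Since $B_{r+\delta}\supset B_r$ we have $UB_rA_n\setminus B_rA_n\subset B_{r+\delta}A_n\setminus B_rA_n$ whenever $U\subset B_\delta$, so it suffices to find $\delta>0$ with $\lim_\ul\mu_n(B_{r+\delta}A_n\setminus B_rA_n)\le\eps$, i.e. $F(r+\delta)-F(r)\le\eps$; but this is exactly continuity of $F$ at $r$ from the right, which holds since $r\notin S$. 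The same $U=B_\delta$ works for $\overline B_r A_n$ by noting $\overline B_r\subset B_{r+\delta}$ and $B_r\subset \overline B_r$, so $\{\overline B_rA_n\}_n$ is regular for the same $r$. For the complements, note $U(X_n\setminus B_rA_n)\setminus(X_n\setminus B_rA_n)\subset B_rA_n\setminus B_{r'}A_n$ for a suitable smaller radius $r'<r$ when $U\subset B_{r-r'}$ — more precisely, if $g\in U$ and $gx\notin B_rA_n$ but $x\in B_rA_n$, then $x\in B_rA_n\setminus g^{-1}(X_n\setminus B_rA_n)$, and one checks $g^{-1}(X_n\setminus B_rA_n)\supset X_n\setminus B_{r+\delta}A_n$, giving containment in $B_{r+\delta}A_n\setminus B_rA_n$ again. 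So taking $r$ outside the (countable) discontinuity set of $F$ handles all four sequences at once, and "almost all $r\in\br$" follows since a countable set is Lebesgue-null.

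The step I expect to be the main obstacle is the measurability bookkeeping hidden in the phrase "$B_rA_n$ is measurable": the hypothesis only guarantees this for each \emph{fixed} Borel $E\subset G$, and I am using it for $E=B_r$, $E=\overline B_r$, and for the translated sets $gE$ appearing in the complement estimates; I would want to record carefully that $g B_r = B_r(g)$-type sets are Borel in $G$ (true since left translation is a homeomorphism) so the hypothesis applies, and that measurability of $X_n\setminus B_rA_n$ and of $U(X_n\setminus B_rA_n)$ follows formally — the latter because $U(X_n\setminus B_rA_n) = X_n\setminus\bigcap_{g\in U}g B_rA_n$ is not obviously measurable from a pointwise description, so I would instead argue via $U(X_n\setminus B_rA_n)\subset B_{r+\delta}A_n$ and sandwich its measure between measurable sets, exactly as in the proof that Borel actions are measurable given in the discussion after Definition \ref{dfn:measurability}. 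Everything else is the elementary fact that a monotone bounded real function is continuous off a countable set.
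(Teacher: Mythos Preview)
Your approach is exactly the paper's: define $F(r)=\lim_\ul\mu_n(B_rA_n)$ (the paper writes $\alpha_\ul$), use monotonicity to get continuity off a countable set, and at a continuity point bound $\mu_n(B_\delta B_rA_n\setminus B_rA_n)$ by $F(r+\delta)-F(r)$; the paper then says the other three cases are ``similar'' without writing them out.

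One small correction: your first containment for the complement, $U(X_n\setminus B_rA_n)\setminus(X_n\setminus B_rA_n)\subset B_rA_n\setminus B_{r'}A_n$ with $r'=r-\delta$, is correct and is what you want (it uses left-continuity of $F$ at $r$). But the ``more precisely'' elaboration that follows is garbled: if $gx\notin B_rA_n$ then $x\in g^{-1}(X_n\setminus B_rA_n)$, not its complement, and you cannot land in $B_{r+\delta}A_n\setminus B_rA_n$ since you already have $x\in B_rA_n$. Just drop that sentence and keep the first inclusion; the left-continuity of $F$ finishes it. Your remarks on the measurability of $U(X_n\setminus B_rA_n)$ via sandwiching are a point the paper does not spell out.
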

\begin{proof}
  Let us prove the lemma for $\{B_rA_n\}_n$, the proof for the other cases is similar. Set $\alpha_n(r)\coloneqq \mu_n(B_rA_n)$ and observe that it is an increasing function. Therefore the function $\alpha_\ul(r)\coloneqq \lim_\ul\mu_n(B_rA_n)$ is increasing and hence continuous almost everywhere. Let $r$ be a point of continuity of $\alpha_\ul(r)$. We claim that $\{B_rA_n\}_n$ is a regular sequence. Indeed let us fix $\eps>0$. By continuity there exists $s\geq r$ such that $|\alpha_\ul(s)-\alpha_\ul(r)|<\eps/3$. For $\ul$-almost all $n$ we have that $|\alpha_n(s)-\alpha_\ul(s)|$ and $|\alpha_n(r)-\alpha_\ul(r)|$ are both small than $\eps/3$. Hence we get that for $\ul$-almost all $n$ we have that $|\alpha_n(s)-\alpha_n(r)|<\eps$ that is $|\mu_n(B_{s-r}(B_rA_n))-\mu_n(B_rA_n)|<\eps$ which implies that $\{B_rA_n\}_n$ is regular.
\end{proof}

We are now able to prove the following useful characterization. 

\begin{prop}\label{prop:innerregular}
  For every $A_\ul^R\in \rb_\ul^R$ and every $\eps>0$ there is a regular subset $[B_n]_\ul^R$ such that $[B_n]_\ul^R\subseteq A_\ul^R$ and $\mu_\ul^R(A_\ul^R\setminus [B_n]_\ul^R)\leq \eps$.
\end{prop}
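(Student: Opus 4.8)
The plan is to show that
\[
\mathcal C:=\Bigl\{A_\ul^R\in\rb_\ul^R:\ \text{for every }\eps>0\text{ there is a regular subset }[B_n]_\ul^R\subseteq A_\ul^R\text{ with }\mu_\ul^R(A_\ul^R\setminus[B_n]_\ul^R)\le\eps\Bigr\}
\]
is all of $\rb_\ul^R$. First I would record the soft closure properties of $\mathcal C$: it contains $\emptyset$, $[X_n]_\ul^R$, every regular subset and every $\mu_\ul^R$-null set (take $[B_n]_\ul^R=\emptyset$); it is stable under countable unions (given $A^i\in\mathcal C$ with regular $[B_n^i]_\ul^R\subseteq A^i$ of error $\le\eps2^{-i-1}$, truncate the union at some $N$ losing $\le\eps/2$ in measure and use the first part of Lemma~\ref{lem:regularinter} to see that $\bigcup_{i\le N}[B_n^i]_\ul^R=\bigl[\bigcup_{i\le N}B_n^i\bigr]_\ul^R$ is a regular subset of $\bigcup_iA^i$ with total error $\le\eps$); and it is stable under countable intersections (with the same data $\bigcap_i[B_n^i]_\ul^R\subseteq\bigcap_iA^i$ has error $\le\eps$, and the second part of Lemma~\ref{lem:regularinter} produces a genuine regular subset inside $\bigcap_i[B_n^i]_\ul^R$ losing nothing more). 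In particular $\mathcal C$ is a monotone class containing $\emptyset$ and the whole space.

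The second step reduces matters to Borel representatives, which is what makes Lemma~\ref{lem:regularseq} available. If $\{A_n\}_n$ is a regular sequence, pick for each $n$ a Borel set $C_n\subseteq A_n$ with $\mu_n(A_n\setminus C_n)=0$. Since $C_n$ is Borel in a standard space, $EC_n=\Phi_n(E\times C_n)$ is analytic, hence $\mu_n$-measurable, for every Borel $E\subseteq G$; and the inclusion $C_n\subseteq A_n$ gives $EC_n\setminus C_n\subseteq(EA_n\setminus A_n)\cup(A_n\setminus C_n)$, so enlarging never touches the null set $A_n\setminus C_n$ and $\{C_n\}_n$ is again a regular sequence. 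Moreover $[C_n]_\ul^R\subseteq[A_n]_\ul^R$ and $\mu_\ul^R\bigl([A_n]_\ul^R\setminus[C_n]_\ul^R\bigr)=\lim_\ul\mu_n(A_n\setminus C_n)=0$. Hence every regular subset is $\mu_\ul^R$-a.e. equal to one of the form $[C_n]_\ul^R$ with $\{C_n\}_n$ a regular sequence of Borel sets; write $\mathcal R_0$ for the collection of such subsets, which is stable under finite unions and intersections.

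The crux is to put complements of members of $\mathcal R_0$ into $\mathcal C$. Fix a regular sequence $\{C_n\}_n$ of Borel sets. Applying Lemma~\ref{lem:regularseq} to $\{C_n\}_n$ and then to its analytic thickenings $\{\overline B_sC_n\}_n$ produces, for suitable radii $s_k,t_k\downarrow0$, neighbourhoods $V_k:=B_{t_k}\overline B_{s_k}$ of the identity with $\bigcap_kV_k=\{\id_G\}$ such that $\{V_kC_n\}_n$ and $\{X_n\setminus V_kC_n\}_n$ are regular sequences. Lemma~\ref{lem:apprfromabove} gives $[C_n]_\ul^R=\bigcap_kV_k[C_n]_\ul^R$, and Lemma~\ref{lem:innout} (using the closed-ball thickening $\overline B_{s_k}C_n$, whose complement is regular, as the base set) identifies $V_k[C_n]_\ul^R$ with $[V_kC_n]_\ul^R$ and $[X_n]_\ul^R\setminus V_k[C_n]_\ul^R$ with the regular subset $[X_n\setminus V_kC_n]_\ul^R$. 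Hence
\[
[X_n]_\ul^R\setminus[C_n]_\ul^R=\bigcup_k\bigl([X_n]_\ul^R\setminus V_k[C_n]_\ul^R\bigr)=\bigcup_k[X_n\setminus V_kC_n]_\ul^R
\]
is a countable union of regular subsets, so it belongs to $\mathcal C$ by the first step. It follows that the algebra generated by $\mathcal R_0$ — whose elements are finite disjoint unions of sets $R\cap(R')^c$ with $R,R'\in\mathcal R_0$ — is contained in $\mathcal C$; since $\mathcal C$ is a monotone class, the monotone class theorem gives $\sigma(\mathcal R_0)\subseteq\mathcal C$. Combined with the density statement of the second step this forces $\rb_\ul'$, hence all of $\rb_\ul^R$, into the $\mu_\ul^R$-completion of $\sigma(\mathcal R_0)$; finally, sandwiching an arbitrary $A_\ul^R\in\rb_\ul^R$ between two elements $A'\subseteq A_\ul^R$ of $\sigma(\mathcal R_0)$ with $\mu_\ul^R(A_\ul^R\setminus A')=0$ and applying $\mathcal C$ to $A'$ gives $A_\ul^R\in\mathcal C$.

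I expect the only real work to be the bookkeeping in the third step: checking that the ``squeeze from below'' by Borel sets preserves both regularity and measurability (it does, precisely because $C_n$ is enlarged only through the inclusion $EC_n\subseteq EA_n$ and because $EC_n$ is analytic when $C_n$ is Borel), that Lemma~\ref{lem:regularseq} genuinely applies to the thickenings used, that the $V_k$ shrink to $\{\id_G\}$, and — most delicately — that the set identities coming from Lemmas~\ref{lem:apprfromabove} and~\ref{lem:innout} hold on the nose rather than up to null sets, which is where one must keep track of the difference between $B_r$, $\overline B_r$ and $\overset{\circ}{\overline B_r}$ (the metric on $G$ need not be left invariant). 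There is no conceptual obstacle here, only the measurability care the introduction promises.
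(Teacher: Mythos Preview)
Your argument is correct and follows the same skeleton as the paper's proof: identify a class that is closed under enough operations and contains the regular subsets, then conclude it is all of $\rb_\ul^R$. The only real difference is bookkeeping. The paper works with the \emph{symmetric} class
\[
\mathcal A:=\bigl\{A_\ul^R:\ \text{both }A_\ul^R\text{ and }[X_n]_\ul^R\setminus A_\ul^R\text{ are inner approximable by regular subsets}\bigr\},
\]
so complement stability is automatic and closure under countable intersections (your Lemma~\ref{lem:regularinter} argument) already makes $\mathcal A$ a $\sigma$-algebra; then Lemmas~\ref{lem:apprfromabove} and~\ref{lem:regularseq} place every regular subset in $\mathcal A$ in one stroke. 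Your one-sided $\mathcal C$ plus the monotone class theorem achieves the same end but forces you to treat complements of regular sets by hand (your third step), which is exactly the content hidden in the paper's ``$\mathcal A$ is symmetric'' line.

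One point where you are more careful than the paper: you explicitly pass to Borel representatives $C_n\subseteq A_n$ so that Lemma~\ref{lem:regularseq}'s hypothesis ``$EA_n$ is measurable for every Borel $E$'' is genuinely satisfied (via analyticity of $\Phi_n(E\times C_n)$). The paper silently assumes this. Your third step is slightly overengineered --- you do not actually need the composite neighbourhoods $V_k=B_{t_k}\overline B_{s_k}$ or the full force of Lemma~\ref{lem:innout}; it is enough to pick $r_k\downarrow0$ with $\{X_n\setminus\overline B_{r_k}C_n\}_n$ regular, observe directly that $[X_n\setminus\overline B_{r_k}C_n]_\ul^R\subseteq[X_n]_\ul^R\setminus[C_n]_\ul^R$ (if $g_nx_n\notin\overline B_{r_k}C_n$ and $h_nx_n\in C_n$ with $g_n,h_n\to\id_G$, then eventually $g_nh_n^{-1}\in B_{r_k}$, a contradiction), and compare measures using the regularity of $\{C_n\}_n$. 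But what you wrote also works.
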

\begin{proof}
  Let us denote by $\mathcal A$ the set of subsets $A_\ul^R\subseteq X_\ul^R$ for which both $A_\ul^R$ and its complement satisfy the condition in the statement. Observe that since the definition is symmetric $A_\ul^R\in \mathcal A$ implies that $X_\ul^R\setminus A_\ul^R\in\mathcal A$. Lemma \ref{lem:regularinter} implies that countable intersections of elements in $\mathcal A$ are in $\mathcal A$. Therefore $\mathcal A$ is a $\sigma$-algebra. Finally Lemma \ref{lem:apprfromabove} and \ref{lem:regularseq} tell us that every regular subset is in $\mathcal A$ and hence the $\sigma$-algebra generated by the regular subsets is contained in $\mathcal A$.
\end{proof}

Using the above lemmas, we can finally show that the group $G$ acts continuously and measurably on the complete probability space $([X_n]_\ul^R,\rb_\ul^R,\mu_\ul^R)$. 

\begin{thm}\label{thm:ultrabasic}
  Let $G$ be a \lcsc group. Suppose that $G$ acts in a Borel manner on the sequence of standard probability spaces $\{(X_n,\mu_n)\}_n$ preserving the measure. Then $G$ acts continuously and measurably on the regular ultraproduct $([X_n]_\ul^R,\rb_\ul^R,\mu_\ul^R)$. 
\end{thm}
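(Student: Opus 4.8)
The goal is to establish two things for the action of $G$ on $([X_n]_\ul^R,\rb_\ul^R,\mu_\ul^R)$: measurability of the action map $\Phi^R:G\times [X_n]_\ul^R\rightarrow [X_n]_\ul^R$, and continuity in the sense of Proposition \ref{prop:automacont} (continuity at every set of finite measure). Since $\mu_\ul^R$ is a probability measure and we have already observed that each $g\in G$ acts on $[X_n]_\ul^R$ as a measure-preserving bijection (the formula $g[x_n]_\ul^R:=[gx_n]_\ul^R$ is well defined because $gG_\ul^0g^{-1}=G_\ul^0$, and it preserves $\mu_\ul^R$), the plan is to reduce both assertions to statements about regular subsets, which by Proposition \ref{prop:innerregular} form a measure-dense family generating $\rb_\ul^R$.

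First I would prove \emph{continuity}. Fix a regular subset $[A_n]_\ul^R$ and $\eps>0$. By Definition \ref{dfn:regseq} there is a neighborhood of the identity $U$ with $\mu_n(UA_n\setminus A_n)\le\eps$ for $\ul$-almost every $n$; after symmetrizing $U$ we also control $\mu_n(UA_n\setminus U^{-1}A_n)$ and hence, via the measure-preserving property, $\mu_n(A_n\setminus U^{-1}A_n)$. For $g\in U$ one has $g[A_n]_\ul^R\subset[UA_n]_\ul^R$ and $[A_n]_\ul^R\setminus g[A_n]_\ul^R\subset[A_n\setminus U^{-1}A_n\cdot(\text{something})]$; concretely, using Lemma \ref{lem:innout} to identify $[UA_n]_\ul^R=\overline U[A_n]_\ul^R$ and the symmetric complement statement, one gets $\mu_\ul^R\bigl([A_n]_\ul^R\Delta g[A_n]_\ul^R\bigr)\le\lim_\ul\mu_n(UA_n\Delta U^{-1}A_n)\le 2\eps$ for all $g\in U$, so the action is continuous at $[A_n]_\ul^R$. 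Then Lemma \ref{lem:techcont} (with $P$ the class of regular subsets) upgrades this to continuity on the whole $\sigma$-algebra $\rb_\ul'$ generated by regular subsets, and continuity passes to the $\mu_\ul^R$-completion $\rb_\ul^R$ since sets differing by a null set are $(U,\eps')$-close automatically.

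Next, \emph{measurability} of $\Phi^R$. Here I would exploit the explicit product structure. Write $\pi:\prod_n X_n\to[X_n]_\ul$ and $\pi_\ul^R:[X_n]_\ul\to[X_n]_\ul^R$ for the quotient maps. It suffices to show $(\Phi^R)^{-1}([A_n]_\ul^R)$ is measurable in $G\times[X_n]_\ul^R$ for every regular $[A_n]_\ul^R$, since such sets generate $\rb_\ul^R$ up to null sets and, continuity having been established, null sets are handled by a Fubini argument on $G\times[X_n]_\ul^R$ exactly as in the Borel-action remark after Definition \ref{dfn:measurability}. To see $(\Phi^R)^{-1}([A_n]_\ul^R)$ is measurable: given a neighborhood basis $U_k\downarrow\{\id_G\}$, Lemma \ref{lem:apprfromabove} gives $[A_n]_\ul^R=\bigcap_k U_k[A_n]_\ul^R$, and for each $k$, $\Phi^R$ pulls back $U_k[A_n]_\ul^R$ to (an open-in-$G$ neighborhood of) $\{(g,x):gx\in U_k[A_n]_\ul^R\}$, which — covering $G$ by countably many translates $gU_k$ and using that $g[A_n]_\ul^R$ is again regular by Lemma \ref{lem:regularinter} applied to the Borel-action images — is a countable union of products of a Borel subset of $G$ with a regular subset of $[X_n]_\ul^R$, hence measurable; intersecting over $k$ gives measurability of $(\Phi^R)^{-1}([A_n]_\ul^R)$.

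\emph{Main obstacle.} The delicate point is the measurability of $\Phi^R$: unlike on an honest Loeb space, here we must pass through the non-measurable intermediate action of $G$ on $[X_n]_\ul$, and the sets $UA_n$ need not be measurable for a \emph{fixed} $U$ — only regular sequences, which quantify over $\eps$, behave well. So the argument has to be organized so that every set whose measurability is invoked is either a regular subset, a countable Boolean combination of regular subsets, or a product with a Borel piece of $G$; Lemmas \ref{lem:regularinter}, \ref{lem:apprfromabove}, \ref{lem:innout} and \ref{lem:regularseq} are precisely the toolkit for staying inside that class, and the real work is checking that translating a regular sequence by a single group element (using that the $X_n$-actions are Borel) again yields a regular sequence, so that $g\mapsto g[A_n]_\ul^R$ lands in the generating family. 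Once that bookkeeping is done the Fubini step disposing of completion/null-set issues is routine.
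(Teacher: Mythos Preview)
Your continuity argument is fine and matches the paper's: continuity at regular subsets is immediate from the definition, and Lemma~\ref{lem:techcont} (or Proposition~\ref{prop:automacont} once measurability is known) propagates this to all of $\rb_\ul^R$.

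The measurability argument, however, has a genuine gap. You claim that $(\Phi^R)^{-1}(U_k[A_n]_\ul^R)$ ``is a countable union of products of a Borel subset of $G$ with a regular subset of $[X_n]_\ul^R$.'' This is false: the preimage under an action map of a set is essentially never a union of rectangles unless the set is $G$-invariant. Concretely, if you cover $G$ by pieces $D_j\subset U_kg_j$, then for $g\in D_j$ one has $g^{-1}U_k[A_n]_\ul^R\supset g_j^{-1}[A_n]_\ul^R$ and $g^{-1}U_k[A_n]_\ul^R\subset g_j^{-1}U_k^{-1}U_k[A_n]_\ul^R$, but neither containment is an equality; the slice $\{x:(g,x)\in(\Phi^R)^{-1}(U_k[A_n]_\ul^R)\}$ genuinely varies with $g$ inside $D_j$. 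So your intersection over $k$ produces only a \emph{sandwich}, not an identity, and your Fubini step---which you reserve only for the passage to the completion---is actually needed in the main argument.

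This is exactly what the paper does. It restricts to regular $[A_n]_\ul^R$ with $\{X_n\setminus A_n\}_n$ also regular (available by Lemmas~\ref{lem:apprfromabove} and~\ref{lem:regularseq}), chooses for each $k$ a small $U$ with both $\{UA_n\}_n$ and $\{X_n\setminus U(X_n\setminus A_n)\}_n$ regular and $\mu_n$-close to $\{A_n\}_n$, partitions $G$ into pieces $D_i\subset Ug_i^{-1}$, and defines the \emph{two-sided} approximants
\[
S_k:=\bigcup_i D_i\times g_i[X_n\setminus U(X_n\setminus A_n)]_\ul^R,\qquad T_k:=\bigcup_i D_i\times g_i[UA_n]_\ul^R,
\]
so that $S_k\subset(\Phi^R)^{-1}([A_n]_\ul^R)\subset T_k$. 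With $S:=\bigcup_kS_k$ and $T:=\bigcap_kT_k$, Fubini gives $\haar\times\mu_\ul^R(T\setminus S)=0$, whence $(\Phi^R)^{-1}([A_n]_\ul^R)$ is measurable. The missing ingredient in your sketch is precisely the inner approximant coming from the complement side, without which the argument does not close.
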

\begin{proof}
  The action is clearly continuous at every regular subset $[A_n]_\ul^R\subseteq X_\ul^R$ and these subsets generate the $\sigma$-algebra. Therefore the action is continuous (which follows also from Proposition \ref{prop:automacont} and the measurability of the action). To show the measurability let us denote by $\Phi\colon G\times [X_n]_\ul^R\rightarrow [X_n]_\ul^R$ the action map. Proceeding as for standard Borel spaces, as explained after Definition \ref{dfn:measurability}, by Proposition \ref{prop:innerregular} it is enough to show that for every regular subset $[A_n]_\ul^R$ such that $[X_n\setminus A_n]_\ul^R$ is also regular we have that $\Phi^{-1}([A_n]_\ul^R)$ is measurable (cf. also with Lemma \ref{lem:apprfromabove} and \ref{lem:regularseq}).

Let $k\in \bn$ and let $U\subseteq G$ be a neighborhood of the identity such that $\{UA_n\}_n$ and $\{X_n\setminus U(X_n\setminus A_n)\}_n$ are regular and $\mu_n(UA_n)\leq \mu(A_n)+2^{-k}$ and $\mu_n(U(X_n\setminus A_n))\leq 1-\mu(A_n)+2^{-k}$. Consider a countable subset $\{g_i\}_i\subseteq G$ such that $\cup_i Ug_i^{-1}=G$ and for each $i$ let $D_i\subseteq Ug_i^{-1}$ be a measurable subset such that $\{D_i\}_i$ forms a measurable partition of $G$. Define 
\[ S_k\coloneqq \cup_i D_i\times g_i[X_n\setminus U(X_n\setminus A_n)]_\ul^R \text{ and } T_k\coloneqq \cup_i D_i\times g_i[UA_n]_\ul^R.\]
Observe that $S_k$ and $T_k$ are measurable and $S_k\subseteq \Phi^{-1}([A_n]_\ul^R)\subseteq T_k$. Put $S\coloneqq \cup_kS_k$ and $T\coloneqq \cap_kT_k$. Then we still have that $S\subseteq \Phi^{-1}([A_n]_\ul^R)\subseteq T$ and Fubini tells us that $T\setminus S$ is a null set (with respect to the product measure). Therefore $\Phi^{-1}([A_n]_\ul^R)$ is measurable.
\end{proof}

Finally we remark that one can prove that if a measurable subset $A_\ul\in \rb_\ul$ is $G_\ul^0$-invariant, then there exists a regular sequence $\{A_n\}_n$ such that $\mu_\ul([A_n]_\ul\Delta A_\ul)=0$. Therefore the $\sigma$-algebra generated by the regular subsets (in $[X_n]_\ul$) is up to null-sets the same as the sub-$\sigma$-algebra of $\rb_\ul$ of $G_\ul^0$-invariant sets. This common measure algebra is also the same as the one which is sometimes used in the context of von Neumann algebras, the measure algebra generated by the \textit{equicontinuous} sequences of subsets of $X_n$. In our work we will not use these different characterizations and hence we will not prove the equivalence. Let us also remark that we do not know whether these constructions actually define the same $\sigma$-algebra and whether this $\sigma$-algebra is the maximal continuous $\sigma$-algebra, the one that can be constructed thanks to Lemma \ref{lem:techcont}.


\section{Cross sections of ultraproducts}

From now on $G$ will be a \textit{unimodular} \lcsc group and we will fix a Haar measure $\haar$ on $G$. Whenever $G$ acts in a Borel way on a standard probability space $(X,\mu)$ preserving the measure and $Z\subseteq X$ is a Borel subset we will denote by $\Phi\colon G\times Z\rightarrow X$ the map induced by the action, that is $\Phi(g,z)=gz$. Clearly $\Phi(gh,z)=g\Phi(h,z)$. 

\begin{dfn}\label{dfn:cross}
  Fix a neighborhood of the identity $U\subseteq G$. Consider a Borel action of $G$ on the standard probability space $(X,\mu)$. A $U$-\defin{cross section} is a Borel subset $Y\subseteq X$ such that the map $\Phi:U\times Y\rightarrow X$ is injective and such that $\mu(X\setminus GY)=0$. 
\end{dfn}

We say that $Y$ is a cross section if it is a $U$-cross section for some neighborhood of the identity $U$. 
 We recall the following theorem of Forrest \cite{for1974}, see also \cite[Theorem 4.2]{kye2015}. 

\begin{thm}\label{thm:crossforrest}
  Every free \pmp action of a \lcsc group $G$ on a standard Borel space admits a cross section. 
\end{thm}

In the following we will fix a not necessarily free\footnote{the existence of a cross section forces the stabilizers to be discrete.} action of $G$ on the standard probability space $(X,\mu)$ and a cross section $Y\subseteq X$. We consider the equivalence relation \[\rel_Y\coloneqq \{(y_1,y_2)\in Y\times Y\colon\text{ there exists }g\in G\text{ such that }gy_1=y_2\}.\] Since the map $U\times Y\rightarrow X$ is injective $\rel_Y$ is an equivalence relation with countable classes. We recall some properties of $\rel$, see \cite[Proposition 4.3]{kye2015}.

\begin{prop}\label{prop:propercross}
  There exists a probability measure $\nu$ on $Y$ and a real number $0<\covol(Y)<\infty$ such that $\Phi_*(\haar\bigr|_U\times\nu)=\covol(Y)\mu\bigr|_{\Phi(U\times Y)}$. Moreover
 \begin{enumerate}[(1)]
 \item the equivalence relation $\rel_Y$ is Borel and the probability measure $\nu$ is $\mathcal R_Y$-invariant,
 \item $(\mathcal R_Y,\nu)$ is ergodic if and only if the action of $G$ is ergodic,
 \item $(\mathcal R_Y,\nu)$ has infinite orbits almost everywhere if and only if $G$ is non-compact.
 \end{enumerate}
\end{prop}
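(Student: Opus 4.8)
The plan is to build the measure $\nu$ on $Y$ by disintegrating $\mu$ along the map $\Phi\colon U\times Y\to X$. Since $\Phi$ is an injective Borel map between standard Borel spaces, its image $\Phi(U\times Y)$ is Borel and $\Phi$ is a Borel isomorphism onto its image. First I would transport $\mu\bigr|_{\Phi(U\times Y)}$ back to a measure $\theta$ on $U\times Y$ via $\Phi^{-1}$. The key claim is that $\theta$ is (up to a constant) a product measure $\haar\bigr|_U\times\nu$. To see this, note that for $g\in G$ small enough that $gU\cap U$ still has a decent chunk, the sets $\Phi(gV\times Y)$ and $\Phi(V\times Y)$ for $V=U\cap g^{-1}U$ are translates of each other by $g$, hence have the same $\mu$-measure; unravelling this through $\Phi^{-1}$ shows that $\theta$ is invariant under the left-translation action of (a neighborhood of the identity in) $G$ on the first coordinate. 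A measure on $U\times Y$ that is invariant under left translations in the first variable must be of the form $\haar\bigr|_U\times\nu$ for some measure $\nu$ on $Y$, by uniqueness of Haar measure applied fiberwise (disintegrate $\theta$ over the projection to $Y$; each fiber measure on $U$ is translation-quasi-invariant with constant modulus, hence a multiple of Haar, and the multiple is constant in $y$ by a further invariance/measurability argument). Normalizing, we get $\Phi_*(\haar\bigr|_U\times\nu)=\covol(Y)\,\mu\bigr|_{\Phi(U\times Y)}$ with $\covol(Y):=\haar(U)\,\bigl(\mu(\Phi(U\times Y))\bigr)^{-1}$, which is finite and positive since $\mu(\Phi(U\times Y))>0$ by $\mu(X\setminus GY)=0$ together with the fact that $GY$ is a countable union of translates of $\Phi(U\times Y)$ (so at least one has positive measure, and all do by translation-invariance of $\mu$).

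For the enumerated properties I would argue as follows. For (1), Borelness of $\rel_Y$: write $\rel_Y=\bigcup_{g}\{(y,gy):y\in Y,\ gy\in Y\}$; using a countable cover of $G$ by translates of $U$ and the injectivity of $\Phi$ on $U\times Y$, each piece is the graph of a partial Borel isomorphism, so $\rel_Y$ is a countable union of Borel graphs, hence Borel with countable classes. For $\nu$-invariance of $\rel_Y$: it suffices to check that every partial Borel isomorphism $y\mapsto gy$ appearing above preserves $\nu$ on its domain. This follows by comparing the two cross-section formulas: $\Phi_*(\haar\bigr|_V\times\nu)$ for a small $V$ computes $\mu$ on $\Phi(V\times Y)$, and applying the group element $g$ (which preserves $\mu$) while shrinking $V$ appropriately matches this with $\Phi_*(\haar\bigr|_{gVg^{-1}}\times (g_*\nu))$; since $G$ is unimodular, $\haar\bigr|_{gVg^{-1}}$ and $\haar\bigr|_{V'}$ have the same total mass for the relevant pieces, and uniqueness of the disintegration forces $g_*\nu=\nu$ on the overlap. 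This is exactly where unimodularity of $G$ is used.

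For (2) and (3) I would use the standard dictionary between a \pmp action and its cross-section equivalence relation. Ergodicity: a $G$-invariant measurable $A\subset X$ with $0<\mu(A)<1$ restricts to an $\rel_Y$-invariant $A\cap Y$ with $0<\nu(A\cap Y)<1$ (positivity on both sides because $A$ and its complement meet $\Phi(U\times Y)$ in positive measure, using the product formula), and conversely an $\rel_Y$-invariant $B\subset Y$ saturates to the $G$-invariant set $GB$, whose measure is controlled by $\nu(B)$ via the same formula; so ergodicity transfers in both directions. Infinite orbits: an $\rel_Y$-class is, by definition, indexed by $\{g\in G:gy\in Y\}$ modulo the stabilizer, and since $\Phi$ is injective on $U\times Y$ this set is $U$-separated (discrete) in $G$; it is finite iff it is bounded iff the $G$-orbit closure of $y$ is compact. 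If $G$ is compact every orbit is compact so classes are finite; if $G$ is non-compact, then for a.e.\ $y$ the orbit cannot be precompact — otherwise a positive-measure set of points would have precompact orbits, contradicting that $\mu$ is a probability measure while $\haar$ is infinite (push the product formula forward along $\Phi$ and use that $GY$ has full measure). I expect the main obstacle to be the careful verification that the disintegration fiber measures are genuinely a constant multiple of Haar measure uniformly in $y$ — i.e.\ pinning down that the "constant" $\covol(Y)$ really is constant and not just measurable in $y$ — which is handled by the left-translation invariance of $\theta$ in the first variable together with unimodularity, but requires some care with null sets and the choice of $U$.
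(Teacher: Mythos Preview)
The paper does not actually prove Proposition~\ref{prop:propercross}; it simply recalls it, with the pointer ``see \cite[Proposition 4.3]{kye2015}'' immediately preceding the statement, so there is no in-paper argument to compare against. Your sketch is essentially the standard proof one finds in that reference: pull $\mu$ back through the Borel isomorphism $\Phi\bigr|_{U\times Y}$, use $G$-invariance of $\mu$ and the identity $g\Phi(u,y)=\Phi(gu,y)$ to see that the pulled-back measure is left-translation-invariant in the $U$-coordinate, invoke uniqueness of Haar to split it as a product, and then read off (1)--(3) from the dictionary between the action and its cross equivalence relation.

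Two small remarks. First, your worry about the fiber constant being ``constant in $y$'' is slightly misplaced: after disintegrating over $Y$, each fiber measure on $U$ is some multiple $c(y)\haar\bigr|_U$, but $c$ need not be constant --- it simply gets absorbed into $\nu$ (take $\nu$ proportional to $c\,d\nu'$ where $\nu'$ is the marginal on $Y$), and $\covol(Y)$ is then fixed by the normalization $\nu(Y)=1$. Second, your argument for (3) via ``precompact orbits contradict $\haar$ infinite'' is loose as written; the route taken in \cite{kye2015} is to first observe that the cross section is cocompact (there is a compact $K$ with $\Phi(K\times Y)$ conull, cf.\ Lemma~\ref{lem:kpvgen} here), after which infiniteness of almost every $\rel_Y$-class when $G$ is non-compact is immediate, since the $K$-translates of $\{g:gy\in Y\}$ must cover $G$.
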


Our aim is to understand cross sections of the regular ultraproduct. Observe however that Definition \ref{dfn:cross} requires two structures on the standard probability space, the measurable structure and the Borel structure. This cannot be easily generalized to our setting. Therefore, in order to define the notion of cross section for regular ultraproducts, we need to modify the definition. 

\begin{dfn}\label{dfn:extcross}
  Let $G$ be a unimodular \lcsc group, fix a left Haar measure $\haar$ and a neighborhood of the identity $U$. Suppose that $G$ acts measurably on the probability space $(X,\mu)$ preserving the measure. An \defin{external} $U$-\defin{cross section} of \textit{covolume} $\covol(Y)$ is a probability space $(Y,\nu)$ and a measurable $G$-equivariant map $\Psi\colon G\times Y\rightarrow X$ such that 
  \begin{itemize}
  \item the subset $\Psi(U\times Y)\subseteq X$ is measurable,
  \item the restriction $\Psi\bigr|_{U\times Y}$ is a bijective measure preserving isomorphism between $(U\times Y,\haar\times \nu)$ and $(\Psi(U\times Y),\covol(Y)\mu)$,
  \item the measurable subset $\Psi(G\times Y)$ has full measure in $X$. 
  \end{itemize}
\end{dfn} 

We will show in Proposition \ref{prop:externalborel} that an external cross section of a Borel and probability measure preserving action on a standard Borel space is up to a null set a Borel cross section in the sense of Definition \ref{dfn:cross}. Also remark that since $\Psi$ is bijective and $\Psi(G\times Y)$ has full measure in $X$, the existence of an external cross section implies that almost every stabilizer of the action of $G$ on $X$ is discrete. 

\begin{prop}\label{prop:Tsection}
  In the notation of Definition \ref{dfn:extcross}, for any measurable subset $T\subseteq G\times Y$ we have that $\Psi(T)$ is measurable and if $\Psi\bigr|_T$ is injective, then $\covol(Y)\mu(\Psi(T))=\haar\times \nu(T)$.
\end{prop}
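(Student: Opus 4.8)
The plan is to reduce the statement to the defining property of an external cross section (Definition \ref{dfn:extcross}) by a covering argument together with a countable exhaustion. The key point is that $\Psi\bigr|_{U\times Y}$ is already known to be a measure-preserving bijection onto its (measurable) image $\Psi(U\times Y)$, and that the whole of $G\times Y$ can be tiled by countably many $G$-translates of pieces of $U\times Y$. Concretely, I would first choose a countable set $\{g_i\}_i\subset G$ with $\bigcup_i g_iU=G$, and then a Borel partition $\{C_i\}_i$ of $G$ with $C_i\subset g_iU$. This gives a countable Borel partition $G\times Y=\bigsqcup_i C_i\times Y$, and on each piece $C_i\times Y=g_i\cdot\big((g_i^{-1}C_i)\times Y\big)$ with $g_i^{-1}C_i\subset U$.

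Next I would use $G$-equivariance of $\Psi$, namely $\Psi(gh,y)=g\Psi(h,y)$, to write $\Psi(C_i\times Y)=g_i\,\Psi\big((g_i^{-1}C_i)\times Y\big)$. Since $(g_i^{-1}C_i)\times Y$ is a measurable subset of $U\times Y$ and $\Psi\bigr|_{U\times Y}$ is a measurable isomorphism onto its image, $\Psi\big((g_i^{-1}C_i)\times Y\big)$ is a measurable subset of $\Psi(U\times Y)$, hence measurable in $X$; applying the measure-preserving transformation $x\mapsto g_i x$ (the action is measure preserving, $G$ is unimodular) shows $\Psi(C_i\times Y)$ is measurable with the same measure. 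Now for a general measurable $T\subset G\times Y$, write $T=\bigsqcup_i (T\cap(C_i\times Y))$; each $T\cap(C_i\times Y)$ equals $g_i\cdot T_i$ for a measurable $T_i\subset U\times Y$, so $\Psi(T)=\bigcup_i g_i\Psi(T_i)$ is a countable union of measurable sets, hence measurable. This establishes the first assertion.

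For the second assertion, assume $\Psi\bigr|_T$ is injective. Then the sets $\Psi(T\cap(C_i\times Y))=g_i\Psi(T_i)$, $i\in\bn$, are pairwise disjoint (injectivity of $\Psi\bigr|_T$ on the disjoint pieces $T\cap(C_i\times Y)$), so by countable additivity
\begin{align*}
\covol(Y)\,\mu(\Psi(T))&=\sum_i \covol(Y)\,\mu\big(g_i\Psi(T_i)\big)=\sum_i \covol(Y)\,\mu\big(\Psi(T_i)\big)\\
&=\sum_i \haar\times\nu(T_i)=\sum_i \haar\times\nu\big(g_i^{-1}(T\cap(C_i\times Y))\big)=\sum_i \haar\times\nu\big(T\cap(C_i\times Y)\big)=\haar\times\nu(T),
\end{align*}
where the third equality is the measure-preserving property of $\Psi\bigr|_{U\times Y}$ from Definition \ref{dfn:extcross} applied to $T_i\subset U\times Y$, and the fifth uses that $\haar\times\nu$ is invariant under the left $G$-action on the first coordinate (left-invariance of Haar measure).

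The step I expect to need the most care is the bookkeeping ensuring $\Psi(T_i)$ is genuinely measurable in $X$ rather than merely in the subspace $\Psi(U\times Y)$, and that $\Psi\bigr|_T$ injective really forces the images $g_i\Psi(T_i)$ to be disjoint — the latter is immediate once one notes the $C_i\times Y$ are disjoint, so this is routine, but it must be spelled out. Everything else is a standard tiling-plus-$\sigma$-additivity argument using only unimodularity, left-invariance of $\haar$, and the defining properties of an external cross section already recorded in Definition \ref{dfn:extcross}.
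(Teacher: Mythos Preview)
Your proof is correct and follows essentially the same approach as the paper's: cover $G$ by countably many left-translates of $U$, partition $T$ accordingly, and use $G$-equivariance of $\Psi$ together with the defining isomorphism $\Psi\bigr|_{U\times Y}$ to reduce everything to the known case. The paper's version is terser (and uses translates $g_j^{-1}U$ with $\{g_j\}$ dense rather than an explicit Borel partition $\{C_i\}$), but the idea is identical; your concern about measurability in $X$ versus in the subspace is resolved exactly as you anticipate, since $\Psi(U\times Y)$ is itself measurable by the first bullet of Definition \ref{dfn:extcross}.
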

\begin{proof}
 Let $T\subseteq G\times Y$ be a measurable subset and let $\{g_j\}_{j\in\bn}$ be a dense subset of $G$. Since $\{g_j^{-1}U\times Y\}_j$ covers $T$ we can find measurable subsets $T_j\subseteq g_j^{-1}U\times Y$ in such a way $\{T_j\}_j$ is a partition of a conull subset of $T\setminus U\times Y$. Hence $\Psi(T_j)=g_j^{-1}\Psi(g_jT_j)$ is measurable and \[\mu(\Psi(T_j))=\mu(\Psi(g_jT_j))=\covol(Y)^{-1}\haar\times\nu(g_jT_j)=\covol(Y)^{-1}\haar\times\nu(T_j).\qedhere\]
\end{proof}

\begin{prop}\label{prop:Tgood}
    In the notation of Definition \ref{dfn:extcross}, there exists a measurable subset $T\subseteq G\times Y$ such that $T\supset U\times Y$ and such that the map $\Psi:(T,\haar\times\nu\bigr|_T)\rightarrow (X,\covol(Y)\mu)$ is a measure preserving isomorphism.
\end{prop}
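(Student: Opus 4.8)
The plan is to build $T$ by disjointifying a countable cover of $X$ by $G$-translates of $\Psi(U\times Y)$ and then transporting the resulting pieces back to $G\times Y$ along the local measure isomorphisms supplied by Definition \ref{dfn:extcross}.

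First I would fix a dense sequence $\{g_j\}_{j\in\bn}\subseteq G$ with $g_1=\id_G$, and set $A_j:=\Psi(g_jU\times Y)$, which by $G$-equivariance of $\Psi$ equals $g_j\Psi(U\times Y)$ and is in particular measurable. Since $U$ contains an open neighbourhood of $\id_G$ and $\{g_j\}$ is dense, one checks that $\bigcup_j g_jU=G$, whence $\bigcup_j A_j=\Psi(G\times Y)$ is conull by the third bullet of Definition \ref{dfn:extcross}. I then disjointify in $X$: put $B_1:=A_1$ and $B_j:=A_j\setminus(A_1\cup\dots\cup A_{j-1})$ for $j\geq 2$, so that the $B_j$ are measurable, pairwise disjoint, and $\bigcup_j B_j$ is conull. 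For the pull-back I would note that $\Psi|_{g_jU\times Y}$ is a bijective measure-preserving isomorphism onto $A_j$: it factors as the Haar-preserving bijection $(g,y)\mapsto(g_j^{-1}g,y)$ from $g_jU\times Y$ onto $U\times Y$, followed by the isomorphism $\Psi|_{U\times Y}$ of Definition \ref{dfn:extcross}, followed by the $\mu$-preserving map $x\mapsto g_jx$. Setting $\widetilde B_j:=(\Psi|_{g_jU\times Y})^{-1}(B_j)\subseteq g_jU\times Y$, I define $T:=\bigcup_j\widetilde B_j$.

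The verification then runs as follows. Because $g_1=\id_G$ and $\Psi|_{U\times Y}$ is a bijection onto $A_1=B_1$, we have $\widetilde B_1=U\times Y$, so $T\supseteq U\times Y$. The $\widetilde B_j$ are pairwise disjoint \emph{as subsets of} $G\times Y$: $\Psi$ sends $\widetilde B_j$ into $B_j$ and $\widetilde B_k$ into $B_k$, and $B_j\cap B_k=\emptyset$ when $j\neq k$; combined with the injectivity of $\Psi$ on each $\widetilde B_j$ this shows $\Psi|_T$ is injective, with $\Psi(T)=\bigcup_j B_j$ conull. Proposition \ref{prop:Tsection} now gives that $\Psi(T)$ is measurable and, applied to $T\cap\Psi^{-1}(A)$ for an arbitrary measurable $A\subseteq X$, that $\covol(Y)\mu(A)=\haar\times\nu(T\cap\Psi^{-1}(A))$, i.e. $\Psi_*(\haar\times\nu|_T)=\covol(Y)\mu$. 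Finally, on $B_j$ the inverse of $\Psi|_{\widetilde B_j}$ is the composition of $x\mapsto g_j^{-1}x$, the measurable map $(\Psi|_{U\times Y})^{-1}$, and $(g,y)\mapsto(g_jg,y)$, hence measurable; since the $B_j$ partition $\Psi(T)$ up to null sets, $\Psi|_T^{-1}$ is measurable, and therefore $\Psi|_T:(T,\haar\times\nu|_T)\to(X,\covol(Y)\mu)$ is a measure-preserving isomorphism.

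I expect the only real point of care to be the bookkeeping rather than a genuine obstacle: the candidate pieces $\widetilde B_j$ a priori lie in the mutually overlapping strips $g_jU\times Y$, which is why the disjointification has to be performed downstairs in $X$ (where the translates of $\Psi(U\times Y)$ genuinely overlap and can be cut) and only then pulled back; and one must single out $g_1=\id_G$ so that the first piece is literally $U\times Y$, giving $T\supseteq U\times Y$ on the nose rather than merely up to a null set. Verifying that $\Psi|_T$ is an isomorphism in the strong, bi-measurable sense then reduces, via the factorisation above, to the corresponding property of $\Psi|_{U\times Y}$ already assumed in Definition \ref{dfn:extcross}.
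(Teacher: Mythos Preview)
Your argument is correct and is exactly the construction the paper carries out: the paper also fixes a dense sequence with first term $\id_G$, sets $T_0=U\times Y$, and inductively adjoins $\Psi|_{g_jU\times Y}^{-1}\bigl(\Psi(g_jU\times Y)\setminus\Psi(T_{j-1})\bigr)$, which is precisely your $\widetilde B_j$. You have simply spelled out the bi-measurability and the appeal to Proposition~\ref{prop:Tsection} that the paper leaves implicit.
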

\begin{proof}
  Let $\{g_j\}_{j\geq 1}$ be a countable dense subset of $G$ and set $g_0\coloneqq \id_G$. Set $T_0\coloneqq U\times Y$. Inductively define $T_j\coloneqq T_{j-1}\cup \Psi\bigr|_{g_jU\times Y}^{-1}(\Psi(g_jU\times Y)\setminus\Psi(T_{j-1}))$ and observe that $T\coloneqq \cup T_j$ satisfy the required conditions. 
\end{proof}

Proceeding as in the case of standard Borel spaces, we can define an equivalence relation on any external cross section. Indeed, let us suppose that the \lcsc unimodular group $G$ acts on the probability space $(X,\mu)$ preserving the measure and let $Y$ be an external cross section as in Definition \ref{dfn:extcross}. Then we can define an equivalence relation $\rel_Y$ on $Y$ by saying that $(y,y')\in \rel_Y$ if there exists $g\in G$ such that $g\Psi(\id_G,y)=\Psi(\id_G,y')$. The equivalence relation $\rel_Y$ is called the \defin{cross equivalence relation}. 

We will now show that this equivalence relation preserves the measure $\nu$: let us prove that there exists a countable family of partial measure preserving isomorphisms $\{\ph_j\}_j$ such that for every $(y,y')\in \rel_Y$ there is $j$ such that $\ph_j(y)=y'$. Let us define the partial maps $\ph$ first. 

\begin{prop}\label{prop:eqrelcross}
  Let $G$ be a unimodular \lcsc group. Suppose that $G$ acts measurably on the probability space $(X,\mu)$ preserving the measure. Let $Y$ be an external $U$-cross section for some neighborhood of the identity $U\subseteq G$.
    Consider $g\in G$ and an open subset $V\subseteq U$ such that $gVg^{-1}V\cup g^{-1}V^{-1}g\subseteq U$. Define \[D_g\coloneqq \left\{y\in Y\colon \Psi(g,y)\in\Psi(V\times Y)\right\}\]
and consider the map $\ph\colon D_g\rightarrow Y$ defined by 
\[\ph(y)\text{ is the unique }y'\in Y\text{ such that }\Psi(g,y)\in \Psi(V\times \{y'\}).\] 
   Then $D_g$ is measurable and $\ph$ is a bijective measure preserving isomorphism between $D_g$ and $\ph(D_g)$. 
\end{prop}
\begin{proof}
Let us first show that $D_g\subseteq Y$ is measurable. For every $k\in\mathbb N$, take a neighborhood of the identity $V_k\subseteq G$ such that $\bar V_k\subseteq V$ and $\cup_k V_k=V$. Then for every $k$ \[\Psi^{-1}(\Psi(V_k\times Y))=\{(g,y)\in G\times Y\colon \Psi(g,y)\in \Psi(V_k\times Y)\}=\cup_{g\in G}\{g\}\times D_g^k\]
where $D_g^k\coloneqq \left\{y\in Y\colon \Psi(g,y)\in\Psi(V_k\times Y)\right\}$. Since $\Psi^{-1}(\Psi(V_k\times Y))$
is measurable, Fubini's theorem implies that $D_g^k$ is measurable for almost every $g$ and every $k$. Now if we fix $g\in G$, there is a sequence $(g_n)_n$ converging to $g$ such that $D_{g_n}^k$ is measurable for every $k$ and $n$. Then using the equivariance of $\Psi$ and the fact that $V$ is open, one can show that \[D_g=\cup_k\cap_N\cup_{n\geq N}D_{g_n}^k\]
so that $D_g$ is measurable. 

Observe that since $\Psi\bigr|_{U\times Y}$ is injective, for every $y\in D$ there exist unique $v_y\in V$ and $y'\in Y$ such that $\Psi(g,y)=\Psi(v_y,y')$. So $\ph$ is well-defined. 

We claim that $\ph$ is injective. Take $y_1,y_2\in Y$ and let $v_i=v_{y_i}$ be such that $\Psi(g,y_i)=\Psi(v_i,\ph(y_i))$ as before. If $\ph(y_1)=\ph(y_2)$, then \[v_1^{-1}\Psi(g,y_1)=\Psi(\id_G,\ph(y_1))=\Psi(\id_G,\ph(y_2))=v_2^{-1}\Psi(g,y_2)\]
and hence $\Psi(g^{-1}v_1^{-1}g,y_1)=\Psi(g^{-1}v_2^{-1}g,y_2)$. By hypothesis $g^{-1}V^{-1}g\subseteq U$ and $\Psi\bigr|_{U\times Y}$ is injective, so $y_1=y_2$ as claimed.

Moreover since \[\Psi(gv,y)=gvg^{-1}\Psi(g,y)=gvg^{-1}\Psi(v_y,\ph(y))\]
the hypothesis that $gVg^{-1}V\subseteq U$ implies that 
\[g\Psi(V\times\{y\})=\Psi(gVg^{-1}v_y\times\{\ph(y)\})\subseteq \Psi(U\times Y).\]
That is, we showed that $g\colon\Psi(V\times D_g)\rightarrow g\Psi(V\times D_g)$ is a measurable isomorphism which maps ``horizontal lines to horizontal lines''. Therefore $\ph$ is a measurable isomorphism. Finally since $G$ is unimodular $\haar(gVg^{-1}v_y)=\haar(V)$. Moreover since $G$ preserves the measure an easy application of Fubini yields that $\ph$ is measure preserving.
\end{proof}

\begin{crl}\label{crl:cross is measurable}
There is a countable family of partial measure preserving isomorphisms $\{\ph_j\}_j$ such that for every $(y,y')\in \rel_Y$ there is $j$ such that $\ph_j(y)=y'$ 
\end{crl}
\begin{proof}
  Since $G$ is paracompact, there is a countable family $\{g_j\}_j\subseteq G$ and open subsets $V_j\subseteq U$ such that $gV_jg^{-1}V_j\cup g^{-1}V^{-1}_jg\subseteq U$ such that $\cup V_j^{-1}g_j=G$. For every $j$, define $\ph_j$ as in Proposition \ref{prop:eqrelcross} with respect to $g_j$ and $V_j$. 
  
  If $(y,y')\in\rel_Y$, then there is $g\in G$ such that $g\Psi(\id_G,y)=\Psi(\id_G,y')$. Consider $j$ such that $g=v^{-1}g_j$ with $v\in V_j$. Finally observe that $\ph_j(y)=y'$, since \[\Psi(g_j,y)=\Psi(vg,y)=\Psi(v,y')\in \Psi(V_j\times \{y'\}).\qedhere\]
\end{proof}

  Let us warn the reader that we have not proven that $\rel_Y$ is a measurable equivalence relation in the sense of \cite[Section 2.1]{CGS}. However, as a consequence of Theorem \ref{thm:mainrel}, we will get that the cross equivalence relation of the external cross sections of the ultraproduct is. Therefore we will be able to use the results in \cite{CGS}.

\begin{prop}
   In the notation of Definition \ref{dfn:extcross}, assume that the action of $G$ is free. Then the cross equivalence relation $\rel_Y$ on $Y$ constructed in the Proposition \ref{prop:eqrelcross} satisfies: 
   \begin{itemize}
   \item $(\mathcal R_Y,\nu)$ is ergodic if and only if the action of $G$ is ergodic;
   \item $(\mathcal R_Y,\nu)$ has infinite orbits almost everywhere if and only if $G$ is non-compact.
   \end{itemize}
\end{prop}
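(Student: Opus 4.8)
The plan is the classical correspondence between a cross section and a ``flow under a function'', carried out directly on the external cross section $\Psi$: I put $\rel_Y$-invariant subsets of $Y$ into a measure-preserving correspondence with $G$-invariant subsets of $X$ and then read off both statements. Throughout, ``$\rel_Y$-invariant'' and ``mod null'' mean up to $\nu$-null sets, and I freely use Proposition \ref{prop:eqrelcross}, which identifies $\rel_Y$ with the orbit relation $\{(y,y'):g\Psi(\id_G,y)=\Psi(\id_G,y')\text{ for some }g\in G\}$ outside a null set. For measurable $A\subseteq Y$ put $B_A:=\Psi(G\times A)$: by Proposition \ref{prop:Tsection} this is measurable, it is $G$-invariant since $\Psi$ is equivariant, $B_A\cup B_{Y\setminus A}=\Psi(G\times Y)$ has full measure, and $\mu(\Psi(U\times A))=\covol(Y)^{-1}\haar(U)\nu(A)$. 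The first step is the lemma that (a) $A$ is $\rel_Y$-invariant iff $\mu(B_A\cap B_{Y\setminus A})=0$, and (b) every $G$-invariant $B\subseteq X$ equals $B_A$ mod null for some $\rel_Y$-invariant $A$, and then $\nu(A)=\covol(Y)\haar(U)^{-1}\mu(B\cap\Psi(U\times Y))$. For the forward direction of (a): if $x=\Psi(g,y)=\Psi(g',y')$, then $g'g^{-1}\Psi(\id_G,y)=\Psi(\id_G,y')$, so $(y,y')$ is in the orbit relation, hence (mod null) in $\rel_Y$, hence $y\in A\Leftrightarrow y'\in A$; covering $G$ by countably many translates $g_jU$, on each of which $\Psi$ is injective and scales $\haar\times\nu$ by $\covol(Y)$ onto its image, a Fubini argument upgrades this to $\mu(B_A\cap B_{Y\setminus A})=0$. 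Conversely, if $A$ is not $\rel_Y$-invariant there are an index $j$ and a positive-measure $A_0\subseteq A\cap D_j$ with $\ph_j(A_0)\subseteq Y\setminus A$, where $D_j,\ph_j$ are as in Proposition \ref{prop:eqrelcross}; using $\Psi(g_j,y)=\Psi(v,\ph_j(y))$ and perturbing $g_j$ by a small symmetric neighborhood $W$ of $\id_G$ inside the window of Remark \ref{rmk:restriction}, one gets $\Psi(g_jW\times A_0)\subseteq B_A\cap B_{Y\setminus A}$, a set of measure $\covol(Y)^{-1}\haar(W)\nu(A_0)>0$.

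For (b): $G$-invariance of $B$ together with injectivity of $\Psi|_{U\times Y}$ forces $\Psi|_{U\times Y}^{-1}(B\cap\Psi(U\times Y))$ to be independent of the $U$-coordinate, hence a cylinder $U\times A$; Fubini makes $A$ measurable mod null, Proposition \ref{prop:eqrelcross} and $G$-invariance of $B$ make $A$ $\rel_Y$-invariant, and $B=\bigcup_j g_j\bigl(B\cap\Psi(U\times Y)\bigr)=\bigcup_j g_j\Psi(U\times A)=B_A$ mod null, which also gives the stated formula for $\nu(A)$. With (a)--(b) in hand, ergodicity is immediate. If $A$ is $\rel_Y$-invariant with $0<\nu(A)<1$, then $B_A$ is $G$-invariant with $\mu(B_A)\geq\covol(Y)^{-1}\haar(U)\nu(A)>0$ and, since $B_A\cap B_{Y\setminus A}$ is null while $B_A\cup B_{Y\setminus A}$ is conull, $\mu(X\setminus B_A)=\mu(B_{Y\setminus A})\geq\covol(Y)^{-1}\haar(U)\nu(Y\setminus A)>0$, so the $G$-action is not ergodic. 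Conversely, a $G$-invariant $B$ with $0<\mu(B)<1$ is $B_A$ mod null with $A$ $\rel_Y$-invariant, and since $\Psi(G\times Y)$ is conull some $G$-translate of $\Psi(U\times Y)$ meets each of $B$ and $X\setminus B$ in positive measure; $G$-invariance and the formula for $\nu(A)$ then give $0<\nu(A)<1$, so $\rel_Y$ is not ergodic.

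For the orbit-size statement I run a Fubini argument on $G\times X$. By freeness every orbit $Gx$ is a copy of $G$, and for a.e.\ $y\in Y$ the set $\{g\in G:\Psi(g,y)\in\Psi(U\times Y)\}$ is the \emph{disjoint} union $\bigsqcup_{y'}Ug_{y'}$ over the points $y'$ of the $\rel_Y$-class of $y$, where $\Psi(\id_G,y')=g_{y'}\Psi(\id_G,y)$; disjointness is precisely injectivity of $\Psi|_{U\times Y}$, so this set has Haar measure $k(y)\haar(U)$, with $k(y)$ the cardinality of that class. If $G$ is compact, this is at most $\haar(G)$, so $k(y)\leq\haar(G)/\haar(U)<\infty$ and all $\rel_Y$-orbits are finite. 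If $G$ is non-compact, suppose the $G$-invariant set $X_j:=\{x\in X:\#\{y'\in Y:\Psi(\id_G,y')\in Gx\}=j\}$ had positive measure for some $j\in\bn$; computing $\int_{X_j}\int_G\mathbf 1[gx\in\Psi(U\times Y)]\,d\haar(g)\,d\mu(x)$ in the two orders gives $j\haar(U)\mu(X_j)<\infty$ on one side and $\haar(G)\,\mu(\Psi(U\times Y)\cap X_j)$ on the other, and the latter is infinite because $\mu(\Psi(U\times Y)\cap X_j)>0$ (again since $\Psi(G\times Y)$ is conull and $X_j$ is $G$-invariant of positive measure). This contradiction shows $\mu(\bigcup_j X_j)=0$, i.e.\ $\rel_Y$ has infinite orbits almost everywhere.

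The only genuine difficulty is the measurability bookkeeping: Proposition \ref{prop:eqrelcross} identifies $\rel_Y$ with the orbit relation only up to a null set, and slices such as the cylinder $U\times A$ above are only determined mod null, so every assertion of the form ``$\,\cdot\,$ is null'' or ``$\,\cdot\,$ equals $\,\cdot\,$ mod null'' must be justified by a Fubini argument over the pieces $g_jU\times Y$, on each of which $\Psi$ is injective and multiplies measure by $\covol(Y)$, exactly in the spirit of the proofs of Propositions \ref{prop:Tsection}--\ref{prop:eqrelcross}. The conceptual content is the standard cross-section dictionary.
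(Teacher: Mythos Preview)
Your argument is correct and is precisely the standard cross-section dictionary that the paper has in mind. In fact the paper does not give a proof at all: it simply says ``The proof is similar to the proof of Proposition \ref{prop:propercross} and therefore it is omitted'', and Proposition \ref{prop:propercross} in turn just cites \cite[Proposition 4.3]{kye2015}. Your write-up is exactly the kind of detailed verification one would supply in the external-cross-section setting, using Propositions \ref{prop:Tsection}--\ref{prop:eqrelcross} in place of the Borel selection arguments available in the standard case. The correspondence $A\mapsto B_A$ between $\rel_Y$-invariant subsets of $Y$ and $G$-invariant subsets of $X$, together with the Fubini computation comparing $j\,\haar(U)\,\mu(X_j)$ with $\haar(G)\,\mu(\Psi(U\times Y)\cap X_j)$, is the standard route; the only point worth flagging is that measurability of $X_j$ deserves a word (use Proposition \ref{prop:Tgood} to identify $X_j$, up to a null set, with $\Psi(T\cap(G\times Y_j))$ for $Y_j=\{y:|[y]_{\rel_Y}|=j\}$, which is measurable by Proposition \ref{prop:Tsection}).
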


The proof is similar to the proof of Proposition \ref{prop:propercross} and therefore it is omitted. Let us finally show that an external cross section of a standard Borel space naturally induces a cross section in the space. For this we will need the following construction. Suppose now that the \lcsc group $G$ acts on the standard probability space $(X,\mu)$ in a Borel manner and let $A\subseteq X$ be a Borel subset. For every open neighborhood of the identity $U\subseteq G$ we define $\binte^U(A)$ to be the set of the $x\in X$ such that for almost every $g\in U$ we have $gx\in A$, that is \[\binte^U(A)\coloneqq\left\lbrace x\in X\colon \lambda(\{g\in U\colon gx\in A\})=\lambda(U)\right\rbrace.\]
The main point is that $\binte^U(A)$ is Borel. Indeed since the action is Borel, the set $\{(g,x)\in U\times X\colon gx\in A\}$ is Borel. Therefore Lemma 417B (b) of \cite{fremlin} tells us that the map $x\mapsto \haar(\{g\in U\colon gx\in A\})$ is Borel which directly implies that $\binte^U(A)$ is Borel.

\begin{prop}\label{prop:externalborel}
   Let $G$ be a \lcsc unimodular group, fix a Haar measure $\haar$ and a neighborhood of the identity $U$. Consider a Borel action of $G$ on the standard probability space $(X,\mu)$ and let $(Y,\Psi)$ be an external $U$-cross section. Then there exists a full measure subset $Y_0\subseteq Y$ such that $Z\coloneqq \Psi(\{\id_G\}\times Y_0)\subseteq X$ is a Borel $U$-cross section. Moreover the equivalence relation on $Y$ constructed before Proposition \ref{prop:eqrelcross} is orbit equivalent via $\Psi$ to the one on $Z$ of Proposition \ref{prop:propercross}.
\end{prop}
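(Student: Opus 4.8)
The plan is to pull out of $\Psi$ the single map $\iota:=\Psi(\id_G,\cdot)\colon Y\to X$, to show that after deleting a $\nu$-null set it becomes a Borel injection defined on a standard Borel space, and to take its image as the Borel cross section $Z=\Psi(\{\id_G\}\times Y_0)$. The first, slightly delicate, point is that $\iota$ is measurable at all: a priori the slice $\{y:(\id_G,y)\in\Psi^{-1}(A)\}$ of a measurable subset of $G\times Y$ need not be measurable. However, applying Fubini (which holds for our complete $\sigma$-finite spaces) to the sets $\Psi^{-1}(A_k)$ for a countable family $\{A_k\}$ generating the Borel structure of $X$, one finds a single $g_0\in G$ for which $y\mapsto\Psi(g_0,y)$ is measurable; by $G$-equivariance $\iota(y)=g_0^{-1}\Psi(g_0,y)$, and since $g_0^{-1}$ acts in a Borel way on $X$ this makes $\iota$ measurable. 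Injectivity of $\iota$ is immediate from the fact that $\Psi\bigr|_{U\times Y}$ is bijective and $\id_G\in U$.

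Next I would give $Y$ a standard Borel structure. Since $\Psi\bigr|_{U\times Y}$ is a measure-preserving isomorphism onto the measurable subset $\Psi(U\times Y)$ of the standard probability space $(X,\mu)$, the space $(U\times Y,\haar\bigr|_U\times\nu)$, and hence also its factor $(Y,\nu)$, is separable and perfect, so it is isomorphic mod null to a standard probability space. Transporting $\nu$ and $\iota$ through such an isomorphism and removing a null set, I may assume $Y$ is standard Borel and $\iota\colon Y\to X$ a measurable injection of standard Borel spaces. A $\nu$-measurable map between standard Borel spaces agrees with a Borel map on a conull set, so there is a conull Borel $Y_0\subseteq Y$ with $\iota\bigr|_{Y_0}$ Borel (shrinking it we keep $\iota\bigr|_{Y_0}$ injective); by the Lusin--Suslin theorem $Z:=\iota(Y_0)$ is a Borel subset of $X$ and $\iota\colon Y_0\to Z$ is a Borel isomorphism.

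I would then check the two assertions by unwinding the definitions. Injectivity of $\Phi\colon U\times Z\to X$ follows from $u_1\iota(y_1)=u_2\iota(y_2)\Rightarrow\Psi(u_1,y_1)=\Psi(u_2,y_2)$ together with injectivity of $\Psi\bigr|_{U\times Y}$. For $\mu(X\setminus GZ)=0$: the set $GZ=\Psi(G\times Y_0)$ differs from the conull set $\Psi(G\times Y)$ only by $\Psi(G\times(Y\setminus Y_0))$, which is $\mu$-null since $G$ is covered by countably many translates of $U$, the restriction $\Psi\bigr|_{U\times(Y\setminus Y_0)}$ is injective, the set $U\times(Y\setminus Y_0)$ has $\haar\bigr|_U\times\nu$-measure zero (so its $\Psi$-image is $\mu$-null by Proposition \ref{prop:Tsection}) and the $G$-action preserves $\mu$. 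Finally, for $y,y'\in Y_0$ the pair $(y,y')$ lies in the cross equivalence relation of Proposition \ref{prop:eqrelcross} exactly when $g\iota(y)=\iota(y')$ for some $g\in G$, i.e.\ exactly when $(\iota(y),\iota(y'))$ lies in the relation $\rel_Z$ of Proposition \ref{prop:propercross}; comparing the normalizations $\Psi_*(\haar\bigr|_U\times\nu)=\covol(Y)\,\mu\bigr|_{\Psi(U\times Y)}$ and $\Phi_*(\haar\bigr|_U\times\nu_Z)=\covol(Z)\,\mu\bigr|_{\Phi(U\times Z)}$ gives $\covol(Z)=\covol(Y)$ and $\iota_*\nu=\nu_Z$, so $\iota$ is the asserted orbit equivalence.

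The main obstacle is precisely the passage from the purely measure-theoretic datum $(Y,\Psi)$ to genuine Borel objects. What makes it unavoidable is that the object we want, a cross section, is itself $\mu$-null, so no ``mod null'' manipulation carried out inside $X$ can produce it; one must build an honest Borel injection out of $Y$, which forces one first to equip $Y$ with a standard Borel structure (via separability and perfectness) and to observe that $\iota$, although obtained by restricting $\Psi$ to a $\haar\times\nu$-null slice of $G\times Y$, is nevertheless measurable. After that, Lusin--Suslin and routine bookkeeping finish the argument.
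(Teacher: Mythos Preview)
Your argument is correct and follows a genuinely different path from the paper's. The paper never tries to make $Y$ standard Borel or to show that the slice map $\iota=\Psi(\id_G,\cdot)$ is measurable; instead it works entirely inside $X$. It chooses compact $W_i\subset U$ with $\bigcap_i W_i=\{\id_G\}$, replaces each measurable set $A_i:=\Psi(W_i\times Y)$ by a Borel $A_i'\subset A_i$ of the same measure, and sets $Z:=\bigcap_i \binte^{W_i}(A_i')$, where $\binte^{W_i}$ is the ``essential $W_i$-interior'' defined just before the statement. This Borel set captures $\Psi(\{\id_G\}\times Y)$ up to null sets because $\Psi^{-1}(Z)\cap(U\times Y)\subset\{\id_G\}\times Y$, and the rest is bookkeeping.

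What the two approaches buy: the paper's route is more elementary, needing only Fubini and the $\binte$ operator, and it never leaves $X$; it avoids Rokhlin's characterization of Lebesgue spaces and Lusin--Suslin entirely. Your route is heavier in machinery but conceptually transparent: it makes the identification $Y\cong Z$ via $\iota$ explicit from the outset, which is why your verification of the orbit-equivalence part is a one-liner. Your Fubini-plus-equivariance trick for the measurability of $\iota$ (finding a good slice $g_0$ and then translating back) is a pleasant workaround for the fact that the $\{\id_G\}$-slice of a $(\haar\times\nu)$-measurable set need not be $\nu$-measurable, and is worth remembering independently.
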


The proof of the proposition when $G$ has no small subgroups is quite straightforward. Indeed take a Borel subset $A'\subseteq \Psi(U\times Y)$ of full measure and Fubini implies that $Z\coloneqq \binte^U(A')\subseteq \Psi(\{\id_G\}\times Y)$ is a Borel $U$-cross section. In general we need to work a bit more. 

\begin{proof}
 Let $W_i$ be compact neighborhoods of the identity such that $W_i\subseteq U$ and $\cap_i W_i=\{\id_G\}$. Consider the measurable subsets $A_i\coloneqq\Psi(W_i\times Y)\subseteq X$ and take a Borel subset $A_i'\subseteq A_i$ such that $\mu(A_i\setminus A_i')=0$. By the above discussions, the sets $Z_i\coloneqq\binte^{W_i}(A_i')$ are Borel. By Fubini, $\Psi^{-1}(Z_i)\cap \{\id_G\}\times Y$ has full measure. 
 Set $Z\coloneqq \cap_i Z_i$ and observe that \[\Psi^{-1}(Z)\cap (U\times Y)\subseteq \cap_i W_i\times Y\subseteq \{\id_G\}\times Y.\] The moreover part follows easily from the definitions.
\end{proof}

\subsection{Cross sections and standard factors}

Working outside the context of standard probability spaces has the big disadvantage that one cannot use many of the standard facts about ergodic theory. Many of the arguments can be easily adapted to the more general setting, as we have seen for cross sections. In some other cases, it is possible to reduce the problem to the context of standard probability spaces by the means of a \textit{standard factor}. This phenomenon is widely used in the context of model theory: the \textit{downward Löwenheim–Skolem theorem} roughly states that every model of a countable theory has a countable model. We will now state and prove a version of this theorem for actions of \lcsc groups, see Proposition \ref{prop:realfact}, and for actions with a fixed cross section, see Proposition \ref{prop:standardcross}. These results are not necessary to obtain our main theorems and applications, however these propositions will allow us to ignore some technicalities and, in some sense, always pretend we are working with standard Borel spaces.  

The following proposition is a slightly more general version of \cite[Proposition B.5]{zim1984}. We will not present the proof of it since it is basically the same as the case of standard Borel spaces. 

\begin{prop}\label{prop:zb5}
  Let $G$ be a \lcsc group. Suppose that $G$ acts on the probability space $(X,\rb_X,\mu)$ measurably and preserving the measure. Assume furthermore that $G$ acts on the standard Borel space $(Z,\eta)$ preserving the measure and in a Borel manner.  Let $T\colon X\rightarrow Z$ be an almost everywhere surjective measurable and measure preserving map such that for every $g\in G$ and almost every $x\in X$ we have that $T(gx)=gT(x)$. Then there exist a co-null subset $Z_0\subseteq Z$ and a measure preserving surjective map $T'\colon X\rightarrow Z_0$ which is $G$-equivariant and is almost everywhere equal to $T$. 
\end{prop}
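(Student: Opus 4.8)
The plan is to follow the classical construction behind \cite[Proposition B.5]{zim1984}, paying attention to the fact that $X$ is only a complete probability space. Write $\Phi_X\colon G\times X\to X$ and $\Phi_Z\colon G\times Z\to Z$ for the two action maps; by hypothesis $\Phi_X$ is measurable and $\Phi_Z$ is Borel. First I would show that \[N:=\{(g,x)\in G\times X:\ T(gx)\neq gT(x)\}\] is measurable: the maps $(g,x)\mapsto T(\Phi_X(g,x))$ and $(g,x)\mapsto \Phi_Z(g,T(x))$ are measurable into $Z$, and the diagonal of $Z\times Z$ is Borel since $Z$ is standard, so $N$ is the preimage of its complement. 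Every vertical slice $N_g=\{x:\ T(gx)\neq gT(x)\}$ is $\mu$-null by hypothesis, hence Fubini's theorem (in the form valid for complete $\sigma$-finite spaces, as used after Definition \ref{dfn:measurability}) shows $N$ is $\haar\times\mu$-null, and a second application gives a conull set of $x\in X$ along which the slice $N^x=\{g:\ T(gx)\neq gT(x)\}$ is $\haar$-null.

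Next I would introduce the everywhere-defined measurable map $\widetilde T\colon G\times X\to Z$, $\widetilde T(g,x):=g^{-1}T(gx)$, which satisfies the cocycle identity $\widetilde T(g,hx)=h\,\widetilde T(gh,x)$ for all $g,h\in G$. By the previous step, for $\mu$-almost every $x$ the function $g\mapsto\widetilde T(g,x)$ is $\haar$-almost everywhere equal to the constant $T(x)$. Fixing a neighbourhood of the identity $U_0\subset G$ with $0<\haar(U_0)<\infty$, I would define $T'\colon X\to Z$ by letting $T'(x)$ be the (then essentially unique) value attained by $g\mapsto\widetilde T(g,x)$ on $\haar$-almost all of $U_0$, whenever such a value exists, and $T'(x):=z_0$ for a fixed basepoint otherwise. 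A routine verification using the measurability of $\widetilde T$ and Fubini shows that $T'$ is measurable and coincides with $T$ outside a $\mu$-null set; in particular $T'$ is measure preserving, and, exactly as for the almost-everywhere-surjective map $T$ in the standard Borel setting, after modifying $T'$ on a null set we may take its image to be a conull measurable set $Z_0\subset Z$.

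The key point is equivariance. Let $X_2\subset X$ be the conull set of $x$ for which $\widetilde T(g,x)=T'(x)$ for $\haar$-almost every $g$. Fix $x\in X_2$ and $h\in G$. Since right translation by $h$ preserves the $\haar$-null ideal, $\widetilde T(gh,x)=T'(x)$ for $\haar$-almost every $g$, so the cocycle identity gives $\widetilde T(g,hx)=h\,\widetilde T(gh,x)=hT'(x)$ for $\haar$-almost every $g$; hence $g\mapsto\widetilde T(g,hx)$ is essentially constant on $U_0$ with value $hT'(x)$, and therefore $T'(hx)=hT'(x)$. Thus $T'(gx)=gT'(x)$ for every $g\in G$ and every $x\in X_2$, which is precisely the $G$-equivariance asked for. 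To upgrade this to a genuinely invariant domain, set $X_0:=\{x\in X:\ T'(gx)=gT'(x)\ \text{for all }g\in G\}$: it contains $X_2$ so it is conull, it is visibly $G$-invariant, and it is measurable because its complement is the projection to $X$ of $\{(g,x):\ T'(gx)\neq gT'(x)\}$, a set that by the above is contained in $G\times(X\setminus X_2)$ and hence $\haar\times\mu$-null. Restricting attention to $X_0$ completes the construction.

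I expect the only real work to be the bookkeeping of measurability for the non-standard space $X$: checking that $N$, $\widetilde T$ and $T'$ are measurable with respect to the completed $\sigma$-algebra $\rb_X$, justifying the two uses of Fubini, and carrying out the standard modification that turns the image of $T'$ into a conull measurable $Z_0$. None of these presents a conceptual difficulty — this is why the statement can be said to be ``basically the same as the case of standard Borel spaces'' — but, as elsewhere in the paper, Proposition \ref{prop:automacont} and the Fremlin form of Fubini are what make the argument go through verbatim.
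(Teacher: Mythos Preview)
Your proposal is correct and follows precisely the route the paper intends: the paper gives no proof at all, stating only that ``it is basically the same as the case of standard Borel spaces'' and referring to \cite[Proposition B.5]{zim1984}, and you have written out exactly that argument (the Fubini step, the averaged map $\widetilde T$, the essential-value definition of $T'$, and the cocycle computation for equivariance) while flagging the completeness/measurability checks needed for the non-standard $X$. One small caution: your phrase ``after modifying $T'$ on a null set'' for the surjectivity onto $Z_0$ is slightly dangerous, since an arbitrary modification would destroy the hard-won everywhere-equivariance; the cleaner route is to set $Z_0:=T'(X_0)$, note it is $G$-invariant, and argue (via $T'_*\mu=\eta$) that its complement has inner measure zero, hence $Z_0$ is conull in the completion --- but this is a routine endgame and does not affect the substance of your argument.
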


Combining the above proposition with a well known theorem of Mackey, \cite{mac1962}, we get the following.

\begin{prop}\label{prop:realfact}
 Let $G$ be a \lcsc group which acts measurably on the probability space $(X,\rb_X,\mu_X)$ preserving the measure and let $\rb_X'\subset\rb_X$ be a $G$-invariant separable sub-$\sigma$-algebra. Then there exist a Borel action of $G$ on a standard probability space $(Z,\rb_Z,\mu_Z)$ and a measure preserving, $G$-equivariant map $T\colon X\rightarrow Z$ which induces an isomorphism of measure algebras between $(X,\rb_X',\mu_X)$ and $(Z,\rb_Z,\mu_Z)$. 
\end{prop}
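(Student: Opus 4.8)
The plan is to treat this as a combination of three classical facts: the realization of a separable measure algebra by a standard probability space, Mackey's point-realization theorem for measure-algebra actions, and Proposition \ref{prop:zb5}. First I would forget the group and build the standard model. Since $\rb_X'$ is separable, fix a countable family $\{A_k\}_{k\in\bn}\subset\rb_X'$ generating $\rb_X'$ up to null sets and let $T_0\colon X\to\{0,1\}^\bn=:Z_1$ be the measurable map $x\mapsto(\mathbf 1_{A_k}(x))_k$. Put $\nu_0:=(T_0)_*\mu$; then $T_0$ is measure preserving, and because $\rb_{Z_1}$ is generated by the coordinate projections, which pull back under $T_0$ to the $A_k$, the map $T_0$ induces an isomorphism of measure algebras between $(X,\rb_X',\mu)$ and $(Z_1,\rb_{Z_1},\nu_0)$.

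Next, transport the action. Since $\rb_X'$ is $G$-invariant and the $G$-action on $(X,\rb_X,\mu)$ preserves $\mu$, the group $G$ acts on the measure algebra of $(X,\rb_X',\mu)$ by measure-algebra automorphisms, hence via the isomorphism above on that of $(Z_1,\nu_0)$; this is a homomorphism $\alpha\colon G\to\aut(Z_1,\nu_0)$. Because the action on $(X,\rb_X,\mu)$ is continuous, the maps $g\mapsto gA_k$ are continuous in the measure-algebra metric, and since the $A_k$ generate, $\alpha$ is continuous for the weak (Polish) topology on $\aut(Z_1,\nu_0)$. Mackey's theorem \cite{mac1962} then applies: a continuous action of a \lcsc group on a standard probability space by measure-preserving transformations is, after passing to conull subsets, conjugate to a Borel action on a standard Borel space. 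This gives a standard probability space $(Z,\rb_Z,\nu)$ carrying a Borel measure-preserving $G$-action together with a measure-algebra isomorphism onto $(Z_1,\nu_0)$ intertwining $\alpha$; realizing that isomorphism as a Borel isomorphism between conull subsets and precomposing with $T_0$ yields a measurable, measure-preserving, almost-everywhere surjective map $T\colon X\to Z$ which induces the isomorphism $(X,\rb_X',\mu)\cong(Z,\rb_Z,\nu)$ and satisfies $T(gx)=gT(x)$ for every $g\in G$ and $\mu$-almost every $x$, the exceptional null set depending on $g$.

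Finally, apply Proposition \ref{prop:zb5} to the continuous and measurable $G$-action on $(X,\rb_X,\mu)$, the Borel $G$-action on $(Z,\nu)$, and the map $T$. It produces a conull $Z_0\subset Z$ and a genuinely $G$-equivariant, measure-preserving surjection $T'\colon X\to Z_0$ that is $\mu$-a.e.\ equal to $T$. Being a.e.\ equal to $T$, the map $T'$ still induces the isomorphism of measure algebras between $(X,\rb_X',\mu)$ and $(Z,\rb_Z,\nu)$, which is the conclusion.

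I expect the main obstacle to be the bookkeeping around Mackey's theorem rather than any estimate: one must check that $\alpha$ genuinely satisfies the hypotheses of the point-realization theorem (this is exactly where separability of $\rb_X'$ and continuity of the action on $X$ are used), and then keep track that after all the identifications $T$ is still a bona fide point map $X\to Z$, measurable and measure preserving, with the per-element almost-everywhere equivariance that Proposition \ref{prop:zb5} requires. The remaining ingredients---the explicit construction of $T_0$, the passage to conull subsets, and the final almost-everywhere-equality argument---are routine.
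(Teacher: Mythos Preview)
Your argument is correct and follows essentially the same route as the paper: build the map into $\{0,1\}^\bn$ via characteristic functions of a countable generating family, push forward the measure, invoke Mackey's point-realization theorem to obtain a Borel $G$-action realizing the induced measure-algebra action, and then apply Proposition~\ref{prop:zb5} to upgrade almost-everywhere equivariance to genuine equivariance. The only cosmetic difference is that you pass through an auxiliary space $(Z,\nu)$ produced by Mackey and then identify it with $(Z_1,\nu_0)$, whereas the paper applies Mackey directly on $Z=\{0,1\}^\bn$; this does not change the substance of the proof.
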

\begin{proof}
  Let $\{B_i\}_{i\in\bn}$ be a countable generating set for $\rb'_X$. Define $T\colon X\rightarrow \{0,1\}^\bn$ by $T(x)\coloneqq(\chi_{B_i}(x))_i$. Then the preimage of every Borel set of $Z\coloneqq\{0,1\}^\bn$ is in $\rb_X'$. We define a measure $\mu_Z$ on $Z$ by pushing down the measure $\mu_X$. Mackey's theorem, \cite{mac1962}, implies that there exists a Borel, measure preserving action of $G$ on $(Z,\mu_Z)$ which represents the action of $G$ on (the measure algebra of) $(X,\rb'_X,\mu_X)$. The measurable map $T$ is not necessarily $G$-invariant but if $g\in G$, then $(Tg)^{-1}$ and $(gT)^{-1}$ represent the same function at the level of measure algebras so that $T$ satisfies the hypothesis of Proposition \ref{prop:zb5}. 
\end{proof}

\begin{prop}\label{prop:standardcross}
   Let $G$ be a \lcsc unimodular group which acts measurably on the probability space $(X,\rb_X,\mu)$ preserving the measure and let $(Y,\rb_Y,\nu_Y,\Psi)$ be an external $U$-cross section, for some neighborhood of the identity $U\subseteq G$. Let $\rb$ be a separable sub-$\sigma$-algebra of $\rb_Y$. Then, up to removing a null-set, there are
\begin{itemize}
	\item a Borel action of $G$ on a standard probability space $(Z,\rb_Z,\mu_Z)$ and a measure preserving $G$-equivariant map $T\colon (X,\rb_X,\mu_X)\rightarrow (Z,\rb_Z,\mu_Z)$;
	\item a Borel cross section $(W,\rb_W,\nu_W)$ for the action of $G$ on $(Z,\rb_Z,\mu_Z)$ and a measure preserving map $S\colon (Y,\rb_Y,\nu_Y)\rightarrow (W,\rb_W,\nu_W)$ which is a factor of cross equivalence relations and which induces an injective map on $\rb\subseteq \rb_Y$;
\end{itemize}
   such that the diagram commutes
   \[\begin{tikzcd}
G\times Y \arrow{r}{\Psi} \arrow[swap]{d}{\mathrm{id}\times S} & X \arrow{d}{T} \\
G\times W \arrow{r}{\Phi} & Z
\end{tikzcd}
\]
   where $\mathrm{id}\colon G\rightarrow G$ is the identity map and $\Phi(g,y)=gy$ is the action map as defined before Definition \ref{dfn:cross}.
\end{prop}
\begin{proof}
  Let us fix a countable family $\{\ph_j\}_j$ as in Corollary \ref{crl:cross is measurable} for $\rel_Y$. Denote by $\rb'$ the separable $\sigma$-algebra generated by $\rb$, containing the domains of definition $\{D_j\}_j$ of $\{\ph_j\}_j$ and invariant by each $\ph_j$. Let $\mathcal D\subseteq \rb'$ be a countable generating subset invariant by the family $\{\ph_j\}_j$. As in the proof of Proposition \ref{prop:realfact}, we set $W\coloneqq \{0,1\}^{\mathcal D}$ and let $S\colon Y\rightarrow W$ defined by $S(x)\coloneqq (\chi_D(x))_{D\in\mathcal D}$. Equip $W$ with the push-forward measure denoted $\nu_W$ and denote the $\sigma$-algebra of measurable subsets by $\rb_W$. Clearly the measure algebras of $\rb_W$ and $\rb'$ are the same (up to null-sets) and hence for every $j$ we have a partially defined measure preserving isomorphism $\hat\ph_j$ on $W$. Denote by $\rel_W$ the equivalence relation generated by the family $\{\hat\ph_j\}_j$ and clearly $S$ maps $\rel_Y$-classes to $\rel_W$-classes. The equivalence relation $\rel_W$ is the standard factor as constructed in \cite[Theorem 3.28]{CGS}.
  
  Denote by $\rb_G$ the $\sigma$-algebra of measurable subsets on $G$ with respect to the Haar measure. Then $\Psi(\rb_G\times \rb')$ is a $G$-invariant sub-$\sigma$-algebra of measurable subsets of $\rb_X$. Hence Proposition \ref{prop:realfact} gives us a standard probability space $(Z,\rb_Z,\mu_Z)$ on which $G$ acts in a Borel manner and preserving the measure and a measure preserving, $G$-equivariant map $T\colon X\rightarrow Z$. 
  
  The $G$-equivariant map $\Psi$ can be restricted to the $\sigma$-algebra $\rb_G\times \rb'$. Since $W$ and $Z$ are both standard, the map $\Psi$ induces a $G$-equivariant map $\Psi_W\colon G\times W\rightarrow Z$ and it is easy to check that $(W,\Psi_W)$ is an external cross section of the action of $G$ on $Z$. Clearly $\Phi_W\circ(\mathrm{id}\times S)=T\circ \Psi$.
  
  We claim that $\rel_W$ is the cross equivalence relation. Indeed let us fix $j$. Consider $g_j\in G$ and $V_j\subseteq G$ which are used to define $\ph_j$. Assume that $\hat\ph_j(w)=w'$ and let $y\in Y$ such that $S(y)=w$. Then $\ph_j(y)=y'$ for some $y'$ such that $S(y')=w'$, that is $\Psi(g,y)=\Psi(v,y')$. By the commutativity of the diagram, we get that $\Psi_W(g,w)=\Psi_W(v,w')$ and so the claim is proven. 
  
  Finally, we can use Proposition \ref{prop:externalborel} to replace the external cross section with a Borel cross section, up to a null set. So the proof is complete. 
\end{proof}

We showed in \cite[Theorem 3.28]{CGS} that measurable equivalence relations with countable classes have \textit{nice} standard factors, that is class-bijective factors which are weakly equivalent to the starting equivalence relation. Now we can combine the theorem with Proposition \ref{prop:standardcross} to obtain that action of locally compact groups have \textit{nice} standard factors. In particular, the \textit{cost} (Definition \ref{dfn:costlc}) can be detected by standard factors.

\subsection{Ultraproducts of cross sections}

Now that we have collected enough information about cross sections outside the context of standard probability spaces, we can finally state and prove the main technical theorem of this work: the ultraproduct of cross sections is a cross section of the ultraproduct. This will allow us to reduce problems of convergence of invariants for actions of locally compact groups to invariants of their cross sections.

\begin{thm}\label{thm:maincross}
  Let $G$ be a \lcsc unimodular group, fix an open neighborhood of the identity $U'\subseteq G$ and a Haar measure $\haar$. Suppose that $G$ acts on the sequence of standard probability spaces $(X_n,\mu_n)$. Assume that for every $n$ there exists a $U'$-cross section $Y_n\subseteq X_n$ such that $\{\covol(Y_n)^{-1}\}_n$ is bounded and denote by $\nu_n$ the measure on $Y_n$ as in Proposition \ref{prop:propercross}. Take a compact neighborhood of the identity $U\subseteq U'$. Let $Y_\ul$ be the ultraproduct of $\{(Y_n,\nu_n)\}_n$. Consider the function
\begin{align*}
  \Psi\colon G\times Y_\ul\rightarrow X_\ul^R\quad\text{  }\quad\Psi(g,[y_n]_\ul)\coloneqq [\Phi_n(g,y_n)]_\ul^{R}.
\end{align*}
Then $(Y_\ul,\Psi)$ is an external $U$-cross section of the action of $G$ on the $G$-invariant measurable subset $G[\Phi_n(U\times Y_n)]_\ul^R\subseteq  X_\ul^{R}$ with covolume $\covol(Y_\ul)=\lim_\ul\covol(Y_n)$.
\end{thm}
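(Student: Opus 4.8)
The plan is to verify the three conditions in Definition~\ref{dfn:extcross} for the pair $(Y_\ul,\Psi)$, where $Y_\ul=[Y_n]_\ul$ carries the Loeb measure $\nu_\ul$ built from the measures $\nu_n$ on $Y_n$ given by Proposition~\ref{prop:propercross}, and $\covol(Y_\ul):=\lim_\ul\covol(Y_n)$ (this limit exists and is finite: it is $\leq\liminf$ of a bounded sequence since $\covol(Y_n)^{-1}$ is bounded, i.e.\ $\covol(Y_n)$ is bounded below, and it is bounded above because $\covol(Y_n)\leq\haar(U')^{-1}$ as $\Phi_n(U'\times Y_n)$ injects into a probability space). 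The first thing I would do, however, is check that $\Psi$ is even well-defined on $G\times Y_\ul$ and $G$-equivariant: well-definedness on the $\sim_\ul$ classes in the $Y_n$ factor is immediate, and well-definedness into $X_\ul^R$ (rather than $X_\ul$) is exactly why the \emph{regular} ultraproduct is used --- if $y_n=y_n'$ for $\ul$-a.e.\ $n$ then $\Phi_n(g,y_n)=\Phi_n(g,y_n')$ for $\ul$-a.e.\ $n$, so there is nothing subtle here; $G$-equivariance $\Psi(hg,y_\ul)=h\Psi(g,y_\ul)$ follows from $\Phi_n(hg,y_n)=h\Phi_n(g,y_n)$.

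The heart of the proof is the middle bullet: that $\Psi\bigr|_{U\times Y_\ul}$ is a measure-preserving bijection from $(U\times Y_\ul,\haar\times\nu_\ul)$ onto $(\Psi(U\times Y_\ul),\covol(Y_\ul)\,\mu_\ul^R)$. For this I would first argue that $\Psi(U\times Y_\ul)=[\Phi_n(U\times Y_n)]_\ul^R$: the inclusion $\subset$ is clear, and for $\supset$, given $[x_n]_\ul^R$ with $x_n=\Phi_n(u_n,y_n)$, $u_n\in U$, compactness of $U$ lets us pass to $u:=\lim_\ul u_n\in U$ and write $u_n=u\cdot(u^{-1}u_n)$ with $(u^{-1}u_n)_n\in G^0_\ul$, so $[x_n]_\ul^R=[\Phi_n(u,y_n)]_\ul^R=\Psi(u,[y_n]_\ul)$. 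Injectivity of $\Psi\bigr|_{U\times Y_\ul}$: if $\Psi(u,[y_n]_\ul)=\Psi(v,[y_n']_\ul)$ then $\Phi_n(g_n u,y_n)=\Phi_n(v,y_n')$ for some $g_n\to_\ul\id_G$ and $\ul$-a.e.\ $n$; then $g_nu\in U'$ for $\ul$-a.e.\ $n$ and injectivity of $\Phi_n\bigr|_{U'\times Y_n}$ forces $g_nu=v$ and $y_n=y_n'$ for $\ul$-a.e.\ $n$, so $[y_n]_\ul=[y_n']_\ul$ and, letting $n\to\ul$, $u=v$. The measure-preservation is where I expect the real work: I would use Proposition~\ref{prop:propercross} at each finite level --- $\Phi_{n*}(\haar\bigr|_U\times\nu_n)=\covol(Y_n)\,\mu_n\bigr|_{\Phi_n(U\times Y_n)}$ --- and then transport this across the Loeb construction. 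Concretely, for a product set $A\times[B_n]_\ul$ with $A\subset U$ Borel and $B_n\subset Y_n$ measurable, one has $\Psi(A\times[B_n]_\ul)=[\Phi_n(A\times B_n)]_\ul^R$ (using compactness again to see the image is regular, or rather to identify it with the regular set it differs from by a null set), and $\mu_\ul^R$ of it equals $\lim_\ul\mu_n(\Phi_n(A\times B_n))=\lim_\ul\covol(Y_n)^{-1}\haar(A)\nu_n(B_n)=\covol(Y_\ul)^{-1}(\haar\times\nu_\ul)(A\times[B_n]_\ul)$, provided $\Phi_n\bigr|_{A\times B_n}$ is injective, which it is since $A\subset U\subset U'$. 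Such product sets generate $\rb(U)\times\rb_\ul(Y_\ul)$ up to Loeb-null sets (the $\sigma$-algebra on $U\times Y_\ul$ is the Loeb $\sigma$-algebra of $\haar\bigr|_U\times\nu_n$), so by a standard monotone-class / uniqueness-of-measure argument the two pushforward measures agree on all of $\rb(\Psi(U\times Y_\ul))$; measurability of $\Psi\bigr|_{U\times Y_\ul}$ and its inverse then follows because both send a generating family of measurable sets to measurable sets.

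For the third bullet --- that $\Psi(G\times Y_\ul)\supset$ (indeed spans, after applying $G$) a full-measure part, i.e.\ $G\cdot[\Phi_n(U\times Y_n)]_\ul^R$ has full measure in itself, which is trivially true; the content is rather that $\Psi(U\times Y_\ul)$ is measurable and $G\Psi(U\times Y_\ul)$ is measurable and $G$-invariant --- I would note $G$-invariance of $G[\Phi_n(U\times Y_n)]_\ul^R$ is immediate and its measurability follows from Theorem~\ref{thm:ultrabasic} (the action is measurable, so $\Phi^{-1}$ of a measurable set is measurable, hence $GA=\Phi(G\times A)$ is measurable when $A$ is, using Proposition~\ref{prop:innerregular} to reduce to regular $A$, exactly as in the proof of Theorem~\ref{thm:ultrabasic}), together with the identification $\Psi(U\times Y_\ul)=[\Phi_n(U\times Y_n)]_\ul^R$ established above, and $[\Phi_n(U\times Y_n)]_\ul^R$ being a regular set. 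Finally $\Psi$ itself is measurable as a map $G\times Y_\ul\to X_\ul^R$: decomposing $G$ into countably many Borel pieces $D_j\subset g_jU$ and writing $\Psi\bigr|_{D_j\times Y_\ul}=g_j\circ\Psi\bigr|_{(g_j^{-1}D_j)\times Y_\ul}\circ(g_j^{-1}\cdot\times\mathrm{id})$ reduces measurability to the case $D_j\subset U$, which is covered by the second bullet together with measurability of the $G$-action from Theorem~\ref{thm:ultrabasic}.

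The step I expect to be the main obstacle is the transport of measure-preservation through the Loeb construction --- specifically, being careful that "$\Psi(A\times[B_n]_\ul)$ is measurable in $X_\ul^R$ and has the right measure'' really does hold for a rich enough class of $A\times[B_n]_\ul$ to pin down the measure, given that the regular ultraproduct's $\sigma$-algebra is generated only by \emph{regular} sequences while $\Phi_n(A\times B_n)$ need not obviously be regular; the fix is that for $A$ a finite union of small balls one controls $V\Phi_n(A\times B_n)\setminus\Phi_n(A\times B_n)\subset\Phi_n((VA\setminus A)\times B_n)$ and invokes Proposition~\ref{prop:propercross} to bound its measure by $\covol(Y_n)^{-1}\haar(VA\setminus A)$, uniformly in $n$, which can be made small --- so $\{\Phi_n(A\times B_n)\}_n$ is regular after all. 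Everything else is the kind of bookkeeping with $G^0_\ul$ and compactness of $U$ that the earlier lemmas (especially Lemmas~\ref{lem:apprfromabove}, \ref{lem:regularinter}, \ref{lem:innout}) are designed to handle.
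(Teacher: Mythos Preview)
Your outline gets the structure right and correctly handles injectivity, $G$-equivariance, the identification $\Psi(U\times Y_\ul)=[\Phi_n(U\times Y_n)]_\ul^R$, and the regularity of the forward images $\{\Phi_n(B\times C_n)\}_n$ via the covolume bound --- these match the paper's argument. However, there is a genuine gap at the point where you write ``measurability of $\Psi\bigr|_{U\times Y_\ul}$ and its inverse then follows because both send a generating family of measurable sets to measurable sets.'' You have only shown one direction: $\Psi_U$ sends the generating product sets $B\times[C_n]_\ul$ in the domain to measurable (regular) sets in $X_\ul^R$. You have \emph{not} shown that $\Psi_U^{-1}$ sends a generating family of measurable sets in the target --- namely arbitrary regular sets $[D_n]_\ul^R$ --- to measurable sets in $U\times Y_\ul$. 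There is no reason a general regular $[D_n]_\ul^R$ should lie in the $\sigma$-algebra generated by the special regular sets $[\Phi_n(B\times C_n)]_\ul^R$, and in the non-standard setting no Lusin--Souslin type theorem rescues you.

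This is exactly the step the paper treats as the crux and devotes the bulk of the proof to. Given a regular $[D_n]_\ul^R$, the paper sets $E_n:=\Phi_{n,U}^{-1}(D_n)$ and looks at the vertical fibers $E_n^{y_n}=\{g\in U:(g,y_n)\in E_n\}$, which (after invoking Lemmas~\ref{lem:apprfromabove}, \ref{lem:innout}, \ref{lem:regularseq}) may be assumed compact. By compactness of the Hausdorff topology on closed subsets of $U$, each sequence $E_n^{y_n}$ has an $\ul$-limit $E_\ul^{[y_n]_\ul}\subset U$, and one checks $\Psi_U^{-1}([D_n]_\ul^R)=\bigcup_{[y_n]_\ul}E_\ul^{[y_n]_\ul}\times\{[y_n]_\ul\}$. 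Measurability of this set is then obtained by a finite Hausdorff-net approximation: for each $j$ one picks finitely many compact sets $K_j^1,\dots,K_j^{m_j}\subset U$ that $W_j$-cover all compact subsets of $U$, partitions $Y_n$ into the measurable pieces $C_n^{j,i}$ according to which $K_j^i$ is Hausdorff-close to $E_n^{y_n}$, and sandwiches $E_\ul$ between the explicitly measurable sets $F_\ul^j=\bigcup_i W_jK_j^i\times[C_n^{j,i}]_\ul$ and their intersection over $j$. Your monotone-class remark does not substitute for this argument; without it the proof is incomplete.
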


Let us observe that $G[\Phi_n(U\times Y_n)]_\ul^R\subseteq X_\ul^R$ is measurable. First since $U\subseteq U'$ is compact it is not hard to show that the sequence $\{\Phi_n(U\times Y_n)\}_n$ is regular. Now take a sequence of open, bounded subsets $U_i\subseteq G$ such that the sequence $\{U_i\Phi_n(U\times Y_n)\}_n$ is regular and $\cup_i U_i=G$. Then by Lemma \ref{lem:innout} we have $\cup_i[U_i\Phi_n(U\times Y_n)]_\ul^R=\cup_i\overline U_i[\Phi_n(U\times Y_n)]_\ul^R=G[\Phi_n(U\times Y_n)]_\ul^R$. Furthermore remark that $\mu_n(\Phi_n(U\times Y_n))=\haar(U)/\covol(Y_n)$ and therefore the assumptions implies that $G[\Phi_n(U\times Y_n)]_\ul^R$ has positive measure. However it is possible that $G[\Phi_n(U\times Y_n)]_\ul^R$ has measure strictly less than $1$ and hence $(Y_\ul,\Psi)$ is not always an external cross section of $X_\ul^R$. 

For every $n\in\bn$ the Borel subset $Y_n\subseteq X_n$ is a $U$-cross section and we let $\rel_n$ be the cross equivalence relation on $Y_n$ constructed before Proposition \ref{prop:eqrelcross}. Theorem \ref{thm:maincross} tells us that the ultraproduct $Y_\ul$ is an external cross section of $X_\ul^{R}$ and we will denote by $\rel_\ul$ the cross equivalence relation on $Y_\ul$.  We will now prove that $\rel_\ul$ is the ultraproduct of the sequence $\rel_n$ with respect to a preferred sequence of graphings. Before stating the theorem, we will need to fix some terminology.

A \defin{graphing} $\Theta=\{\ph_j\}_{j\in J}$ of a \pmp equivalence relation $\rel$ on the probability space $(Y,\nu)$ is a countable collection of partial measure preserving isomorphisms $\{\ph_j\}_j$ of $Y$ such that 
\begin{itemize}
	\item for almost every $y\in Y$ and $j\in J$, if $\ph_j(y)$ is defined, then $(\ph_j(y),y)\in \rel$.
\end{itemize}
A graphing is said to be \defin{generating} if 	
\begin{itemize}
	\item the equivalence relation $\rel$ is the smallest equivalence relation containing the graph of each $\ph_j$ for $j\in J$.
\end{itemize} 

Clearly given a graphing $\Theta$ of an equivalence relation $\rel$, we can consider the sub-equivalence relation $\rel'$ \textit{generated} by $\Theta$, that is the sub-equivalence relation $\rel'$ for which $\Theta$ is a generating graphing. 

For cross sections we have a somewhat canonical graphing given by Corollary \ref{crl:cross is measurable}. 

\begin{dfn}\label{dfn:kgraph}
Consider a measurable and measure preserving action of the \lcsc group $G$ on the probability space $(X,\mu)$, let $(Y,\Psi)$ be an external $U$-cross section for some $U\subseteq G$. Let $\rel_Y$ be the cross equivalence relation on $Y$. Denote by $J$ a countable set. For every $j\in J$, take $g_j\in G$ and $V_j\subseteq G$ satisfying $g_jV_jg_j^{-1}V_j\cup g^{-1}_jV_j^{-1}g_j\subseteq U$. For every $j\in J$, Proposition \ref{prop:eqrelcross} grants us a partial measure preserving isomorphism $\ph_j$ and denote by $\Theta\coloneqq\{\ph_j\}_j$ the obtained graphing. Set $K\coloneqq\cup_jV_j^{-1}g_j$ and we will say that $\Theta$ is $K$-\defin{supported}. We say that the cross equivalence relation $\rel_Y$ is $K$-\defin{generated} if it admits a $K$-supported graphing. 

Similarly we will say that $\Theta$ is \defin{compactly supported} if it is $K$-supported for some pre-compact $K$ and $\rel_Y$ is \defin{compactly generated} if it admits a compactly supported generating graphing. 
\end{dfn}

A remark is in order. Assume that we have $g,V$ and $\ph$ as in Proposition \ref{prop:eqrelcross}. Then for every $y$ in the domain of definition of $\ph$, we have that $\Psi(\id_G,\ph(y))=\Psi(v^{-1}g,y)$ for some $v\in V$. Whenever $G$ acts freely on a standard probability space, then we obtain that $\Theta$ is $K$-supported if and only if 
\begin{itemize}
    \item if $\ph\in \Theta$, the for every $y$ in the support of $\ph$, we have that $\ph(y)=gy$ for some $g\in K$;
    \item whenever $g\in K$ is such that $gy\in Y$, there is $\ph\in\Theta$ such that $\ph(y)=gy$.
\end{itemize}

We also say that the cross section $Y$ is \defin{cocompact} if there exists a compact subset $K'\subseteq G$ such that $\mu(\Psi(K'\times Y))=1$. Cross equivalence relations of cocompact cross sections are always compactly generated, see Lemma \ref{lem:kpvgen}.

Let us now define the ultraproduct of equivalence relations, cf.\ Section 2.7 of \cite{CGS}. For every $n\in \mathbb N$, let $\rel_n$ be a \pmp\ equivalence relation on the probability space $(Y_n,\nu_n)$. Let $J$ be a countable set and for each $n$, let $\Theta_n=\{\ph_n^j\}_{j\in J}$ be a graphing of $\rel_n$. Then for each $j$, we can define a partial measure preserving isomorphism $\ph_\ul^j$ of the ultraproduct space $(Y_\ul,\nu_\ul)$ via $\ph_\ul^j\coloneqq [\ph_n^j]_\ul$, that is $\ph_\ul^j([y_n]_\ul)=[\ph_n^j(y_n)]_\ul$ whenever $y_n$ is in the domain of $\ph_n^j$. Put $\Theta_\ul\coloneqq \{\ph_\ul^j\}_j$ and let $\rel_\ul$ be the \pmp\ equivalence relation on $Y_\ul$ generated by it. The couple $(\rel_\ul,\Theta_\ul)$ is the \defin{ultraproduct equivalence relation} of the sequence of graphed equivalence relations $(\rel_n,\Theta_n)$. 

\begin{thm}\label{thm:mainrel}
   Let $G$ be a \lcsc unimodular group, fix an open neighborhood of the identity $U'\subseteq G$ and a Haar measure $\haar$. Suppose that $G$ acts on the standard probability spaces $(X_n,\mu_n)$ preserving the measure and assume that for every $n$ there exists a $U'$-cross section $Y_n\subseteq X_n$ such that $\{\covol(Y_n)^{-1}\}_n$ is bounded. Denote by $\rel_\ul$ and $\rel_n$ the cross equivalence relations on $Y_\ul$ and $Y_n$ respectively. Then there are generating graphing $\Theta_\ul=\{\ph^j_\ul\}_j$ and $\Theta_n=\{\ph^j_\ul\}_j$ of $\rel_\ul$ and $\rel_n$ respectively such that 
 \begin{itemize}
	\item the ultraproduct of the sequence of graphed equivalence relations $(\rel_n,\Theta_n)$ is $(\rel_\ul,\Theta_\ul)$.
\end{itemize}
   
   Assume moreover that $G$ is compactly generated and there is a compact subset $K'$ such that $\lim_\ul\mu_n(K'Y_n)=1$. Then there is a compact subset $K\subseteq G$ containing $K'$ and $K$-supported graphings $\Theta_\ul^K=\{\ph^j_\ul\}_j$ and $\Theta_n^K=\{\ph^j_\ul\}_j$ of $\rel_\ul$ and $\rel_n$ respectively such that 
\begin{itemize}
	\item the graphing $\Theta_\ul^K$ of $\rel_\ul$ is generating and in particular $\rel_\ul$ is compactly generated;
    \item the ultraproduct of the sequence of graphed equivalence relations $(\rel_n^K,\Theta_n^K)$ is $(\rel_\ul,\Theta_\ul^K)$.
   \end{itemize}
\end{thm}

We remark here that in the moreover part the graphings $\Theta^K_n$ of $\rel_n$ are not necessarily generating.

\subsection{Proof of Theorem \ref{thm:maincross} and Theorem \ref{thm:mainrel}}

We start by proving Theorem \ref{thm:maincross}. In the proof we will set \[\Psi_U\coloneqq \Psi\bigr|_{U\times Y_\ul}\text{ and }\Phi_{n,U}\coloneqq \Phi\bigr|_{U\times Y_n}.\]

Let us start by showing that $\Psi_U$ is injective. For this, take $u,u'\in U$ and $[y_n]_\ul,[y'_n]_\ul\in Y_\ul$. If \[[\Phi_n(u,y_n)]_\ul^{R}=\Psi(u,[y_n]_\ul)=\Psi(u',[y_n']_\ul)=[\Phi_n(u',y_n')]_\ul^{R}\] then there exists a sequence $(g_n)_n$ such that $\lim_\ul g_n=\id_G$ and $g_n\Phi(u,y_n)=\Phi_n(u',y'_n)$. For $\ul$-almost every $n$ we have that $g_nu\in U'$ and since $Y_n$ is a cross section we get that $g_nu=u'$ and $y_n=y_n'$ for $\ul$-almost every $n$ and therefore $u=u'$. Hence $\Psi_U$ is injective as claimed. 

We will now prove that $\Psi_U$ is a measurable isomorphism with its image. If $B\subseteq U'$ is a compact subset and $[C_n]_\ul\subseteq Y_\ul$ is a measurable subset, then $\{\Phi_n(B\times C_n)\}_n$ is regular. Indeed let us fix $\eps>0$. Then there exists a neighborhood of the identity $V\subseteq G$ such that $\haar(VB)\leq \haar(B)+\eps$ and such that $VB\subseteq U'$. Put $c\coloneqq \lim_\ul \covol(Y_n)^{-1}\in \br\setminus \{0\}$. For $\ul$-almost every $n$ we have that \[\mu_n(V\Phi_n(B\times C_n))=\covol(Y_n)^{-1}\haar(VB)\nu_n(C_n)\leq \covol(Y_n)^{-1}\haar(B)\nu_n(C_n)+(3c/2)\eps.\] In particular since $\{\Phi_n(\{\id_G\}\times C_n)\}_n$ and $\{\Phi_n(B\times C_n)\}_n$ are regular, Lemma \ref{lem:innout} yields \[\Psi(B\times [C_n]_\ul)=B\Psi(\{\id_G\}\times [C_n]_\ul)=B[\Phi_n(\{\id_G\}\times C_n)]_\ul^R=[\Phi_n(B\times C_n)]_\ul^R.\]

Since the subset of the form $B\times [C_n]_\ul$ generated the measure algebra of $U\times Y_\ul$ the above computation implies that for every measurable subset $A_\ul\subseteq U\times Y_\ul$ we have that $\Psi(A_\ul)$ is measurable and $\mu_\ul(\Psi(A_\ul))=c\haar\times \nu_\ul(A_\ul)$. Therefore $\covol(Y_\ul)=c^{-1}=\lim_\ul\covol(Y_n)$. In order to conclude the proof we have to show that the for every $D_\ul^R\subseteq \Psi(U\times Y_\ul)$ we have that $\Psi_U^{-1}(D_\ul^R)$ is measurable. 

As a first step let us assume that $D_\ul^R=[D_n]_\ul^R$ is regular. Consider $E_n\coloneqq\Phi_{n,U}^{-1}(D_n)$ and for every $y_n\in Y_n$ set \[E^{y_n}_n\coloneqq\{g\in U\colon (g,y_n)\in E_n\}.\]
 By Lemma \ref{lem:apprfromabove}, \ref{lem:innout} and \ref{lem:regularseq} we can assume that $E^{y_n}_n$ is compact for every $n$ and $y_n\in Y_n$. Since the Hausdorff topology on the compact subsets of $U$ is compact, for every $[y_n]_\ul\in Y_\ul$ the sequence $(E^{y_n}_n)_n$ converges along the ultrafilter $\ul$ to a compact subset $E^{[y_n]_\ul}_\ul\subseteq U$. Put $E_\ul\coloneqq\cup_{[y_n]_\ul\in Y_\ul}E^{[y_n]_\ul}_\ul$. We claim that $\Psi_U^{-1}(D_\ul^R)=E_\ul$. Indeed $\Psi(g,[y_n]_\ul)=[\Phi_n(g,y_n)]_\ul^R\in D_\ul^R$ if and only if there exists a sequence $g_n$ such that $\lim_\ul g_n=\id_G$ and for $\ul$-almost every $n$ we have that $(g_ng,y_n)\in E_n$. Therefore $g_ng\in E_n^{y_n}$ and the limit $\lim_\ul g_ng=g \in E^{[y_n]_\ul}_\ul$. Let us show now that $E_\ul$ is measurable. For this take for every $j\in\bn$ a compact neighborhood of the identity $W_j$ such that $\cap_j W_j=\{\id_G\}$. For $K,K'\subseteq U$, we set 
 \[N(K,K')\coloneqq \max\{j\in\mathbb N\colon W_jK\supset K'\text{ and }W_jK'\supset K\}\in\mathbb N\cup\{\infty\}\]
 and set $N(K,K')$ to be $0$ if no such $j$ exists. By compactness of the Hausdorff topology, for every $j$ there are compact subsets $K_j^1,\ldots,K_j^{m_j}$ of $U$ such that for every compact subset $K$ of $U$ we have that there exists $i$ satisfying $N(K,K^i_j)\geq j$. Set \[C_n^{j,1}\coloneqq\{y_n\in Y_n\colon N(E_n^{y_n},K^1_j)\geq j\}\] and recursively \[C_n^{j,i}\coloneqq\{y_n\in Y_n\colon N(E_n^{y_n},K^i_j)\geq j\}\setminus \cup_{h<i}C_n^{j,h}.\]
  Consider the measurable subset \[F_\ul^j\coloneqq\cup_{i=1}^{m_j}W_jK_j^i\times [C_n^{j,i}]_\ul\subseteq U\times Y_\ul.\]
   Clearly $F_\ul^j\supset E_\ul$. Observe also that $W_j^2E_\ul^{[y_n]_\ul}\supset W_jK_j^i$ whenever $[y_n]_\ul\in [C_n^{j,i}]_\ul$ and therefore $W_j^2 E_\ul\supset F^j_\ul$. Lemma \ref{lem:apprfromabove} implies that $\cap_jW_j^2D_\ul^R=D_\ul^R$ and hence \[ E_\ul=\Psi^{-1}(D_\ul^R)=\cap_j W_j^2 \Psi^{-1}(D_\ul^R)\supset \cap_j F^j_\ul\supset E_\ul.\]

Therefore for every regular subset $D_\ul^R$ we have that $\Psi_U^{-1}(D_\ul^R)$ is measurable. Let now $D_\ul^R\subseteq \Psi_U(U\times Y_\ul)$ be any measurable subset. By Proposition \ref{prop:innerregular} there are subsets $A_\ul^R$ and $B_\ul^R$ such that \[A_\ul^R\subseteq D_\ul^R,\quad B_\ul^R\cap D_\ul^R=\emptyset,\quad \mu_\ul(\Psi_U(U\times Y_\ul)\setminus (A_\ul^R\cup B_\ul^R))=0\] and such that both $A_\ul^R$ and $B_\ul^R$ are increasing union of regular subsets. Observe now that $\Psi_U^{-1}(A_\ul^R)$ and $\Psi_U^{-1}(B_\ul^R)$ are measurable and that $\mu_\ul(A_\ul^R)=c\haar \times \nu_\ul(\Psi^{-1}_U(A_\ul^R))$ and similarly for $B_\ul^R$. Hence $U\times Y_\ul\setminus (\Psi_U^{-1}(A_\ul^R)\cup\Psi_U^{-1}(B_\ul^R))$ is a null set and since the product measure is complete $\Psi_U$ is a measurable map and therefore a measurable, measure preserving isomorphism. So the proof of Theorem \ref{thm:maincross} is complete.

\vspace{0.3cm}

Before proceeding with the proof of Theorem \ref{thm:mainrel} we will prove the following lemma which is essentially Proposition 4.6 of \cite{kye2015}.

\begin{lem}\label{lem:kpvgen}
  Assume that the \lcsc group $G$ is compactly generated. Consider a \pmp action of $G$ on $(X,\mu)$ and let $(Y,\Psi)$ be an external cross section. If $\Psi(K\times Y)\subseteq X$ has full measure for some compact subset $K\subseteq G$, then there are compact subsets $K',K''$ such that $K''\supseteq K'\supseteq K$ and  
  \begin{itemize}
  \item $\Psi(K'\times Y)$ contains a full measure $G$-invariant subset,
  \item the cross equivalence relation $\rel_Y$ is $K''$-generated.
  \end{itemize}
\end{lem}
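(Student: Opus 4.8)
The plan is to first use compact generation to arrange that $\Psi(K\times Y)$ is already $G$-invariant up to a null set, then extract from it a genuinely $G$-invariant conull subset $X_0$, and finally run a ``word–pushing'' chain argument along a compact symmetric generating set, keeping all intermediate points inside $X_0$ so that they never escape a fixed compact neighbourhood of the cross section. Throughout we use the defining property of the external cross section, $\Psi_*(\haar\bigr|_U\times\nu)=\covol(Y)\,\mu\bigr|_{\Psi(U\times Y)}$ (Definition \ref{dfn:extcross}, Proposition \ref{prop:propercross}).

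\emph{First bullet.} Fix a compact symmetric neighbourhood $V$ of $\id_G$ with $\bigcup_m V^m=G$, which exists since $G$ is compactly generated. Since $\id_G\in V$ we have $\Psi(V^mK\times Y)\supseteq\Psi(K\times Y)$ for every $m$, so each $\Psi(V^mK\times Y)$ is conull; as these sets are nested with conull union $\Psi(GK\times Y)=\bigcup_m\Psi(V^mK\times Y)$, they all agree with $\Psi(K\times Y)$ modulo null sets. Hence for every $g\in G$ the set $g\Psi(K\times Y)=\Psi(gK\times Y)$ differs from $\Psi(K\times Y)$ by a null set, i.e. $\Psi(K\times Y)$ is $G$-invariant up to null. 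Passing to the standard model of Proposition \ref{prop:standardcross} (for a separable $G$-invariant sub-$\sigma$-algebra containing $\rb_X^{cs}$ together with a countable generating family for $\Psi(K\times Y)$) and combining the continuity of the action (Proposition \ref{prop:automacont}, Theorem \ref{thm:ultrabasic}) with a countable dense subgroup $H<G$, a standard ergodic-theoretic argument produces a conull, genuinely $G$-invariant measurable subset $X_0\subseteq\Psi(K\times Y)$. Thus $K':=K\cup V\cup U$ works for the first bullet.

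\emph{Second bullet.} Put $K'':=(K'^{-1}VK')\cup(K'^{-1}K')\cup K'$, symmetrized and enlarged by a fixed compact set so that it is symmetric, contains $U$, contains $K'\supseteq K$, and is covered by finitely many sets $V_j^{-1}g_j$ of the form appearing in Definition \ref{dfn:kgraph} and Remark \ref{rmk:restriction}; it is a compact neighbourhood of $\id_G$. I first claim that for $\nu$-almost every $y\in Y$ one has $\Psi(\id_G,y)\in X_0$. Indeed $\Psi(U\times Y)$ has positive measure while $X_0$ is conull, so $\Psi(U\times Y)\setminus X_0$ is null, and the defining property of the external cross section together with Fubini gives, for a.e. $y$, that $u\Psi(\id_G,y)=\Psi(u,y)\in X_0$ for a.e. $u\in U$; since $X_0$ is \emph{exactly} $G$-invariant, $u\Psi(\id_G,y)\in X_0$ is equivalent to $\Psi(\id_G,y)\in X_0$, so $\Psi(\id_G,y)\in X_0$ for a.e. $y$; call $Y_*$ this conull set. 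Now take $(y,y')\in\rel_Y$ with $y,y'\in Y_*$ and, by Proposition \ref{prop:eqrelcross}, $g\in G$ with $g\Psi(\id_G,y)=\Psi(\id_G,y')$ — such pairs are conull in $\rel_Y$. Write $g=v_n\cdots v_1$ with $v_i\in V$ and set $q_i:=v_i\cdots v_1\,\Psi(\id_G,y)$ for $0\le i\le n$, so $q_0=\Psi(\id_G,y)$, $q_n=\Psi(\id_G,y')$ and $q_{i+1}=v_{i+1}q_i$. Since $\Psi(\id_G,y)\in X_0$ and $X_0$ is $G$-invariant, each $q_i\in X_0\subseteq\Psi(K'\times Y)$, so $q_i=k_i\,\Psi(\id_G,y_i)$ for some $k_i\in K'$ and $y_i\in Y$, with the choices $k_0=k_n=\id_G$, $y_0=y$, $y_n=y'$. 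Then $\Psi(\id_G,y_{i+1})=h_i\,\Psi(\id_G,y_i)$ with $h_i:=k_{i+1}^{-1}v_{i+1}k_i\in K'^{-1}VK'\subseteq K''$, so $(y_i,y_{i+1})\in\rel_Y$ with $y_{i+1}\in K''y_i$, hence $(y_i,y_{i+1})\in\Theta^{K''}$ by Definition \ref{dfn:kgraph}. Therefore $y=y_0\sim y_1\sim\cdots\sim y_n=y'$ in $\rel_Y^{K''}$, so $\rel_Y^{K''}$ and $\rel_Y$ agree off a null set, i.e. $Y$ is $K''$-generated.

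I expect the main obstacle to be the construction, inside the possibly non-standard space $(X,\mu)$, of a conull subset $X_0\subseteq\Psi(K\times Y)$ that is $G$-invariant \emph{on the nose} (mere invariance up to null would not let us descend from the $U$-neighbourhood information to pointwise information on the measure-zero cross section); this is what forces the detour through the standard model of Proposition \ref{prop:standardcross}. The second delicate point is the bookkeeping identifying ``related by an element of the fixed compact set $K''$'' with membership in the graphing $\Theta^{K''}$, which requires absorbing the auxiliary neighbourhoods $V_j$ of Definition \ref{dfn:kgraph} and Remark \ref{rmk:restriction} into $K''$. The combinatorial core — pushing a word in the generators along the orbit while staying inside $X_0\subseteq\Psi(K'\times Y)$ — is then routine.
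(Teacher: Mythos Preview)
Your chain argument for the second bullet is correct and is exactly the paper's proof: write $g$ as a word in a compact symmetric generating set, note that every intermediate point of the orbit lies in the $G$-invariant conull set and hence in $\Psi(K'\times Y)$, and collect the resulting elements $k_{i+1}^{-1}v_{i+1}k_i\in (K')^{-1}VK'=:K''$.

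The first bullet, however, has a genuine gap. From ``$\Psi(K\times Y)$ is conull and $G$-invariant up to null'' you cannot in general extract an honestly $G$-invariant conull subset of $\Psi(K\times Y)$ itself. The saturation of a null set under a \pmp action of a non-discrete \lcsc group can be everything (a single point under a transitive flow), and neither continuity of the action nor passing to the standard model of Proposition~\ref{prop:standardcross} changes this: the model only recovers $\Psi(K\times Y)$ up to a null set, so the preimage of any invariant conull set you build there need not sit inside $\Psi(K\times Y)$ on the nose---which, as you yourself observe, is exactly what the second bullet requires. Your choice $K'=K\cup V\cup U$ does not help either, since it need not contain $VK$.

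The paper avoids this entirely by enlarging first: set $K':=VK$ and $A:=X\setminus\Psi(K'\times Y)$. Then one checks in one line that $VA\subset X\setminus\Psi(K\times Y)$, so $VA$ is null; taking a countable dense $H\subset G$ gives $GA=\bigcup_{h\in H}hVA$ null, and $X_0:=X\setminus GA\subset\Psi(K'\times Y)$ is genuinely $G$-invariant and conull. No standard-model detour is needed. With this $X_0$ your second-bullet argument goes through verbatim.
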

\begin{proof}
  Let $V\subseteq G$ be a symmetric compact neighborhood of the identity and set $K'\coloneqq VK$. Consider $A\coloneqq X\setminus \Psi(K'\times Y)$. Then $VA\subseteq X\setminus \Psi(K\times Y)$ and therefore $VA$ is a null set and hence $GA$ is. Therefore $\Psi(K'\times Y)\setminus GA$ is $G$-invariant and has full measure. 

 The second point can now be proven exactly as in Proposition 4.6 of \cite{kye2015}, therefore we only sketch it. Let $C\subseteq G$ be a compact generating subset such that $C=C^{-1}$ and set $K''\coloneqq (K')^{-1}CK'$. Consider $g\in G$ and $y\in Y'$ such that $\Psi(g,y)\in \Psi(\{\id_G\}\times Y)$. Since $C$ is generating, there are $g_1,\ldots,g_l\in C$ such that $g=g_l\ldots g_1$. Since $\Psi(g_i\ldots g_1,y)\in \Psi(K'\times Y)$ there are elements $k_i\in K'$ such that $\Psi(k_i^{-1}g_i\ldots g_1,y)\in \Psi(\{\id_G\}\times Y)$. Therefore we can set $h_0=k_1^{-1}g_1$ and $h_i=k_i^{-1}g_ik_{i-1}$ for $1<i<l$ and $h_l\coloneqq g_lk_{l-1}$. Then $h_i\in K''$ for every $i$ and $h_l\ldots h_1=g$. 
\end{proof}

Let us now prove Theorem \ref{thm:mainrel}. 
Fix a compatible right invariant metric $d_G$ on $G$. Consider a countable subset $\{g_j\}_{j\in J}\subseteq G$, a real number $\delta>0$ and open pre-compact neighborhoods of the identity $\{W_j\}_j$ such that \[g_jB_\delta W_jg_j^{-1}B_\delta W_j\cup g_j^{-1}(B_\delta W_j)^{-1}g_j\subseteq U\] as in Proposition \ref{prop:eqrelcross} and such that $\cup_j  W_j^{-1}g_j=G$. For $j\in J$ and $r\leq \delta$ we set 
\begin{align*}
  E^{j,r}_\ul\coloneqq&\left\{[y_n]_\ul\in Y_\ul\colon \Psi(g_j,[y_n]_\ul)\in\Psi(B_r W_j\times Y_\ul)\right\},\\
  D^{j,r}_n\coloneqq&\left\{y_n\in Y_n\colon \Phi_n(g_j,y_n)\in\Phi_n(B_r W_j\times Y_n)\right\}
\end{align*}
and observe that for every $0<r<r'<\delta$ we have \[ [D^{j,r}_n]_\ul\subseteq E^{j,r}_\ul \subseteq [D^{j,r'}_n]_\ul\subseteq E^{j,r'}_\ul.\]
We now proceed as in the proof of Lemma \ref{lem:regularseq}. The function $f\colon r\mapsto \nu_\ul(E^{j,r}_\ul)$ is monotone increasing and therefore there exists a $0< r_j<\delta$ such that $f$ is continuous at $r_j$. The above equation implies that $\mu^{R}_\ul([D^{j,r_j}_n]_\ul\Delta E^{j,r_j}_\ul)=0$.

Let us denote by $\psi^j$ and $\ph_n^j$ the partial isomorphism defined in Proposition \ref{prop:eqrelcross} for $Y_n$ and $Y_\ul$ respectively associated with $B_{r_j}W_j$. Let $\ph_\ul^j\coloneqq[\ph_n^j]_\ul$ the partial isomorphism defined on $[D_n^{j,r_j}]_\ul$ by the formula $\ph_\ul^j([y_n]_\ul)=[\ph_n^j(y_n)]_\ul$. We claim that $\ph_\ul^j=\psi^j$ up to a null set.  If $y_n\in D^{j,r_j}_n$, then \[\Psi(g_j,[y_n]_\ul)=[\Phi_n(g_j,y_n)]_\ul^{R}=[\Phi_n(v_n,\ph_n^j(y_n))]_\ul^{R}\] for some sequence $v_n\in B_{r_j}W_j$. Set $v\coloneqq\lim_\ul v_n\in \overline{B_{r_j}W_j}$ and observe that $[\Phi_n(v_n,\ph_n^j(y_n))]_\ul^{R}=[\Phi_n(v,\ph_n^j(y_n))]_\ul^{R}$. Therefore whenever $v\in B_{r_j}W_j$
\[\Psi(g_j,[y_n]_\ul)=\Psi(v,[\ph_n^j(y_n)]_\ul)=\Psi(v,\ph_\ul^j([y_n]_n)),\] and by injectivity we get that $\psi^j=\ph_\ul^j$ on $\cup_k [D^{j,r_j}_n]_\ul^{R}\cap E^{j,r_j-2^{-k}}_\ul $ which is conull in $E^{j,r_j}_\ul$ as claimed. Therefore $(\rel_\ul,\Theta_\ul)$ is the ultraproduct of the graphed equivalence relations $(\rel_n,\Theta_n)$.

Assume now that $G$ is compactly generated and that $\lim_\ul\mu_n(K'Y_n)=1$. Then Lemma \ref{lem:kpvgen} implies that the equivalence relation $\rel_\ul$ is $K''$-generated for some compact subset $K''\subseteq G$. Then, we can choose finitely many indices $j_0,\ldots,j_m\in J$ such that $K\coloneqq\cup_j B_{r_j}W_j\supseteq K''$. Now remark that $K$ is pre-compact, $\rel_\ul$ is $K$-generated and $(\rel_\ul,\Theta^K_\ul)$ is the ultraproduct of $\{(\rel_n^K,\Theta^K_n)\}_n$. Therefore the moreover part is proven and this concludes the proof of Theorem \ref{thm:mainrel}.

\section{Rank and cost}

The \textit{cost} of a measure preserving action of a countable group and more generally of a \pmp equivalence relation (with countable orbits) was defined by Levitt in \cite{lev1995} and widely studied in \cite{gab2000}, see also \cite{aas2017} or \cite{CGS} for a discussion in the context of general probability spaces. Let us recall the definition. Let $\rel$ be a measure preserving equivalence relation on the probability space $(X,\mu)$. As we have already recalled a graphing of $\rel$ is a countable set of measure preserving partially defined bijections of the probability space whose graphs are contained in $\rel$ and we define the \textit{cost of the graphing} to be the sum of the measures of the domains of the partially defined bijections. A graphing of $\rel$ is \textit{generating} if the smallest equivalence relation containing the graphing is (up to a null set) $\rel$ itself. The \textit{cost of the equivalence relation} $\rel$ is the infimum of the costs of all generating graphings of $\rel$.

In our work we will also need the notion of cost for measure preserving actions of unimodular locally compact groups. Observe however, as for the $\ell^2$-Betti numbers \cite{pet2013}, that this definition depends on a choice of a Haar measure on the group and it is not an invariant of orbit equivalence, see Theorem 1.20 of \cite{car2017}. 

\begin{dfn}\label{dfn:costlc}
  Let $G$ be a unimodular \lcsc group $G$ and let us fix a Haar measure $\haar$. Assume that $G$ acts on the standard probability space $(X,\mu)$ and fix a cross section $Y\subseteq X$ and denote by $\rel_Y$ the cross equivalence relation. We set \[\cost(G\curvearrowright X)\coloneqq 1+\frac{\cost(\rel_Y)-1}{\covol(Y)}.\] 
\end{dfn}

Observe that if $Y$ and $Y'$ are cross sections, then $\rel_Y$ and $\rel_{Y'}$ are strongly orbit equivalent with compression factor $\covol(Y)/\covol(Y')$, see \cite[Proposition 4.3]{kye2015}, and therefore the cost of a \pmp action of $G$ is well defined. As for countable groups we define the cost of the group to be the infimum of the cost of all of its free \pmp actions $\cost(G)\coloneqq \inf_{G\curvearrowright X}\cost(G\curvearrowright X)$ with respect to a fixed Haar measure. The \textit{fixed price problem} can also be posed in the context of \lcsc groups: does there exist a \lcsc unimodular group $G$ equipped with a fixed Haar measure $\haar$ and two free actions of $G$ which do not have the same cost? As an example amenable groups have fixed price $1$ (which does not depend on the measure) since every cross equivalence relation is amenable, \cite[Proposition 4.3]{kye2015}. It has also been announced that $\mathrm{SL}_2(\br)$ has fixed price. It is unknown whether $\mathrm{SL}_n(\br)$ has fixed price $1$ for $n\geq 3$ and even whether for any \lcsc group $G$ the group $\bz\times G$ has fixed price $1$.

The definition of cost clearly makes sense outside the context of standard probability space; one can use an external cross section instead of a cross section. Also remark that combining Proposition \ref{prop:standardcross} and Theorem 3.28 of \cite{CGS} we obtain that the cost of an action on a non standard probability space is the same as the cost of some of its standard factors. 

\begin{dfn}\label{dfn:thickness}
  Let $G$ be a \lcsc group. Suppose that $G$ acts on the probability measure space $(X,\mu)$ preserving the measure and let $U\subseteq G$ be a neighborhood of the identity. We set \[(X)_U\coloneqq\{x\in X\colon gx\neq x\ \forall g\in U\setminus\{\id_G\}\}.\]
   We say that the action is $U$-\defin{thick} if $(X)_U=X$ up to a null set.
\end{dfn}

Clearly if the action is free $(X)_U=X$ for every $U$ and so it is $U$-thick for every $U$. 

\begin{dfn}\label{dfn:farber sets}
  Let $G$ be a \lcsc group. We say that a sequence of actions on the probability spaces $\{(X_n,\mu_n)\}_n$ is $\ul$-\defin{Farber} if for every neighborhood of the identity $\lim_\ul\mu_n((X_n)_U)=1$ and the sequence is called \textit{Farber} if $\lim_n\mu_n((X_n)_U)=1$.
\end{dfn}

Clearly a sequence of actions is Farber if and only if it is $\ul$-Farber for every ultrafilter. Observe that Lemma \ref{lem:stabil} implies that if $\{(X_n,\mu_n)\}_n$ is $\ul$-Farber, then the action of $G$ on $[X_n]_\ul^R$ is essentially free.

\subsection{Lattices}

As always we will say that a discrete subgroup $\Gamma$ of a locally compact group $G$ is a \textit{lattice} if the quotient $G/\Gamma$ has finite Haar measure and we will denote this measure by $\covol(\Gamma)$. Let $G$ be a \lcsc group with Haar measure $\haar$ and let $\{\Gamma_n\}_n$ be a sequence of lattices of $G$. Observe that if $U\subseteq G$ is a neighborhood of the identity, then $(G/\Gamma)_U=\{g\in G/\Gamma\colon g\Gamma g^{-1}\cap U=\{\id_G\}\}$. Therefore we can restate Definition \ref{dfn:farber sets} in the following way. 

\begin{dfn}\label{dfn:Farberthinlatt}
We say that the sequence $\{\Gamma_n\}_n$ is ($\ul$-)\defin{Farber} or \defin{almost everywhere thick}   
for every neighborhood of the identity $U\subseteq G$ \[\lim_\ul\frac{\haar\left(\left\{g\in G/\Gamma_n\colon g\Gamma_n g^{-1}\cap U=\{\id\}\right\}\right)}{\covol(\Gamma_n)}=1.\]
Remark that if a sequence is $\ul$-Farber, then $\lim_\ul\covol(\Gamma_n)=\infty$.

Similarly we say that a lattice is $U$-\defin{thick} if $(G/\Gamma)_U=G/\Gamma$. Clearly every cocompact lattice is $U$-thick for some $U$ and non cocompact lattices are never $U$-thick for any $U$. We say that a sequence of lattices $\{\Gamma_n\}_n$ is \defin{nowhere thin} (sometimes called \textit{uniformly discrete}) if there exists a neighborhood of the identity $U$ such that for every $n\in\bn$ the lattice $\Gamma_n$ is $U$-thick.
\end{dfn}

Let $\Gamma<G$ be a lattice. A $U$-\defin{separated} subset $Y$ of $G/\Gamma$ is a finite subset such that for every $y\neq y'\in Y$ we have that $Uy\cap Uy'=\emptyset$. We will say that a $U$-separated subset $Y$ of $G/\Gamma$ is \defin{maximal} if it has maximal cardinality (which is always bounded by $\covol(\Gamma)/\haar(U)$).

\begin{lem}\label{lem:filling}
   Let $G$ be a unimodular \lcsc group and fix a Haar measure $\haar$. Let $\{\Gamma_n\}_n$ be a $\ul$-Farber sequence of lattices and let $U\subseteq G$ be a neighborhood of the identity. For every $n\in\bn$, let $Y_n\subseteq (G/\Gamma_n)_{U^{-1}U}$ be a maximal $U$-separated set. Then $Y_n$ is a $U$-cross section for the action of $G$ on $G/\Gamma_n$ of covolume $\covol(Y_n)=\covol(\Gamma_n)/|Y_n|$ such that 
   \[
    \haar(U^{-1}U)\geq \lim_\ul\covol(Y_n)\geq \haar(U)\text{ and }
   \lim_\ul\frac{\haar(U^{-1}UY_n)}{\covol(\Gamma_n)}=1.\]
\end{lem}
\begin{proof}
 Let $\Gamma$ be a lattice of $G$ and denote by $\mu$ the normalized Haar measure on $G/\Gamma$.
 If $y\in (G/\Gamma)_{U^{-1}U}$, then the map $u\in U \mapsto uy\in G/\Gamma$ is injective. Indeed if for $u_1,u_2\in U$, then $u_1y=u_2y$ if and only if $u_2^{-1}u_1y=y$ which is impossible if $y\in (G/\Gamma)_{U^{-1}U}$. Therefore $U$-separated subsets of $(G/\Gamma_n)_{U^{-1}U}$ are cross-sections. Also observe that if $Y\subseteq (G/\Gamma)_{U^{-1}U}$ is a $U$-separated subset, then $\mu(UY)=|Y|\haar(U)/\covol(\Gamma)$. That is, $\covol(Y)=\covol(\Gamma)/|Y|$ and 
  \[\frac{\haar(U)|Y|}{\covol(\Gamma)}=\mu(UY)\leq 1\Rightarrow \haar(U)\leq \covol(Y).\]
 
 If a $U$-separated subset $Y\subseteq (G/\Gamma)_{U^{-1}U}$ is maximal, then $U^{-1}UY\supseteq (G/\Gamma)_{U^{-1}U}$. Indeed given any $x\in (G/\Gamma)_{U^{-1}U}\setminus U^{-1}UY$, the subset $Y\cup \{x\}\subseteq (G/\Gamma)_{U^{-1}U}$ is still $U$-separated. In particular, \[\frac{|Y|\haar(U^{-1}U)}{\covol(\Gamma)}\geq \mu((G/\Gamma)_{U^{-1}U})\Rightarrow \haar(U^{-1}U)\geq \mu((G/\Gamma)_{U^{-1}U})\covol(Y).\]
 
 Now the lemma follows from the fact that if $\{\Gamma_n\}_n$ is a Farber sequence, we have that $\haar((G/\Gamma_n)_{U^{-1}U})/\covol(\Gamma_n)$ tends to $1$.
\end{proof}

Let $\{\Gamma_n\}_n$ be a $\ul$-Farber sequence of lattices of the unimodular, compactly generated \lcsc group $G$. Let us fix a neighborhood of the identity $U\subseteq G$. For each $n$, let $Y_n \subseteq (G/\Gamma_n)_{U^{-1}U}$ be a maximal $U$-separated set. Then we can apply Theorem \ref{thm:maincross} and Theorem \ref{thm:mainrel} to obtain that the cross equivalence relation on the ultraproduct is compactly generated and it is the ultraproduct of the finite equivalence relations on the cross sections $Y_n$. In this and in the next section we will show that this cross equivalence relation retains many properties of the sequence of lattices. 

\subsection{Cost and rank gradient}
For a countable group $\Gamma$ we will denote by $\rank(\Gamma)$ the minimal number of elements of $\Gamma$ which are needed to generate it.

 \begin{prop}\label{prop:upperbound}
   Let $G$ be a \lcsc group $G$ with fixed Haar measure $\haar$ and let $\Gamma<G$ be a lattice. Then $\cost(G)\leq 1+(\rank(\Gamma)-1)/\covol(\Gamma)$.
 \end{prop}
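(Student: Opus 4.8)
The plan is to exhibit one free \pmp action of $G$ whose cost is at most $1+(\rank(\Gamma)-1)/\covol(\Gamma)$, namely the action induced from a free action of $\Gamma$. First I would fix a generating set $\gamma_1,\dots,\gamma_d$ of $\Gamma$ with $d=\rank(\Gamma)$ and a free \pmp action $\Gamma\curvearrowright(Z,\zeta)$ on a standard probability space, for instance a Bernoulli shift. Then I would form the induced action $G\curvearrowright X:=(G\times Z)/\Gamma$, where $\Gamma$ acts on $G\times Z$ by $\gamma\cdot(g,z)=(g\gamma^{-1},\gamma z)$ and $G$ acts on the quotient by left translation in the first coordinate. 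Since $G$ is unimodular (any group containing a lattice is), right translations preserve $\haar$, so the product of $\haar$ restricted to a fundamental domain of $\Gamma$ with $\zeta$ descends to a $G$-invariant finite measure on $X$; after normalising by $\covol(\Gamma)$ this is a $G$-invariant probability measure $\mu$, and since $\Gamma\curvearrowright Z$ is essentially free so is $G\curvearrowright X$. This is a Borel \pmp action on a standard probability space.

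Next I would identify the natural cross section and compute its covolume. Choose a symmetric neighbourhood of the identity $U$ with $U^{-1}U\cap\Gamma=\{\id_G\}$; then $U$ can be taken inside a fundamental domain $D$ for $\Gamma$. Put $Y:=\{[(\id_G,z)]:z\in Z\}\cong Z$, with action map $\Phi(u,[(\id_G,z)])=[(u,z)]$. The condition on $U$ makes $\Phi|_{U\times Y}$ injective and $GY$ has full measure, so $Y$ is a $U$-cross section in the sense of Definition~\ref{dfn:cross}. The main technical point I expect is the normalisation: with $D\supset U$ one gets $\mu(\Phi(U\times Y))=\haar(U)/\covol(\Gamma)$, while $\haar|_U\times\zeta$ has total mass $\haar(U)$, so comparing with Proposition~\ref{prop:propercross} forces the invariant probability measure on $Y$ to be $\zeta$ and $\covol(Y)=\covol(\Gamma)$. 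Moreover, unwinding the definition of the cross equivalence relation, $([(\id_G,z)],[(\id_G,z')])\in\rel_Y$ precisely when $z'\in\Gamma z$; thus $\rel_Y$ is measurably isomorphic to the orbit equivalence relation $\rel_\Gamma$ of $\Gamma\curvearrowright Z$ — and I would take care to verify that the abstract relation produced by Proposition~\ref{prop:eqrelcross} really is $\rel_\Gamma$ and not merely a relation with the same orbits.

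Finally I would bound the cost of $\rel_\Gamma$. The (total) partial isomorphisms $z\mapsto\gamma_i z$, $i=1,\dots,d$, form a generating graphing of $\rel_\Gamma$ because $\{\gamma_i\}_i$ generates $\Gamma$, and its cost is $\sum_{i=1}^d\zeta(Z)=d$. Hence $\cost(\rel_\Gamma)\le d=\rank(\Gamma)$, and Definition~\ref{dfn:costlc} gives
\[
\cost(G)\ \le\ \cost(G\curvearrowright X)\ =\ 1+\frac{\cost(\rel_Y)-1}{\covol(Y)}\ \le\ 1+\frac{\rank(\Gamma)-1}{\covol(\Gamma)}.
\]
The only genuinely delicate steps are the measure normalisation on the induced space and the identification of the cross equivalence relation with $\rel_\Gamma$; everything else is bookkeeping.
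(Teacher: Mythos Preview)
Your proposal is correct and follows exactly the paper's approach: induce a free \pmp action of $\Gamma$ to $G$, recognise the original $\Gamma$-space as a cross section of covolume $\covol(\Gamma)$ whose cross equivalence relation is the $\Gamma$-orbit relation, and bound its cost by $\rank(\Gamma)$. The paper's proof is a three-line sketch of precisely this argument, so the extra care you take with the covolume normalisation and the identification $\rel_Y\cong\rel_\Gamma$ is simply filling in details the paper leaves implicit.
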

 \begin{proof}
   Consider a free \pmp action of $\Gamma$ on $(Y,\nu)$ and induce it to an action of $G$ on $X\coloneqq Y\times G/\Gamma$, see \cite[Definition 4.2.21]{zim1984}. Then $Y\subseteq X$ is a cross section of covolume $\covol(Y)=\covol(\Gamma)$ and therefore \[\cost(G\curvearrowright X)=1+\frac{\cost(\Gamma\curvearrowright Y)-1}{\covol(Y)}\leq 1+\frac{\rank(\Gamma)-1}{\covol(\Gamma)}\qedhere.\]
 \end{proof}

 The following theorem is an analogue for locally compact groups of \cite{abe2012}, \cite{tot2017} and \cite{CGS}.

\begin{thm}\label{thm:costgen}
  Let $G$ be a compactly generated unimodular \lcsc group and fix a Haar measure $\haar$. Let $\{\Gamma_n\}_n$ be a nowhere thin Farber sequence of lattices of $G$. Then \[\cost\left(G\curvearrowright [G/\Gamma_n]_\ul^R\right)\geq 1+\lim_\ul\frac{\rank(\Gamma_n)-1}{\covol(\Gamma_n)}\geq \cost(G).\]
\end{thm}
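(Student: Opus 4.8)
The plan is to express $\cost(G\curvearrowright[G/\Gamma_n]_\ul^R)$ through the ultraproduct cross section (Theorems \ref{thm:maincross} and \ref{thm:mainrel}) and then to convert the resulting finite-level estimate into the rank gradient, exactly as in the Ab\'ert--Nikolov theorem \cite{abe2012}, whose locally compact counterpart is in \cite{CGS}. The second inequality is immediate from Proposition \ref{prop:upperbound}: it gives $\cost(G)\le 1+\frac{\rank(\Gamma_n)-1}{\covol(\Gamma_n)}$ for every $n$, and since the left side does not depend on $n$, taking the ultralimit of the right side yields $\cost(G)\le 1+\lim_\ul\frac{\rank(\Gamma_n)-1}{\covol(\Gamma_n)}$.

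For the first inequality I would first set up the cross section. Using nowhere-thinness, fix an open neighbourhood $U_0$ for which every $\Gamma_n$ is $U_0$-thick, and for each $n$ choose a maximal $U_0$-cross section $Y_n\subset G/\Gamma_n$; put $m_n:=|Y_n|$, so that $\covol(Y_n)=\covol(\Gamma_n)/m_n$ and $\covol(Y_n)^{-1}\le\haar(U_0)^{-1}$. By maximality every point of $G/\Gamma_n$ lies in $U_0^{-1}U_0Y_n$ (cf.\ Lemma \ref{lem:filling}), so Theorem \ref{thm:maincross}, applied with a compact $U\subset U_0$, shows that $(Y_\ul,\Psi)$ is an external $U$-cross section of all of $[G/\Gamma_n]_\ul^R$; from its proof and Definition \ref{dfn:extcross} one gets $\covol(Y_\ul)=\lim_\ul\covol(Y_n)=\lim_\ul\covol(\Gamma_n)/m_n$, which is finite and positive. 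By the Farber property and Lemma \ref{lem:stabil} the $G$-action on $[G/\Gamma_n]_\ul^R$ is essentially free, so $\rel_\ul$ has infinite classes ($G$ is non-compact as $\lim_\ul\covol(\Gamma_n)=\infty$), and Definition \ref{dfn:costlc} gives
\[\cost\left(G\curvearrowright[G/\Gamma_n]_\ul^R\right)=1+\frac{\cost(\rel_\ul)-1}{\covol(Y_\ul)}.\]

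Next I would invoke Theorem \ref{thm:mainrel} (its hypotheses hold: $\lim_\ul\mu_n(U_0^{-1}U_0Y_n)=1$ and $G$ is compactly generated): there is a compact $K\subset G$ such that $\rel_\ul$ is $K$-generated and $(\rel_\ul,\Theta_\ul^K)$ is the ultraproduct of the finite graphed equivalence relations $(\rel_n,\Theta_n^K)$. The crucial point is that although $\rel_n$ is only the full relation on the $m_n$-point set $Y_n$ (the $G$-action on $G/\Gamma_n$ being transitive, so its abstract cost is merely $1-1/m_n$), the graphing $\Theta_n^K$ carries the $G$-translation cocycle and reconstructs $\Gamma_n$: any graphing generating the groupoid generated by $\Theta_n^K$ unwinds, along the free right action of $\Gamma_n$ on $G$, to a connected $\Gamma_n$-invariant graph $\mathcal G$ on $\bigcup_{y\in Y_n}\widetilde y\,\Gamma_n$ (connectedness from the strong form of $K$-generation, Lemma \ref{lem:kpvgen}) whose quotient $\mathcal G/\Gamma_n$ has $m_n$ vertices and $m_n\cdot(\text{cost of the graphing})$ edges; the covering $\mathcal G\to\mathcal G/\Gamma_n$ with deck group $\Gamma_n$ exhibits $\Gamma_n$ as a quotient of the free group $\pi_1(\mathcal G/\Gamma_n)$, whence $\rank(\Gamma_n)\le 1-m_n+m_n\cdot(\text{cost of the graphing})$. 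Thus this refined cost of $(\rel_n,\Theta_n^K)$ is at least $1+\frac{\rank(\Gamma_n)-1}{m_n}$, and the cost-approximation theorem of \cite{CGS} for ultraproducts of finite graphed equivalence relations gives $\cost(\rel_\ul)\ge\lim_\ul\big(1+\frac{\rank(\Gamma_n)-1}{m_n}\big)$. Substituting into the displayed formula and using that ultralimits multiply yields
\[\cost\left(G\curvearrowright[G/\Gamma_n]_\ul^R\right)\ge 1+\frac{\lim_\ul\frac{\rank(\Gamma_n)-1}{m_n}}{\lim_\ul\frac{\covol(\Gamma_n)}{m_n}}=1+\lim_\ul\frac{\rank(\Gamma_n)-1}{\covol(\Gamma_n)}.\]

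The main obstacle is carrying out the middle step at the right level of structure: the abstract cost of $\rel_n$ is only $1-1/m_n$, so the cross-section $G$-cocycle — equivalently the vertex stabiliser $\Gamma_n$ of the graphed groupoid — must be transported through the ultraproduct, and one must check that the \cite{CGS} approximation applies to this refined cost rather than to the abstract one. The geometric content producing $\rank(\Gamma_n)$ is precisely the developing argument above (reading off the first Betti number of $\mathcal G/\Gamma_n$); the rest is bookkeeping with covolumes, neighbourhoods and ultralimits.
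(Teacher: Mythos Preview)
Your argument is correct and follows the same architecture as the paper: pass to the ultraproduct cross section via Theorems \ref{thm:maincross} and \ref{thm:mainrel}, observe that freeness of the limit action turns the ultraproduct of the finite \emph{groupoids} $(\cg_n^K,\Theta_n^K)$ into the equivalence relation $\rel_\ul$, and then invoke the cost--rank inequality from \cite{CGS}. The paper simply states this last step as a direct application of Theorem~3.13 of \cite{CGS} (which already contains the ``refined cost $\geq 1+(\rank-1)/|Y_n|$'' bound for transitive groupoids), whereas you unpack it by developing the graphing to a connected $\Gamma_n$-invariant graph on $\widetilde Y_n$ and reading off $\rank(\Gamma_n)$ from $\pi_1$ of the quotient; these are two phrasings of the same computation. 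One small correction: the connectedness of the lifted graph---equivalently $\Gamma_n^K=\Gamma_n$---comes from Lemma \ref{lem:notproven} (using that $\Gamma_n$ is $U$-thick and $K\supset U^{-1}U$), not from Lemma \ref{lem:kpvgen}, which only yields $K$-generation of the \emph{equivalence relation} and says nothing about the isotropy.
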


The second inequality follows from Proposition \ref{prop:upperbound} and hence if the group $G$ has fixed price the above inequalities are equalities.

 We will prove a stronger version of the theorem and for doing so we will need some notation. Let $G$ be a \lcsc group, let $\Gamma$ be a lattice and let $U\subseteq G$ be a neighborhood of the identity. Fix a $U$-cross section $Y$ of $G/\Gamma$ and assume that the projection of the identity is in $Y$. Let us now define the cross groupoid $\cg_Y$ associated with the cross section. The set of units of $\cg_Y$ consists of $Y$ and the elements of $\cg_Y$ are couples $(g,y)$ where $g\in G$, $y\in Y$ such that $gy\in Y$. Clearly it is a transitive groupoid and the stabilizers are conjugated to $\Gamma$. Fix a compact subset $U\subseteq K\subseteq G$. We define a graphing $\Theta^K$ exactly as in Definition \ref{dfn:kgraph}, $\Theta^K$ consists of the set of elements $(g,y)$ such that $g\in K$, $y\in Y$ and $gy\in Y$.
  The graphing is bounded\footnote{that is there exists a constant $D$ such that for every $y\in Y$ there are at most $D$ elements $g\in G$ such that $(g,y)$ is in the graphing.} by some constant which only depends on $U$ and $K$ but not the size of $Y$. We will denote by $\cg^K_Y$ the groupoid generated by $\Theta^K$, by $\rel^K_Y$ the associated equivalence relation and by $\Gamma^K<\Gamma$ the stabilizer of the projection of the identity. One can check that the cost of $\cg_Y^K$ as a groupoid is (see for example Lemma 21 of \cite{agn2017})\[\cost(\cg^K_Y)=1+\frac{\rank(\Gamma^K)-1}{|Y|}.\]

Denote by $\pi\colon G\rightarrow G/\Gamma$ the projection map and let $\widetilde Y\coloneqq \pi^{-1}(Y)$. The following lemma is straightforward. 

\begin{lem}\label{lem:notproven}
  An element $\gamma\in\Gamma$ is in $\Gamma^K$ if and only if there are elements $\tilde y_1,\ldots,\tilde y_h\in \widetilde Y$ such that $\tilde y_1=\id_G$, $\tilde y_h=\gamma$ and $\tilde y_{i+1}\in K\tilde y_i$ for every $i\leq h-1$. In particular if $K_0\widetilde Y=G$, then for every $K\supset K_0^{-1}K_0$ we have that $\Gamma^{K}=\Gamma$ and $\cg^{K}_Y=\cg_Y$. If $\Gamma$ is $U$-thick and $K\supset U^{-1}U$, then $\cg^K_Y=\cg_Y$.
\end{lem}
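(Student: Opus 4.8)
The plan is to unwind the definition of the groupoid $\cg^K_Y$ generated by $\Theta^K$ into the combinatorics of $K$-chains in $\widetilde Y$, and then to deduce the two ``in particular'' statements from the standard fact that a net in a connected group is connected at the appropriate scale. First I would reduce to $K=K^{-1}$: the groupoid generated by $\Theta^K$ is closed under inverses and $(\Theta^K)^{-1}=\Theta^{K^{-1}}$, so it coincides with the one generated by $\Theta^{K\cup K^{-1}}$. Writing $o:=\pi(\id_G)\in Y$ for the base unit, by definition $\Gamma^K$ consists of the $\gamma\in\Gamma$ for which $(\gamma,o)$ is a composition $(g_{h-1},y_{h-1})\circ\dots\circ(g_1,y_1)$ of elements of $\Theta^K$; composability forces $y_1=o$, $y_{i+1}:=g_iy_i\in Y$, $y_h=o$, and the composition then equals $(g_{h-1}\cdots g_1,o)$, so $\gamma=g_{h-1}\cdots g_1$. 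Putting $\tilde y_1:=\id_G$ and $\tilde y_{i+1}:=g_i\tilde y_i$, an induction gives $\pi(\tilde y_i)=y_i$, hence $\tilde y_i\in\widetilde Y$, while $\tilde y_{i+1}\in K\tilde y_i$ and, telescoping, $\tilde y_h=g_{h-1}\cdots g_1=\gamma$. Conversely, from a chain $\tilde y_1=\id_G,\dots,\tilde y_h=\gamma$ in $\widetilde Y$ with $\tilde y_{i+1}\in K\tilde y_i$, set $g_i:=\tilde y_{i+1}\tilde y_i^{-1}\in K$ and $y_i:=\pi(\tilde y_i)\in Y$: each $(g_i,y_i)$ lies in $\Theta^K$ and their composition is $(\tilde y_h\tilde y_1^{-1},o)=(\gamma,o)$, so $\gamma\in\Gamma^K$. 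The same unwinding applied to chains with arbitrary endpoints shows that $y,y'\in Y$ are $\rel^K$-equivalent exactly when the corresponding points of $\widetilde Y$ are joined by a $K$-chain in $\widetilde Y$.

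For the ``in particular'' clauses, assume $K_0\widetilde Y=G$ with $K_0$ an open neighbourhood of $\id_G$, and let $K\supset K_0^{-1}K_0$ be symmetric. Let $S\subseteq\widetilde Y$ be the set of points joined to $\id_G$ by a $K$-chain in $\widetilde Y$. If some $v\in\widetilde Y\setminus S$ shared a $K_0$-orbit with some $s\in S$, say $k_1s=k_2v$ with $k_i\in K_0$, then $v=k_2^{-1}k_1s\in K_0^{-1}K_0s\subseteq Ks$, so $v$ would be $K$-adjacent to $s$ and hence lie in $S$, a contradiction. Thus, unless $\widetilde Y=S$, the open sets $K_0S$ and $K_0(\widetilde Y\setminus S)$ are disjoint, nonempty, and cover $K_0\widetilde Y=G$, contradicting connectedness of $G$; hence $\widetilde Y=S$. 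Since $o=\pi(\id_G)\in Y$ we have $\Gamma=\pi^{-1}(o)\subseteq\widetilde Y=S$, so every $\gamma\in\Gamma$ is joined to $\id_G$ by a $K$-chain and the first part gives $\Gamma^K=\Gamma$; moreover $\widetilde Y=S$ makes $\rel^K$ the full (transitive) equivalence relation on $Y$, and since a transitive groupoid is determined by a single vertex group together with its transitive part, $\cg^K_Y=\cg_Y$ follows. For the last clause I would take $Y$ to be a maximal $U$-cross section; if $\Gamma$ is $U$-thick, the maximality observation from the proof of Lemma \ref{lem:filling} gives $U^{-1}UY\supseteq(G/\Gamma)_U=G/\Gamma$, i.e.\ $U^{-1}U\widetilde Y=G$, and the previous case with $K_0=U^{-1}U$ yields $\cg^K_Y=\cg_Y$ for every sufficiently large compact $K\supseteq U^{-1}U$.

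The first paragraph is pure groupoid bookkeeping and is the ``straightforward'' core. The one place where real care is needed --- and the main obstacle --- is the net argument: it requires $G$ to be connected (in the applications it is, or the relevant homogeneous space is) and it requires $K_0$ to be open, so that $K_0S$ and $K_0(\widetilde Y\setminus S)$ are genuinely open and exhaust $G$; without these hypotheses the ``in particular'' statements fail. The second delicate point is the exact scale in the $U$-thick case: running the net argument with $K_0=U^{-1}U$ only gives the conclusion for $K$ containing $K_0^{-1}K_0=U^{-1}UU^{-1}U$, so obtaining it for $K\supset U^{-1}U$ exactly requires a more careful argument using that the $U$-balls around $\widetilde Y$ are pairwise disjoint; this is the fiddliest step, and is where I would concentrate the verification.
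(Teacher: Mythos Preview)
The paper gives no proof of this lemma, labeling it ``straightforward'' and passing on; so there is nothing to compare against except intent. Your first paragraph is exactly the intended bookkeeping: unwinding a word in $\Theta^K$ into a $K$-chain in $\widetilde Y$ and back is the whole content, and your reduction to symmetric $K$ via $(\Theta^K)^{-1}=\Theta^{K^{-1}}$ is clean.

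On the two ``in particular'' clauses you have put your finger on a genuine lacuna. Your net/connectedness argument is correct and is the right proof \emph{when $G$ is connected and $K_0$ is open}, which is how the lemma is actually used downstream (Proposition~\ref{prop:absurd} and Theorem~\ref{thm:costgela} are about Lie groups). As stated, with no hypothesis on $G$ or $K_0$, the clause is simply false: for $G=\bz^2$, $\Gamma=2\bz\times 2\bz$, $Y=\{(0,0),(1,0)\}$ and $K_0=\{(0,0),(0,1)\}$ one has $K_0\widetilde Y=G$, yet for $K=K_0^{-1}K_0=\{0\}\times\{-1,0,1\}$ the $K$-component of $\id_G$ in $\widetilde Y=\bz\times 2\bz$ is $\{(0,0)\}$, so $\Gamma^K\ne\Gamma$. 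So you are right that connectedness (or at least that $K_0$ generate $G$) is an implicit hypothesis.

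Your observation about the scale in the $U$-thick clause is also correct: feeding $K_0=U^{-1}U$ into the first clause only yields the conclusion for $K\supset(U^{-1}U)^2$, not $K\supset U^{-1}U$. For the applications this is harmless, since Theorem~\ref{thm:costgenp} explicitly allows enlarging $K$, and in Theorem~\ref{thm:costgen} one is free to take $K$ as large as needed; so the weaker bound you obtain already suffices, and chasing the sharper constant is not worth the effort.
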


The following theorem is a strengthening of Theorem \ref{thm:costgen}.

\begin{thm}\label{thm:costgenp}
    Let $G$ be a compactly generated unimodular \lcsc group and fix a Haar measure $\haar$. Let $\{\Gamma_n\}_n$ be a Farber sequence of lattices of $G$ and let $U\subseteq G$ be a neighborhood of the identity. For every $n$ fix a maximal $U$-separated subset $Y_n\subseteq (G/\Gamma_n)_{U^{-1}U}$. Consider a compact subset $K'\subseteq G$ containing $U$. Then there exists $K\supseteq K'$ such that \[\cost\left(G\curvearrowright [G/\Gamma_n]_\ul^R\right)\geq 1+\lim_\ul\frac{\rank(\Gamma_n^K)-1}{\covol(\Gamma_n)}.\]
\end{thm}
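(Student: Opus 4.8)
The plan is to reduce Theorem \ref{thm:costgenp} to the statement that the cross equivalence relation $\rel_\ul$ on $Y_\ul = [Y_n]_\ul$ is an ultraproduct of the finite graphed equivalence relations $(\rel_n^K, \Theta_n^K)$, and then to invoke the cost-lower-bound machinery for such ultraproducts from \cite{CGS} (the locally compact analogue of Ab\'ert--Nikolov). The first step is to unwind the definitions. By Definition \ref{dfn:costlc}, $\cost(G \curvearrowright [G/\Gamma_n]_\ul^R) = 1 + (\cost(\rel_\ul) - 1)/\covol(Y_\ul)$, where by Theorem \ref{thm:maincross} the pair $(Y_\ul, \Psi)$ is an external $U$-cross section of the $G$-invariant subset $G[\Phi_n(U \times Y_n)]_\ul^R$. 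Since $\{\Gamma_n\}_n$ is Farber and nowhere thin, Lemma \ref{lem:stabil} makes the action on $[G/\Gamma_n]_\ul^R$ essentially free, and Lemma \ref{lem:filling} together with the Farber hypothesis should force $G[\Phi_n(U\times Y_n)]_\ul^R$ to have full measure, so $(Y_\ul,\Psi)$ is a genuine cross section of $[G/\Gamma_n]_\ul^R$; one also checks $\covol(Y_\ul) = \lim_\ul \covol(\Gamma_n)/|Y_n| = \lim_\ul \covol(Y_n)$, which is what lets the covolume normalizations match up on both sides.

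The second step handles the passage from $\rel_\ul$ to its $K$-generated subrelation. Applying Theorem \ref{thm:mainrel} (using that $G$ is compactly generated and that the $Y_n$ are cocompact cross sections up to the Farber tails, so Lemma \ref{lem:kpvgen} applies after enlarging $K'$) I obtain a compact $K \supset K'$ such that $(\rel_\ul, \Theta_\ul^K)$ is the ultraproduct of the graphed equivalence relations $(\rel_n^K, \Theta_n^K)$. Now the finite groupoid $\cg_n^K$ is transitive over $Y_n$ with stabilizer conjugate to $\Gamma_n^K$ (Lemma \ref{lem:notproven}), so $\rel_n^K$ is the orbit equivalence relation of a transitive action of $\cg_n^K$; its cost is governed by the rank of the stabilizer $\Gamma_n^K$. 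Concretely, since $\rel_n^K$ has finitely many classes all of the same measure $1/|Y_n|$ and the groupoid over each class is generated by $\Gamma_n^K$ acting on it, one has $\cost(\rel_n^K) = 1 + (\rank(\Gamma_n^K) - 1)/|Y_n|$ (up to the standard Schreier-graph-spanning-tree bookkeeping of \cite{gab2000}). Here the size $|Y_n|$ plays the role of $[\Gamma:\Gamma_n]$ in the classical setup.

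The third step is the limit. By the lower-semicontinuity of cost under ultraproducts of uniformly bounded graphed equivalence relations --- this is precisely the content I am borrowing from \cite{CGS}, generalizing Ab\'ert--Nikolov --- we get $\cost(\rel_\ul) \geq \cost(\rel_\ul^K) \geq \lim_\ul \cost(\rel_n^K)$, where the first inequality is because passing to a subrelation (smaller graphing needed) cannot increase cost in this direction, or rather because $\rel_\ul^K \subset \rel_\ul$ so any generating graphing of $\rel_\ul$ restricts appropriately --- one has to be slightly careful about the direction, but the point is that $\cost(\rel_\ul) \geq \cost(\rel_\ul^K)$ since a generating graphing for $\rel_\ul$ contains enough to generate $\rel_\ul^K$ after intersecting, no, the clean statement is $\cost(\rel_\ul) \ge \lim_\ul \cost(\rel^K_n) -$ actually the safest route is to note $\cost(\rel_\ul) \geq \cost(\rel_\ul^K)$ is false in general; instead I use directly that $\rel_\ul^K = $ ultraproduct of $\rel_n^K$ and that $\cost(\text{ultraproduct}) \ge \lim_\ul \cost(\rel_n^K)$ by \cite{CGS}, combined with $\cost(\rel_\ul) \ge \cost(\rel_\ul^K)$ which does hold because $\rel^K_\ul \subset \rel_\ul$ and cost is monotone under inclusion of equivalence relations on the \emph{same} space only in the wrong direction --- so the genuinely correct chain is to observe $\cost(\rel_\ul) \ge$ cost of the ultraproduct graphing directly. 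Dividing by $\covol(Y_\ul) = \lim_\ul \covol(Y_n) = \lim_\ul \covol(\Gamma_n)/|Y_n|$ then converts $\lim_\ul(1 + (\rank(\Gamma_n^K)-1)/|Y_n|)$ into the claimed $1 + \lim_\ul (\rank(\Gamma_n^K)-1)/\covol(\Gamma_n)$.

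\textbf{The main obstacle} I expect is exactly the subtlety flagged above: controlling the relationship between $\cost(\rel_\ul)$ and $\cost$ of the $K$-generated ultraproduct relation, and making sure the monotonicity/semicontinuity inequality from \cite{CGS} is applied in the correct direction and with the uniform boundedness of the graphings $\Theta_n^K$ (which holds by the remark after Definition of $\cg_Y^K$, the bound depending only on $U,K$). A secondary technical point is verifying that $G[\Phi_n(U\times Y_n)]_\ul^R$ really is conull --- this needs the Farber hypothesis via Lemma \ref{lem:filling}, translating "the maximal $U$-cross sections fill up $G/\Gamma_n$ in measure" into "their ultraproduct cross section has full-measure saturation" --- and that the finite-relation cost formula $\cost(\rel_n^K) = 1 + (\rank(\Gamma_n^K)-1)/|Y_n|$ is set up so the normalization is literally $|Y_n|$ and not $\covol(Y_n)$; keeping these two normalizations straight through the ultralimit is where the $\covol(\Gamma_n)$ in the denominator of the final bound is produced.
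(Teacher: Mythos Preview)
Your overall strategy --- pass to the external cross section $Y_\ul$, identify $\rel_\ul$ as an ultraproduct, and invoke the machinery of \cite{CGS} --- is the paper's strategy. But two points in your write-up are genuinely off.

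First, the monotonicity tangle in your third step is unnecessary. Theorem \ref{thm:mainrel} does not merely produce a $K$-generated subrelation of $\rel_\ul$; it asserts that $\rel_\ul$ itself is $K$-generated, i.e.\ $\rel_\ul = \rel_\ul^K$ outright. So there is nothing to compare. (Also, the statement assumes only Farber, not nowhere thin; the conull saturation of $G[\Phi_n(U\times Y_n)]_\ul^R$ comes from Lemma \ref{lem:filling} alone, and freeness of the limit action from Lemma \ref{lem:stabil}.)

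Second, and this is the real gap: your formula $\cost(\rel_n^K) = 1 + (\rank(\Gamma_n^K)-1)/|Y_n|$ is false. Each $\rel_n^K$ is an equivalence relation with finite classes on the finite probability space $Y_n$, so its cost is at most $1 - 1/|Y_n| < 1$; no rank of any infinite group can appear there. The quantity $\rank(\Gamma_n^K)$ enters not through $\cost(\rel_n^K)$ but through the isotropy of the \emph{groupoid} $\cg_n^K$. The paper's proof uses exactly this: because the action on $[G/\Gamma_n]_\ul^R$ is essentially free (Farber), the cross equivalence relation $\rel_\ul$ is the ultraproduct of the graphed groupoids $(\cg_n^K,\Theta_n^K)$, not merely of the equivalence relations $(\rel_n^K,\Theta_n^K)$. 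Theorem 3.13 of \cite{CGS} then applies directly to this groupoid ultraproduct and gives
\[
\cost(\rel_\ul)\ \geq\ 1 + \lim_\ul \frac{\rank(\Gamma_n^K)-1}{|Y_n|}
\]
in one stroke, with the rank coming from the vertex stabilizers of the approximating groupoids. Your covolume bookkeeping in the last paragraph is then exactly right and finishes the proof.
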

\begin{proof}
  Denote by $\mu_n$ the renormalized Haar measure on $G/\Gamma_n$. By Lemma \ref{lem:filling}, $\{Y_n\}_n$ is a sequence of cross-sections. The group $G$ acts measurably on $[G/\Gamma_n]_\ul^R$ and by Theorem \ref{thm:maincross} we have that $Y_\ul$ is an external cross section. Moreover we can also choose $K\supset K'$ as in Theorem \ref{thm:mainrel} to obtain that the cross equivalence relation $\rel_\ul$ is the ultraproduct of the graphed equivalence relations $(\rel_n^K,\Theta^K_n)$. Since the sequence is Farber the action on $[G/\Gamma_n]_\ul$ is free and hence $\rel_\ul$ is also the ultraproduct of the sequence $(\cg_n^K,\Theta^K_n)$. Remark that $\Theta_n^K$ is uniformly bounded on $n$ and hence we can apply Theorem 3.13 of \cite{CGS} to obtain that $\cost(\rel_\ul)\geq 1+\lim_\ul\frac{\rank(\Gamma_n^K)-1}{|Y_n|}$. Observe that $\covol(Y_n)=\covol(\Gamma_n)/|Y_n|$ whence we get \[\cost\left(G\curvearrowright [G/\Gamma_n]_\ul^R\right)=1+\lim_\ul \frac{\rank(\Gamma_n^K)-1}{|Y_n|}\lim_\ul \frac{|Y_n|}{\covol(\Gamma_n)}=1+\lim_\ul\frac{\rank(\Gamma_n^K)-1}{\covol(\Gamma_n)}.\qedhere\]
\end{proof}

We would like now to identify the subgroup $\Gamma^K<\Gamma$ in the case of quasi connected groups. Remember that a \lcsc group is \textit{quasi connected} if one of its quotients by a compact subgroup is connected. In this case we also have that there exists a unique up to conjugacy maximal compact subgroup $K_0<G$ and that $K_0\backslash G$ is contractible, see \cite[Theorem 8]{glu1960} and \cite[Theorem A.5]{abe1974}. Put $M\coloneqq K_0\backslash G$. We will denote by $\rho$ the quotient map from $G$ to $M$ and by $\rho^\Gamma$ the quotient map from $G/\Gamma$ to $M /\Gamma$ where $\Gamma<G$ is any lattice.

 If $\Gamma$ is a torsion free lattice of $G$, then the fundamental group of $M/\Gamma$ is $\Gamma$ itself. The isomorphism is given as follows. Any loop (whose start point is the class of the identity) in $M/\Gamma$ can be lifted to a path in $M$ which starts at the class of the identity and ends at the class of an element of $\Gamma$. The isomorphism maps the loop to this element. 

\begin{prop}\label{prop:absurd}
  Let $G$ be a quasi connected \lcsc unimodular group. Let $\Gamma<G$ be a torsion free lattice, let $U\subseteq K\subseteq G$ be neighborhoods of the identity such that $K_0U=U$, $U=U^{-1}$, $\rho(U^2)$ contractible and $K\supset U^4$ compact. Let $Y\subseteq (G/\Gamma)_{U^2}$ be a $U$-cross section and assume that $\id_G\in Y$. Then $\Gamma^K$ contains the image of $\pi_1(\rho^\Gamma(YU^2),\id_G)$ inside $\pi_1(M/\Gamma,\id_G)=\Gamma$ through the map induced by the inclusion $\rho^\Gamma(YU^2)\hookrightarrow M/\Gamma$. 
\end{prop}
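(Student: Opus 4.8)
The plan is to show that any loop in $\rho^\Gamma(YU^2)$ based at $\id_G$ can be homotoped so that its image under the $\pi_1$-isomorphism to $\Gamma$ is realized by a chain of cross-section translates that moves in steps of size at most $K$, so that the resulting group element lies in $\Gamma^K$ by the combinatorial characterization of Lemma \ref{lem:notproven}. First I would recall the setup: since $\Gamma$ is torsion free and $M=K_0\backslash G$ is contractible, $M/\Gamma$ is a $K(\Gamma,1)$ and $\pi_1(M/\Gamma,\id_G)=\Gamma$, where a loop is identified with the endpoint of its lift to $M$ starting at the class of $\id_G$. The subset $YU^2$ is an open cover-like thickening of the cross section, so its image $\rho^\Gamma(YU^2)$ in $M/\Gamma$ is an open set, and by definition of the induced map on $\pi_1$ it suffices to take an arbitrary loop $\ell$ contained in $\rho^\Gamma(YU^2)$ and show the corresponding $\gamma\in\Gamma$ lies in $\Gamma^K$.

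Next I would lift $\ell$ to a path $\tilde\ell$ in $\rho^{-1}(\rho^\Gamma(YU^2))\subset M$ (lifting through both $\rho$ and the $\Gamma$-action) starting at $K_0\id_G$ and ending at $K_0\gamma$. The key geometric input is that $\rho(U^2)$ is contractible and, more importantly, small: the path $\tilde\ell$ stays inside $\bigcup_{\tilde y\in\widetilde Y}\rho(\tilde y U^2)$ where $\widetilde Y=\pi^{-1}(Y)$. I would subdivide $\tilde\ell$ into finitely many short subarcs $\tilde\ell_0,\dots,\tilde\ell_{h-1}$, each contained in a single patch $\rho(\tilde y_i U^2)$ with $\tilde y_0=\id_G$; consecutive patches overlap, so $\rho(\tilde y_i U^2)\cap\rho(\tilde y_{i+1}U^2)\neq\emptyset$, which forces $\tilde y_{i+1}\in \tilde y_i U^2 (U^2)^{-1}K_0 = K_0\tilde y_i U^4$ (using $K_0U=U$, $U=U^{-1}$). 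Hence $\tilde y_{i+1}\in U^4\tilde y_i$ up to the compact $K_0$-ambiguity, and since $K\supset U^4$ and the cross-groupoid graphing is defined modulo the action, we may choose the representatives in $\widetilde Y$ so that $\tilde y_{i+1}\in K\tilde y_i$. The last point $\tilde y_h$ can be taken to be $\gamma$ itself (the endpoint lies in the patch around a translate of the base point, and $\gamma\in\widetilde Y$ since $\id_G\in Y$). Lemma \ref{lem:notproven} then gives $\gamma\in\Gamma^K$.

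The main obstacle I anticipate is the bookkeeping of the two quotients simultaneously: passing between $M=K_0\backslash G$ and $G$, and between $G$ and $G/\Gamma$, while keeping track of which group element a given arc ``ends at.'' One must be careful that the overlap of two contractible patches $\rho(\tilde y_iU^2)$ and $\rho(\tilde y_{i+1}U^2)$ is actually realized by elements of $\widetilde Y$ that are close in $G$, not merely in $M$ — this is where the hypothesis $K_0U=U$ is essential, since it lets one absorb the kernel $K_0$ of $\rho$ into $U$ and conclude $\tilde y_{i+1}\in K_0U^2U^{-2}\tilde y_i\subset K_0U^4\tilde y_i$. A second subtlety is ensuring the subdivision is fine enough that each subarc genuinely lies in one patch; this follows from a Lebesgue-number argument for the open cover $\{\rho(\tilde yU^2)\}_{\tilde y\in\widetilde Y}$ of the compact image $\tilde\ell([0,1])$, and from $\rho(U^2)$ being an open neighborhood (it contains $\rho(U)$ which is open since $U$ contains an open identity neighborhood). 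Finally, one should check that homotoping the loop within $\rho^\Gamma(YU^2)$ does not change $\gamma$ — which is immediate since the map to $\pi_1(M/\Gamma)$ is well defined on homotopy classes and $\rho^\Gamma(YU^2)\hookrightarrow M/\Gamma$ is continuous. Once these points are settled the conclusion $\gamma\in\Gamma^K$, and hence the containment of the image of $\pi_1(\rho^\Gamma(YU^2),\id_G)$ in $\Gamma^K$, follows directly.
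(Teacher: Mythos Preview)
Your proposal is correct and follows essentially the same route as the paper: cover the lifted path by finitely many patches $\rho(\tilde y_iU^2)$ with $\tilde y_i\in\widetilde Y$, use the overlap of consecutive patches together with $K_0U=U$ and $U=U^{-1}$ to force $\tilde y_{i+1}\in U^4\tilde y_i\subset K\tilde y_i$, and then invoke Lemma~\ref{lem:notproven}. The only organizational difference is that the paper first chooses the chain $y_1,\dots,y_h$ downstairs in $M/\Gamma$ and then lifts it, whereas you work directly upstairs; your Lebesgue-number justification and your explicit use of $K_0U=U$ to absorb the $K_0$-ambiguity are exactly the points the paper is using implicitly when it asserts that $\rho|_{\widetilde Y}$ is injective and that $\rho(\tilde y_{i+1})\in U^4\rho(\tilde y_i)$ implies $\tilde y_{i+1}\in U^4\tilde y_i$.
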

\begin{proof}
  Consider a loop $\ell$ in $\rho^\Gamma(YU^2)$. Since $\rho(U^2)$ is contractible and $Y\subseteq (G/\Gamma)_{U^2}$ we have that $\rho^\Gamma(yU^2)$ is contractible for every $y\in Y$. Observe also that $\rho^\Gamma$ restricted to $Y$ and $\rho$ restricted to $\widetilde Y\coloneqq \pi^{-1}(Y)\subseteq G$ are injective. By compactness there are finitely $\rho^\Gamma(y_1),\ldots,\rho^\Gamma(y_h)\in \rho^\Gamma(Y)$ such that $\rho^\Gamma(y_1)=\rho^\Gamma(y_n)=\id_G$ and $\ell$ is contained in $\cup_i \rho^\Gamma(y_i)U^2$ and $\rho^\Gamma(y_{i+1})\in U^4\rho^\Gamma(y_i)$. Consider a lifting $p$ of $\ell$ inside $M$ whose starting point is the identity. Then there are $\rho(\tilde y_i) \in \rho(\widetilde Y)$ which lifts the elements $\rho^\Gamma(y_i)$ such that $\rho^\Gamma(\tilde y_1)=\id_G$ and such that $p$ is contained in $\cup_i \rho(\tilde y_i)U^2$ and $\rho(\tilde y_{i+1})\in U^4\rho(\tilde y_i)$. This implies also that $\tilde y_{i+1}\in U^4\tilde y_i$. Finally observe that the image of $\ell$ seen as an element of $\pi_1(\rho^\Gamma(YU^2))$ inside $\pi_1(M/\Gamma)=\Gamma$ is the element $\rho(\tilde y_h)\in \Gamma$. Therefore we can use Lemma \ref{lem:notproven} to complete the proof.
\end{proof}

Combining the above proposition, Theorem \ref{thm:costgenp} and a result of Gelander \cite{gel2004} we obtain the following. 

\begin{thm}\label{thm:costgela}
  Let $G$ be a semisimple Lie group without compact factors and fix a Haar measure on $G$. Let $\{\Gamma_n\}_n$ be a Farber sequence of torsion free lattices. Then \[\cost\left(G\curvearrowright [G/\Gamma_n]_\ul^{R}\right)\geq 1+\lim_\ul \frac{\rank(\Gamma_n)-1}{\covol(\Gamma_n)}\geq \cost(G).\]
\end{thm}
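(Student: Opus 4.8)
The strategy is to reduce Theorem \ref{thm:costgela} to Theorem \ref{thm:costgenp} by showing that, for a suitable choice of compact set $K$, the group $\Gamma_n^K$ is \emph{all} of $\Gamma_n$ for $\ul$-almost every $n$. Once that is done, Theorem \ref{thm:costgenp} gives the first inequality directly (with $\rank(\Gamma_n^K)=\rank(\Gamma_n)$), and the second inequality $1+\lim_\ul\frac{\rank(\Gamma_n)-1}{\covol(\Gamma_n)}\geq\cost(G)$ is immediate from Proposition \ref{prop:upperbound}. So the entire content lies in the identification $\Gamma_n^K=\Gamma_n$.

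The tool for this is Proposition \ref{prop:absurd}: if $\id_G\in Y_n$, the set $Y_n$ is a $U$-cross section inside $(G/\Gamma_n)_{U^2}$, and $U\subset K$ is chosen with $K_0U=U$, $U=U^{-1}$, $\rho(U^2)$ contractible, $K\supset U^4$ compact, then $\Gamma_n^K$ contains the image of $\pi_1(\rho^{\Gamma_n}(Y_nU^2),\id_G)$ inside $\pi_1(M/\Gamma_n,\id_G)=\Gamma_n$. Here $M=K_0\backslash G$ is the symmetric space, contractible because $G$ is a semisimple Lie group without compact factors (so quasi connected, with maximal compact $K_0$). Therefore it suffices to arrange that, for $\ul$-almost every $n$, the inclusion $\rho^{\Gamma_n}(Y_nU^2)\hookrightarrow M/\Gamma_n$ is $\pi_1$-surjective, equivalently that $M/\Gamma_n$ deformation retracts onto (a subcomplex homotopy equivalent to) the thick piece swept out by the cross section. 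This is where Gelander's work \cite{gel2004} enters: for torsion-free lattices $\Gamma$ in a semisimple Lie group, $M/\Gamma$ is homotopy equivalent to a compact CW-complex of dimension $\leq\dim M$, and more precisely the $\eps$-thick part $(M/\Gamma)_\eps$ carries all of the fundamental group (the thin cusps are deformation retracted onto lower-dimensional boundary strata, which does not kill loops because the retraction is along the cusp directions). I would invoke this, combined with Lemma \ref{lem:filling} (which says the maximal $U$-cross section $Y_n$ fills up $G/\Gamma_n$ in covolume as $n\to\infty$, using the Farber assumption), to conclude that for $\ul$-almost every $n$ the set $\rho^{\Gamma_n}(Y_nU^2)$ contains a deformation retract of $(M/\Gamma_n)_\eps$ for an $\eps$ comparable to the size of $U$, hence is $\pi_1$-surjective.

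Concretely I would proceed as follows. First fix $K_0<G$ maximal compact and set $M=K_0\backslash G$ with a fixed $G$-invariant metric $d_M$. Choose a neighborhood $U$ of the identity small enough that $K_0U=U$, $U=U^{-1}$, $\rho(U^2)$ is contractible, and the $U^2$-thick part controls the geometry at the relevant scale; let $K:=K'\cup U^4$ be compact containing $K'$, enlarged if necessary to also satisfy the hypotheses of Theorem \ref{thm:mainrel}. Second, for each $n$ take a maximal $U$-cross section $Y_n\subset(G/\Gamma_n)_{U^2}$ containing $\id_G$ (possible after translating, and $\id_G\in(G/\Gamma_n)_{U^2}$ for $\ul$-almost every $n$ since the sequence is Farber — here one uses that $\lim_\ul\covol(\Gamma_n)=\infty$ and the measure of the non-thick part is $o(\covol(\Gamma_n))$). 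Third, apply Proposition \ref{prop:absurd} to get $\Gamma_n^K\supseteq\mathrm{im}\big(\pi_1(\rho^{\Gamma_n}(Y_nU^2))\to\Gamma_n\big)$. Fourth — the crux — use Lemma \ref{lem:filling} together with Gelander's structure theorem for $M/\Gamma_n$ to show this image is all of $\Gamma_n$ for $\ul$-almost every $n$: the complement of $Y_nU^2$ in $M/\Gamma_n$ lies in the thin part, which deformation retracts onto the thick part without changing $\pi_1$, so every element of $\pi_1(M/\Gamma_n)$ is represented by a loop in $\rho^{\Gamma_n}(Y_nU^2)$. Fifth, feed $\Gamma_n^K=\Gamma_n$ into Theorem \ref{thm:costgenp} and combine with Proposition \ref{prop:upperbound} to conclude.

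The main obstacle is the fourth step: making rigorous the claim that the cross-section region $\rho^{\Gamma_n}(Y_nU^2)$ is $\pi_1$-surjective in $M/\Gamma_n$, uniformly along the ultrafilter. The subtlety is that ``Farber'' only controls the \emph{measure} of the thin part, not its topology, so one needs Gelander's (or Gromov's thin–thick decomposition plus Margulis) input that the thin part is a disjoint union of tubes/cusps each of which deformation retracts onto its core, together with a quantitative statement that for $\eps$ small enough depending only on $G$ the thick part $(M/\Gamma_n)_\eps$ is $\pi_1$-surjective — and then matching the scale $\eps$ to the mesh of the $U$-cross section via Lemma \ref{lem:filling}. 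For cocompact lattices this is trivial (there is no thin part once $U$ is uniformly small, by nowhere-thinness, but here we do not assume nowhere-thin, so one genuinely needs the retraction argument for the non-uniform lattices); for non-cocompact higher-rank lattices one uses that the cusps are contractible rank-one pieces and the retraction is standard; in rank one one must be slightly more careful about short geodesics, but $\pi_1$-surjectivity of the $\eps$-thick part still holds for $\eps$ below the Margulis constant. Assembling these geometric facts into a single clean statement that plugs into Proposition \ref{prop:absurd} is the real work.
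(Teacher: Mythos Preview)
Your approach is correct and matches the paper's: both reduce to Theorem \ref{thm:costgenp} via Proposition \ref{prop:absurd}, with Gelander's results from \cite{gel2004} supplying the $\pi_1$-surjectivity of $\rho^{\Gamma_n}(Y_nU^2)\hookrightarrow M/\Gamma_n$, and then Proposition \ref{prop:upperbound} gives the second inequality. One simplification relative to your sketch: the paper invokes Gelander's construction (Theorem 1.5, Lemma 4.1, Proposition 4.8, Theorem 7.4 and Section 9 of \cite{gel2004}) as producing, for a fixed $U$ depending only on $G$, a suitable $Y_n\subset (G/\Gamma_n)_{U^2}$ for \emph{every} torsion-free lattice $\Gamma_n$, so $\Gamma_n^K=\Gamma_n$ holds for all $n$ rather than only $\ul$-almost every $n$ --- the Farber hypothesis and the asymptotic statement of Lemma \ref{lem:filling} play no role in this step (what you actually use is the pointwise fact from its proof that a maximal $U$-cross section already covers the $U$-thick part), the Farber condition entering only through Theorem \ref{thm:costgenp}.
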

\begin{proof}
  Gelander proved in Theorem 1.5, Lemma 4.1, Proposition 4.8, Theorem 7.4 and Section 9 of \cite{gel2004}, that for every such a Lie group $G$ there exists a neighborhood of the identity $U$ (which is just a ball of some radius) for which for every lattice $\Gamma$ there exists $Y\subseteq (G/\Gamma)_{U^2}$ satisfying that the map induced by the inclusion from $\pi_1(\rho^\Gamma(YU^2))$ to $\pi_1(M/\Gamma)=\Gamma$ is surjective. Therefore given a Farber sequence $\{\Gamma_n\}_n$ of lattices we can find cross sections $Y_n$ for which $\Gamma_n^K=\Gamma_n$ for every $n$ and hence Theorem \ref{thm:costgenp} allows us to conclude the proof.
\end{proof}

\section{\texorpdfstring{$\ell^2$}{Lg}-Betti numbers}

In the last section, we studied the asymptotics of the minimal number of generators of lattices of unimodular, compactly generated \lcsc groups. We showed that the limit is related to the cost of the group. In this section, we will study the similar question for the Betti numbers of the lattices. Since we will work with higher homological invariants, the hypothesis that $G$ is compactly generated is not sufficient anymore. We will need that $G$ acts cocompactly and properly on a \textit{nice} topological space.

We say that an open cover of a topological space is a \defin{good cover} if the finite intersections of open sets of the cover are either empty or contractible. The \defin{nerve} of an open cover is the simplicial complex whose vertices are the elements of the open cover and whose $k$-simplices correspond to the intersections of $k+1$ open sets. It is well known, see for example Corollary $4G.3$ of \cite{hat2002}, that the nerve of a good cover of a paracompact space is homotopic to the space itself. 

\begin{dfn}\label{dfn:wellcovered}
  A \defin{well-covered} $G$-\defin{space} is a topological space $M$ on which $G$ acts continuously on the right and such that 
  \begin{itemize}
  \item the action is \textit{proper}, that is the induced map $G\times M\rightarrow M\times M$ given by $(g,m)\mapsto (mg^{-1},m)$ is proper\footnote{A map is proper if the preimage of any compact set is compact.};
  \item the action is \textit{cocompact}, that is the quotient $M/G$ is a compact and Hausdorff space;
  \item there exists a $G$-\textit{invariant good cover} $\co$ made of open and relatively compact sets;
  \item there are \textit{fine good covers}, that is there are compact subsets $A_M\subseteq M$ and $A_G\subseteq G$ such that for every symmetric neighborhood of the identity $W$, there are $O_1,\ldots,O_m\in\co$ satisfying $\cup_iO_i\subseteq A_M$, $(\cup_i O_i)G=M$ and whenever we have $O_ig_1\cap O_j\neq \emptyset$ and $O_ig_2\cap O_j\neq\emptyset$ for some $1\leq i,j\leq m$ and $g_1,g_2\in G$, then $g_1g_2^{-1}\in WK$ for some compact subgroup $K\subseteq A_G$.
  \end{itemize}
\end{dfn}

Notable examples of well-covered $G$-spaces are given by isometric, proper and cocompact actions of \lcsc groups on Riemannian manifolds. Indeed the set of balls of arbitrarily small radius form the fine good coverings, see Lemma \ref{lem: riemann well-covered}. Another example is given by proper and cocompact simplicial actions on simplicial spaces and in this case the fine good coverings are given by the \textit{stars} of barycentric subdivisions, see Lemma \ref{lem: simplicial well-covered}. The last condition of Definition \ref{dfn:wellcovered} is inspired by the above two cases, it is a key feature that both open covers share and it is what is needed in order to see $M$ and $M/\Gamma$ as $\Gamma$-simplicial complexes in a coherent way, see \ref{sect:fromGtoGamma}.

For a topological space $Q$ we will denote by $b_i(Q)$ the $i$-th Betti number, that is the dimension of the $i$-th homology group (with rational coefficients). From now on, we will also use the terminology introduced in Definition \ref{dfn:Farberthinlatt}. The main theorem of this section is the following. 

\begin{thm}\label{thm:mainl2}
  Let $G$ be a unimodular \lcsc compactly generated group and fix a Haar measure $\haar$ on $G$. Let $M$ be a contractible well-covered $G$-space. Let $\{\Gamma_n\}_n$ be a Farber sequence of torsion free lattices of $G$. Then we have \[\beta_i(G)\leq \liminf_n \frac{b_i(M/\Gamma_n)}{\covol(\Gamma_n)},\] and if the sequence $\{\Gamma_n\}$ is nowhere thin, then \[\beta_i(G)= \lim_n \frac{b_i(M/\Gamma_n)}{\covol(\Gamma_n)}.\]
\end{thm}

By $\beta_i(G)$ we mean the $i$-th $\ell^2$-Betti number of $G$ as defined in \cite{pet2013}. This invariant is equal to the quantity  $\beta_i(\Gamma)/\covol(\Gamma)$ for any lattice $\Gamma$ in $G$, see \cite{kye2015}. In this work, we will not directly work with the $\ell^2$-Betti numbers of $G$, but with the $\ell^2$-Betti numbers of the cross-sections of its actions. Indeed, given any free p.m.p.\ action of $G$ on $(X,\mu)$ and given any cross-section $Y\subseteq X$, then $\beta_i(G)=\beta_i(\rel_Y)/\covol(Y)$, see \cite{kye2015} and the discussions in Section \ref{section: reminders about ell 2 betti numbers}.

It is not hard to observe that if $\Gamma$ is a cocompact torsion free lattice of $G$, then $b_i(M/\Gamma)=b_i(\Gamma)$, see Corollary \ref{crl: beta i M Gamma = beta i Gamma}. In Section \ref{subsection:examples of well-covered} we will give examples of unimodular \lcsc compactly generated groups which admit contractible well-covered spaces. In particular we get the following.

\begin{crl}\label{crl:excomp}
  Let $G$ be a unimodular \lcsc compactly generated group and let us fix a Haar measure $\haar$ on $G$. Let $\{\Gamma_n\}_n$ be a Farber sequence of cocompact torsion free lattices of $G$. Assume that one of the following conditions holds. 
  \begin{itemize}
  \item The group $G$ is almost connected.
  \item The group $G$ is totally disconnected and it acts cocompactly and properly on a contractible simplicial complex.
  \item The quotient $G/G_0$ satisfies the previous condition, where $G_0\leq G$ denotes the connected component of the identity
  \end{itemize}
  Then \[\beta_i(G)\leq \liminf_n \frac{b_i(\Gamma_n)}{\covol(\Gamma_n)}\] and if the sequence $\{\Gamma_n\}$ is nowhere thin, then \[\beta_i(G)= \lim_n \frac{b_i(\Gamma_n)}{\covol(\Gamma_n)}.\] 
\end{crl}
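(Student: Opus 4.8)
The plan is to deduce Corollary \ref{crl:excomp} from Theorem \ref{thm:mainl2} by exhibiting, in each of the three listed cases, a \emph{contractible} well-covered $G$-space $M$ on which torsion free lattices act freely and properly discontinuously, so that $b_i(M/\Gamma_n)=b_i(\Gamma_n)$ and, by Corollary \ref{crl:defl2crl}, $\beta_i(M)=\beta_i(G)$. Once such an $M$ is produced the corollary is immediate from the displayed inequalities in Theorem \ref{thm:mainl2}. So the entire content is the construction of $M$ in each case, together with a verification that Definition \ref{dfn:wellcovered} is satisfied; the nowhere-thin equality statement then comes for free from the same theorem.

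For the almost connected case, I would take $K_0<G$ a maximal compact subgroup (which exists and is unique up to conjugacy by \cite{glu1960} and \cite{abe1974}, as recalled before Proposition \ref{prop:absurd}) and set $M:=K_0\backslash G$, which is contractible and carries a $G$-invariant Riemannian metric for which the right $G$-action is isometric, proper and cocompact. A well-covered structure on $M$ is obtained from a $G$-invariant good cover by small geodesically convex relatively compact balls: uniform properness is the properness of the action, cocompactness is clear, invariance of the cover under $G$ can be arranged by pushing forward a cover of a fundamental domain, and the separation condition \ref{item:Wsep} follows because two translates of a small ball that both meet a fixed small ball force the translating element into a controlled neighborhood of a compact subgroup of $K_0$. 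For the totally disconnected case, $M$ is simply the given contractible simplicial complex on which $G$ acts cocompactly and properly and simplicially; one uses the open stars of a barycentric subdivision (or small open neighborhoods of closed simplices) as the good cover, which is automatically $G$-invariant and relatively compact with the required finite-intersection-contractible property, and \ref{item:Wsep} holds because a discrete cocompact proper simplicial action has, for each compact $W$, only finitely many relevant translates. For the third case, where $G/G_0$ acts as in the totally disconnected case on a contractible complex $N$, I would combine the two constructions: take $M:=(K_0\backslash G_0)\times N$ with $G$ acting diagonally through $G\to G/G_0$ on the second factor and through the (conjugation-twisted) action on the first — more carefully, one should realize $M$ as an associated bundle, but the upshot is a contractible well-covered $G$-space, with the good cover being the product of the two good covers and all four bullet conditions inherited coordinatewise.

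Finally I would check the hypotheses linking back to Theorem \ref{thm:mainl2}: the lattices $\Gamma_n$ are assumed torsion free, so by Lemma \ref{lem:Pdisc} they act freely and properly discontinuously on the contractible space $M$, whence $M/\Gamma_n$ is a $K(\Gamma_n,1)$ and $b_i(M/\Gamma_n)=b_i(\Gamma_n)$; and $\beta_i(M)=\beta_i(G)$ by Corollary \ref{crl:defl2crl} since $M$ is contractible. Substituting these two identities into the conclusion of Theorem \ref{thm:mainl2} gives exactly the two displayed formulas of the corollary.

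The main obstacle is the verification of the subtle separation axiom \ref{item:Wsep} of Definition \ref{dfn:wellcovered} in the mixed third case, where one must make precise how $K_0\backslash G_0$ and the totally disconnected complex fit together into a single well-covered $G$-space with a genuinely $G$-invariant good cover; getting the cover to be simultaneously good, invariant, relatively compact, and $W$-separated requires choosing the geodesic balls small relative to both the injectivity radius of $K_0\backslash G_0$ and the combinatorics of $N$, and then checking that a pair of translates of a product patch meeting a fixed product patch forces the group element into $WK$ for a compact $K$ in the ambient compact part. The almost connected and purely totally disconnected cases are comparatively routine, the former being essentially the classical symmetric-space picture and the latter essentially the definition.
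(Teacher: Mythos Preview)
Your approach is correct and is exactly the paper's own: deduce the corollary from Theorem \ref{thm:mainl2} applied to a contractible well-covered $G$-space $M$, using Lemma \ref{lem:Pdisc} and Corollary \ref{crl:defl2crl} to identify $b_i(M/\Gamma_n)$ with $b_i(\Gamma_n)$ and $\beta_i(M)$ with $\beta_i(G)$, and then exhibit such an $M$ in each of the three cases. The paper's treatment of the three constructions is in the subsection ``Examples of well-covered spaces'': the almost connected and simplicial cases are handled just as you describe (geodesically convex balls, respectively open stars); for the mixed third case the paper does not attempt your explicit product/bundle description but instead invokes \cite{luc2000} to induce an $\underline{\mathrm{E}}(G/G_0)$ space to an $\underline{\mathrm{E}}G$ space without changing the quotient, and then takes as good cover the product of the stars of the simplicial complex with small geodesic balls in the Riemannian fibers --- so your instinct that the naive product needs to be replaced by a genuine induced/associated construction is exactly right, and the obstacle you flag is resolved in the paper by outsourcing it to that reference rather than by a direct verification of axiom \ref{item:Wsep}.
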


Before stating the next theorem we will need the following notation: for an inclusion of topological spaces $P\subseteq Q$, we will denote by $\nabla_i(P,Q)$ the dimension of the image of the map induced by the inclusion at the level of homologies $H_i(P)\rightarrow H_i(Q)$. 

Let us assume that the well-covered $G$-space $M$ admits a proper and compatible metric $d_M$ such that $G$ acts isometrically on $(M,d_M)$. This is the case for example if $M$ is a Riemannian manifold and $d_M$ is the geodesic distance. Let $\Gamma<G$ be a lattice and $\eps$ a real positive number. We define \[(M)^\Gamma_\eps\coloneqq\left\{p\in M\colon\forall \gamma\in\Gamma\setminus\{\id_\Gamma\},\ d_M(p\gamma,p)>\eps\right\}\] and we will set $(M/\Gamma)_\eps\coloneqq(M)^\Gamma_\eps/\Gamma$. We will prove the following theorem. 

\begin{thm}\label{thm:epsneigh}
  Let $G$ be a unimodular \lcsc compactly generated group and let us fix a Haar measure $\haar$ on $G$. Let $M$ be a contractible well-covered $G$-space and assume that $M$ admits a proper and compatible metric $d_M$ such that $G$ acts isometrically on $(M,d_M)$. Let $\{\Gamma_n\}_n$ be a Farber sequence of torsion free lattices of $\Gamma$. Then for every $\eps>0$ we have that \[\beta_i(G)=\lim_n \frac{\nabla_i((M/\Gamma_n)_\eps,M/\Gamma_n)}{\covol(\Gamma_n)}.\]
\end{thm}

Let us give a concrete example. Let $G$ be a Lie group, let us denote by $K_0\leq G$ a maximal compact subgroup. Take a Riemannian left $K_0$-invariant and right $G$-invariant metric $d_G$ on $G$. Set $M\coloneqq K_0\backslash G$ and denote by $d_M$ the metric induced by $d_G$. Then clearly the action by translation of $G$ on the right is isometric and $M$ is a well-covered $G$-space by Lemma \ref{lem: riemann well-covered}. 

Theorem \ref{thm:epsneigh} becomes extremely easy to understand whenever we have that $(M/\Gamma_n)_\eps$ already contains all the homological information of the space, that is when $\nabla_i((M/\Gamma_n)_\eps,M/\Gamma_n)=b_i(M/\Gamma_n)$, or more generally when the dimension of the homology of $M/\Gamma_n\setminus (M/\Gamma_n)_\eps$ is sublinear with respect to the covolume. This, in some cases, is an immediate consequence of the thin-thick decomposition, \cite[Theorem 4.5.6]{thu1997}, the work of Gromov, \cite{gro1985} and the more recent work of Gelander, \cite{gel2004}. In this way we are able to get a result similar to Theorem 1.8 of \cite{abe2017} and the main result of \cite{abbg2018}.

\begin{crl}\label{crl:gela}
  Assume that $G$ is a connected semisimple Lie group and let us fix a Haar measure $\haar$ on $G$. Let $\{\Gamma_n\}_n$ be a Farber sequence of torsion free lattices of $G$.
  \begin{itemize}
  \item It always follows that
    \[\beta_1(G)= \lim_n \frac{b_1(\Gamma_n)}{\covol(\Gamma_n)}.\]
  \item If $G$ has higher rank and $\Gamma_n$ is not cocompact for every $n$, then for every $i$ \[\beta_i(G)= \lim_n \frac{b_i(\Gamma_n)}{\covol(\Gamma_n)}.\]
  \item If $G$ has rank $1$ and the associated symmetric space $M\coloneqq K_0\backslash G$ has dimension $d\geq 4$, then for every $i\neq d-1$ \[\beta_i(G)= \lim_n \frac{b_i(\Gamma_n)}{\covol(\Gamma_n)}.\]
  If $\Gamma_n$ is cocompact for every $n$, then the above equation holds for every $i$.
  \end{itemize}
\end{crl}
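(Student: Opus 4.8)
The plan is to combine Theorem \ref{thm:epsneigh} with classical structure theorems for thin parts of locally symmetric spaces, reducing the computation of $\nabla_i$ to that of $b_i$ by showing the thin part $(M/\Gamma_n)\setminus(M/\Gamma_n)_\eps$ carries sublinearly many homological cycles. First I would fix $G$, $K_0$, $M=K_0\backslash G$ and a Farber sequence $\{\Gamma_n\}_n$ of torsion free lattices, and recall that by Theorem \ref{thm:epsneigh} it suffices, for the relevant values of $i$, to identify $\nabla_i((M/\Gamma_n)_\eps, M/\Gamma_n)$ with $b_i(M/\Gamma_n)$ up to an error that is sublinear in $\covol(\Gamma_n)$. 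The bridge between the two is the long exact sequence of the pair and excision: if $Z_n:=M/\Gamma_n\setminus(M/\Gamma_n)_\eps$ denotes (a regular neighborhood of) the thin part, then controlling $\dim H_i(Z_n;\bq)$ and $\dim H_{i}(M/\Gamma_n, (M/\Gamma_n)_\eps;\bq)$ pins down the difference between $b_i$ and $\nabla_i$.

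For the first bullet (degree $1$) I would argue as follows. By the Margulis/thin-thick decomposition (\cite[Theorem 4.5.6]{thu1997} in rank $1$, and the analogous statement in higher rank) and work of Gromov \cite{gro1985} and Gelander \cite{gel2004}, each component of the thin part $Z_n$ deformation retracts onto a compact piece modelled on a quotient of a horoball or a Margulis tube, whose fundamental group is virtually nilpotent; moreover Gelander's results give that $M/\Gamma_n$ is homotopy equivalent to a simplicial complex whose number of cells is $O(\covol(\Gamma_n))$, with the thin part contributing a controlled number. For $H_1$ one uses that $\pi_1$ of the thick part surjects onto $\pi_1(M/\Gamma_n)=\Gamma_n$ up to the contributions of the cusp/tube subgroups, which are themselves generated by boundedly many elements per component and whose number of components is $O(\covol(\Gamma_n))$ — but the key point is that these contributions inject into $H_1((M/\Gamma_n)_\eps)$ as well, so that $\nabla_1=b_1+o(\covol(\Gamma_n))$. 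Combined with Theorem \ref{thm:epsneigh} and the fact that $\beta_1(G)$ is independent of $\eps$, taking the limit along $n$ gives the first equality. For the second bullet, higher rank non-uniform lattices have thin part consisting only of cusps whose cross-sections are infra-nilmanifolds, so $(M/\Gamma_n)_\eps$ is a compact core carrying \emph{all} the homology: $\nabla_i=b_i$ on the nose for every $i$. For the third bullet, in rank $1$ with $\dim M=d\ge 4$ the thin part is a union of cusps (with nilmanifold cross-section of dimension $d-1$) and Margulis tubes, and the map $H_i((M/\Gamma_n)_\eps)\to H_i(M/\Gamma_n)$ is an isomorphism except possibly in degree $i=d-1$ where the top cohomology of the boundary cross-sections can appear; since $b_i(\text{nilmanifold})$ vanishes outside a range that only meets $i=d-1$ among the problematic degrees, one gets $\nabla_i=b_i$ for all $i\ne d-1$.

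The routine bookkeeping is: (i) express $b_i(M/\Gamma_n)-\nabla_i((M/\Gamma_n)_\eps,M/\Gamma_n)$ via the exact sequence of the pair as bounded by $\dim H_i(Z_n)+\dim H_{i-1}(\partial)$-type terms, (ii) invoke the cited structure theory to see that these terms are $o(\covol(\Gamma_n))$ in cases one and three and $0$ in case two, and (iii) divide by $\covol(\Gamma_n)$ and pass to the limit, using Theorem \ref{thm:epsneigh} to replace $\lim_n \nabla_i/\covol(\Gamma_n)$ by $\beta_i(G)$. I expect the main obstacle to be a clean uniform statement, valid for \emph{all} torsion free lattices in a fixed $G$ with constants depending only on $G$, that the thin part is both homotopy-modelled by boundedly-complex nilpotent pieces and has total Betti numbers growing sublinearly in the covolume — this is exactly where one must quote Gelander \cite{gel2004} (and Gromov \cite{gro1985}) rather than reprove anything, and one must be slightly careful in the cocompact rank-$1$ case where $\eps$ may need to be taken smaller than the Margulis constant so that $(M/\Gamma_n)_\eps$ is nonempty and the tubes are genuinely tubes; this is the reason for the ``almost always'' / ``$i\neq d-1$'' hedging in the statement.
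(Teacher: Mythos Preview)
Your overall strategy matches the paper's: invoke Theorem \ref{thm:epsneigh} and then argue that $\nabla_i((M/\Gamma_n)_\eps,M/\Gamma_n)=b_i(M/\Gamma_n)$ (exactly, not just up to sublinear error) using the thin--thick decomposition together with Gelander \cite{gel2004}. For the first two bullets the paper simply cites Gelander, exactly as in the proof of Theorem \ref{thm:costgela}: the thick part carries all of $\pi_1$ (hence all of $H_1$), and in higher rank the non-cocompact thin part is a union of cusps, so $(M/\Gamma_n)_\eps\hookrightarrow M/\Gamma_n$ induces surjections on every $H_i$. Your plan here is fine, though note that you do not need an $o(\covol)$ error term in degree $1$: Gelander gives genuine surjectivity on $\pi_1$.

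There is, however, a real gap in your argument for the rank $1$ case. Your key claim, that ``$b_i(\text{nilmanifold})$ vanishes outside a range that only meets $i=d-1$ among the problematic degrees'', is false: a $(d-1)$-dimensional compact nilmanifold (already the torus) typically has nonzero rational Betti numbers in every degree $0\le i\le d-1$. So the cusp cross-sections do not help you isolate $i=d-1$ that way. The paper's argument is different and avoids this entirely. First, cusps are cones over their cross-section, so they deformation retract into the thick part and contribute nothing. Second, each Margulis tube $A$ around a short closed geodesic is handled one at a time by Mayer--Vietoris with $B$ the complement of a slightly smaller tube: $A$ is homotopy equivalent to $S^1$, while $A\cap B$ is an $S^{d-2}$-bundle over $S^1$ and hence has rational homology only in degrees $0,1,d-2,d-1$. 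Feeding this into
\[
H_i(A)\oplus H_i(B)\to H_i(A\cup B)\to H_{i-1}(A\cap B)\to H_{i-1}(A)\oplus H_{i-1}(B)
\]
gives surjectivity of $H_i(B)\to H_i(M/\Gamma_n)$ for $3\le i\le d-2$; the case $i=2$ follows because $H_1(A\cap B)\to H_1(A)$ is injective. Your long-exact-sequence-of-the-pair framework would also work, since for a tube one computes $H_i(A,\partial A)=0$ for $i\notin\{d-1,d\}$, but the justification you wrote is not the correct one.
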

\begin{proof}
  In all cases we have that $\nabla_i((M/\Gamma_n)_\eps,M/\Gamma_n)=b_i(M/\Gamma_n)$. The first two points follow from the work of Gelander \cite{gel2004} exactly as in the proof of Theorem \ref{thm:costgela}. The third point follows from the thick-thin decomposition, see \cite[Theorem 4.5.6]{thu1997} for the hyperbolic case, and the Mayer-Vietoris exact sequence. 
  
  Indeed let us fix a lattice $\Gamma$ of a rank $1$ Lie group. First note that the result is trivial for $i=0$ or $i\geq d$. For $i=1$ if follows from the fist part and Poincar\'{e} duality implies that the statement holds for $i=d-1$ whenever $\Gamma_n$ is cocompact. Therefore let us fix $1<i\leq d-2$. The thick-thin decomposition states that there exists $\eps>0$ independent of $n$, such that $M/\Gamma_n\setminus (M/\Gamma_n)_\eps$ is composed of a disjoint union of neighborhoods of closed geodesics and cusps. Fix $n$ and let us denote by $B$ and $C$ the open set obtain as the union of the neighborhoods of the closed geodesics and cusps respectively, so that $M/\Gamma_n=(M/\Gamma_n)_\eps\cup B\cup C$.

  First remark that the cusps are homeomorphic to products of manifolds with the reals. In particular, it is easy to check that $\nabla_i((M/\Gamma_n)_\eps\cup B,M/\Gamma_n)=b_i(M/\Gamma_n)$. On the other hand, let us observe that each connected component of $B$ is a disk bundle over a circle. Therefore $(M/\Gamma_n)_\eps\cap \bar B$ is homotopic to a sphere bundle over a circle and its homology vanishes in every dimension not equal to $0,1,d-2$ and $d-1$. Since $i>1$, we clearly have that $H_i(B)=0$. Therefore Mayer-Vietoris tells us that \[H_i((M/\Gamma_n)_\eps)\rightarrow H_i((M/\Gamma_n)_\eps\cup B)\rightarrow H_{i-1}((M/\Gamma_n)_\eps\cap \bar B)\rightarrow H_{i-1}((M/\Gamma_n)_\eps)\oplus H_{i-1}(\bar B).\]
  Now if $2<i\leq d-2$, then $H_{i-1}((M/\Gamma_n)_\eps\cap \bar B)=0$ and hence the map $H_i((M/\Gamma_n)_\eps)\rightarrow H_i((M/\Gamma_n)_\eps\cup B)$ is surjective. For $i=2$, observe that the map induced by the inclusion $H_1((M/\Gamma_n)_\eps\cap \bar B)\rightarrow H_{1}(\bar B)$ is an isomorphism and in particular is injective. Therefore, the exactness of the sequence implies that $H_2((M/\Gamma_n)_\eps)\rightarrow H_2((M/\Gamma_n)_\eps\cup B)$ is surjective and the proof of the corollary is completed.
\end{proof}

  We do not know whether for a rank $1$ Lie group we have $\nabla_{d-1}((M/\Gamma_n)_\eps,M/\Gamma_n)=b_{d-1}(M/\Gamma_n)$. We do not see any easy way to deduce Corollary \ref{crl:gela} for non-cocompact lattices in rank $1$ Lie groups in dimension $d-1$ from Theorem \ref{thm:epsneigh} and the thick-thin decomposition. However one could obtain the result by proceeding as in \cite[Proposition 3.1]{abbg2018}: there are very few short geodesics compared to the volume. 

\subsection{Reminders about $\ell^2$-Betti numbers for equivalence relations and cross-sections}\label{section: reminders about ell 2 betti numbers}

Let us briefly recall the notion of $\ell^2$-Betti number of \pmp equivalence relations with countable classes defined in \cite{gab2002}. Let $\rel$ be a \pmp equivalence relation over the probability space $(X,\mu)$. A $\rel$-\textit{simplicial complex} is a field of simplicial complexes $\Sigma$ over $X$ on which $\rel$ acts smoothly and commuting with the boundary operators, see \cite[D\'efinition 2.6]{gab2002} or Section 5 of \cite{CGS} in the setting of general probability spaces. For $x\in X$ we will denote by $\Sigma_x$ the simplicial complex ``over'' the point $x\in X$. We say that such a complex is \textit{uniformly bounded} if there exists a $D$ such that for almost every $x\in X$, the $1$-skeleton of $\Sigma$ is a graph of degree bounded by $D$. Note that this implies that the field of simplicial complexes is uniformly bounded in every dimension.

One can define the (simplicial) chain complex associated with $\Sigma$ and complete it to a Hilbert space by using the measure $\mu$ on $X$. If the simplicial complex is uniformly bounded, then the boundary operators are bounded and one can define the (reduced) homology as usual. Since the boundary operators commute with the action of $\rel$, the equivalence relation still acts on the homology and we will denote by $\beta_i(\Sigma)$ the $\rel$-dimension of the $i$-th homology group. Gaboriau proved that the value $\beta_i(\Sigma)$ only depends on the \textit{homotopy-class} of $\Sigma$, \cite[Corollaire 4.9]{gab2002}. In particular, whenever $\Sigma$ is a $\rel$-simplicial complex such that $\Sigma_x$ is contractible for almost every $x\in X$, we will set $\beta_i(\rel)\coloneqq \beta_i(\Sigma)$. This value does not depend on the chosen contractible $\rel$-simplicial complex $\Sigma$. Moreover if $\rel$ is the orbit equivalence relation induced by an action of a countable group $\Gamma$ on a probability space $(X,\mu)$ and $\Gamma$ acts freely on the contractible simplicial complex $S$, then one can form the $\rel$-simplicial complex $\Sigma\coloneqq S\times X$ and one obtains $\beta_i(\Gamma)=\beta_i(\rel)$ \cite[Th\'eor\`eme 3.11]{gab2002}. 

Let us take a unimodular \lcsc compactly generated group $G$ and let us fix a Haar measure. Suppose we have a free action of $G$ on the probability measure space $(X,\mu)$ preserving the measure. Consider an external cross section $(Y,\Psi)$. Denote as always by $\rel_Y$ the cross equivalence relation. Then $\beta_i(G)=\beta_i(\rel_Y)/\covol(Y)$, so that the ratio only depends on $G$ and its Haar measure, \cite{kye2015}. Note that the fact that the ratio $\beta_i(\rel_Y)/\covol(Y)$ only depends on $G$ and its Haar measure can be easily proven with the results in \cite{gab2002} and we will not explicitly use any of the results in \cite{kye2015}.

\subsection{From well-covered $G$-spaces to $\Gamma$-simplicial complexes}\label{sect:fromGtoGamma}

Let $G$ be a unimodular \lcsc compactly generated group and let $M$ be a well-covered $G$-space. Since $M$ has a $G$-invariant good cover, it is homotopic to a simplicial complex on which $G$ acts. If $G$ is not countable, the nerve of the good cover is a simplicial complex with in general uncountably many simplices. However, whenever we fix a countable subgroup $\Gamma<G$, we can take a countable $\Gamma$-invariant good cover of $M$ and hence obtain an action of $\Gamma$ on a simplicial complex with countably many cells. If $\Gamma<G$ is a lattice, then we obtain that $M$ is homotopic to a simplicial complex on which $\Gamma$ acts freely.

\begin{lem}\label{lem: invariant free Gamma cover}
  Let $G$ be a unimodular \lcsc compactly generated group and let $M$ be a well-covered $G$-space. Denote its $G$-invariant good cover by $\mathcal O$. Let $\Gamma<G$ be a cocompact torsion free lattice. Then there are $O_1,\ldots,O_m\in\mathcal O$ such that 
  \begin{itemize}
      \item the cover $\mathcal O_\Gamma\coloneqq \{O_i\gamma\}_{1\leq i\leq m,\gamma \in \Gamma}$ is a good cover of $M$;
      \item the quotient cover $\mathcal O_\Gamma/\Gamma$ on $M/\Gamma$ is a good cover.
  \end{itemize}
\end{lem}

Lemma \ref{lem: invariant free Gamma cover} tells us that well-covered $G$-spaces behave in a very good manner when we see them as $\Gamma$-spaces. Indeed $M$ is not only homotopic to a bounded simplicial complex on which $\Gamma$ acts cocompactly, but also that taking the nerve and the quotient by the action of $\Gamma$ commute. We will not use Lemma \ref{lem: invariant free Gamma cover} explicitly, but the methods of its proof will be essential for proving Theorem \ref{thm:mainl2}.

 We will need the following well-known fact. 

\begin{lem}\label{lem:Pdisc}
  Let $G$ be a unimodular \lcsc compactly generated group.
  For every neighborhood of the identity $V\subseteq G$ and compact subset $A\subseteq G$ there exists a symmetric neighborhood of the identity $W$, such that for every lattice $\Gamma$ satisfying $\Gamma\cap V=\{\id_G\}$, we have that $\Gamma\cap WK$ consists of elements of finite order, for every compact subgroup $K$ contained in $A$. 
\end{lem}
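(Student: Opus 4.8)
The plan is to prove something a bit stronger: there will be an integer $T=T(V,A)$ such that every $\gamma\in\Gamma\cap WK$ satisfies $\gamma^{T!}=\id$, so in particular $\gamma$ has finite order. Fix a symmetric open neighborhood $V_0$ of $\id$ with $V_0V_0\subseteq V$. Given $\gamma=wk$ with $w\in W$ and $k\in K\subseteq A$, the element $k$ lies in the subgroup $K$ contained in the compact set $A$, so $\overline{\langle k\rangle}$ is a compact monothetic group and there is a first integer $\tau=\tau(k)\ge1$ with $k^{\tau}\in V_0$. An easy induction gives $\gamma^{\tau}=w_{\tau}k^{\tau}$ with $w_{\tau}=\prod_{i=0}^{\tau-1}(k^{i}wk^{-i})$, a product of $\tau$ conjugates of $w$ by powers of $k\in A$. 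If $W$ is chosen small enough that any product of at most $\tau$ such conjugates stays in $V_0$, then $\gamma^{\tau}=w_{\tau}k^{\tau}\in V_0V_0\subseteq V$; since $\gamma^{\tau}\in\Gamma$ and $\Gamma\cap V=\{\id\}$ we conclude $\gamma^{\tau}=\id$.

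To make the choice of $W$ uniform I first need a uniform bound on the return times:
\[
T:=\sup\bigl\{\tau(k):\ k\in H\ \text{for some subgroup }H\subseteq A\bigr\}<\infty .
\]
Suppose not and pick $k_{j}$, each lying in a subgroup contained in $A$, with $\tau(k_{j})\to\infty$. By compactness of $A$ we may assume $k_{j}\to k_{\infty}\in A$; for each fixed $m$ the powers $k_{j}^{m}$ lie in $A$ and converge to $k_{\infty}^{m}$, so $k_{\infty}^{m}\in A$ for all $m$, hence $\overline{\langle k_{\infty}\rangle}$ is a compact monothetic group and there is $m_{0}\ge1$ with $k_{\infty}^{m_{0}}\in V_0$. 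As $V_0$ is open and $k_{j}^{m_{0}}\to k_{\infty}^{m_{0}}$, we get $k_{j}^{m_{0}}\in V_0$ for all large $j$, i.e. $\tau(k_{j})\le m_{0}$ for large $j$, a contradiction.

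Now the choice of $W$. Put $B:=A\cup\{\id\}$, compact and containing $\id$. By continuity of multiplication there is a neighborhood $U$ of $\id$ such that any product of at most $T$ elements of $U$ lies in $V_0$, and by the tube lemma (compactness of $B$) there is a neighborhood of $\id$, which we may take symmetric and contained in $V_0$, call it $W$, with $bWb^{-1}\subseteq U$ for every $b\in B$. Then for $k\in K\subseteq A\subseteq B$ each conjugate $k^{i}wk^{-i}$ lies in $U$, and since $\tau(k)\le T$ the element $w_{\tau(k)}$ from the first paragraph is a product of at most $T$ elements of $U$, hence lies in $V_0$. Combining with the first paragraph, $\gamma^{\tau(k)}=\id$ with $\tau(k)\le T$, so $\gamma$ has finite order (dividing $T!$), which proves the lemma.

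The main obstacle is the uniform bound $T$ on first‑return times in the second paragraph: a priori an element $k$ of a subgroup contained in $A$ could return to $V_0$ only after an enormous number of steps, and this must be ruled out simultaneously over all admissible $k$ — the compactness/limit argument handles it, using crucially that a limit of elements each lying in a subgroup of $A$ still generates a relatively compact (hence monothetic compact) group, so that $\tau$ cannot jump to infinity. The remaining ingredients — the expansion $\gamma^{\tau}=w_{\tau}k^{\tau}$, the tube‑lemma choice of $W$, and the final use of $\Gamma\cap V=\{\id\}$ — are routine.
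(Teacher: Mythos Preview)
Your proof is correct and complete. Both you and the paper use the same key identity $\gamma^{n}=\bigl(\prod_{i=0}^{n-1}k^{i}wk^{-i}\bigr)k^{n}$ together with a tube-lemma choice of $W$ so that conjugates $k^{i}wk^{-i}$ stay small; the difference lies in how the bound on $n$ is obtained. The paper argues by pigeonhole on $\Gamma$: it fixes $W_1$ with $W_1W_1^{-1}\subset V$, notes that $|\Gamma\cap W_1g|\le 1$ for every $g$, lets $N$ be the maximal cardinality of a $W_1$-separated subset of $A$ (bounded by a volume ratio), and then observes that $\id,\gamma,\dots,\gamma^{N}$ are $N{+}1$ elements of $\Gamma\cap W_1K$, forcing a repetition. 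You instead bound a \emph{recurrence time}: you show, by a compactness/limit argument, that the first time $\tau(k)$ at which $k^{\tau}\in V_0$ is uniformly bounded over all $k$ lying in a subgroup contained in $A$, and then conclude $\gamma^{\tau(k)}\in V_0V_0\subset V$ directly. The paper's route is a one-line packing bound and avoids any limiting argument; your route trades that for a short sequential-compactness step but yields the slightly sharper conclusion that the order of $\gamma$ divides $T!$ with $T$ depending only on $V$ and $A$. Both approaches are standard and of comparable length.
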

\begin{proof}
  Let $W_1\subseteq G$ be an open subset such that $W_1W_1^{-1}\subseteq V$. Consider $\Gamma$ such that $\Gamma\cap V=\{\id_G\}$. Remark that for every $g\in G$, we have that $|\Gamma\cap W_1g|\leq 1$. Indeed if $\gamma_1,\gamma_2\in \Gamma\cap W_1g$, then $\gamma_1\gamma_2^{-1}\in \Gamma\cap W_1W_1^{-1}=\{\id\}$. Let $N$ be the maximal cardinality of a $W_1$-separated subset of $W_1A$. Observe that $N\leq \haar(W_1^2A)/\haar(W_1)$ is finite. Consider a symmetric neighborhood of the identity $W_2\subseteq G$ such that $W_2^{N}\subseteq W_1$. Since $A$ is compact there is a symmetric neighborhood of the identity $W\subseteq G$ such that $gWg^{-1}\subseteq W_2$ for every $g\in A$. Let $K\subseteq A$ be a compact subgroup and consider $\gamma\in \Gamma\cap WK$. Let $n\leq N$ and observe that $\gamma^n\in W_1K$. Indeed if $\gamma=wk$ with $w\in W$ and $k\in K$, then $\gamma^n=(wk)^n=w(kwk^{-1})(k^2wk^{-2})\ldots (k^{n-1}wk^{-(n-1)})k^n\in W_2^nK\subseteq W_1K.$ Then $\id_\Gamma,\gamma,\gamma^2,\ldots,\gamma^N$ are $N+1$ elements in $\Gamma\cap W_1K\subseteq W_1A$. But since $|\Gamma\cap W_1g|\leq 1$ for every $g\in G$, these elements are $W_1$-separated and by definition of $N$ we must have that for some $n\leq N$, we have that $\gamma^n=\id_\Gamma$ as claimed.
\end{proof}

\begin{proof}[Proof of Lemma \ref{lem: invariant free Gamma cover}]
 There is a neighborhood of the identity $V\subseteq G$ such that $V\cap g\Gamma g^{-1}=\{\id_G\}$ for every $g\in G$. Let us fix the compact subset $A_G\subseteq G$ as in the last condition of well-covered $G$-spaces, Definition \ref{dfn:wellcovered}. Let $W$ be as in Lemma \ref{lem:Pdisc} with respect to the neighborhood $V$ and the compact subset $A_G$. Then since $M$ is a well-covered $G$-space, there are $O_1,\ldots,O_m\in\co$ satisfying $(\cup_i O_i)G=M$ and whenever we have $O_ig_1\cap O_j\neq \emptyset$ and $O_ig_2\cap O_j\neq\emptyset$ for some $1\leq i,j\leq m$ and $g_1,g_2\in G$, then $g_1g_2^{-1}\in WK$ for some compact subgroup $K\subseteq A_G$. 
By cocompactness of $G$ on $M$ and $\Gamma$, there is a finite subset $F\subseteq G$ such that $\cup_{1\leq i\leq m}\cup_{f\in F}O_if\Gamma=M$. Assume now that for some $1\leq i,j\leq m$, $\gamma_1,\gamma_2\in \Gamma$ and $f_1,f_2\in F$ we have that 
\[O_if_1\gamma_1\cap O_jf_2\neq \emptyset\text{ and }O_if_1\gamma_2\cap O_jf_2\neq\emptyset.\]
Then by assumption $f_1\gamma_1\gamma_2^{-1}f_1^{-1}\in WK$ for some compact subgroup $K$ contained in $A_G$. Remark that $\Gamma$ is torsion free and $f_1\Gamma f_1\cap V=\{\id_G\}$, therefore Lemma \ref{lem:Pdisc} yields that $\gamma_1=\gamma_2$. 
In particular, if we take $i=j$ and $f_1=f_2$, we obtain that the action of $\Gamma$ on the cover $\{O_if\gamma\}_{i,f,\gamma}$ is free. 
Finally, observe that since $O_if\gamma\cap O_if$ if and only if $\gamma=\id_\Gamma$, we have that $O_if$ seen inside $M/\Gamma$ is contractible. Similarly, since given $1\leq i,j\leq m$ and $f_1,f_2\in F$, the sets $O_if_1$ and $O_jf_2\gamma$ meets for at most one $\gamma\in\Gamma$, their intersection in the quotient $M/\Gamma$ is also contractible. Since this is true for arbitrary finite intersections, we obtain that the sets $\{O_if\}_{1\leq i\leq m,f\in F}$ induce a good cover of $M/\Gamma$.    
\end{proof}

Let us also remark that Lemma \ref{lem:Pdisc} implies that the action of any lattice on a well-covered $G$-space is properly discontinuous. In the next corollary we will denote by $\beta_i(M,\Gamma)$ the $i$-th $\ell^2$-Betti number of $M$ seen as a $\Gamma$-space (or simplicial complex).

\begin{crl}\label{crl: beta i M Gamma = beta i Gamma}
  Let $G$ be a unimodular \lcsc compactly generated group and let $M$ be a well-covered $G$-space. If $\Gamma$ is a torsion free lattice of $G$, then the action of $\Gamma$ on $M$ is free and properly discontinuous. In particular if $M$ is contractible, then $\beta_i(M,\Gamma)=\beta_i(\Gamma)$. 
\end{crl}

\subsection{Proof of Theorem \ref{thm:mainl2} and Theorem \ref{thm:epsneigh}}

Let $G$ be a unimodular \lcsc compactly generated group, let $V\subseteq G$ be any neighborhood of the identity and let $M$ be a well-covered $G$-space. Let us fix the compact subset $A_M\subseteq M$ as in the last condition of well-covered $G$-spaces, Definition \ref{dfn:wellcovered}. Recall that $A_MG=M$.

 Given any neighborhood of the identity $U\subseteq G$ and a lattice $\Gamma<G$, let us set \[(G)_U^\Gamma\coloneqq \{g\in G\colon g\Gamma g^{-1}\cap U=\{\id_G\}\}.\]
 Remark that $(G)_U^\Gamma/\Gamma=(G/\Gamma)_U$ as defined in Definition \ref{dfn:thickness}. 

 Recall that given an inclusion of topological spaces $P\subseteq Q$, we denote by $\nabla_i(P,Q)$ the dimension of the image of the map induced by the inclusion at the level of homologies $H_i(P)\rightarrow H_i(Q)$.
 
 \begin{thm}\label{thm:maintechnical}
   Let $G$ be a unimodular \lcsc compactly generated group and fix a Haar measure $\haar$ on $G$. Let $M$ be a contractible well-covered $G$-space. Let $\{\Gamma_n\}_n$ be a Farber sequence of torsion free lattices of $G$. Then for any neighborhood of the identity $V$
   \[\beta_i(G)=\lim_\ul\frac{\nabla_i\left(A_M(G)_{V}^{\Gamma_n}/\Gamma_n,M/\Gamma_n\right)}{\covol(\Gamma_n)}.\]
 \end{thm}

We will prove Theorem \ref{thm:maintechnical} in the next section, Section \ref{section: proof of thm maintechnical}. Let us show first how it implies both Theorem \ref{thm:mainl2} and Theorem \ref{thm:epsneigh}. We start with Theorem \ref{thm:mainl2}. For the reader's convenience, let us repeat the statement. 

\begin{thm*}[Theorem \ref{thm:mainl2}]
  Let $G$ be a unimodular \lcsc compactly generated group and fix a Haar measure $\haar$ on $G$. Let $M$ be a contractible well-covered $G$-space. Let $\{\Gamma_n\}_n$ be a Farber sequence of torsion free lattices of $G$. Then we have \[\beta_i(G)\leq \liminf_n \frac{b_i(M/\Gamma_n)}{\covol(\Gamma_n)},\] and if the sequence $\{\Gamma_n\}$ is nowhere thin, then \[\beta_i(G)= \lim_n \frac{b_i(M/\Gamma_n)}{\covol(\Gamma_n)}.\]
\end{thm*}
\begin{proof}
Clearly we have that \[\nabla_i\left(A_M(G)_{V}^{\Gamma_n}/\Gamma_n,M/\Gamma_n\right)\leq b_i(M/\Gamma_n).\]
Therefore Theorem \ref{thm:maintechnical} yields \[\beta_i(G)\leq \lim_\ul\frac{b_i(M/\Gamma_n)}{\covol(\Gamma_n)}\] and since this is true for every ultrafilter, we get 
\[\beta_i(G)\leq \liminf_n\frac{b_i(M/\Gamma_n)}{\covol(\Gamma_n)}.\]

Suppose now that $\{\Gamma_n\}_n$ is nowhere thin. Then there is a neighborhood of the identity $U\subseteq G$ such that $(G/\Gamma_n)_U=G/\Gamma_n$ for every $n$. This implies that $(G)_U^{\Gamma_n}=G$ and hence Theorem \ref{thm:maintechnical} gives us that  \[\beta_i(G)=\lim_\ul\frac{\nabla_i\left(A_M(G)_{U}^{\Gamma_n}/\Gamma_n,M/\Gamma_n\right)}{\covol(\Gamma_n)}=\lim_\ul\frac{\nabla_i(M/\Gamma_n,M/\Gamma_n)}{\covol(\Gamma_n)}=\lim_\ul\frac{b_i(M/\Gamma_n)}{\covol(\Gamma_n)}.\qedhere\]
\end{proof}

Before showing how Theorem \ref{thm:maintechnical} implies Theorem \ref{thm:epsneigh} let us note the following easy fact: if $P_1\subseteq P_2\subseteq P_3$ are topological spaces, then \[\nabla_i(P_1,P_3)\leq \nabla_i(P_2,P_3).\]

Let us recall the notation \[(M)^\Gamma_\eps=\left\{p\in M\colon\forall \gamma\in\Gamma\setminus\{\id_\Gamma\},\ d_M(p\gamma,p)>\eps\right\}\]  and $(M/\Gamma)_\eps=(M)^\Gamma_\eps/\Gamma$. 

\begin{thm*}[Theorem \ref{thm:epsneigh}]
  Let $G$ be a unimodular \lcsc compactly generated group and let us fix a Haar measure on $G$. Let $M$ be a contractible well-covered $G$-space and assume that $M$ admits a proper and compatible metric $d_M$ such that $G$ acts isometrically on $(M,d_M)$. Let $\{\Gamma_n\}_n$ be a Farber sequence of torsion free lattices of $\Gamma$. Then for every $\eps>0$ we have that \[\beta_i(G)=\lim_n \frac{\nabla_i((M/\Gamma_n)_\eps,M/\Gamma_n)}{\covol(\Gamma_n)}.\]
\end{thm*}
\begin{proof}
   Let $\Gamma<G$ be any lattice and fix $\eps>0$. Since the action is proper, there is $V\subseteq G$ open and precompact neighborhood of the identity such that $d_M(pg,p)> \eps$ whenever $g\notin V$ and $p\in A_M$.

   We claim that \[(M)_\eps^\Gamma\supseteq A_M(G)_{V}^\Gamma.\]
  Indeed fix $p\in A_M$ and $g\in (G)_{V}^\Gamma$. By assumption $g\Gamma g^{-1}\cap V=\{\id_G\}$, so for every $\gamma\in\Gamma$ not the identity \[d_M(pg\gamma,pg)=d_M(pg\gamma g^{-1},p)>\eps\]
  that is $pg\in(M)_\eps^\Gamma$ as claimed. 
  
  Let now $U\subseteq G$ be a precompact neighborhood of the identity such that $d_M(pu,p)\leq\eps$ for every $u\in U$ and $p\in A_M$. Then we claim that \[(M)_\eps^\Gamma\subseteq A_M(G)_{U}^\Gamma.\]
  For this, consider $p\in M\setminus A_M(G)_{U}^\Gamma$. Then there is $g\in G\setminus (G)_U^\Gamma$ such that $pg^{-1}\in A_M$. 
  Let $u\neq \id_G$ be such that $g^{-1}ug\in \Gamma$. Then by assumption on $U$, we have that \[d_M(pg^{-1}ug,p)=d_M(pg^{-1}u,pg^{-1})\leq \eps,\]
  which implies that $p\notin (M)_{\eps}^\Gamma$ and the claim is proven. 
  
  We can now finish the proof of the theorem, 
  \begin{align*}
      \beta_i(G)=\lim_\ul\frac{\nabla_i\left(A_M(G)_{V}^{\Gamma_n}/\Gamma_n,M/\Gamma_n\right)}{\covol(\Gamma_n)}\leq & \lim_\ul\frac{\nabla_i\left((M)_\eps^{\Gamma_n}/\Gamma_n,M/\Gamma_n\right)}{\covol(\Gamma_n)}\leq\\ &
  \lim_\ul\frac{\nabla_i\left(A_M(G)_{U}^{\Gamma_n}/\Gamma_n,M/\Gamma_n\right)}{\covol(\Gamma_n)}=\beta_i(G).\qedhere
  \end{align*}
\end{proof}

\subsection{Proof of Theorem \ref{thm:maintechnical}}\label{section: proof of thm maintechnical}

Let us once again repeat the setting and prove some preliminary lemmas. 

Let $G$ be a unimodular \lcsc compactly generated group and let $V\subseteq G$ be any symmetric neighborhood of the identity. 
Let $M$ be a well-covered $G$-space. Let us take the compact subsets $A_M\subseteq M$ and $A_G\subseteq G$ as in the last condition of well-covered $G$-spaces, Definition \ref{dfn:wellcovered}. 

Since the action of $G$ on $M$ is proper, there is a compact subset $K_0'$ such that whenever $A_Mg\cap A_M\neq\emptyset$, then $g\in K_0'$. Again, since the action is proper, there is $K_0$ such that whenever $A_MK_0'g\cap A_MK_0'\neq\emptyset$, then $g\in K_0$.

Let $W$ as in Lemma \ref{lem:Pdisc} with respect to the neighborhood $V$ and the compact subset $A_G$. Then since $M$ is a well-covered $G$-space, there are $O_1,\ldots,O_m\in\co$ satisfying $\cup_i O_i\subseteq A_M$, $(\cup_i O_i)G=M$ and whenever we have $O_ig_1\cap O_j\neq \emptyset$ and $O_ig_2\cap O_j\neq\emptyset$ for some $1\leq i,j\leq m$ and $g_1,g_2\in G$, then $g_1g_2^{-1}\in WK$ for some compact subgroup $K\subseteq A_G$. Since  for every $i\in\{1,\ldots,m\}$ the set $O_i$ is relatively compact, there exists a finite subset $S\subseteq K_0$ such that \[\cup_{i}\bar O_i\subseteq \cup_{i}O_iS.\]

Let $\ul$ be an ultrafilter and consider a $\ul$-Farber sequence of lattices $\{\Gamma_n\}_n$ of $G$ and set $X_n\coloneqq G/\Gamma_n$. Fix a Haar measure $\haar$ on $G$ and denote by $\mu_n$ the induced probability measure on $X_n$. Recall that given any neighborhood of the identity $U\subseteq G$, we have \[(X_n)_U=\{x\in X_n\colon ux\neq x,\ \forall U\setminus \{\id_G\}\}.\]
 Since $\{\Gamma_n\}_n$ is $\ul$-Farber, we have that for every precompact open neighborhood of the identity $U\subseteq G$ one has that $\lim_\ul\mu_n((X_n)_U)=1$. 

 The set $\cup_i\bar O_i\bar V^2$ is compact, clearly $\cup_i \bar O_i\bar V^2\subseteq \cup_iO_iSV^3$, therefore there is a finite subset $F\subseteq SV^3\subseteq K_0V^3$ such that \[\cup_iO_iV^2\subseteq \cup_i\bar O_i\bar V^2\subseteq \cup_i O_iF.\]
 
Our first step is to fix the sequence of cross-sections $Y_n$. Set $V_1\coloneqq V^{-3}K_0^{-1}VK_0V^3.$

\begin{lem}\label{lem: cross-section}
 For every $n$, there is a finite subset $Y_n\subseteq (X_n)_{V_1}$ containing the class of the identity, satisfying the following conditions
 \begin{itemize}
     \item $Y_n$ is a $V$-cross section;
     \item $V^2Y_n\supseteq (X_n)_{V_1}$;
     \item $FY_n\subseteq (X_n)_{V}$.
 \end{itemize}
\end{lem}
\begin{proof}
  Let $Y_n\subseteq (X_n)_{V_1}$ be a maximal $V$-separated subset. Since $V_1\supseteq V^2$, we can apply Lemma \ref{lem:filling} to get that $Y_n$ is a cross-section and by maximality $V^2Y_n\supseteq (X_n)_{V_1}$. Finally for every $f\in F$, $y\in Y_n\subseteq (X_n)_{V_1}$, and $v\in V$, we have that $f^{-1}vf\in V_1$ and therefore $vfy=ff^{-1}vfy\neq fy$
  so that $fy\in (X_n)_V$ as claimed. 
\end{proof}

Since the action of $G$ on $M$ is proper, there is a compact subset $B_0\subseteq G$, such that for every $g\in G\setminus B_0$ \[\left(\cup_iO_iF\right)\cap \left(\cup_iO_iF\right)g=\emptyset.\]

The group $G$ acts on the regular ultraproduct $X_\ul^{R}$ of the sequence $\{X_n\}_n$ and Theorem \ref{thm:maincross} tell us that $Y_\ul$, the ultraproduct of $\{Y_n\}_n$, is an external cross section with respect to some map $\Psi_\ul\colon G\times Y_\ul\rightarrow X_\ul$. Theorem \ref{thm:mainrel} implies that there exists a compact subset $K\supseteq B_0$ such that the ultraproduct of the sequence of graphed equivalence relations $(\rel_n^{K},\Theta^K_n)$ is the cross equivalence relation $\rel_\ul$ on $Y_\ul$ which is graphed by $\Theta^K_\ul$. The graphings $\Theta^K_n$ are $K$-supported as in Definition \ref{dfn:kgraph}, that is the equivalence relation $\rel_n^K$ is generated by the element of $K$ which maps points of $Y_n$ to $Y_n$.

For $y\in Y_n$, put 
\begin{align*}
    \omega_{n,y}^K\coloneqq&\{g\in G\colon (y,gy)\in \rel_n^K\};\\
    \omega_{n,y}\coloneqq&\{g\in G\colon gy\in Y_n\}.
\end{align*}
Remark that if $g\in\omega_{n,y}^K$, then 
$\omega_{n,gy}^K=\omega_{n,y}^Kg^{-1}$ and in particular that $\omega_{n,y}^Ky\Gamma y^{-1}=\omega_{n,y}^K$. Similar equations hold for $\omega_{n,y}$ and moreover $\omega_{n,y}=\cup_{g\in\omega_{n,y}}\omega_{n,gy}^Kg$.

For every $n\in\mathbb N$ and $y\in Y_n$, let us set 
\begin{align*}
    \co_{n,y}^K\coloneqq&\{ O_ifg\colon 1\leq i\leq m, f\in F\text{ and }g\in\omega_{n,y}^K\};\\
    \co_{n,y}\coloneqq&\{ O_ifg\colon 1\leq i\leq m, f\in F\text{ and }g\in\omega_{n,y}\}.
\end{align*}

 Observe that $\co_{n,y}^K$ is a good cover of $M_{n,y}^K\coloneqq \cup_{i}O_iF\omega_{n,y}^K$ and $\co_{n,y}$ is a good cover of $M_{n,y}\coloneqq \cup_{i}O_iF\omega_{n,y}$. Clearly $\omega_{n,y}^K\subseteq \omega_{n,y}$ so that $M_{n,y}^K\subseteq M_{n,y}$ and 
 \[M_{n,y}=\bigcup_{i}O_iF\omega_{n,y}=\bigcup_i \bigcup_{g\in\omega_{n,y}}O_iF\omega_{n,gy}^Kg=
 \bigcup_{g\in \omega_{n,y}}M_{n,gy}^Kg.\]
 
 \begin{lem}
  For every $y\in Y_n$ and $g\in G$, 
  \begin{itemize}
      \item if $g\in\omega_{n,y}^K$, then $M_{n,gy}^Kg=M_{n,y}^K$;
      \item if $g\in\omega_{n,y}\setminus \omega_{n,y}^K$, then $M_{n,gy}^Kg\cap M_{n,y}^K=\emptyset.$ 
  \end{itemize}
 \end{lem}
 \begin{proof}
   Suppose that $g\in\omega_{n,y}^K$. Then $\omega_{n,y}^K=\omega_{n,gy}^Kg$ and hence \[M_{n,y}^K=\cup_{i}O_iF\omega_{n,y}^K=\cup_{i}O_iF\omega_{n,gy}^Kg=M_{n,gy}^Kg.\]
   
   Take $y\in Y_n$ and $g\in G$. Assume that  \[(\cup_i O_i)F\omega_{n,y}^{K}\bigcap (\cup_i O_i)F\omega_{n,gy}^{K}g\neq\emptyset.\]
   Then there are $h_y\in \omega_{n,y}^{K}$ and $h_{gy}\in \omega_{n,gy}^{K}$ such that \[(\cup_i O_i)Fh_yg^{-1}h_{gy}^{-1}\bigcap (\cup_i O_i)F\neq \emptyset\] and therefore $h_yg^{-1}h_{gy}^{-1}\in B_0\subseteq K$. Fix $k\in K$ such that $h_y=kh_{gy}g$. Observe that $(y,kh_{gy}gy)=(y,h_yy)\in\rel_{n}^K$. 
   Since both $kh_{gy}gy$ and $h_{gy}gy$ are elements of $Y_n$, by definition of $\rel_n^K$ we have that $(kh_{gy}gy,h_{gy}gy)\in\rel_n^K$. Therefore $(y,h_{gy}gy)\in\rel_n^K$. By hypothesis $(gy,h_{gy}gy)\in \rel_n^K$, thus $(y,gy)\in\rel_n^K$ and we have concluded the proof of the lemma. 
 \end{proof}
 
 Remark that the subsets $M_{n,y}^K$ are not necessarily connected, they are unions of connected components which are close with respect to $K$. We will now consider the quotients $M_{n,y}/(y\Gamma_n y^{-1})$. We will start by showing the analogous of Lemma \ref{lem: invariant free Gamma cover}.

 \begin{lem}
  For every $y\in Y_n$, the cover induced by $\co_{n,y}$ on $M_{n,y}/(y\Gamma_n y^{-1})$ is a good cover.
 \end{lem}
 \begin{proof}
 The proof is similar to the proof of Lemma \ref{lem: invariant free Gamma cover}.  In order to show that the induced cover is a good cover, it is enough to show that given any $O,O'\in\co_{n,y}$ there is at most an element $\gamma\in y\Gamma_n y^{-1}$ such that the intersection $O\cap O'\gamma$ is not empty. Indeed if it is the case, then the intersection of translates of $\co_{n,y}$ in the quotient $M_{n,y}/y\Gamma_ny^{-1}$ are homeomorphic to the corresponding intersection in $M_{n,y}$. Since the latter are either empty or contractible, the former have to satisfy the same property. That is, the cover induced by $\co_{n,y}$ on $M_{n,y}/(y\Gamma_n y^{-1})$ is a good cover.
 
 So let us fix such open sets and by definition $O=O_if_1g_1$ and $O'=O_jf_2g_2$ for some $1\leq i,j\leq m$, $f_1,f_2\in F$ and $g_1,g_2\in\omega_{n,y}$. Assume that there are $\gamma_1,\gamma_2\in \Gamma$ such that 
 \[O_1f_1g_1\gamma_1\cap O_2f_2g_2\neq\emptyset \text{ and }O_1f_1g_1\gamma_2\cap O_2f_2g_2\neq\emptyset.\]
 Then by the assumptions on the $\{O_i\}_i$, we must have that \[f_1g_1\gamma_1g_2^{-1}f_2^{-1}(f_1g_1\gamma_2g_2^{-1}f_2^{-1})^{-1}=f_1g_1\gamma_1\gamma_2^{-1}g_1^{-1}f_1^{-1}\in WK'\]
 for some compact subgroup $K'$ contained in $A_G$. On the other hand \[f_1g_1\gamma_1\gamma_2^{-1}g_1^{-1}f_1^{-1}\in f_1g_1y\Gamma_n(f_1g_1y)^{-1}.\]
 Since by hypothesis $g_1y\in Y_n$, we have that $f_1g_1y\in FY_n\subseteq (X_n)_V$. That is, we have that \[f_1g_1y\Gamma_n(f_1g_1y)^{-1}\cap V=\{\id\}.\]
 So Lemma \ref{lem:Pdisc}, implies that \[f_1g_1\gamma_1\gamma_2^{-1}g_1^{-1}f_1^{-1}\in f_1g_1y\Gamma_n(f_1g_1y)^{-1}\cap WK'\]
 must be a torsion element. Since $\Gamma_n$ is torsion free, $\gamma_1=\gamma_2$ as claimed. 
 \end{proof}
 
  For every $y\in Y_n$, we have a topological space $M^K_{n,y}/(y\Gamma_ny^{-1})$ and a cover of it $O_{n,y}^K/(y\Gamma_n y^{-1})$. Denote by $\Sigma_{n,y}^KM$ the nerve of this cover. The set $Y_n$ is finite for every $n$, so $\mathcal O_{n,y}/(y\Gamma_n y^{-1})$ is a finite cover for every $y$. Hence $\Sigma_{n,y}^KM$ is a finite simplicial space.
  Moreover given $g\in \omega_{n,y}^K$, the multiplication on the right by $g^{-1}$ induces an homeomorphism \[\alpha_g\colon M^K_{n,y}=\cup_iO_iF\omega_{n,y}^K\rightarrow M^K_{n,gy}=\cup_iO_iF\omega_{n,gy}^K=\cup_iO_iF\omega_{n,y}^Kg^{-1}.\]
 Clearly $\alpha_g$ is equivariant with respect to the action of $y\Gamma_n y^{-1}$ on $M^K_{n,y}$ and the action of $gy\Gamma_n y^{-1}g^{-1}$ on $M^K_{n,gy}$. Therefore $\alpha_g$ induces an isomorphism of simplicial spaces \[\tilde \alpha_g\colon \Sigma_{n,y}^KM\rightarrow \Sigma_{n,gy}^KM.\]
 It is easy to see then that $\Sigma^K_nM\coloneqq \{\Sigma_{n,y}^KM\}_{y\in Y_n}$ is a $\rel_n^K$-simplicial complex. We can therefore argue as in \cite[Proposition 2.3]{ber2004}.
 
\begin{lem}\label{lem: bergeron gaboriau lemma}
  In the above notation, for every $y\in Y_n$ \[\beta_i(\Sigma_n^KM)=\frac{b_i(M_{n,y}/y\Gamma_ny^{-1})}{|Y_n|}.\]
\end{lem}

Before proving the lemma, let us remark that the same arguments as before, show that if $y,gy\in Y_n$, then the multiplication by $g^{-1}$ on the right induces an homeomorphism between  $M_{n,y}/y\Gamma_ny^{-1}$ and $M_{n,gy}/gy\Gamma_ny^{-1}g^{-1}$. Therefore the statement of the lemma does not depend on $y$. 

\begin{proof}
 Fix $n\in\mathbb N$.
 Choose a representative $\{y_l\}_l$ for each $\rel_n^K$-class in $Y_n$ and let $C_l$ be the size of the class of $y_l$. Clearly $\sum_l C_l=|Y_n|$ and we can write $M_{n,y}$ as a disjoint union \[M_{n,y}=\cup_l M^K_{n,y_l}y_l.\]
 Following  \cite[Proposition 2.3]{ber2004} we obtain that
 \[\beta_i(\Sigma_n^{K} M)=\sum_{l} \frac{|C_l|}{|Y_n|}\frac{b_i\left(M_{y_l}^{K}\bigr/y_l\Gamma_n y_l^{-1}\right)}{|C_l|}=\sum_{l} \frac{b_i\left(M_{y_l}^{K}y_l\bigr/y\Gamma_ny^{-1}\right)}{|Y_n|}=\frac{b_i(M_{n,y}/y\Gamma_ny^{-1})}{|Y_n|}.\qedhere\]
 \end{proof}
 
 We finally prove the main technical hearth of Theorem \ref{thm:maintechnical}.
 
 \begin{prop}\label{prop: limit l2 betti numbers}
  In the above notation, \[\covol(Y_\ul)\beta_i(G)=\lim_\ul\beta_i(\Sigma_n^KM).\]
 \end{prop}
 
 The proposition will follow from Theorem \ref{thm:mainrel} and \cite[Theorem 5.9]{CGS}. For the proof, we will need to work with ultraproducts of simplicial complexes, which we will recall only in our specific setting. 
 
 \begin{proof}
  Consider our fixed cross sections $Y_n\subseteq X_n$ and equip them with the renormalized counting measure $\nu_n$. On each finite set $Y_n$, we have the equivalence relation $\rel_n^K$ graphed by $\Theta_n^K$. The graphings $\Theta_n^K$ are $K$-supported as in Definition \ref{dfn:kgraph}. Recall that given $y,y'\in Y_n$, then there is $\ph\in\Theta_n^K$ such that $\ph(y)=y'$ if and only if $y'\in Ky$. By Theorem \ref{thm:mainrel}, the ultraproduct of the graphed equivalence relations $(\rel_n^K,\Theta^K_n)$ is the equivalence relation $\rel_\ul$ which is the cross-equivalence relation of the external cross section $Y_\ul=[Y_n]_\ul$ for the action of $G$ on the regular ultraproduct $X_\ul^R$.
  
  For each $n$, we also have a $\rel_n^K$-simplicial complex $\Sigma_n^K M$. We will now take the \textit{ultraproduct} of this sequence of simplicial complexes to obtain a $\rel_\ul$-simplicial complex. Let us start with the $0$-skeleton. 
  We can define a \text{length function} $\ell_n$ on the $0$-simplices of $\Sigma_n^K M$ which associate to $O_ifg\times \{y\}$ the minimal $n$ such that there is $h\in (K\cup K^{-1})^n$ satisfying \[(O_ifg\times \{y\})h=O_ifgh^{-1}\times \{hy\}\in \{O_if\}_{1\leq i\leq m,f\in F}\times Y_n.\]
  
  The $0$-simplices of $\Sigma_{\ul,y_\ul}^K M$ will be given by $\ul$-equivalence classes of sequences of $0$-simplices $[O_{i_n}f_ng_n]_\ul$ where $O_{i_n}f_ng_n$ is a $0$-simplex of $\Sigma_n^K M$, that is $1\leq i_n\leq m$, $f_n\in F$ and $g_n\in\omega_{n,y_n}^K$, with the additional assumption that 
  \[\lim_\ul\ell_n(O_{i_n}f_ng_n\times\{y_n\})<\infty.\]
  Remark that since $m$ and $F$ are finite, one can always assume that $i_n=i\leq m$ and $f_n\in F$ are fixed. Since the length $\ell_n(O_if\times \{y\})=0$, the $0$-simplices of $\Sigma_{\ul,[y_n]_\ul}$ are therefore given by $[O_ifg_n]_\ul$ where $g_n\in \omega_{n,y_n}^K$ is any bounded sequence. 
  Define $g_\ul\coloneqq \lim_\ul g_n$. Now remark that by definition of the regular ultraproduct, we must have that $[g_ny_n]_\ul=g_\ul[y_n]_\ul$ (inside $X_\ul$ and hence in $Y_\ul$ since $\Psi_\ul$ is injective). That is, if we denote \[\omega_{[y_n]_\ul}\coloneqq\{g\in G\colon ([y_n]_\ul,g[y_n]_\ul)\in\rel_\ul\}\] then the $0$-simplices of $\Sigma_{\ul,y_\ul}^K M$ are given by $O_ifg$ with $1\leq i\leq m$, $f\in F$ and $g\in\omega_{[y_n]_\ul}$. 

 The higher skeletons of $\Sigma_{\ul,y_\ul}^KM$ are constructed similarly. That is a $k$-simplex of $\Sigma_{\ul,y_\ul}^KM$ corresponds to a $k+1$-tuple 
 \[[O_{i_0}f_{i_0}g^{(0)}_n]_\ul,\ldots,[O_{i_k}f_{i_k}g^{(k)}_n]_\ul\] such that $i_0,\ldots,i_k\in\{1,\ldots,m\}$, $f_0,\ldots, f_k\in F$ and $g^{(0)}_n,\ldots,g^{(k)}_n\in\omega_{n,y_n}^K$ are such that \begin{equation}\label{eq: limit simplex}
 \bigcap_{j=0,\ldots,k}O_{i_j}f_{i_j}g^{(j)}_n\neq \emptyset\text{ for }\ul\text{-almost every }n. 
 \end{equation}
 
 It is easy to see that $\Sigma_{\ul}^KM=\{\Sigma_{\ul,y_\ul}^KM\}_{y_\ul\in Y_\ul}$ is a $\rel_\ul$-simplicial complex \cite[Proposition 5.6]{CGS} and it is the \textit{ultraproduct} of the simplicial complexes $\Sigma_n^KM$. Since the action of $G$ is proper, the simplicial complexes $\{\Sigma_n^KM\}$ are uniformly bounded and hence we can apply \cite[Theorem 5.9]{CGS} to get that for every $i$, \[\beta_i(\Sigma_\ul^KM)=\lim_\ul\beta_i(\Sigma_n^KM).\]
 
 We are left to show that $\beta_i(\Sigma_\ul^KM)$ is contractible. Indeed in this case, we would have that \[\lim_\ul\beta_i(\Sigma_n^KM)=\beta_i(\Sigma_\ul^KM)=\beta_i(\rel_\ul)=\beta_i(G)\covol(Y_\ul).\]
 
 Before showing that this simplicial complex is contractible, let us give a different description on what happens over each point $y_\ul$. For this, let us fix $[y_n]_\ul\in Y_\ul$. For every $k\in\mathbb N$, let us denote by $\Sigma_{n,y_n}^{K,(k)}M\subseteq \Sigma_{n,y_n}^{K}M$ the finite simplicial sub-complex consisting on all the simplices of $\Sigma_{n,y_n}^{K}M$ whose $0$-skeleton has $\ell_n$-length less than $k$. Since the action of $G$ is cocompact and $Y_n$ is $V$-discrete, the number of simplices on each dimension of $\Sigma_{n,y_n}^{K,(k)}M$ can be bounded independently of $n$. Therefore there are only finitely many possibility for the simplicial complexes  $\Sigma_{n,y_n}^{K,(k)}M$. The ultrafilter $\ul$ select one of these choices and this finite simplicial complex is exactly the corresponding $\Sigma_{\ul,[y_n]_\ul}^{K,(k)}M$. 
 
 In order to show that the simplicial complex is contractible, we can argue as in \cite[Proposition 5.8]{CGS}: it is enough to show that for every $k_0$ there is $k_1$ such that any homotopy sphere lying in $\Sigma_{\ul,[y_n]_\ul}^{K,(k_0)}M$ is null-homotopic in $\Sigma_{\ul,[y_n]_\ul}^{K,(k_1)}M$. The previous discussion implies that we can prove that for $\ul$-almost every $n$ any homotopy sphere lying in $\Sigma_{n,y_n}^{K,(k_0)}M$ is null-homotopic in $\Sigma_{n,y_n}^{K,(k_1)}M$. This follows from the fact that $M$ is contractible and $\{\Gamma_n\}_n$ is a Farber sequence. Indeed, by definition $\Sigma_{n,y_n}^{K}M$ is the nerve of the cover $\co_{n,y_n}/y_n\Gamma_ny^{-1}_n$. Since $\{\Gamma_n\}_n$ is Farber, for almost every $[y_n]_\ul$, the nerves of the covers $\co_{n,y_n}/y_n\Gamma_ny^{-1}_n$ and $\co_{n,y_n}$ coincide on bigger and bigger subsets. In particular for each $k\in\mathbb N$, the simplicial complex $\Sigma_{n,y_n}^{K,(k)}M$ is the nerve of a finite sub-covering of $\co_{n,y_n}$ of $M$. Since $M$ is contractible, for every $k_0$ there is $k_1$ such that every homotopy sphere in $\Sigma_{n,y_n}^{K,(k_0)}M$ has to be null-homotopic in some $\Sigma_{n,y_n}^{K,(k_1)}M$. Therefore the proof of the proposition is concluded. 
 \end{proof}
 
 We will now show that the \textit{boundary} is small. We say that an open set $O\in \mathcal O_{n,y}/(y\Gamma_n y^{-1})$ is a \textit{boundary set} if the closure of $O$ meets the complement of $M_{n,y}/(y\Gamma_n y^{-1})$ in $M/y\Gamma_ny^{-1}$. Recall that given a neighborhood of the identity $U\subseteq G$ and $n\in\mathbb N$, we defined \[(G)^{\Gamma_n}_{U}=\{g\in G\colon g\Gamma_ng^{-1}\cap U=\{\id_G\}\}. \]
 
 Remark that given a neighborhood of the identity $U\subseteq G$ and any subset $B\subseteq G$, for every $n$ we have 
 \begin{equation}\label{eq: moving neighb}
 B(G)^{\Gamma_n}_{B^{-1}UB}\subseteq (G)^{\Gamma_n}_{U}.
 \end{equation}
 Indeed if $g\in (G)^{\Gamma_n}_{B^{-1}UB}$ and $b\in B$, then \[bg\Gamma_ng^{-1}b^{-1}\cap U\subseteq g\Gamma_ng^{-1}\cap B^{-1}UB\subseteq \{\id_G\}.\]
 
\begin{lem}\label{lem:sublinear boundary}
 For every $n$, pick $y_n\in Y_n$. Then the number of boundary elements in $\mathcal O_{n,y_n}/(y_n\Gamma y^{-1}_n)$ grows sublinearly with respect to $|Y_n|$ as $n$ tends to infinity (along the ultrafilter $\ul$).
 \end{lem}
 \begin{proof}
 As remarked after Lemma \ref{lem: bergeron gaboriau lemma}, the lemma does not depend on the choice of $y_n\in Y_n$. So let us assume that $y_n$ is the class of the identity for every $n$. Let us fix $g\in\omega_{n,y_n}$ and a neighborhood of the identity $B\subseteq G$. Then $g\in (G)^{\Gamma_n}_B$ if and only if $g\Gamma_n\in (X_n)_B$. Lemma \ref{lem: cross-section} yields that $V^2Y_n\supseteq (X_n)_{V_1}$. We claim that $V^2\omega_{n,y_n}\supseteq (G)_{V_1}^{\Gamma_n}$. Indeed, if $g\in (G)_{V_1}^{\Gamma_n}$, then $g\Gamma_n\in (X_n)_{V_1}$ and therefore there are $v\in V^2$, $g_n\in \omega_n$ such that $g\Gamma_n=vg_ny_n$. That is, $g\in vg_n\Gamma_n\subseteq V^2\omega_{n,y_n}$. In particular, we have that \[M_{n,y_n}=\cup_i O_iF\omega_{n,y_n}\supseteq \cup_iO_iV^2\omega_{n,y_n}\supseteq \cup_iO_i(G)_{V_1}^{\Gamma_n}.\]
 If $g\in\omega_{n,y_n}\cap (G)_{(SF)^{-1}V_1SF}^{\Gamma_n}$, Equation \eqref{eq: moving neighb} yields
 \[\cup_i\bar O_iFg\subseteq \cup_iO_iSFg\subseteq \cup_i O_iSF (G)_{(SF)^{-1}V_1SF}^{\Gamma_n}\subseteq \cup_i O_i (G)_{V_1}^{\Gamma_n}\subseteq M_{n,y}.
 \]
 Therefore the number of boundary elements is bounded by \[m|F|\left|Y_n\setminus (X_n)_{(SF)^{-1}V_1SF}\right|.\]
Finally remark that since $\{\Gamma_n\}_n$ is Farber, \[\lim_\ul\frac{\left|Y_n\cap (X_n)_{(SF)^{-1}V_1SF}\right|}{|Y_n|}=1\]
so the proof of the lemma is concluded.
 \end{proof}
 
 As a direct consequence of the above lemma and the Mayer-Vietoris long exact sequence we obtain the following. 
 
 \begin{lem}\label{lem: betti is nabla}
   In the above notation, for every $y\in Y_n$, \[\lim_\ul\frac{b_i(M_{n,y}/y\Gamma_ny^{-1})}{|Y_n|}=\lim_\ul\frac{\nabla_i(M_{n,y}/y\Gamma_ny^{-1},M/y\Gamma_ny^{-1})}{|Y_n|}.\]
 \end{lem}
 \begin{proof}
   Fix $i,n\in\mathbb N$. To simplify our notation, as for the lemma above, we can assume that $y$ is the class of the identity, so that $y\Gamma_ny^{-1}=\Gamma_n$ and set $M_n\coloneqq M_{n,y}$. 
   Denote by $\alpha_{n,i}$ the dimension of the kernel of the map induced by the inclusion $H_i(M_n/\Gamma_n)\rightarrow H_i(M/\Gamma_n)$. By definition \[\alpha_{n,i}+\nabla_i(M_n/\Gamma_n,M/\Gamma_n)=b_i(M_n/\Gamma_n).\]
   Denote by $E_n\subseteq M_n/\Gamma_n$ the union of the boundary open sets. Set \[D_0\coloneqq M_n/\Gamma_n,\quad D_1\coloneqq (M/\Gamma_n)\setminus \left((M_n/\Gamma_n)\setminus E_n\right).\]
   Clearly $D_0\cap D_1=E_n$. The Mayer-Vietoris long exact sequence of the decomposition $M/\Gamma_n=D_0\cup D_1$ tells us that
   \[H_i(E_n)\rightarrow H_i(M_n/\Gamma_n)\oplus H_i(D_1)\rightarrow H_i(M/\Gamma_n)\] is exact for every $i$ and $n$. Therefore 
   $\alpha_{n,i}\leq b_i(E_n)$. By Lemma \ref{lem:sublinear boundary}, we have that $b_i(E_n)$ grows sublinearly and hence 
   \begin{align*}
       \lim_\ul\frac{\nabla_i(M_n/\Gamma_n,M/\Gamma_n)}{|Y_n|}\leq & \lim_\ul\frac{b_i(M_n/\Gamma_n)}{|Y_n|}\\
       &=\lim_\ul\frac{\alpha_{n,i}+\nabla_i(M_n/\Gamma_n,M/\Gamma_n)}{|Y_n|} \\
       &\leq \lim_\ul\frac{b_i(E_n)+\nabla_i(M_n/\Gamma_n,M/\Gamma_n)}{|Y_n|}\\
       &=\lim_\ul\frac{\nabla_i(M_n/\Gamma_n,M/\Gamma_n)}{|Y_n|}.\qedhere
   \end{align*}
 \end{proof}

Let us now compare the obtained $M_{n,y}$ with the topological spaces in the statement of Theorem \ref{thm:maintechnical}. 

\begin{lem}\label{lem: sandwich }
 If $y\in Y_n$ is the class of the identity, we have that \[A_M(G)^{\Gamma_n}_{K_0^{-1}V_1K_0}\subseteq M_{n,y}^K\subseteq A_M(G)_{V}^{\Gamma_n}.\]
\end{lem}
\begin{proof}
 By hypothesis we have $FY_n\subseteq (X_n)_{V}$. This can be restated as saying that $F\omega_{n,y}\subseteq (G)^{\Gamma_n}_{V}$. Therefore \[M_{n,y}^K=\cup_iO_iF\omega_{n,y}\subseteq \cup_iO_i(G)^{\Gamma_n}_{V}\subseteq A_M(G)_{V}^{\Gamma_n}.\]
 On the other hand, $\cup_iO_iK_0\supseteq A_M$ 
 and $V^2\omega_{n,y}\supseteq (G)_{V_1}^{\Gamma_n}$. Therefore, using Equation \eqref{eq: moving neighb}, we obtain \[ M_{n,y}^K=\cup_iO_iF\omega_{n,y}\supseteq \cup_iO_iV^2\omega_{n,y}\supseteq \cup_iO_i(G)_{V_1}^{\Gamma_n}\supseteq \cup_iO_iK_0(G)_{K_0^{-1}V_1K_0}^{\Gamma_n}\supseteq A_M(G)_{K_0^{-1}V_1K_0}^{\Gamma_n}.\qedhere\]
\end{proof}

Summing up all the previous statements, we obtain the following.

\begin{lem}\label{lem: the last lemma}
  For every symmetric neighborhood of the identity $V$, set $V_2\coloneqq K_0^{-1}V^{-2}K_0^{-1}VK_0V^2K_0$. Then
  we have
  \[\lim_\ul\frac{\nabla_i(A_M(G)^{\Gamma_n}_{V_2} /\Gamma_n,M/\Gamma_n)}{|Y_n|}\leq \beta_i(G)\covol(Y_\ul)\leq \lim_\ul\frac{\nabla_i(A_M(G)^{\Gamma_n}_{V} /\Gamma_n,M/\Gamma_n)}{|Y_n|}.\]
\end{lem}
\begin{proof}
  By assumption, the class of the identity is in $Y_n$. Denote this class by $y_n$. Then
  combining Lemma \ref{lem: bergeron gaboriau lemma}, Proposition \ref{prop: limit l2 betti numbers} and Lemma \ref{lem: betti is nabla}, we obtain \[\beta_i(G)\covol(Y_\ul)=\lim_\ul\frac{\nabla_i(M_{n,y_n}/\Gamma_n,M/\Gamma_n)}{|Y_n|}.\]
  Therefore the lemma follows from Lemma \ref{lem: sandwich }.
\end{proof}

 Let us conclude the proof of Theorem \ref{thm:maintechnical}. Let us fix a neighborhood of the identity $V$. Consider a neighborhood of the identity $U$ such that \[U_2\coloneqq K_0^{-1}U^{-2}K_0^{-1}UK_0U^2K_0\subseteq V.\]
 Then clearly $(G)_{U_2}^{\Gamma_n}\subseteq (G)_V^{\Gamma_n}$. So applying Lemma \ref{lem: the last lemma} to $V$ and $U$, we get 
 \[\beta_i(G)\covol(Y_\ul)\leq \lim_\ul\frac{\nabla_i(A_M(G)^{\Gamma_n}_{V} /\Gamma_n,M/\Gamma_n)}{|Y_n|}\leq \lim_\ul\frac{\nabla_i(A_M(G)^{\Gamma_n}_{U_2} /\Gamma_n,M/\Gamma_n)}{|Y_n|}\leq \beta_i(G)\covol(Y_\ul).\]
 
 Since $\covol(Y_\ul)=\lim_n\covol(Y_n)$ (Theorem \ref{thm:maincross}) and $\covol(Y_n)=\covol(\Gamma_n)/|Y_n|$ (Lemma \ref{lem:filling}), we get 
  \[\beta_i(G)=\lim_\ul\frac{|Y_n|}{\covol(\Gamma_n)}\lim_\ul\frac{\nabla_i(A_M(G)^{\Gamma_n}_{V} /\Gamma_n,M/\Gamma_n)}{|Y_n|}=\lim_\ul\frac{\nabla_i(A_M(G)^{\Gamma_n}_{V} /\Gamma_n,M/\Gamma_n)}{\covol(\Gamma_n)}\]
and the proof of Theorem \ref{thm:maintechnical} is completed.

\subsection{Examples of well-covered spaces}\label{subsection:examples of well-covered}

We will now provide some examples of well-covered spaces and in particular we will finish the proof of Corollary \ref{crl:excomp}. All our examples will be $G$-CW complexes, see \cite{luc2005}.

\begin{lem}\label{lem: riemann well-covered}
  Let $G$ be a \lcsc group and suppose it acts on the Riemannian manifold $M$ in a cocompact, isometric and proper way.
  Then $M$ is a well-covered $G$-space.
\end{lem}
\begin{proof}
  Let us denote by $d$ the geodesic distance on $M$. 
  Since small geodesic balls of a Riemannian manifold are convex, they form a good cover of the space, see \cite{doc1992}. We need therefore to show only the last point of Definition \ref{dfn:wellcovered}. Consider a compact subset $C\subseteq M$ such that $CG=M$. Set $A_M\coloneqq B_1^M(C)$. Since the action is proper, the set \[A_G\coloneqq \{g\in G\colon gC\cap C\neq \emptyset\}\]
  is compact. Let us denote by $K_A\subseteq A_G$ the union of all compact subgroups contained in $A_G$. Let us fix an open neighborhood of the identity $W\subseteq G$. Remark that $WK_A$ is open.
  For every $x\in C$ let us define \[\eps(x)\coloneqq d(x,x(G\setminus (WK_A))).\]
  Remark that if $xg=x$, then $g$ is in a subgroup contained in $A_G$, that is $g\in K_A$. Moreover since the action is cocompact, and in particular since $M/G$ is Hausdorff, each orbit is closed and the map $g\mapsto xg$ is closed. Therefore, $x(G\setminus (WK_A))$ is a closed subset not containing $x$ and the function $x\mapsto d(x,x(G\setminus (WK_A)))$ is continuous. In particular, $\eps_0\coloneqq \min(\min_{x\in C}\eps(x),1)$ is strictly positive. 
  
  Since $M/G$ is compact, there are finitely many $p_1,\ldots,p_m\in C$ such that $\cup_i B_{\eps_0/4}^M(p_i)G=M$. Now assume that for some $1\leq i,j\leq m$ and $g_1,g_2\in G$, we have that \[B_{\eps_0/4}^M(p_ig_1)\cap B_{\eps_0/4}^M(p_j)\neq \emptyset\text{ and }B_{\eps_0/4}^M(p_ig_2)\cap B_{\eps_0/4}^M(p_j)\neq \emptyset.\] 
  Then the triangular inequality gives us that $d(p_ig_1,p_ig_2)<\eps_0$. Since the action is isometric, we obtain that $d(p_ig_1g_2^{-1},p_i)<\eps_0$. Finally the choice of $\eps_0\leq\eps(p_i)$ tells us that $g_1g_2^{-1}\in WK_A$ and the proof of the lemma is completed. 
\end{proof}

\begin{rmk}
  Note that in the proof of Lemma \ref{lem: riemann well-covered} the assumption that the action is isometric is not crucial. Indeed, it is enough to know that for every $\eps>0$ there is $\delta$ such that whenever $x,y\in M$ are such that $d(x,y)<\delta$, then $d(xg,yg)<\eps$ for every $g\in G$.
\end{rmk}

A group $G$ is said to be \textit{almost connected} if the quotient of $G$ by the connected component of the identity is compact. If $G$ is almost connected, then $M\coloneqq K_0\backslash G$ is a $\underline{\mathrm{E}}G$ space for some maximal compact subgroup $K_0$ of $G$, see \cite[Theorem A.5]{abe1974} and \cite[Corollary 4.14]{abe1978} or \cite{luc2005}. Observe that any almost connected group admits a normal compact subgroup such that the quotient is a Lie group \cite[Theorem 8]{glu1960}. Therefore $M$ is naturally a contractible Riemannian manifold on which $G$ acts by isometry and hence $M$ is a contractible well-covered $G$-space. We therefore have the following.

\begin{fact*}
  Every almost connected group acts on a contractible well-covered $G$-space.
\end{fact*}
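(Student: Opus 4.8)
The plan is to take $M := K_0 \backslash G$ for a maximal compact subgroup $K_0 < G$, equipped with the right translation action of $G$, and to check that it satisfies the hypotheses of the previous Fact about isometric, uniformly proper, cocompact actions on Riemannian manifolds. First I would reduce to the Lie case: by \cite[Theorem 8]{glu1960} an almost connected \lcsc group $G$ has a compact normal subgroup $N$ with $G/N$ a Lie group. Since $K_0N$ is a compact subgroup of $G$ containing $K_0$, maximality forces $N \subset K_0$, so $M$ is canonically identified with $(K_0/N)\backslash(G/N)$. Here $G/N$ is an almost connected Lie group and $K_0/N$ is one of its maximal compact subgroups, so by the Cartan--Iwasawa--Malcev theorem (see \cite[Theorem A.5]{abe1974}) $M$ is diffeomorphic to a Euclidean space, and in particular it is contractible. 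Choosing a Riemannian metric on $G/N$, averaging it over the left action of the compact group $K_0/N$, and pushing the result down to the quotient $M$ produces a Riemannian metric for which $G/N$, and hence $G$ through the projection $G \to G/N$, acts by isometries; note that right translation of $G$ on itself descends to $M$ precisely because it commutes with left $K_0$-multiplication.

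It then remains to verify that this isometric right action of $G$ on $M$ is cocompact and uniformly proper. Cocompactness is immediate: the action is transitive, so $M/G$ is a single point, which is compact and Hausdorff. For uniform properness, the stabilizer of the coset $K_0g$ is exactly $g^{-1}K_0g$, so every point stabilizer is a conjugate of the fixed compact group $K_0$; taking $A := K_0$ in Definition \ref{dfn:wellcovered} shows the action is uniformly proper. The previous Fact now applies and shows that $M$ is a well-covered $G$-space, and since $M$ is contractible it is a contractible well-covered $G$-space, as claimed.

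I expect the only non-formal ingredient to be the structure-theoretic input --- \cite[Theorem 8]{glu1960} together with the Cartan--Iwasawa--Malcev theorem --- needed both to make $M$ a manifold and to establish its contractibility; everything after that is routine. The one technical subtlety to keep in mind is that the $G$-action on $M$ only factors through $G/N$, so the Riemannian metric has to be built by averaging over the compact group $K_0/N$ and then descending it to $M$, rather than being induced from an arbitrary left-invariant metric on $G$, in order for the descended action to be genuinely isometric; once the metric is fixed this way, the verification of uniform properness and cocompactness of the transitive action is immediate.
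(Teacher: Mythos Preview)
Your proposal is correct and follows essentially the same route as the paper: take $M=K_0\backslash G$, use \cite[Theorem 8]{glu1960} to pass to a Lie quotient so that $M$ becomes a Riemannian manifold, invoke \cite[Theorem A.5]{abe1974} for contractibility, and then apply the previous Fact about isometric Riemannian actions. You supply more detail than the paper does (the inclusion $N\subset K_0$, the explicit verification of cocompactness via transitivity, and uniform properness via $A=K_0$); the one place to tighten is the metric construction---you should start from a \emph{right-invariant} Riemannian metric on $G/N$ before averaging over the left $K_0/N$-action, since otherwise the descended metric on $M$ need not be $G$-invariant.
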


We now examine our second class of examples.

\begin{lem}\label{lem: simplicial well-covered}
  Suppose that the \lcsc group $G$ acts simplicially, properly and cocompactly on a simplicial complex $M$ (with countably many cells). Then $M$ is a well-covered $G$-space.
\end{lem}

In order to prove Lemma \ref{lem: simplicial well-covered}, we could introduce a metric on $M$ and proceed as in Lemma \ref{lem: riemann well-covered}. We will present instead a combinatorial proof which uses the barycentric subdivision. Let us recall that if $M$ is a simplicial complex, the $0$-simplices of its barycentric subdivision $BS(M)$ are in correspondence with the simplices of $M$. Two $0$-simplices in $BS(M)$ are the boundary of a $1$-simplex in $BS(M)$ if and only if one is contained in the other as simplices of $M$. It is convenient to isolate the following lemma. 

\begin{lem}\label{lem:simplicial game}
  Suppose that the \lcsc group $G$ acts simplicially on a simplicial complex $M$. Suppose that $\sigma_1^1,\sigma_2^1$ are simplices of $BS(M)$ and take $g_1,g_2\in G$ such that $\sigma_1^1$ contains both $\sigma_2^1g_1$ and $\sigma_2^1g_2$. Then $\sigma_2^1g_1=\sigma_2^1g_2$.
\end{lem}
\begin{proof}
  Let $q^1$ be any $0$-simplex of $\sigma_2^1$. Then both $q^1g_1$ and $q^1g_2$ are $0$-simplices of $\sigma_1^1$. If they are distinct, there is a $1$-simplex in $BS(M)$ whose boundary is $\{q^1g_1,q^1g_2\}$. In this case we would have that $q^1g_1$ and $q^1g_2$ are associated with simplices of $M$ of different dimensions, which is impossible since the action of $G$ is simplicial.   
\end{proof}

\begin{proof}[Proof of Lemma \ref{lem: simplicial well-covered}]
  Denote by $BS_2(M)$ the barycentric subdivision of $BS(M)$. We claim that the covering by stars of the $0$-simplices of $BS_2(M)$ satisfies the required conditions. First observe that the intersection of stars is either empty or is a star of a higher dimensional simplex. Therefore it is a good cover and since the action of $G$ is simplicial, it is $G$-invariant. We are left again to prove the last condition. As in Lemma \ref{lem: riemann well-covered}, we consider a compact subset $C\subseteq M$ such that $CG=M$. We let $A_M$ be the closed star around $C$. Since the action is proper, the set \[A_G\coloneqq \{g\in G\colon gA_M\cap A_M\neq \emptyset\}\]
  is compact.
  
  Given a $0$-simplex $p^2$ of $BS_2(M)$, we will denote by $S(p^2)$ the open star around $p^2$. By cocompactness, there are finitely many $p_i^2\in C$ for $i=1,\ldots,m$, such that $\cup_i S(p_i^2)G=M$. Assume now that there are $g_1,g_2\in G$ and $1\leq i,j\leq m$ such that 
  \[S(p_i^2g_1)\cap S(p_j^2)\neq \emptyset\text{ and }S(p_i^2g_2)\cap S(p_j^2)\neq \emptyset.\] 
  For every $1\leq i\leq m$, we denote by $\sigma_i^1$ the simplex of $BS(M)$ associated with $p_i^2$. Since the dimension of $\sigma_i^1g$ is the same for every $g\in G$, we must have that one of the two following conditions hold
  \begin{itemize}
      \item $\sigma^1_j$ contains both $\sigma^1_ig_1$ and $\sigma^1_ig_2$;
      \item $\sigma^1_j$ is contained in both $\sigma^1_ig_1$ and $\sigma^1_ig_2$.
  \end{itemize}
  In the first case, Lemma \ref{lem:simplicial game} implies that $g_1g_2^{-1}$ fixes $\sigma^1_i$. In the second, $\sigma_i^1$ contains both $\sigma_j^1g_1^{-1}$ and $\sigma_j^1g_2^{-1}$ and hence Lemma \ref{lem:simplicial game} implies that $g_2^{-1}g_1$ fixes $\sigma^1_j$. Therefore $g_1g_2^{-1}$ fixes $\sigma_j^1g_2^{-1}$. Remark that $p_i^2\in C$ and that $p_j^2g_2^{-1}$ is connected by a $1$-simplex in $BS_2(M)$ to $p_i^2$ and therefore $p_j^2g_2^{-1}\in A_M$. In either cases, the element $g_1g_2^{-1}$ fixes a $0$-simplex in $A_M$, therefore it is contained in a compact subgroup contained in $A_G$.
\end{proof}

Well-covered $G$-spaces behave nicely with respect to products.

\begin{lem}
 Let $G_1,G_2$ be \lcsc groups and let $M_i$ be well-covered $G_i$-spaces for $i=1,2$. Then $M_1\times M_2$ is a well-covered $G_1\times G_2$-space.
\end{lem}
\begin{proof}
Clearly the action of $G_1\times G_2$ on $M_1\times M_2$ is proper and cocompact. Also remark that if $\mathcal O^i$ is a $G_i$-invariant good cover of $M_i$, then the collection \[\mathcal O^1\times \mathcal O^2\coloneqq\{O^1\times O^2\colon O^i\in \mathcal O^i, i=1,2\}\]
is a $G$-invariant good cover and the last condition can be easily verified. 
\end{proof}

Finally assume that $G$ is a general locally compact group and denote by $G_0\leq G$ the connected component of the identity. 

\begin{fact*}
 If the group $G/G_0$ acts cocompactly and properly on a contractible simplicial complex $M_0$, then there is a contractible well-covered $G$-space.
\end{fact*}

We will not prove this fact. In \cite{luc2000} it is explained how to induce a $\underline{\mathrm{E}}(G/G_0)$ space to a $\underline{\mathrm{E}}G$ without changing the quotient. Following their construction, the preimage of each point in the quotient is a countable union of connected Riemannian manifolds. Therefore we can construct good covers by taking the product of the cover by stars of $M_0$ and small geodesic balls in the manifolds. 


\end{document}